\documentclass[11pt,a4paper]{article}
\usepackage{graphicx}

\usepackage{fullpage,amsmath,amsfonts,amssymb,amsthm}
\usepackage{arydshln,cite}

\usepackage{cite}
\usepackage{braket} 

\numberwithin{equation}{section}

\newtheorem{Proposition}{Proposition}[section]
\newtheorem{Theorem}[Proposition]{Theorem}  
\newtheorem{Lemma}[Proposition]{Lemma}  
\newtheorem{Corollary}[Proposition]{Corollary}  
\newtheorem{Definition}[Proposition]{Definition}  

\newcommand{\nc}{\newcommand}
\nc{\chit}{\raisebox{0.25ex}{$\chi$}}
\nc{\chih}{\raisebox{0.25ex}{$\hat\chi$}}
%
\nc{\Bf}{\mathfrak{B}}
\nc{\Cf}{\mathfrak{C}}
\nc{\g}{\mathfrak{g}}
\nc{\G}{\mathfrak{G}}
\nc{\h}{\mathfrak{h}}
\nc{\Hf}{\mathfrak{H}}
\nc{\kf}{\mathfrak{k}}
\nc{\m}{\mathfrak{m}}
\nc{\n}{\mathfrak{n}}
\nc{\rf}{\mathfrak{r}}
\nc{\s}{\mathfrak{s}}
\nc{\Sf}{\mathfrak{S}}
\nc{\vf}{\mathfrak{v}}
\nc{\zf}{\mathfrak{z}}
\nc{\Zf}{\mathfrak{Z}}
%
\nc{\C}{\mathbb{C}}
\nc{\Nb}{\mathbb{N}}
\nc{\Z}{\mathbb{Z}}
\nc{\Zkl}{\mathbb{Z}_{k,l}}
%
\nc{\Lc}{\mathcal{L}}
\nc{\Oc}{\mathcal{O}}
\nc{\Pc}{\mathcal{P}}
\nc{\Wc}{\mathcal{W}}
\nc{\Zc}{\mathcal{Z}}
\nc{\Obar}{\overline{\Oc}}
\nc{\z}{\{0\}}
\nc{\Vp}{V_\bullet}
\nc{\Wp}{W_\bullet}
\nc{\ad}{\mathrm{ad}}
\nc{\spa}{\mathrm{span}}
\nc{\hr}{\mathrm{h}}
\nc{\het}{\mathrm{ht}}
\nc{\sr}{\mathrm{s}}

\begin{document}

\thispagestyle{empty}

\begin{center}

\scalebox{1.05}{\textbf{\huge Finite-dimensional $\mathbb{Z}$-graded Lie algebras}}
\\[0.6cm]
\scalebox{1}{\Large Mark D. Gould${}^a$, Phillip S. Isaac${}^a$, Ian Marquette${}^b$, J{\o}rgen Rasmussen${}^a$}
\\[0.4cm]
${}^a$\,\textit{School of Mathematics and Physics, University of Queensland\\ 
St Lucia, Brisbane, Queensland 4072, Australia}
\\[0.3cm] 
${}^b$\,\textit{Department of Mathematical and Physical Sciences, La Trobe University\\ 
Melbourne, Victoria 3086, Australia}
\\[0.3cm]
{\small
\textsf{m.gould1\!\;@\!\;uq.edu.au\quad\;
psi\!\;@\!\;maths.uq.edu.au\quad\;
i.marquette\!\;@\!\;latrobe.edu.au\quad\;
j.rasmussen\!\;@\!\;uq.edu.au}}

\vspace{1.4cm}

{\large\textbf{Abstract}}\end{center}

\noindent
We investigate the structure and representation theory of finite-dimensional $\mathbb{Z}$-graded Lie algebras,
including the corresponding root systems and Verma, irreducible, and Harish-Chandra modules.
This extends the familiar theory for finite-dimensional semisimple Lie algebras to a much wider class of Lie algebras,
and opens up for advances and applications in areas relying on ad-hoc approaches.
Physically relevant examples are afforded by the Heisenberg and conformal Galilei algebras, 
including the Schr{\"o}dinger algebras, whose $\mathbb{Z}$-graded structures are yet to be fully exploited.

\newpage

\tableofcontents

\section{Introduction}

We investigate the structure and representation theory of finite-dimensional $\Z$-graded Lie algebras over the complex field.
Our work and approach is partly motivated by our study of the physically important Schr\"odinger and 
conformal Galilei algebras, and continues work of Kac \cite{Kac68}; see also \cite{EK13,CC18}.
However, we emphasise that some of our definitions differ from those used previously and that, 
unlike in the work of Kac, our algebras are assumed finite-dimensional but, crucially, not necessarily semisimple. 

Here, we demonstrate that much of the theory developed for finite-dimensional semisimple Lie algebras 
(see, e.g., \cite{Serre87,Hum94}) is applicable, 
in a suitably generalised form, to a much wider class of Lie algebras, including some of great physical interest.
Indeed, there are many examples of $\Z$-graded Lie algebras naturally arising in applications. 
This includes the finite-dimensional simple Lie algebras which are naturally $\Z$-graded with respect to 
a given simple root (or subset of simple roots), for example.
In fact, this extends to a large number of non-semisimple Lie algebras such as 
the Heisenberg algebra, the Schr\"odinger and conformal Galilei algebras, as well as the model filiform 
Lie algebras. While there has been a great deal of activity in the study of these algebras due to 
their importance in physics, their $\Z$-graded structures have not been fully exploited, and a systematic study 
has so far not appeared in the literature. Although generally not reductive, these algebras
nevertheless have many properties in common with ordinary semisimple Lie algebras, as we will discuss.

Elucidating the power of the structure theory presented here, 
we explore the representation theory of the class of so-called  \textit{normal} $\Z$-graded Lie algebras,
and demonstrate that a suitable character theory can be developed in this setting. 
In doing so, we provide a cornerstone for future studies that aligns with our original motivation for the current work, 
acknowledging that it is often the representations that manifest in applications of physical interest.

The paper is set up as follows. Section~\ref{Sec:Prelim} is concerned with the interplay between the $\Z$-graded 
structure and the Levi decomposition of a Lie algebra.
This problem has been previously considered in \cite{CC18}, but our work offers the novelty of a description 
in terms of the radical of the Killing form rather than the maximal solvable radical. 
This distinction is important for the remainder of the paper. Motivated by applications to physics, 
Section~\ref{Sec:Normal} is devoted to the introduction and structure of \textit{normal} $\Z$-graded Lie algebras, 
while Sections~\ref{Sec:Regular} and~\ref{Sec:Connected} are devoted to the introduction and structure theory of \textit{regular} 
and \textit{irreducible} $\Z$-graded Lie algebras, respectively.
Section~\ref{Sec:Ex} follows up with some important examples. 
The representation theory of normal $\Z$-graded Lie algebras is developed in 
Section~\ref{Sec:Rep}, with emphasis on a new category of $\Z$-graded modules, which we call Category $\Zc$. 
This includes Category $\Oc$, familiar from the theory of simple Lie algebras but extended to the current setting.
Section~\ref{Sec:Chacters} concerns the characters of several key classes of modules, including Harish-Chandra modules.
The paper concludes in Section~\ref{Sec:Concl} with a brief outlook to future developments.

\section{Preliminaries}
\label{Sec:Prelim}

\subsection{Notation}
\label{Sec:Notation}

Throughout, we let $L$ denote a finite-dimensional complex Lie algebra with radical $R$, 
nilradical $\n$, centre $Z(L)$, derived algebra $L'$ and Killing form $(\ ,\,)$.
We note that $R$ is the orthocomplement of $L'$ under the Killing form, in the sense that
\begin{align}
 R=\{x\in L\,|\,(x,L')=\z\}.
\end{align}
For any subspace $A\subseteq L$, the normaliser is defined by
\begin{align}
 N_L(A):=\{x\in L\,|\,[x,A]\subseteq A\},
\end{align}
while for any subset $B\subseteq L$, the centraliser is defined by
\begin{align}
 C_L(B):=\{x\in L\,|\,[x,B]=\z\}.
\end{align}
In the Levi decomposition
\begin{align}
 L=S\oplus R,
\end{align}
$S$ is a semisimple subalgebra of $L$, called the Levi subalgebra.
This decomposition is also an $S$-module decomposition.
An element of $L$ is said to be semisimple on a subset $B\subseteq L$ if it is ad-diagonalisable on $B$.
We note that if $L$ is reductive, then $L=L'\oplus Z(L)$.
We let $\h_n$ denote the $n$-dimensional Heisenberg algebra
and $\langle x_1,\ldots,x_n\rangle$ the Lie algebra generated by $\{x_1,\ldots,x_n\}\subset L$.
The set of nonnegative integers is denoted by $\Nb_0$.

\subsection{Killing decomposition}

We are concerned with the kernel of the Killing form, herein called the {\em Killing radical}.
\begin{Definition}
The Killing radical of $L$ is defined as
\begin{align}
 K:=\{x\in L\,\,|\,(x,L)=\z\}.
\end{align}
\end{Definition}

\noindent
\textbf{Remark.}
By construction, $K\subseteq R$, but equality does not generally hold.
It follows from the invariance of the Killing form that $[L,R]\subseteq K$,
so $[K,K]\subseteq[R,K]\subseteq[R,R]\subseteq K$.
In fact,
\begin{align}
 [L,R]=L'\cap R\subseteq\n\subseteq K.
\label{LRnK}
\end{align}
\begin{Proposition}
$R$ admits an $S$-module decomposition of the form
\begin{align}
 R=Z_s\oplus K.
\label{RZK}
\end{align}
\end{Proposition}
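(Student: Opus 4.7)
The plan is to apply Weyl's theorem on complete reducibility of finite-dimensional $S$-modules. The starting observation is that both $R$ and $K$ are ideals of $L$, hence $S$-submodules under the adjoint action. For $R$, this is immediate from its definition as the radical. For $K$, it follows from invariance of the Killing form: for any $x\in K$ and $y,z\in L$, one has $([y,x],z)=-(x,[y,z])=0$, so $[y,x]\in K$, confirming that $K$ is an ideal of $L$ and in particular an $S$-submodule of $R$.

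With this in hand, the second step is to view $R$ as a finite-dimensional $S$-module containing $K$ as an $S$-submodule. Since $S$ is semisimple, Weyl's theorem furnishes an $S$-invariant complement $Z_s$ of $K$ in $R$, which yields the claimed $S$-module decomposition $R=Z_s\oplus K$.

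As a third step, one can extract structural information that justifies the notation $Z_s$. From \eqref{LRnK}, $[S,Z_s]\subseteq[L,R]\subseteq K$, while the $S$-invariance of $Z_s$ also gives $[S,Z_s]\subseteq Z_s$; together these inclusions force $[S,Z_s]\subseteq Z_s\cap K=\z$, so $Z_s\subseteq C_L(S)$ and $S$ acts trivially on the complement. This identifies $Z_s$ with a lift to $R$ of the $S$-trivial quotient $R/K$.

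The proof is essentially a direct appeal to Weyl's theorem, so I do not foresee a substantive obstacle; the only point requiring care is the verification that the Killing radical $K$ is an ideal (and not merely a subspace) of $L$, which is the content of the invariance calculation above.
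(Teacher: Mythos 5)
Your proof is correct and follows essentially the same route as the paper: since $K$ is an ideal of $L$ (hence an $S$-submodule of $R$) and $S$ is semisimple, Weyl's complete reducibility theorem provides the $S$-invariant complement $Z_s$, giving $R=Z_s\oplus K$. Your additional observation that $[S,Z_s]\subseteq Z_s\cap K=\z$ is not needed for the statement itself, but it is a valid (and slightly more elementary) derivation of part of what the paper later proves in Proposition~\ref{Prop:Killing}.
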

\begin{proof}
Since $S$ is semisimple and $K$ is an ideal of $L$, hence an $S$-submodule of $L$, the result follows.
\end{proof}

\noindent
\textbf{Remark.}
We refer to the ensuing $S$-module decomposition 
\begin{align}
 L=L_s\oplus K,\qquad L_s:=S\oplus Z_s,
\label{LLK}
\end{align}
as the {\em Killing decomposition} of $L$.
Since $S$ is semisimple, the Killing form restricted to $S$ is non-degenerate, 
so we may assume the decomposition of $L_s$ is orthogonal under the Killing form. 
We stress that $Z_s$ need not be zero.
\begin{Proposition}
\label{Prop:Ls}
The Killing form restricted to $L_s$ is non-degenerate.
\end{Proposition}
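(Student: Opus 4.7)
The plan is to exploit the fact that $K$ is, by definition, the radical of the Killing form on all of $L$, so elements of $K$ pair trivially with everything. This should reduce the non-degeneracy of the restriction to $L_s$ to a triviality coming from the directness of the Killing decomposition.

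Concretely, I would suppose $x\in L_s$ satisfies $(x,L_s)=\z$ and aim to show $x=0$. Since $K$ is the Killing radical, $(x,K)=\z$ automatically, for any $x\in L$. Combining this with the assumption and the Killing decomposition $L=L_s\oplus K$, I get
\begin{align*}
 (x,L)=(x,L_s)+(x,K)=\z,
\end{align*}
so $x\in K$ by the definition of $K$. But $x\in L_s$, and the decomposition $L=L_s\oplus K$ is direct, so $L_s\cap K=\z$ and hence $x=0$. This shows the restriction of the Killing form to $L_s$ is non-degenerate.

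There is no real obstacle here: the entire content of the argument is that any putative kernel vector in $L_s$ is also orthogonal to $K$ (for free, since $K$ is the Killing radical), hence lies in $K\cap L_s=\z$. The assumption from the preceding remark that $S$ and $Z_s$ can be chosen mutually orthogonal within $L_s$ is not even needed for the statement itself, though it is consistent with and anticipates the non-degeneracy just established.
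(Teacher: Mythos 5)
Your proof is correct and follows essentially the same route as the paper: both use that any vector of $L_s$ orthogonal to $L_s$ is automatically orthogonal to $K$ (since $K$ is the Killing radical), hence orthogonal to all of $L$, so it lies in $K\cap L_s=\z$. The only cosmetic difference is that you argue directly while the paper phrases it as a contradiction.
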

\begin{proof}
Use the Killing decomposition to write $z\in L$ as $z=z'+z''$, where $z'\in L_s$ and $z''\in K$.
If there exists nonzero $x\in L_s$ such that $(x,y)=0$ for all $y\in L_s$, then $(x,z)=(x,z')+(x,z'')=(x,z')=0$, 
so $x\in K$, a contradiction.
\end{proof}
\begin{Proposition}
\label{Prop:Killing}
\begin{align}
 [Z_s,Z_s] \subseteq K,\qquad [S,Z_s] = \z.
\end{align}
\end{Proposition}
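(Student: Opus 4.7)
The plan is to use the Killing decomposition $L = S \oplus Z_s \oplus K$ together with the key containment $[L,R] \subseteq K$ recorded in equation~\eqref{LRnK}, and the fact that by Proposition~2.1 the complement $Z_s$ was chosen as an $S$-submodule of $R$.

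For the first claim, I would simply observe that $Z_s \subseteq R$, so
\begin{align*}
[Z_s,Z_s] \subseteq [R,R] \subseteq [L,R] \subseteq K,
\end{align*}
where the final containment is exactly \eqref{LRnK}. This part is immediate and requires no further work.

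For the second claim, the strategy is to show that $[S,Z_s]$ lies in both $Z_s$ and $K$, and then invoke the directness of the sum $R = Z_s \oplus K$. On the one hand, since $Z_s$ is an $S$-submodule of $L$ (by construction in Proposition~2.1), we have $[S,Z_s] \subseteq Z_s$. On the other hand, $Z_s \subseteq R$, so $[S,Z_s] \subseteq [L,R] \subseteq K$ using \eqref{LRnK} again. Combining these yields $[S,Z_s] \subseteq Z_s \cap K = \{0\}$.

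There is no real obstacle here: both statements follow formally from the identity $[L,R] \subseteq K$ established in the Remark, with the second claim relying additionally on the $S$-module nature of $Z_s$. The only subtlety is recognising that the defining property of $K$ via the invariance of the Killing form does all the work, so that no computation with root spaces or explicit brackets is needed.
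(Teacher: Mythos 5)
Your proof is correct, but it follows a more elementary route than the paper. The paper proves both inclusions by passing to the quotient $\overline{L}=L/K$: since the Killing form induces a non-degenerate invariant form there, $\overline{L}$ is reductive with centre $(Z_s\oplus K)/K$, from which $[Z_s,Z_s]\subseteq K$ and $[S,Z_s]\subseteq K$ follow, and then $[S,Z_s]\subseteq K\cap Z_s=\z$ exactly as in your final step. You instead bypass the reductive-quotient argument entirely and use only the containment $[L,R]\subseteq K$ already recorded in the Remark preceding \eqref{LRnK} (a direct consequence of the invariance of the Killing form), together with $Z_s\subseteq R$: this gives $[Z_s,Z_s]\subseteq[R,R]\subseteq[L,R]\subseteq K$ and $[S,Z_s]\subseteq[L,R]\subseteq K$, while the $S$-module nature of the decomposition $R=Z_s\oplus K$ gives $[S,Z_s]\subseteq Z_s$, so the directness of the sum finishes the argument. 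Both proofs share the steps $[S,Z_s]\subseteq Z_s$ and $Z_s\cap K=\z$; the difference is in how one lands in $K$. Your version is shorter and uses less machinery, at the cost of not exhibiting the structural fact (reductivity of $L/K$ with centre $(Z_s\oplus K)/K$) that the paper's proof makes explicit and which frames the subsequent discussion of the Killing decomposition.
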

\begin{proof}
By construction, $[S,Z_s] \subseteq Z_s$. 
The Killing form induces a non-degenerate invariant form on the quotient algebra 
\begin{align}
\overline{L}:=L/K.
\end{align}
Thus, $\overline{L}$ is reductive with centre $(Z_s \oplus K)/K$ and $\overline{L}'$ is semisimple.
It follows that $[Z_s,Z_s] \subseteq K$ and $[S,Z_s] \subseteq K$, so $[S,Z_s] \subseteq K \cap Z_s = \z$.
\end{proof}

\noindent
\textbf{Remark.}
We emphasise that $L_s$ is not generally a Lie algebra. Indeed, we have the following result.
\begin{Corollary} 
\label{Cor:LsLie}
$L_s$ is a Lie algebra if and only if $[Z_s,Z_s] = \z$. In that case, $L_s$ is reductive with 
$Z(L_s)=Z_s$ and $L_s'=S$.
\end{Corollary}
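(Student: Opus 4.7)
My plan is to handle the biconditional by combining two facts already in hand: the direct-sum splitting $L=L_s\oplus K$ from the Killing decomposition, and the containment $[Z_s,Z_s]\subseteq K$ from Proposition~\ref{Prop:Killing}. The forward implication is then almost automatic: if $L_s$ is closed under the bracket, then $[Z_s,Z_s]\subseteq L_s\cap K=\z$, forcing $[Z_s,Z_s]=\z$. For the converse, I would verify closure case by case on the three pair-types in $L_s=S\oplus Z_s$: $[S,S]\subseteq S$ because $S$ is a subalgebra, $[S,Z_s]=\z$ by Proposition~\ref{Prop:Killing}, and $[Z_s,Z_s]=\z$ by hypothesis. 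Each resulting bracket lies in $L_s$, so $L_s$ is a Lie algebra.

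Given the equivalence, the structural claims then follow from routine bracket arithmetic. The derived subalgebra is
\begin{align*}
 L_s'=[S\oplus Z_s,\,S\oplus Z_s]=[S,S]+[S,Z_s]+[Z_s,Z_s]=S,
\end{align*}
using $[S,S]=S$ (as $S$ is semisimple), together with the vanishing brackets above. The same computations show that $S$ and $Z_s$ are both ideals of $L_s$: for $S$ because $[Z_s,S]=\z\subseteq S$, and for $Z_s$ because $[S,Z_s]=\z\subseteq Z_s$ and $[Z_s,Z_s]=\z\subseteq Z_s$. Since $Z_s$ is abelian, the presentation $L_s=S\oplus Z_s$ realises $L_s$ as a semisimple ideal plus an abelian ideal, hence $L_s$ is reductive.

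To identify the centre, I would note that $Z_s\subseteq Z(L_s)$ is immediate from $[S,Z_s]=[Z_s,Z_s]=\z$. Conversely, if $x\in Z(L_s)$, write $x=s+z$ with $s\in S$ and $z\in Z_s$; then $[x,S]=[s,S]=\z$ forces $s\in Z(S)=\z$ by semisimplicity of $S$, so $x=z\in Z_s$. I do not anticipate any genuine obstacle: the proof is essentially a bookkeeping exercise, and the only conceptual input is the realisation that closure of $L_s$ is entirely governed by where $[Z_s,Z_s]$ lands, which Proposition~\ref{Prop:Killing} has already pinpointed inside $K$.
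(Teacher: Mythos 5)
Your proof is correct. The biconditional is handled exactly as in the paper: both arguments reduce closure of $L_s=S\oplus Z_s$ to the question of where $[Z_s,Z_s]$ lands, using $[S,S]=S$, $[S,Z_s]=\z$ from Proposition~\ref{Prop:Killing}, and $[Z_s,Z_s]\subseteq K$ together with $L_s\cap K=\z$. Where you diverge is in the structural consequences. The paper disposes of reductivity, $Z(L_s)=Z_s$ and $L_s'=S$ in one line by citing Proposition~\ref{Prop:Ls}: the Killing form of $L$ restricts non-degenerately to $L_s$, and a Lie algebra carrying a non-degenerate invariant trace form is reductive. You instead argue entirely by bracket bookkeeping: $L_s'=[S,S]+[S,Z_s]+[Z_s,Z_s]=S$, both summands are ideals, $Z_s$ is abelian and central, and $Z(L_s)\subseteq Z_s$ follows from $Z(S)=\z$ by semisimplicity. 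Your route is more elementary and self-contained -- it never invokes the bilinear-form criterion for reductivity and it delivers the identification of the centre explicitly rather than implicitly -- while the paper's appeal to Proposition~\ref{Prop:Ls} is shorter and fits a recurring theme of the paper (structure read off from non-degeneracy of the Killing form on $L_s$ and $\g_0$). Both are valid; nothing is missing from your argument.
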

\begin{proof}
From $[S,Z_s] = \z$ and $[S,S] = S$, it follows that $L_s = S \oplus Z_s$ is a Lie algebra if and only if 
$[Z_s,Z_s] \subset L_s$. By Proposition~\ref{Prop:Killing}, this is the case if and only if 
$[Z_s,Z_s] \subseteq K \cap L_s = \z$. 
Using Proposition~\ref{Prop:Ls}, this implies $L_s$ is reductive with centre $Z_s$ and $L_s'= S$.
\end{proof}
\begin{Lemma}
\begin{align}
 Z(L) \subseteq K,\qquad [S,K] \subseteq \n,\qquad
 [L,Z_s] \subseteq K,\qquad [L_s,K] \subseteq K.
\end{align}
\end{Lemma}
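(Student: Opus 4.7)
The plan is to establish each of the four inclusions separately, all as short consequences of either the invariance of the Killing form or of relations already noted in the excerpt (in particular equation (LRnK), $[L,R]=L'\cap R\subseteq\n\subseteq K$, and the fact that $K$ is an ideal of $L$).

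For the first inclusion, I would take $x\in Z(L)$. Then $\ad x=0$, so for every $y\in L$ we have $(x,y)=\tr(\ad x\,\ad y)=0$, which places $x$ in $K$ by the definition of the Killing radical.

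For the second inclusion, I would chain $[S,K]\subseteq[L,K]\subseteq[L,R]\subseteq\n$, where the middle step uses $K\subseteq R$ (noted in the remark after the definition of $K$) and the last step is (LRnK). The third inclusion is nearly identical: since $Z_s\subseteq R$, we have $[L,Z_s]\subseteq[L,R]\subseteq K$, again by (LRnK).

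For the fourth inclusion, the key observation is that $K$ is an ideal of $L$, since it is the radical of an invariant bilinear form; this is essentially already noted in the remark (where $[L,R]\subseteq K$ is recorded), and the same invariance argument gives $[L,K]\subseteq K$. Restricting to the subspace $L_s\subseteq L$ then yields $[L_s,K]\subseteq K$. I do not expect any real obstacle here — every step is either immediate from invariance of the Killing form or a direct application of (LRnK); the only point requiring a small amount of care is keeping track of which of $R$, $K$, $Z_s$, $\n$ contains which.
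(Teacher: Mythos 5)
Your proposal is correct and follows essentially the same route as the paper: each inclusion is a one-line consequence of the invariance of the Killing form together with \eqref{LRnK}. The only cosmetic differences are that for $[L,Z_s]\subseteq K$ you use $Z_s\subseteq R$ directly (the paper also invokes Proposition~\ref{Prop:Killing}) and for $[L_s,K]\subseteq K$ you cite the ideal property of $K$ rather than the chain $[L_s,K]\subseteq[L,R]\subseteq K$, both of which are equally valid.
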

\begin{proof}
The first inclusion follows from the fact that $(z,L)=\z$ for all $z\in Z(L)$.
The second inclusion follows from $[S,K]\subseteq[L,R]$ and \eqref{LRnK}.
The third inclusion follows from \eqref{LRnK} and Proposition~\ref{Prop:Killing}, 
while the last inclusion follows from $[L_s,K]\subseteq[L,R]$ and \eqref{LRnK}.
\end{proof}
\begin{Proposition}
\label{Prop:ZsR}
\mbox{}
\begin{itemize}
\item[{\rm (i)}]
$Z_s \subseteq C_L(S) \subseteq R$.
\item[{\rm (ii)}]
$C_L(S) = Z_s \oplus C_L(S) \cap K$.
\item[{\rm (iii)}]
$R = Z_s \oplus C_L(S) \cap K \oplus [S,K]$.
\end{itemize}
\end{Proposition}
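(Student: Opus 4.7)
The plan is to exploit that both $R$ and $K$ are ideals of $L$ (the latter by invariance of the Killing form), hence $S$-modules under the adjoint action, and to combine this with complete reducibility of $S$-modules and the fact that $Z(S)=\{0\}$ since $S$ is semisimple. Claim (i) drives everything: once $C_L(S)\subseteq R$ is established, (ii) is immediate from the Killing decomposition of $R$, and (iii) reduces to decomposing $K$ itself as an $S$-module.

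For (i), the inclusion $Z_s \subseteq C_L(S)$ is just the second equality in Proposition~\ref{Prop:Killing}. For $C_L(S) \subseteq R$, I would use the Levi decomposition to write any $x \in C_L(S)$ as $x = s + r$ with $s \in S$ and $r \in R$. Since $S$ is a subalgebra and $R$ is an ideal, $[S,s] \subseteq S$ and $[S,r] \subseteq R$; the condition $[S,x]=\{0\}$ then forces $[S,s]=\{0\}$ separately, so $s \in Z(S) = \{0\}$ and $x = r \in R$.

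For (ii), the Killing decomposition $R = Z_s \oplus K$ applies to any $x \in C_L(S)$ via (i), yielding a unique splitting $x = z + k$ with $z \in Z_s$ and $k \in K$. Since $Z_s \subseteq C_L(S)$ by (i), one has $k = x - z \in C_L(S) \cap K$, so $C_L(S) = Z_s + C_L(S) \cap K$; the sum is direct because $Z_s \cap K = \{0\}$ by the Killing decomposition.

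For (iii), combining with $R = Z_s \oplus K$, it suffices to prove $K = (C_L(S) \cap K) \oplus [S,K]$. Since $K$ is an ideal, it is an $S$-module, and by Weyl complete reducibility it splits as $K = K^S \oplus K^+$, where $K^S = C_L(S) \cap K$ is the sum of the trivial isotypic components and $K^+$ is the sum of the non-trivial ones. For any non-trivial irreducible $S$-submodule $V \subseteq K^+$, the space $[S,V]$ is itself an $S$-submodule of $V$, nonzero (else $V$ would be trivial), hence equal to $V$ by irreducibility. Summing, $K^+ = [S,K^+]$, and since $[S,K^S] = \{0\}$ we obtain $[S,K] = K^+$, giving the desired decomposition.

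The main obstacle is the last step: one must verify that $[S,K]$ captures exactly the non-trivial $S$-isotypic part of $K$, which rests on the observation that irreducible non-trivial $S$-modules $V$ satisfy $[S,V]=V$. Everything else is bookkeeping with the Levi and Killing decompositions.
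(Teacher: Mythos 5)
Your proof is correct and follows essentially the same route as the paper's: the Levi decomposition together with $Z(S)=\{0\}$ for (i), the Killing decomposition $R=Z_s\oplus K$ for (ii), and the $S$-module splitting $K=(C_L(S)\cap K)\oplus[S,K]$ for (iii). The only difference is that you spell out the isotypic-component argument behind that last splitting, which the paper simply invokes via semisimplicity of $S$ and complete reducibility.
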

\begin{proof}
Clearly, $Z_s \subseteq C_L(S)$. To show $C_L(S) \subseteq R$, let
$c\in C_L(S)$ and use the Levi decomposition to write $c=x+r$, where $x \in S$ and $r \in R$. Then,
$\z = [c,S] = [x,S] + [r,S]$, so
\begin{align}
 [x,S] = [r,S] \subseteq S \cap R = \z.
\end{align}
Since $S$ is semisimple, this implies $x=0$ and $c = r \in R$. This establishes (i). 
As to (ii), let $c \in C_L(S)$ and use $C_L(S)\subseteq R=Z_s\oplus K$ to write $c = z + y$, where $z \in Z_s$ 
and $y\in K$. Then, $\z = [c-z,S] = [y,S]$, so $y\in C_L(S)$, hence
\begin{align}
 c \in Z_s \oplus C_L(S) \cap K
\end{align} 
and thus $C_L(S) \subseteq Z_s \oplus C_L(S) \cap K$. The reverse inclusion is obvious. 
As to (iii), since $S$ is semisimple and $K$ is stable under the adjoint action of $S$, we may write 
\begin{align}
K = C_L(S) \cap K \oplus [S,K],
\end{align}
readily implying (iii).
\end{proof}

\noindent
\textbf{Remark.} 
Proposition~\ref{Prop:ZsR} helps to clarify the connection between $R$ and $K$. 
Similar results hold for
the kernel of any nonzero invariant bilinear form on $L$, not just the Killing form.

\subsection{Gradation}

We now introduce the main topic of the paper.
\begin{Definition}
\label{Def:Zgraded}
$L$ is called $\Z$-gradable if it admits a $\Z$-grading of the form
\begin{align}
 L=\bigoplus_{i=-k}^{l}L_i, \qquad [L_i,L_j] \subseteq L_{i+j},\qquad k,l\in\Nb_0,
\label{Zgrad}
\end{align}
where $L_{-k}\neq\z$, $L_l\neq\z$, and $L_i\equiv\z$ for $i<-k$ or $i>l$. 
Such a gradation is called \textit{balanced} if $k = l$.
$L$ is said to be $\Z$-graded if it comes equipped with a $\Z$-grading.
\end{Definition}
\noindent
\textbf{Remark.}
Associated with a $\Z$-grading of $L$ is a derivation $D$ satisfying $D(x) = ix$ for all $x\in L_i$ for each $i$. 
If the derivation is not inner (i.e., not of the form $D=\ad_d$ for any $d\in L$), 
then we extend the Lie algebra by enlarging its basis from $\Bf$ to $\Bf\cup\{d\}$, where
\begin{align}
 [d,x]=D(x),\qquad \forall x \in L.
\end{align}
In the following, we shall accordingly assume that it \textit{is} inner.
It follows that this \textit{level operator} $d$ is semisimple on $L$ and that $d \in Z(L_0)$.  
\medskip

\noindent
\textbf{Remark.}
We view two $\Z$-gradings of a given Lie algebra $L$ as equivalent if one of the corresponding level operators (say $d_1$)
is a positive integer multiple of the other (say $d_2$).
Although multiplying a level operator by $-1$ also gives rise to a $\Z$-grading, 
we will in general view the ensuing \textit{reverse $\Z$-grading} as distinct from the original $\Z$-grading.
\medskip

\noindent
\textbf{Remark.} 
To specify $k$ and $l$ in \eqref{Zgrad}, we occasionally refer to $L$ as $\Zkl$-graded.
For convenience, we introduce
\begin{align}
 m:=\min\{k,l\}.
\end{align}
\noindent
\textbf{Remark.}
Given a $\Z$-grading, $L_0$ is a Lie subalgebra of $L$ and each $L_i$ is an $L_0$-module.
While $\dim L>0$, we allow the possibility that $L_i = \z$ for some $i$ such that $-k<i<l$.
In the remainder of this paper, $L$ is assumed $\Zkl$-graded for some arbitrary but fixed $k,l\in\Nb_0$.
\begin{Proposition}\label{prop:LL0}
The decomposition \eqref{Zgrad} is orthogonal with respect to the Killing form, in the sense that
\begin{align}
(L_i,L_j) = \z,\qquad i+j \neq 0.
\end{align}
\end{Proposition}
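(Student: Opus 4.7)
The plan is to exploit the level operator $d$ and the invariance of the Killing form, which is the standard trick for establishing orthogonality of weight spaces. Since the grading is assumed inner, we have $d\in L$ with $[d,x]=ix$ for every $x\in L_i$, and the Killing form is invariant in the form $([z,x],y)+(x,[z,y])=0$ for all $z,x,y\in L$.

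First I would take arbitrary $x\in L_i$ and $y\in L_j$ and apply the invariance identity with $z=d$, obtaining
\begin{align}
0=([d,x],y)+(x,[d,y])=i(x,y)+j(x,y)=(i+j)(x,y).
\end{align}
If $i+j\neq 0$, this forces $(x,y)=0$, which is exactly the claim. Extending bilinearly over $L_i$ and $L_j$ gives $(L_i,L_j)=\z$.

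The argument is a one-line consequence of invariance, so there is no real obstacle; the only thing worth noting is that the proof genuinely relies on the remark preceding the proposition, namely that the derivation $D$ implementing the grading is assumed inner (equivalently, has been adjoined to $L$ as $d$). Without this, one cannot multiply the invariance relation $([d,\cdot],\cdot)+(\cdot,[d,\cdot])=0$ against $d\in L$; one would instead need to invoke the fact that $D$ is a derivation and a separate identity relating the Killing form to derivations, but this detour is unnecessary here given the standing assumption.
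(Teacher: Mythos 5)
Your proof is correct and is essentially the paper's argument: the paper's one-line proof ("follows readily from the invariance of the Killing form") is precisely the computation you spell out, pairing the invariance identity with the level operator $d$ to get $(i+j)(x,y)=0$. Making explicit that this relies on $d$ having been adjoined (or being inner) is a fair observation, consistent with the paper's standing assumption.
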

\begin{proof}
This follows readily from the invariance of the Killing form.
\end{proof}
\begin{Definition}
For each $i$, we define
\begin{align}
\g_i:=L_s \cap L_i,\qquad
\kf_i:=K \cap L_i,\qquad
\rf_i:=R \cap L_i,\qquad 
\s_i:=S \cap L_i.
\label{ISS}
\end{align}
\end{Definition}
\noindent
\textbf{Remark.}
The Killing decomposition of $L$ induces the graded decompositions
\begin{align}
 L_s=\bigoplus_i \g_i,\qquad 
 K=\bigoplus_i \kf_i,\qquad 
 R=\bigoplus_i \rf_i,\qquad 
 S=\bigoplus_i \s_i,
\label{LK}
\end{align}
while
\begin{align}
 L_i=\g_i\oplus\kf_i,\qquad L_i = \s_i \oplus \rf_i,\qquad \forall i.
\label{Li}
\end{align}
\begin{Proposition}
\label{Prop:gskr}
\mbox{}
\begin{itemize}
\item[{\rm (i)}]
For each $i \neq 0$, $\g_i=\s_i$ and $\kf_i = \rf_i$.
\item[{\rm (ii)}]
$\s_0$ is a reductive Lie algebra.
\item[{\rm (iii)}]
We have $\s_0$-module decompositions
\begin{align}
 L_0=\g_0 \oplus \kf_0,\qquad \g_0 = \s_0 \oplus Z_s,\qquad \rf_0 = Z_s \oplus \kf_0,
\end{align}
where
\begin{align}
  [\s_0,Z_s] = \z,\qquad [Z_s,Z_s] \subseteq \kf_0.
\label{Zsz}
\end{align}
\end{itemize} 
\end{Proposition}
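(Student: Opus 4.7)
The plan is to derive all three parts from the graded Killing decompositions \eqref{LK}--\eqref{Li}, Proposition~\ref{Prop:Killing}, and the fact that the centre of a graded reductive Lie algebra lies in degree $0$. The crux is to establish $Z_s\subseteq L_0$; once this is in hand, parts (i) and (iii) reduce to bookkeeping, while part (ii) follows from the standard fact that the centraliser of a semisimple element in a semisimple Lie algebra is reductive.

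For (i), the decompositions $L_i=\g_i\oplus\kf_i$ and $L_i=\s_i\oplus\rf_i$ in \eqref{Li}, together with the inclusions $\s_i\subseteq\g_i$ and $\kf_i\subseteq\rf_i$, imply by a dimension count that $\g_i=\s_i$ is equivalent to $\kf_i=\rf_i$, and both amount to $Z_s\cap L_i=\z$ for $i\neq 0$. To obtain the latter, I would pass to $\overline L=L/K$: by the proof of Proposition~\ref{Prop:Killing}, $\overline L$ is reductive with centre equal to the image of $Z_s$, and it inherits a $\Z$-grading. Since in any graded Lie algebra the grading derivation acts on the degree-$i$ part by multiplication by $i$, the centre must sit in degree $0$; combined with the gradedness of $Z_s$ (as a graded complement of $S$ in $L_s$) and $Z_s\cap K=\z$, this forces $Z_s\subseteq L_0$. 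For (ii), I would identify $\s_0$ as the $S$-centraliser of a semisimple element: since $S$ is $d$-stable, $D=\ad d$ restricts to a derivation of $S$, which is inner because $S$ is semisimple, so $D|_S=\ad_S h$ for some $h\in S$ with $\ad_S h$ semisimple and $D(h)=0$, hence $h\in\s_0$. Then $\s_0=C_S(h)$ is the centraliser of a semisimple element in a semisimple Lie algebra, hence reductive.

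For (iii), intersecting $L_s=S\oplus Z_s$ with $L_0$ and using $Z_s\subseteq L_0$ yields $\g_0=\s_0\oplus Z_s$, and likewise intersecting $R=Z_s\oplus K$ with $L_0$ yields $\rf_0=Z_s\oplus\kf_0$; the decomposition $L_0=\g_0\oplus\kf_0$ is the degree-$0$ instance of \eqref{Li}. For the brackets in \eqref{Zsz}, Proposition~\ref{Prop:Killing} gives $[\s_0,Z_s]\subseteq[S,Z_s]=\z$ and $[Z_s,Z_s]\subseteq K$; since $Z_s\subseteq L_0$, the latter improves to $[Z_s,Z_s]\subseteq K\cap L_0=\kf_0$. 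The main obstacle throughout is the containment $Z_s\subseteq L_0$ in (i); the remaining assertions are then straightforward consequences of the graded Killing decomposition.
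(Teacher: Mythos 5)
Your argument is essentially correct but takes a genuinely different route from the paper's, reversing the logical order of (i) and (iii). The paper proves (i) first by a direct Killing-form computation: for $i\neq0$ one has $\rf_i=[d,\rf_i]$, hence $(L,\rf_i)=([L,d],\rf_i)\subseteq(L',R)=\z$, so $\rf_i\subseteq K$; only afterwards, inside the proof of (iii), does it deduce $Z_s\subseteq L_0$ by decomposing $z\in Z_s$ into graded components. You instead establish $Z_s\subseteq L_0$ first, via the reductive quotient $\overline L=L/K$ from the proof of Proposition~\ref{Prop:Killing}, and then let (i) follow by bookkeeping; this is non-circular and works, and it isolates the key point (that $Z_s$ sits in degree zero) rather cleanly, at the cost of leaning on the quotient statement instead of the elementary orthogonality $R\perp L'$. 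For (ii) the two arguments in effect construct the same element (your $h$ is the $\s_0$-component $d_0$ of $d$ appearing in the paper's proof): the paper uses it to get $(\s_0,\s_i)=\z$ for $i\neq0$ and concludes from non-degeneracy of the Killing form of $S$ on $\s_0$, whereas you invoke innerness of derivations of the semisimple algebra $S$ together with the standard fact that the centraliser of a semisimple element of a semisimple Lie algebra is reductive; both are legitimate. Your part (iii) matches the paper once $Z_s\subseteq L_0$ is known.

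One caveat: the justification ``in any graded Lie algebra the grading derivation acts by $i$ on degree $i$, so the centre must sit in degree $0$'' is false as a general statement (an abelian Lie algebra with a nontrivial $\Z$-grading is a counterexample, since there the centre is everything). What saves your argument is that the grading derivation of $\overline L$ is \emph{inner}: it is $\ad_{\bar d}$ with $\bar d$ the image of $d\in L_0$, and an inner derivation annihilates the centre, so $Z(\overline L)=(Z_s\oplus K)/K$ lies in the $0$-eigenspace, i.e.\ in degree zero. With that one-line repair (and the gradedness of $S$ and $Z_s$ asserted in \eqref{LK}, which the paper's own proof also uses when it writes $z_i\in Z_s\cap L_i$), your proof is complete.
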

\begin{proof}
For $i \neq 0$, observe that $\kf_i = K \cap L_i \subseteq R \cap L_i=\rf_i $. 
But for $x \in L_i$, $[d,x] = ix$, so $\rf_i = [d,\rf_i]$. 
Thus, $(L,\rf_i) = (L,[d,\rf_i]) = ([L,d],\rf_i)=\z$, 
since $\rf_i\subseteq R$ is orthogonal to $[d,L] \subseteq L'$. 
It follows that $\rf_i \subseteq K$, so $\rf_i \subseteq K \cap L_i = \kf_i$, 
hence $\kf_i=\rf_i$. 
Together with \eqref{Li}, this implies (i).
Part (ii) follows from the observation that $\s_0$ is a Lie algebra and that the Killing form 
of $S$ restricted to $\s_0$ is non-degenerate.
As to this non-degeneracy, since $d\in L_0=\s_0\oplus\rf_0$, we may write $d=d_0+r_0$, 
where $d_0\in\s_0$ and $r_0\in\rf_0$, so for any $x_i\in\s_i$, we have
\begin{align}
 \underbrace{ix_i-[d_0,x_i]}_{\in S}=\underbrace{[r_0,x_i]}_{\in R},
\end{align}
and since $S\cap R=\z$, we see that $d_0$ is a level operator for $S$: $[d_0,x_i]=ix_i$.
It follows that $(\s_0,\s_i)=\z$ for all $i\neq0$, and since $S$ is semisimple, its Killing form is non-degenerate,
so its restriction to $\s_0$ is as well.
As to (iii), we first show that $Z_s \subseteq \g_0$. Suppose $z \in Z_s$ and use \eqref{Zgrad} to write
\begin{align}
 z = z_0 + \sum_{i \neq 0}z_i,
\end{align}
where $z_i \in Z_s \cap L_i \subseteq \rf_i$. 
From (ii), for $i \neq 0$, we have $\rf_i = \kf_i$, 
so $z_i \in \g_i \cap \kf_i \subseteq L_s \cap K = \z$, hence $z = z_0 \in L_0$,
implying that $Z_s \subseteq L_0 \cap R = \rf_0$. 
Note that $L_0 = \s_0 \oplus \rf_0$ and from \eqref{LLK}, $L_0=\g_0 \oplus \kf_0$. 
Moreover, by \eqref{RZK} and \eqref{LLK}, we have the $\s_0$-module decompositions
\begin{align}
 \g_0 = \s_0 \oplus Z_s,\qquad 
 \rf_0 = Z_s \oplus \kf_0.
\end{align} 
Finally, by Proposition~\ref{Prop:Killing}, $[Z_s,Z_s] \subseteq \rf_0 \cap K = \kf_0$ 
and $[\s_0,Z_s] \subseteq [S,Z_s] = \z$.
\end{proof}

\noindent
\begin{Corollary}
\label{Cor:g0}
The following conditions are equivalent:
\begin{itemize}
\item[{\rm (i)}] $\g_0$ is a Lie algebra.
\item[{\rm (ii)}] $[Z_s,Z_s] = \z$.
\item[{\rm (iii)}] $L_s$ is a Lie algebra.
\end{itemize}
\end{Corollary}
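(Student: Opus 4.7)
The plan is to establish (ii)$\Leftrightarrow$(iii) directly from Corollary~\ref{Cor:LsLie}, and then prove (i)$\Leftrightarrow$(ii) by exploiting the explicit decomposition of $\g_0$ furnished by Proposition~\ref{Prop:gskr}(iii). All the required ingredients are already in place, so the argument should be short and essentially a bookkeeping exercise rather than a substantive computation.

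For (i)$\Leftrightarrow$(ii), I would write $\g_0 = \s_0 \oplus Z_s$ and compute the bracket component by component. Since $\s_0$ is a Lie subalgebra, $[\s_0,\s_0] \subseteq \s_0 \subseteq \g_0$, and by Proposition~\ref{Prop:gskr}(iii), $[\s_0,Z_s]=\z\subseteq\g_0$ and $[Z_s,Z_s]\subseteq\kf_0$. Hence $\g_0$ is closed under the bracket if and only if $[Z_s,Z_s]\subseteq\g_0$. Combined with the containment in $\kf_0$ and the direct-sum decomposition $L_0=\g_0\oplus\kf_0$ from Proposition~\ref{Prop:gskr}(iii), this forces $[Z_s,Z_s]\subseteq\g_0\cap\kf_0=\z$, giving (i)$\Rightarrow$(ii); the reverse implication is immediate.

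For (ii)$\Leftrightarrow$(iii), I would simply invoke Corollary~\ref{Cor:LsLie}, which already characterises $L_s$ being a Lie algebra by exactly the condition $[Z_s,Z_s]=\z$. There is no real obstacle: the only point requiring a little care is that one must not confuse the ambient $\kf_0$ with $\g_0$, and the direct-sum decomposition $L_0=\g_0\oplus\kf_0$ is precisely what rules this out. So the proof amounts to stringing together Proposition~\ref{Prop:gskr}(iii) and Corollary~\ref{Cor:LsLie}.
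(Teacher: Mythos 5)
Your proposal is correct and follows essentially the same route as the paper: it uses the decomposition $\g_0=\s_0\oplus Z_s$ together with $[\s_0,Z_s]=\z$, $[Z_s,Z_s]\subseteq\kf_0$ and $\g_0\cap\kf_0=\z$ from Proposition~\ref{Prop:gskr}(iii) to get (i)$\Leftrightarrow$(ii), and cites Corollary~\ref{Cor:LsLie} for (ii)$\Leftrightarrow$(iii), exactly as the paper does.
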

\begin{proof}
As to (i)\,$\Rightarrow$\,(ii), for $\g_0$ a Lie algebra, we have $[Z_s,Z_s] \subseteq \g_0$, 
hence $[Z_s,Z_s] \subseteq \g_0 \cap \kf_0 = \z$.
As to (ii)\,$\Rightarrow$\,(i), it follows from $\g_0 = \s_0 \oplus Z_s$ that $[Z_s,Z_s] = \z$ implies 
that $\g_0$ is a Lie algebra. 
The equivalence of (ii) and (iii) is proved in Corollary~\ref{Cor:LsLie}.
\end{proof}
\begin{Proposition}
\label{Prop:gonondeg}
The Killing form restricted to $\g_0$ is non-degenerate.
\end{Proposition}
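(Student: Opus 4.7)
The plan is to deduce non-degeneracy on $\g_0$ from the two ingredients already available: non-degeneracy of the Killing form on $L_s$ (Proposition~\ref{Prop:Ls}) and orthogonality of the graded components $(L_i,L_j)=\z$ for $i+j\neq 0$ (Proposition~\ref{prop:LL0}). The point is that $\g_0$ is the degree-$0$ piece of $L_s=\bigoplus_i\g_i$, and the orthogonality of graded pieces forces $\g_0$ to pair trivially with all the higher-degree components inside $L_s$.

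Concretely, I would take $x\in\g_0$ with the property that $(x,y)=0$ for every $y\in\g_0$, and aim to show $x=0$. Given an arbitrary element $y\in L_s$, I decompose it along the gradation as $y=\sum_i y_i$ with $y_i\in\g_i\subseteq L_i$. Since $x\in\g_0\subseteq L_0$, Proposition~\ref{prop:LL0} gives $(x,y_i)=0$ for every $i\neq 0$, so $(x,y)=(x,y_0)$. By hypothesis, $(x,y_0)=0$, and hence $x$ is Killing-orthogonal to all of $L_s$. Proposition~\ref{Prop:Ls} then forces $x=0$, which is the desired conclusion.

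There is really no serious obstacle here; the statement is essentially a graded refinement of Proposition~\ref{Prop:Ls}. The one subtlety worth noting in the write-up is that $\g_0$ need not itself be a Lie subalgebra (this is precisely the content of Corollary~\ref{Cor:g0}), but non-degeneracy is a statement about the bilinear form on a vector space, so the argument is unaffected. The same reasoning would apply to each $\g_i$ paired with $\g_{-i}$, giving in fact a perfect pairing $\g_i\times\g_{-i}\to\C$ for every $i$; if useful for later sections, this stronger observation can be recorded as an aside in the proof.
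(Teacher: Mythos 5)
Your argument is correct, and it differs slightly in route from the paper's. The paper works entirely inside $L_0$: it invokes the decomposition $L_0=\g_0\oplus\kf_0$ from \eqref{Li} and observes that the kernel of the Killing form restricted to $L_0$ is exactly $\kf_0$ (which implicitly uses the graded orthogonality of Proposition~\ref{prop:LL0}, since an element of $L_0$ orthogonal to $L_0$ is then orthogonal to all of $L$), whence the form on the complement $\g_0$ is non-degenerate. You instead slice the other way: you work inside $L_s=\bigoplus_i\g_i$ from \eqref{LK}, use Proposition~\ref{prop:LL0} to kill the pairings of $\g_0$ with $\g_i$, $i\neq0$, and then quote the non-degeneracy on $L_s$ from Proposition~\ref{Prop:Ls}. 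The two proofs are of comparable length; yours has the mild advantage of making the orthogonality step explicit rather than folding it into the assertion about the kernel of the restricted form, and your closing aside is genuinely consistent with the paper: the perfect pairing $\g_i\times\g_{-i}\to\C$ is exactly the content (and proof) of the unnumbered Lemma in Section~\ref{Sec:Structure} stating that $x\in\g_i$ with $(x,\g_{-i})=\z$ forces $x=0$, so recording it here would not be redundant with anything before that point. Your remark that $\g_0$ need not be a Lie algebra, and that this does not affect a statement about a bilinear form on a subspace, is also well placed.
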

\begin{proof}
We have $L_0=\g_0 \oplus \kf_0$, and the kernel of the Killing form restricted to $L_0$ is $\kf_0$, 
so the Killing form restricted to $\g_0$ is non-degenerate.
\end{proof}
\noindent
\textbf{Remark.}
By Proposition~\ref{Prop:gonondeg}, if $\g_0$ is a Lie algebra, then it must be reductive.
\begin{Proposition}
\label{Prop:gkk}
\mbox{}
\begin{itemize}
\item[{\rm (i)}]
$[L_i,\kf_j] \subseteq \kf_{i+j}$ for all $i,j$.
\item[{\rm (ii)}]
$[\g_i,\g_j] \subseteq \g_{i+j}$ for all $i,j \neq 0$.
\item[{\rm (iii)}]
$[\s_0,\g_i] \subseteq \g_i$ for all $i\neq0$.
\item[{\rm (iv)}]
$[Z_s,\g_i] \subseteq \kf_i$ for all $i$.
\end{itemize}
\end{Proposition}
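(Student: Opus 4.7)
The plan is to derive each part by combining the $\Z$-grading with structural facts already in hand, chiefly the identifications $\g_i=\s_i$ and $\kf_i=\rf_i$ for $i\neq 0$ from Proposition~\ref{Prop:gskr}(i), the commutation relations $[S,Z_s]=\z$ and $[Z_s,Z_s]\subseteq\kf_0$, and the fact that $K$ is an ideal of $L$ while $S$ is a subalgebra. Each part should drop out after restricting a known ambient inclusion to the appropriate grade.

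For part (i), I would observe that $K$ is an ideal of $L$, so $[L,K]\subseteq K$; combining this with the grading $[L_i,L_j]\subseteq L_{i+j}$ gives $[L_i,\kf_j]\subseteq K\cap L_{i+j}=\kf_{i+j}$. For part (ii), when $i,j\neq 0$ we invoke Proposition~\ref{Prop:gskr}(i) to rewrite $\g_i=\s_i$ and $\g_j=\s_j$, whence $[\g_i,\g_j]=[\s_i,\s_j]\subseteq S\cap L_{i+j}=\s_{i+j}$. If $i+j\neq 0$ then $\s_{i+j}=\g_{i+j}$ by Proposition~\ref{Prop:gskr}(i), while if $i+j=0$ we have $\s_0\subseteq\g_0$, so in either case the bracket lands in $\g_{i+j}$.

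For part (iii), the same identification $\g_i=\s_i$ for $i\neq 0$ reduces the claim to $[\s_0,\s_i]\subseteq S\cap L_i=\s_i=\g_i$, which holds because $S$ is a subalgebra and the grading is preserved. For part (iv), when $i\neq 0$ we use $\g_i=\s_i\subseteq S$ together with Proposition~\ref{Prop:Killing}, which gives $[Z_s,\g_i]\subseteq[Z_s,S]=\z\subseteq\kf_i$. For $i=0$, the decomposition $\g_0=\s_0\oplus Z_s$ from Proposition~\ref{Prop:gskr}(iii) splits the bracket as $[Z_s,\g_0]=[Z_s,\s_0]+[Z_s,Z_s]$; the first summand vanishes by $[S,Z_s]=\z$, and the second is contained in $\kf_0$ by Proposition~\ref{Prop:gskr}(iii).

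There is no real obstacle here: every inclusion is an immediate grade-wise restriction of a containment already established earlier in the section. The only point warranting care is the bookkeeping in part (ii) around the split $i+j=0$ versus $i+j\neq 0$, so that one does not inadvertently conflate $\s_0$ with $\g_0$ in the image.
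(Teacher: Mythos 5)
Your proposal is correct and follows essentially the same route as the paper: part (i) from $[L,K]\subseteq K$ plus the grading, parts (ii) and (iii) from $\g_i=\s_i$ for $i\neq0$ together with $S$ being a graded subalgebra and $\s_0\subseteq\g_0$, and part (iv) from $[Z_s,S]=\z$ for $i\neq0$ and $[Z_s,\g_0]=[Z_s,Z_s]\subseteq\kf_0$ for $i=0$. The only difference is that you spell out the $i+j=0$ versus $i+j\neq0$ bookkeeping and the splitting $[Z_s,\g_0]=[Z_s,\s_0]+[Z_s,Z_s]$ explicitly, which the paper leaves implicit.
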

\begin{proof}
Using that $[L_i,L_j]\subseteq L_{i+j}$ for all $i,j$, part (i) follows from $[L,K]\subseteq K$;
part (ii) from $\g_i=\s_i$ for all $i\neq0$, $\s_0\subseteq\g_0$ and that $S$ is a Lie subalgebra;
part (iii) from $\g_i=\s_i$ for all $i\neq0$ and that $S$ is a Lie subalgebra; and
part (iv) from $\g_i=\s_i$ for all $i\neq0$, so $[Z_s,\g_i] = [Z_s,\s_i] \subseteq [Z_s,S] = \z$ for $i \neq 0$ 
and for $i = 0$, $[Z_s,\g_0] =[Z_s,Z_s] \subseteq \kf_0$.
\end{proof}
\begin{Corollary}
\label{Cor:Zs'}
\begin{align}
 [Z_s,Z_s]= \z\quad\Longleftrightarrow\quad [\g_0,\g_i] \subseteq \g_i,\quad \forall i.
\end{align}
\end{Corollary}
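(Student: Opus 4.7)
The plan is to reduce the biconditional to the single case $i=0$ by showing that the inclusion $[\g_0,\g_i]\subseteq\g_i$ is automatic for every nonzero grade, so the only real content of the statement concerns whether $\g_0$ itself is closed under the bracket. Once that reduction is made, the result follows from the decomposition $\g_0 = \s_0 \oplus Z_s$ in Proposition~\ref{Prop:gskr}(iii) together with the vanishing of $[\s_0,Z_s]$ recorded in the same proposition.

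First I would dispose of the case $i\neq 0$. By Proposition~\ref{Prop:gskr}(i), $\g_i=\s_i$, so
\begin{align}
 [\g_0,\g_i]=[\s_0,\s_i]+[Z_s,\s_i].
\end{align}
The first summand lies in $\s_i=\g_i$ because $S$ is a subalgebra, and the second vanishes since $[Z_s,S]\subseteq[S,Z_s]=\z$ by Proposition~\ref{Prop:Killing}. Hence $[\g_0,\g_i]\subseteq\g_i$ for every $i\neq 0$, independently of whether $[Z_s,Z_s]=\z$.

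Next I would handle the remaining case $i=0$. Expanding $\g_0=\s_0\oplus Z_s$ yields
\begin{align}
 [\g_0,\g_0]=[\s_0,\s_0]+[\s_0,Z_s]+[Z_s,Z_s]
  =[\s_0,\s_0]+[Z_s,Z_s],
\end{align}
where $[\s_0,Z_s]=\z$ again by Proposition~\ref{Prop:Killing}. Since $[\s_0,\s_0]\subseteq\s_0\subseteq\g_0$ and $[Z_s,Z_s]\subseteq\kf_0$ by Proposition~\ref{Prop:gskr}(iii), the inclusion $[\g_0,\g_0]\subseteq\g_0$ holds if and only if $[Z_s,Z_s]\subseteq\g_0\cap\kf_0$. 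But the decomposition $L_0=\g_0\oplus\kf_0$ in Proposition~\ref{Prop:gskr}(iii) gives $\g_0\cap\kf_0=\z$, so this is equivalent to $[Z_s,Z_s]=\z$.

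Combining the two cases: if $[Z_s,Z_s]=\z$, then $[\g_0,\g_i]\subseteq\g_i$ for all $i$; conversely, specialising the hypothesis $[\g_0,\g_i]\subseteq\g_i$ to $i=0$ forces $[Z_s,Z_s]=\z$. There is no genuine obstacle here, only careful bookkeeping of which summand of the Killing decomposition each bracket lands in; the content is entirely encoded in Propositions~\ref{Prop:Killing} and~\ref{Prop:gskr}.
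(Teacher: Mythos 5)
Your proof is correct, but it takes a different route from the paper's. The paper pivots on Corollary~\ref{Cor:LsLie}: it identifies $[Z_s,Z_s]=\z$ with the statement that $L_s$ is a Lie algebra, then gets the forward implication from $[\g_0,\g_i]\subseteq L_s\cap L_i=\g_i$ via the graded decomposition $L_s=\bigoplus_i\g_i$, and the converse by combining the hypothesis with Proposition~\ref{Prop:gkk} to conclude that $L_s$ is closed under the bracket. You instead bypass $L_s$ altogether and argue grade by grade: using $\g_i=\s_i$ for $i\neq0$ together with $[S,Z_s]=\z$ from Proposition~\ref{Prop:Killing}, you show $[\g_0,\g_i]\subseteq\g_i$ holds unconditionally for $i\neq0$ (essentially re-deriving parts (iii) and (iv) of Proposition~\ref{Prop:gkk}), so the whole content sits at $i=0$, where the decomposition $\g_0=\s_0\oplus Z_s$, the inclusion $[Z_s,Z_s]\subseteq\kf_0$ from Proposition~\ref{Prop:gskr}, and $\g_0\cap\kf_0=\z$ give the equivalence directly. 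Your version is more self-contained and makes explicit the useful point that the nonzero grades impose no condition, at the cost of repeating bookkeeping the paper has already packaged into Proposition~\ref{Prop:gkk}; the paper's version is shorter given that machinery and makes the conceptual link between the graded condition and $L_s$ being a (reductive) Lie algebra.
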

\begin{proof}
By Corollary~\ref{Cor:LsLie}, $[Z_s,Z_s] = \z$ if and only if $L_s$ is a Lie algebra. 
If $L_s$ is a Lie algebra, then \eqref{LK} implies that $[\g_0,\g_i] \subseteq L_s \cap L_{i} = \g_i$. 
Conversely, if $[\g_0,\g_i] \subseteq \g_i$ for 
all $i$, then Proposition~\ref{Prop:gkk} implies that $L_s$ is closed under the Lie bracket.
\end{proof}

\section{Normal algebras}
\label{Sec:Normal}

For simplicity, we will write $C_0(\g_0)\equiv C_{L_0}(\g_0)$ and 
$N_0(\g_0)\equiv N_{L_0}(\g_0)$ for the centraliser and normaliser of $\g_0$ in $L_0$, 
as well as $C(L_s)\equiv C_L(L_s)$ and $N(L_s)\equiv N_L(L_s)$ for the
centraliser and normaliser of $L_s$ in $L$. We also note that $N_0(\g_0)= N_L(\g_0) \cap L_0$. An important algebra in the following is
$$
\Cf := C_0(\g_0) \cap K = C_0(\g_0) \cap \kf_0.
$$
\begin{Definition}
\label{Def:normal}
A $\Zkl$-graded Lie algebra $L$ is called normal if it has the following properties:
$$
\begin{array}{rll}
{\rm (i)}\!\!\!& \text{reductivity:} & 
\mbox{$\g_0$ is a reductive Lie algebra.}
\\[.2cm]
{\rm (ii)}\!\!\!& \text{complete reducibility:} & 
\mbox{$L$ is completely reducible as a $\g_0$-module}.
\\[.2cm]
{\rm (iii)}\!\!\!& \text{multiplicity free:} & 
\mbox{For $i \neq 0$, $L_i$ admits a multiplicity free decomposition into}
\\[.05cm] && \mbox{irreducible $\g_0$-modules.} 
\\[.2cm]
{\rm (iv)}\!\!\!& \text{$\Cf$-central:} &
\mbox{$Z(\Cf) \subseteq Z(L)$.} 
\\[.2cm]
{\rm (v)}\!\!\!& \text{non-singularity:} &
\mbox{For each index $i$ such that $0 < |i| \leq m$, $[\kf_i,\kf_{-i}] \subseteq N(L_s)$.}
\\[.2cm]
\end{array}
$$
\end{Definition}
\noindent
\textbf{Remark.}
As $Z(L) \subseteq Z(\Cf)$, condition (iv) implies $Z(\Cf) = Z(L)$.
\begin{Theorem}
\label{Theo:d}
If $L$ is a normal $\Z$-graded Lie algebra, then $d \in Z(\g_0) \oplus Z(L)$.
\end{Theorem}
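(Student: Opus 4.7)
The plan is to decompose $d$ using the $\s_0$-module decomposition $L_0=\g_0\oplus\kf_0$ of Proposition~\ref{Prop:gskr}(iii), and then show that the two components lie in $Z(\g_0)$ and $Z(L)$ respectively, using only conditions (i) and (iv) of normality.

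\textbf{Step 1: Decomposition.} By the remark following Definition~\ref{Def:Zgraded}, $d\in Z(L_0)$; in particular, $d\in L_0$. Using $L_0=\g_0\oplus\kf_0$, I would write $d=d_1+d_2$ with $d_1\in\g_0$ and $d_2\in\kf_0$.

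\textbf{Step 2: $d_1\in Z(\g_0)$ and $d_2\in\Cf$.} For any $x\in\g_0$, the fact that $d\in Z(L_0)$ gives $[d_1,x]+[d_2,x]=0$. Under condition (i), $\g_0$ is a Lie algebra, so $[d_1,x]\in\g_0$; and by Proposition~\ref{Prop:gkk}(i), $[d_2,x]\in\kf_0$. Since $\g_0\cap\kf_0=\z$, both brackets vanish. Thus $d_1\in Z(\g_0)$ and $d_2\in C_0(\g_0)\cap\kf_0=\Cf$.

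\textbf{Step 3: $d_2\in Z(\Cf)$.} For any $c\in\Cf\subseteq\kf_0\subseteq L_0$, $[d,c]=0$; moreover, $[d_1,c]=0$ since $d_1\in\g_0$ and $c\in C_0(\g_0)$. Subtracting gives $[d_2,c]=0$, so $d_2\in Z(\Cf)$. Applying condition (iv), $d_2\in Z(L)$.

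\textbf{Step 4: Directness of the sum.} The Lemma in Section 2 gives $Z(L)\subseteq K$, and since $d$ is semisimple with integer eigenvalues, $Z(L)\subseteq L_0$, hence $Z(L)\subseteq L_0\cap K=\kf_0$. Combined with $Z(\g_0)\subseteq\g_0$ and $\g_0\cap\kf_0=\z$, this justifies writing $d\in Z(\g_0)\oplus Z(L)$.

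\textbf{Main obstacle.} There is no substantial obstacle: the argument is essentially bookkeeping once one has the decomposition $L_0=\g_0\oplus\kf_0$. The only subtlety is noticing that condition (iv) of normality is tailor-made precisely to upgrade the membership $d_2\in Z(\Cf)$ (which is forced by $d\in Z(L_0)$) to the desired $d_2\in Z(L)$; conditions (ii), (iii), (v) play no role. This suggests the theorem is best viewed as the motivation for, rather than a deep consequence of, condition (iv).
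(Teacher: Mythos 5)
Your proposal is correct and follows essentially the same route as the paper: decompose $d=d_0+c$ along $L_0=\g_0\oplus\kf_0$, use $[d,\g_0]=\z$ together with $\g_0\cap\kf_0=\z$ to get $d_0\in Z(\g_0)$ and $c\in\Cf$, then use $[d,\Cf]=\z$ and condition (iv) to conclude $c\in Z(\Cf)=Z(L)$. Your Step 4 merely makes explicit the directness of the sum, which the paper leaves implicit.
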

\begin{proof}
As $d\in L_0=\g_0 \oplus \kf_0$, we may write $d = d_0 + c$ with $d_0 \in \g_0$ and $c \in \kf_0$. 
From $[d,\g_0]=\z$, it follows that $[d_0,\g_0] = [c,\g_0] \subseteq \g_0 \cap \kf_0 = \z$, 
so $d_0 \in Z(\g_0)$ and $c \in C_0(\g_0) \cap K = \Cf$. 
Moreover, $\z = [d,\Cf] = [d_0,\Cf] + [c,\Cf] = [c,\Cf]$, so $c \in Z(\Cf) = Z(L)$. The result now follows.
\end{proof}
\noindent
In the remainder of this section, $L$ will be assumed normal.
\begin{Theorem}
\label{Theo:Ls}
$L_s$ is a reductive Lie algebra with $Z(L_s)=Z_s$.
\end{Theorem}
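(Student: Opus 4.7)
The plan is to chain together two earlier results rather than do any fresh work, since the statement turns out to be essentially a corollary of what has already been established once condition (i) of normality is invoked.

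First I would unpack the normality hypothesis: by Definition~\ref{Def:normal}(i), $\g_0$ is a reductive Lie algebra, so in particular $\g_0$ is closed under the bracket. This is precisely condition~(i) of Corollary~\ref{Cor:g0}.

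Next I would apply Corollary~\ref{Cor:g0}, which gives the equivalence of $\g_0$ being a Lie algebra, $[Z_s,Z_s] = \z$, and $L_s$ being a Lie algebra. From this I immediately conclude both that $L_s$ is a Lie algebra and that $[Z_s,Z_s] = \z$.

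Finally I would invoke Corollary~\ref{Cor:LsLie}, whose second sentence states that when $[Z_s,Z_s] = \z$, the sum $L_s = S \oplus Z_s$ is reductive with $Z(L_s) = Z_s$ (and incidentally $L_s' = S$). This delivers both conclusions of the theorem.

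There is no genuine obstacle here: the only thing normality supplies beyond the setup of Section~\ref{Sec:Prelim} that is used in the argument is the bracket-closure of $\g_0$, and all the structural content has already been proved. The statement is essentially being recorded for later reference under the normality hypothesis.
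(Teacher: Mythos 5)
Your proposal is correct and follows essentially the same route as the paper: normality gives that $\g_0$ is a Lie algebra, Corollary~\ref{Cor:g0} then yields $[Z_s,Z_s]=\z$, and the conclusion follows from the structure of $L_s=S\oplus Z_s$. The only cosmetic difference is that you finish by citing Corollary~\ref{Cor:LsLie} directly, whereas the paper re-derives that final step ($[S,Z_s]=[Z_s,Z_s]=\z$, hence $Z_s\subseteq Z(L_s)$, with $S$ semisimple); both are valid and equivalent.
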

\begin{proof}
Since $\g_0$ is a Lie algebra, Corollary~\ref{Cor:g0} implies $[S,Z_s] = [Z_s,Z_s] = \z$, so 
$[L_s,Z_s] = \z$, hence $Z_s \subseteq Z(L_s)$. Since $L_s = S \oplus Z_s$ and $S$ is semisimple, 
the result follows.
\end{proof}
\begin{Corollary}
\label{Cor:Z_s}
$Z(\g_0) = Z(\s_0) \oplus Z_s$.
\end{Corollary}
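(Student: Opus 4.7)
The plan is to exploit the $\s_0$-module decomposition $\g_0 = \s_0 \oplus Z_s$ from Proposition~\ref{Prop:gskr}(iii) together with the fact that, for a normal $\Z$-graded Lie algebra, $\g_0$ is already a Lie algebra, so by Corollary~\ref{Cor:g0} we have $[Z_s,Z_s]=\z$. Combined with $[\s_0,Z_s]=\z$ from Proposition~\ref{Prop:gskr}(iii), this tells us that $\g_0$ is the direct sum of two commuting subalgebras, with $Z_s$ abelian.

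First I would verify the inclusion $Z(\s_0) \oplus Z_s \subseteq Z(\g_0)$. For $Z_s$, any element commutes with $\s_0$ (by $[\s_0,Z_s]=\z$) and with $Z_s$ (by $[Z_s,Z_s]=\z$), so $Z_s \subseteq Z(\g_0)$. For $Z(\s_0)$, its elements commute with $\s_0$ by definition, and with $Z_s$ since $[Z(\s_0),Z_s] \subseteq [\s_0,Z_s]=\z$. The sum is direct because $Z(\s_0) \cap Z_s \subseteq \s_0 \cap Z_s = \z$.

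For the reverse inclusion, let $x \in Z(\g_0)$ and use $\g_0 = \s_0 \oplus Z_s$ to write $x = x_s + z$ with $x_s \in \s_0$ and $z \in Z_s$. For any $y \in \s_0$, we have $\z = [x,y] = [x_s,y] + [z,y] = [x_s,y]$, since $[z,y]\in[Z_s,\s_0]=\z$, and the bracket $[x_s,y]$ lies in $\s_0$. Hence $x_s \in Z(\s_0)$, and therefore $x \in Z(\s_0) \oplus Z_s$.

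There is no real obstacle here: the result is essentially a direct corollary of Theorem~\ref{Theo:Ls} together with the decomposition $\g_0 = \s_0 \oplus Z_s$ and the commutation relations \eqref{Zsz}. The only care required is to keep track of the direct-sum condition, which is immediate from the fact that $\s_0$ and $Z_s$ intersect trivially as summands of $\g_0$.
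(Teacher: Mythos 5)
Your proof is correct. It does, however, take a slightly different (and more elementary) route than the paper. The paper's proof first invokes the reductivity of $\s_0$ (Proposition~\ref{Prop:gskr}(ii)) to write $\s_0=\s_0'\oplus Z(\s_0)$, hence $\g_0=\s_0'\oplus Z(\s_0)\oplus Z_s$, and then reads off the centre from the semisimplicity of $\s_0'$; the key input $Z_s\subseteq Z(\g_0)$ comes from Theorem~\ref{Theo:Ls}. You instead verify both inclusions directly from the decomposition $\g_0=\s_0\oplus Z_s$ together with the relations $[\s_0,Z_s]=\z$ (Proposition~\ref{Prop:gskr}(iii)) and $[Z_s,Z_s]=\z$ (Corollary~\ref{Cor:g0}, using normality of $L$), never needing the finer structure of $\s_0$: in effect you prove the general fact that if a Lie algebra is a direct sum of an ideal $\mathfrak{a}$ and a central abelian complement, its centre is $Z(\mathfrak{a})$ plus that complement. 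What the paper's route buys is immediacy once the reductive decomposition is on the table; what yours buys is independence from the reductivity of $\s_0$ and from the semisimplicity of $\s_0'$, at the modest cost of an explicit two-inclusion check. Both arguments use normality in essentially equivalent ways, so there is no gap either way.
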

\begin{proof}
We have $\g_0 = \s_0 \oplus Z_s$ where $Z_s = Z(L_s) \subseteq Z(\g_0)$. 
By Proposition~\ref{Prop:gskr}, $\s_0$ is a reductive Lie algebra, so $\s_0 = {\s_0}' \oplus Z(\s_0)$, 
hence $\g_0 = {\s_0}' \oplus Z(\s_0) \oplus Z_s$. 
Since ${\s_0}'$ is semisimple, the result follows.
\end{proof}

\noindent
The $\Z$-gradings \eqref{LK} now imply the following strengthened version of Proposition~\ref{Prop:gkk}.
\begin{Proposition}
\begin{align}
 [\g_i,\g_j] \subseteq \g_{i+j},\qquad [\g_i,\kf_j] \subseteq \kf_{i+j},\qquad [\kf_i,\kf_j] \subseteq \kf_{i+j}.
\end{align}
\end{Proposition}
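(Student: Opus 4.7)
The plan is to observe that this statement is a refinement of Proposition~\ref{Prop:gkk} that follows quickly from the normality hypothesis, and that most of the work has already been done. Two of the three inclusions are immediate from Proposition~\ref{Prop:gkk}(i), which asserts the stronger fact $[L_i,\kf_j]\subseteq\kf_{i+j}$ for all $i,j$. Since $\g_i\subseteq L_i$ and $\kf_i\subseteq L_i$, restriction of the first argument gives $[\g_i,\kf_j]\subseteq\kf_{i+j}$ and $[\kf_i,\kf_j]\subseteq\kf_{i+j}$ without any use of normality.

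The remaining work is the first inclusion $[\g_i,\g_j]\subseteq\g_{i+j}$. When both $i$ and $j$ are nonzero, this is precisely Proposition~\ref{Prop:gkk}(ii), so the only genuine case is when at least one index vanishes; by antisymmetry of the bracket I may assume $i=0$. Here I would invoke Corollary~\ref{Cor:Zs'}, which states that the inclusion $[\g_0,\g_j]\subseteq\g_j$ for all $j$ is equivalent to $[Z_s,Z_s]=\z$. Under normality, Theorem~\ref{Theo:Ls} establishes that $L_s$ is a reductive Lie algebra with centre $Z_s$, so in particular $[Z_s,Z_s]=\z$. This closes the first inclusion in all cases, including $i=j=0$, where it also follows from the fact that $\g_0$ is itself a (reductive) Lie algebra under normality.

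I do not expect any real obstacle: the proposition essentially upgrades Proposition~\ref{Prop:gkk} by promoting the subspace $L_s$ to an honest Lie subalgebra, which is exactly what normality (via Theorem~\ref{Theo:Ls}) buys. The only point requiring any care is the bookkeeping that the three cases $i,j$ nonzero, one of them zero, and both zero are each individually handled by a previous result, and that the earlier proposition about $[L_i,\kf_j]$ is formulated with enough generality to cover both the mixed bracket and the $\kf$--$\kf$ bracket at once.
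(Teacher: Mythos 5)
Your proposal is correct and follows essentially the same route as the paper, which states this proposition without a separate proof, relying on the preceding remark that the $\Z$-gradings of $L_s$ and $K$ together with Theorem~\ref{Theo:Ls} (normality forces $[Z_s,Z_s]=\z$, so $L_s$ is a graded Lie subalgebra) upgrade Proposition~\ref{Prop:gkk}. Your case-by-case assembly via Proposition~\ref{Prop:gkk}(i),(ii) and Corollary~\ref{Cor:Zs'} is just an explicit spelling-out of that same argument.
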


\noindent
As to the algebra $\Cf$, we have the following result.
\begin{Proposition}
\label{Prop:C0}
\mbox{}
\begin{itemize}
\item[{\rm (i)}]
$C_0(\g_0) = Z(\g_0) \oplus \Cf$.
\item[{\rm (ii)}]
$N_0(\g_0) = \g_0 \oplus \Cf$.
\end{itemize}
\end{Proposition}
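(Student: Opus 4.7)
The plan is to exploit the $\s_0$-module decomposition $L_0 = \g_0 \oplus \kf_0$ from Proposition~\ref{Prop:gskr}(iii), together with the strengthened bracket relations just established above ($[\g_0,\g_0]\subseteq\g_0$ and $[\g_0,\kf_0]\subseteq\kf_0$), which let any bracket $[x,y]$ for $y\in\g_0$ be cleanly split into its $\g_0$- and $\kf_0$-components.

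For part (i), I would take an arbitrary $x\in C_0(\g_0)\subseteq L_0$ and write it uniquely as $x=x_\g+x_\kf$ with $x_\g\in\g_0$ and $x_\kf\in\kf_0$. For any $y\in\g_0$, the relation $0=[x,y]=[x_\g,y]+[x_\kf,y]$ expresses zero as a sum of an element in $\g_0$ and an element in $\kf_0$, so each piece vanishes separately. This forces $x_\g\in Z(\g_0)$ and $x_\kf\in C_0(\g_0)\cap\kf_0=\Cf$. The reverse inclusion $Z(\g_0)\oplus\Cf\subseteq C_0(\g_0)$ is immediate since both summands commute with $\g_0$ and sit inside $L_0$. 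The sum is direct because $Z(\g_0)\subseteq\g_0$, $\Cf\subseteq\kf_0$, and $\g_0\cap\kf_0=\z$.

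For part (ii), I would proceed in the same spirit. Note first that $N_0(\g_0)\supseteq\g_0$, and $\Cf\subseteq C_0(\g_0)\subseteq N_0(\g_0)$, so the inclusion $\g_0\oplus\Cf\subseteq N_0(\g_0)$ is clear (the sum is direct by the same reasoning as above). Conversely, for $x\in N_0(\g_0)$, decompose $x=x_\g+x_\kf$ as before. Then, for every $y\in\g_0$, $[x,y]=[x_\g,y]+[x_\kf,y]\in\g_0$ with $[x_\g,y]\in\g_0$ and $[x_\kf,y]\in\kf_0$, so the $\kf_0$-component must lie in $\g_0\cap\kf_0=\z$. This yields $x_\kf\in\Cf$, while $x_\g\in\g_0$ is automatic, completing $N_0(\g_0)\subseteq\g_0\oplus\Cf$.

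There is no real obstacle here: everything rides on the module decomposition $L_0=\g_0\oplus\kf_0$ being stable under the adjoint action of $\g_0$ in a grading-compatible way, which is exactly what the strengthened bracket relations supply. The only point requiring a touch of care is verifying that the sums are direct, but this is immediate from $\g_0\cap\kf_0=\z$ and the containments $Z(\g_0)\subseteq\g_0$, $\Cf\subseteq\kf_0$.
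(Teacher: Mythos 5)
Your argument is correct and follows essentially the same route as the paper: decompose an element of $C_0(\g_0)$ (resp.\ $N_0(\g_0)$) along $L_0=\g_0\oplus\kf_0$ and use that the bracket respects this splitting (the paper phrases this via $[\kf_0,\g_0]\subseteq\g_0\cap K=\z$, you via the strengthened relations $[\g_0,\kf_0]\subseteq\kf_0$ and directness, which is the same mechanism). No gaps; your treatment of (ii) for arbitrary $n\in N_0(\g_0)$ matches the intended argument.
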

\begin{proof}
Let $c \in C_0(\g_0)$. As $C_0(\g_0)\subseteq L_0=\g_0 \oplus \kf_0$, we may write $c = x + y$ with
$x \in \g_0$ and $y\in \kf_0$. From $[c,\g_0]=\z$, it follows that
$[x,\g_0] =[y,\g_0] \subseteq \g_0 \cap K = \z$.
Since $\g_0$ is reductive, we thus have $x \in Z(\g_0)$ and 
$y\in C_0(\g_0) \cap K =\Cf$, so $C_0(\g_0) \subseteq Z(\g_0) \oplus \Cf$. 
As the reverse inclusion is obvious, (i) follows. As to (ii), suppose $n \in N_0(\g_0) \cap K$ and write 
$n = x + y$, where $x \in \g_0$ and $y\in \kf_0$. Then, $[n-x,\g_0] = [y,\g_0] \subseteq  \g_0 \cap K = \z$. 
Thus, $y\in C_0(\g_0) \cap K = \Cf$, so $n \in \g_0 \oplus \Cf$, 
hence $N_0(\g_0) \subseteq \g_0 \oplus \Cf$. The reverse inclusion is clear. 
\end{proof}
\begin{Corollary}
$\Cf=N_0(\g_0) \cap K$.
\end{Corollary}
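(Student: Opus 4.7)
The plan is to deduce the corollary directly from Proposition~\ref{Prop:C0}(ii), which gives the decomposition $N_0(\g_0) = \g_0 \oplus \Cf$, combined with the fact that the summand $\g_0$ meets $K$ only in $\z$ while $\Cf$ sits inside $K$ by definition.

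More concretely, I would first observe that $\Cf \subseteq K$ is built into the definition $\Cf = C_0(\g_0) \cap K$, and $\Cf \subseteq N_0(\g_0)$ because every centraliser lies inside the corresponding normaliser; hence the containment $\Cf \subseteq N_0(\g_0) \cap K$ is immediate.

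For the reverse containment, I would take $n \in N_0(\g_0) \cap K$. Using Proposition~\ref{Prop:C0}(ii), write $n = x + y$ with $x \in \g_0$ and $y \in \Cf$. Since $\Cf \subseteq \kf_0 \subseteq K$ and $n \in K$, it follows that $x = n - y \in K$. However, Proposition~\ref{Prop:gskr}(iii) gives the direct sum $L_0 = \g_0 \oplus \kf_0$, so $\g_0 \cap K = \g_0 \cap \kf_0 = \z$, forcing $x = 0$ and thus $n = y \in \Cf$.

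The argument is entirely formal, amounting to a pair of inclusions and an application of the direct-sum structure of $L_0$; there is no genuine obstacle since the substantive content was already established in Proposition~\ref{Prop:C0}(ii) and Proposition~\ref{Prop:gskr}(iii).
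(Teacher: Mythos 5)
Your proof is correct and follows essentially the same route as the paper: the forward inclusion via $C_0(\g_0)\subseteq N_0(\g_0)$, and the reverse inclusion by decomposing $n\in N_0(\g_0)\cap K$ through Proposition~\ref{Prop:C0}(ii) and using $\g_0\cap K=\z$. No gaps.
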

\begin{proof}
Let $n \in N_0(\g_0) \cap K$, and use Proposition \ref{Prop:C0} to write $n = x + c$, where $x\in\g_0$ and $c \in\Cf$.
It follows that $x\in K \cap \g_0 = \z$, so $n = c \in\Cf$, hence $N_0(\g_0) \cap K \subseteq\Cf$. 
Conversely, $C_0(\g_0) \subseteq N_0(\g_0)$, so $C_0(\g_0) \cap K \subseteq N_0(\g_0) \cap K$,
and the result follows.
\end{proof}
\begin{Proposition}
\label{Prop:CN}
\mbox{}
\begin{itemize}
\item[{\rm (i)}]
$C(L_s) = Z_s \oplus (C(L_s) \cap K)$.
\item[{\rm (ii)}] 
$N(L_s) = L_s \oplus (C(L_s) \cap K)$.
\end{itemize}
\end{Proposition}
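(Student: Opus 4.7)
The plan is to mimic the strategy used in Propositions~\ref{Prop:ZsR} and~\ref{Prop:C0}, leveraging the Killing decomposition $L = L_s \oplus K$ together with two key facts already established: $L_s$ is a reductive Lie algebra with centre $Z(L_s) = Z_s$ (Theorem~\ref{Theo:Ls}), and $[L_s, K] \subseteq K$ (from the Lemma preceding Proposition~\ref{Prop:ZsR}). Both identities reduce to the standard ``split and restrict'' argument, with the observation that $L_s \cap K = \{0\}$ forcing components to live in the predicted summands.

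For part (i), I would take an arbitrary $c \in C(L_s)$ and use the Killing decomposition to write $c = x + y$ with $x \in L_s$ and $y \in K$. Bracketing with $L_s$ gives $\z = [c,L_s] = [x,L_s] + [y,L_s]$, where $[x,L_s] \subseteq L_s$ (since $L_s$ is now a Lie algebra) and $[y,L_s] \subseteq K$ (by $[L_s,K]\subseteq K$). The intersection $L_s \cap K = \z$ then forces each bracket to vanish separately, so $x \in Z(L_s) = Z_s$ and $y \in C(L_s) \cap K$. The reverse inclusion is immediate, and the sum is direct because $Z_s \cap K \subseteq L_s \cap K = \z$.

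For part (ii), I would apply the same splitting to an arbitrary $n \in N(L_s)$, writing $n = x + y$ with $x \in L_s$ and $y \in K$. Since $L_s \subseteq N(L_s)$, it follows that $y = n - x \in N(L_s) \cap K$. Then $[y, L_s] \subseteq L_s$ (because $y$ normalises $L_s$) and $[y, L_s] \subseteq K$ (because $y \in K$ and $[L_s,K] \subseteq K$), so $[y, L_s] \subseteq L_s \cap K = \z$. Hence $y \in C(L_s) \cap K$ and $n \in L_s \oplus (C(L_s) \cap K)$. The reverse inclusion is clear, since $L_s$ obviously normalises itself and elements centralising $L_s$ certainly normalise it; directness of the sum again follows from $L_s \cap K = \z$.

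I do not anticipate any real obstacle: the entire argument rests on the orthogonality $L_s \cap K = \z$ and the containment $[L_s,K] \subseteq K$, both already available. The only small point requiring care is invoking Theorem~\ref{Theo:Ls} to identify $Z(L_s)$ with $Z_s$ in part (i), which uses the normality hypothesis via Corollary~\ref{Cor:g0}.
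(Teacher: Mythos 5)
Your proposal is correct and follows essentially the same route as the paper: split an element via the Killing decomposition $L=L_s\oplus K$, bracket against $L_s$, and use $[L_s,L_s]\subseteq L_s$ (valid since $L$ is normal, so $L_s$ is a Lie algebra with $Z(L_s)=Z_s$) together with $[L_s,K]\subseteq K$ and $L_s\cap K=\z$ to force the components into $Z_s$ (resp.\ $L_s$) and $C(L_s)\cap K$. The paper only sketches part (ii) as ``follows similarly''; your write-up simply makes that step explicit.
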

\begin{proof}
Let $c \in C(L_s)$ and use the Killing decomposition to write $c = x +y$, where $x \in L_s$ and 
$y\in K$. Then, $[c,L_s] = \z = [x,L_s] + [y,L_s]$, so $[x,L_s] =[y,L_s] \subseteq L_s \cap K = \z$.
Thus, $x \in Z_s$ and $y\in C(L_s) \cap K$, so $C(L_s) \subseteq Z_s \oplus C(L_s) \cap K$. 
As the reverse inclusion is clear, (i) follows. Part (ii) follows similarly.
\end{proof}
\begin{Proposition}
\label{Prop:Cf}
$\Cf= C(L_s) \cap K = N(L_s) \cap K$.
\end{Proposition}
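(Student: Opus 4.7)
The plan is to establish the chain of inclusions $\Cf \subseteq C(L_s) \cap K \subseteq N(L_s) \cap K \subseteq \Cf$; the middle step is immediate from $C(L_s) \subseteq N(L_s)$, so only the two outer inclusions require argument.

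For the first inclusion, I take $c \in \Cf \subseteq \kf_0$ and use Proposition~\ref{Prop:gskr}(i) to write $L_s = \g_0 \oplus \bigoplus_{i \neq 0} \s_i$. Since $c$ centralises $\g_0$ by definition, it suffices to show $[c,\s_i]=0$ for each $i \neq 0$. By Proposition~\ref{Prop:gkk}(i), $\ad_c$ sends $\s_i$ into $\kf_i$, and since $c \in C_0(\g_0)$, this map $\ad_c\colon \s_i \to \kf_i$ is $\g_0$-equivariant. Both $\s_i$ and $\kf_i$ decompose into irreducible $\g_0$-modules by the complete-reducibility condition, and the multiplicity-free condition guarantees that no irreducible summand appearing in $\s_i$ also appears in $\kf_i$. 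Schur's lemma then forces $\ad_c|_{\s_i}=0$, so $[c,L_s]=0$ as required.

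For the third inclusion, I take $n \in N(L_s) \cap K$. Since $K$ is an ideal of $L$ by invariance of the Killing form, $[n,L_s] \subseteq L_s \cap K = \z$, so in fact $n \in C(L_s)$ and in particular $[n,\g_0]=0$. It remains only to show $n \in L_0$. For this, I invoke Theorem~\ref{Theo:d} to write $d = d_0 + c_0$ with $d_0 \in Z(\g_0) \subseteq L_s$ and $c_0 \in Z(L)$; both terms annihilate $n$ (the first because $n \in C(L_s)$, the second because $c_0$ is central), so $[d,n]=0$. Since $d$ acts semisimply on $K$ with eigenvalue $i$ on $\kf_i$, this forces $n \in \kf_0$, and hence $n \in C_0(\g_0) \cap \kf_0 = \Cf$.

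The main obstacle is the Schur-lemma step in the first inclusion: this is the only place where the full strength of being \emph{normal} is exploited, combining the complete-reducibility and multiplicity-free conditions together with the fact that $\Cf$ is defined using the centraliser of all of $\g_0$ (which contains $Z_s$) rather than of $\s_0$ alone. The remaining steps are essentially bookkeeping with the Killing decomposition and the earlier structural results, in particular Proposition~\ref{Prop:gskr} and Theorem~\ref{Theo:d}.
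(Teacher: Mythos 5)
Your proof is correct and follows essentially the same route as the paper: the decisive step in both is that $\ad_c\colon\g_i\to\kf_i$ is a $\g_0$-module homomorphism which must vanish because the multiplicity-free condition (iii) of Definition~\ref{Def:normal} forbids an irreducible constituent of $\g_i$ from reappearing in $\kf_i$. The only difference is presentational: where the paper declares the inclusion $C(L_s)\cap K\subseteq\Cf$ clear and gets $C(L_s)\cap K=N(L_s)\cap K$ from Proposition~\ref{Prop:CN}, you close the chain by proving $N(L_s)\cap K\subseteq\Cf$ directly, using Theorem~\ref{Theo:d} to show such elements commute with $d$ and hence lie in $\kf_0$ — a legitimate and slightly more explicit version of the same bookkeeping.
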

\begin{proof}
Let $c \in\Cf$. For $i \neq 0$, let $M_i \subseteq \g_i$ be an irreducible $\g_0$-module. 
Then, $\theta$: $M_i \rightarrow \kf_i$, $x\mapsto [c,x]$, is a $\g_0$-module homomorphism. 
If $\theta\neq0$, then $\theta(M_i)$ is isomorphic to $M_i$ as $\g_0$-modules, in 
contradiction to condition (iii) in Definition~\ref{Def:normal}. 
Thus, $[c,M_i] = \z$ and hence $[c,\g_i] = \z$ for all $i \neq 0$, so 
$[c,\g_i] = \z$ for all $i$. This shows that $c \in C(L_s) \cap K$ and thus 
$\Cf\subseteq C(L_s) \cap K$. The reverse inclusion is clear. 
By Proposition~\ref{Prop:CN}, we also have $C(L_s) \cap K = N(L_s) \cap K$.
\end{proof} 
\begin{Corollary}
\label{Cor:NLLC}
\mbox{}
\begin{itemize}
\item[{\rm (i)}]
$C(L_s) = Z_s \oplus \Cf$.
\item[{\rm (ii)}] 
$N(L_s) = L_s \oplus \Cf$.
\end{itemize}
\end{Corollary}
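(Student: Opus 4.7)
The plan is to observe that this corollary is a direct substitution of the two preceding results: Proposition~\ref{Prop:CN} expresses both $C(L_s)$ and $N(L_s)$ in terms of the intersection $C(L_s) \cap K$, and Proposition~\ref{Prop:Cf} identifies this intersection precisely with $\Cf$. So there is essentially nothing to do beyond citing these two results.

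More explicitly, for part (i), I would start from Proposition~\ref{Prop:CN}(i), which gives
\begin{align}
C(L_s) = Z_s \oplus (C(L_s) \cap K),
\end{align}
and then apply Proposition~\ref{Prop:Cf}, which tells us $\Cf = C(L_s) \cap K$, to rewrite the right-hand side as $Z_s \oplus \Cf$. For part (ii), I would start from Proposition~\ref{Prop:CN}(ii),
\begin{align}
N(L_s) = L_s \oplus (C(L_s) \cap K),
\end{align}
and again substitute using Proposition~\ref{Prop:Cf} to replace $C(L_s) \cap K$ by $\Cf$, producing $L_s \oplus \Cf$.

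Since both statements follow by a single substitution once the earlier propositions are in hand, there is no genuine obstacle. The only minor subtlety worth a sentence of commentary is that the direct sums in Proposition~\ref{Prop:CN} remain direct after the substitution, which is automatic because the second summand $C(L_s)\cap K$ is literally renamed, not altered; in particular $Z_s \cap \Cf \subseteq L_s \cap K = \z$ and $L_s \cap \Cf \subseteq L_s \cap K = \z$, so the direct-sum structure is preserved.
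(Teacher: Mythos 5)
Your proposal is correct and is essentially the paper's own argument: the paper proves this corollary as an immediate consequence of Proposition~\ref{Prop:CN} and Proposition~\ref{Prop:Cf}, exactly the substitution you carry out. Your extra remark that the direct sums survive the renaming is a harmless bonus and does not change the argument.
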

\begin{proof}
The result is an immediate consequence of Proposition \ref{Prop:CN} and \ref{Prop:Cf}.
\end{proof}
\begin{Proposition}
For each index $i$ such that $0 < |i| \leq m$, $[\kf_i,\kf_{-i}] \subseteq \Cf$.
\end{Proposition}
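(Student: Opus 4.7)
The plan is to combine the non-singularity condition (v) in the definition of normality with the characterisation of $\Cf$ given in Proposition~\ref{Prop:Cf}. Since the latter identifies $\Cf = N(L_s) \cap K$, it suffices to establish the two containments $[\kf_i,\kf_{-i}] \subseteq N(L_s)$ and $[\kf_i,\kf_{-i}] \subseteq K$ separately.

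The first containment is literally condition~(v) in Definition~\ref{Def:normal}, so there is nothing to do there. For the second containment, I would argue as follows. Since $\kf_i,\kf_{-i} \subseteq K$ and $K$ is an ideal of $L$ (being the Killing radical), we have $[\kf_i,\kf_{-i}] \subseteq [K,K] \subseteq K$. Independently, the grading gives $[\kf_i,\kf_{-i}] \subseteq L_{i+(-i)} = L_0$. Intersecting, $[\kf_i,\kf_{-i}] \subseteq K \cap L_0 = \kf_0 \subseteq K$.

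Combining these two inclusions yields $[\kf_i,\kf_{-i}] \subseteq N(L_s) \cap K$, and by Proposition~\ref{Prop:Cf} this intersection equals $\Cf$, completing the proof.

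There is no real obstacle: once the key identity $\Cf = N(L_s) \cap K$ (established just before in Proposition~\ref{Prop:Cf} using the multiplicity-free hypothesis (iii)) is in hand, the statement reduces to assembling condition~(v) with a one-line $\Z$-graded observation. The only thing to watch is to use the fact that $K$ is an ideal (so $[K,K]\subseteq K$) rather than trying to invoke the Lie algebra structure of $L_s$, which is not needed here.
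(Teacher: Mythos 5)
Your proposal is correct and follows exactly the paper's argument: condition~(v) gives the containment in $N(L_s)$, membership in $K$ follows since $K$ is an ideal, and Proposition~\ref{Prop:Cf} identifies $N(L_s)\cap K=\Cf$. The only difference is that you spell out the (routine) step $[\kf_i,\kf_{-i}]\subseteq K$, which the paper leaves implicit.
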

\begin{proof}
By Proposition \ref{Prop:Cf} and condition (v) in Definition~\ref{Def:normal},
$[\kf_i,\kf_{-i}] \subseteq N(L_s) \cap K =\Cf$.
\end{proof}

\subsection{Weights and roots}

\begin{Proposition}
\label{Prop:h0}
Let $H_s$ be a Cartan subalgebra (CSA) of $\s_0'$. Then,
\begin{align}
 \h_0:= H_s \oplus Z(\g_0)
\end{align}
is a CSA of $\g_0$.
\end{Proposition}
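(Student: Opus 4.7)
The plan is to verify the two defining properties of a Cartan subalgebra for $\h_0$: nilpotency, and self-normalization in $\g_0$. Since $L$ is assumed normal, $\g_0$ is reductive by Definition~\ref{Def:normal}(i), and Corollary~\ref{Cor:Z_s} yields $Z(\g_0) = Z(\s_0) \oplus Z_s$. Combined with the reductive decomposition $\s_0 = \s_0' \oplus Z(\s_0)$, this gives the vector-space decomposition $\g_0 = \s_0' \oplus Z(\g_0)$ in which $\s_0'$ is semisimple, and I would use this splitting throughout.

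First I would check that $\h_0$ is abelian. The subspace $H_s$ is abelian as a CSA of the semisimple algebra $\s_0'$, while $Z(\g_0)$ is central in $\g_0$ and $H_s \subseteq \s_0' \subseteq \g_0$, so $[H_s, Z(\g_0)] = \z$. Hence $\h_0$ is abelian, and in particular nilpotent.

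Next, self-normalization. Let $x \in N_{\g_0}(\h_0)$ and decompose $x = x' + z$ with $x' \in \s_0'$ and $z \in Z(\g_0)$. Since $[z,\h_0] = \z$, the condition $[x,\h_0] \subseteq \h_0$ reduces to $[x', H_s] \subseteq \h_0$, and because $[x', H_s] \subseteq \s_0'$, the crucial observation is that $\s_0' \cap \h_0 = H_s$. This in turn amounts to $\s_0' \cap Z(\g_0) = \z$: the summand $Z(\s_0)$ of $Z(\g_0)$ meets $\s_0'$ trivially because $\s_0 = \s_0' \oplus Z(\s_0)$ is a direct sum, while $Z_s \cap \s_0' \subseteq R \cap S = \z$ by the Levi decomposition. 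Therefore $[x', H_s] \subseteq H_s$, and the self-normalizing property of $H_s$ inside the semisimple algebra $\s_0'$ forces $x' \in H_s$, so that $x \in H_s \oplus Z(\g_0) = \h_0$.

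The only real obstacle is bookkeeping: isolating the direct-sum decomposition $\g_0 = \s_0' \oplus Z(\g_0)$ and verifying $Z(\g_0) \cap \s_0' = \z$. Once this is in place, the argument is the standard reduction showing that CSAs of a reductive Lie algebra are obtained by adjoining the centre to a CSA of the semisimple part.
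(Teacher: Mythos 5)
Your proof is correct, and it reaches the conclusion by a somewhat more hands-on route than the paper. The paper's proof simply invokes the standard structure theorem for reductive Lie algebras -- a CSA of $\g_0$ is $H(\g_0')\oplus Z(\g_0)$ for a CSA $H(\g_0')$ of $\g_0'$ -- and then identifies $\g_0'=\s_0'$ via Proposition~\ref{Prop:gskr}\,(iii) and Corollary~\ref{Cor:Z_s}. You instead reprove that standard fact in the case at hand, verifying the two CSA axioms (nilpotency via abelianness, and self-normalization) directly for the \emph{given} $H_s$; this has the small advantage of not needing any appeal to the classification or conjugacy of CSAs of $\g_0'$, at the cost of the bookkeeping with the splitting $\g_0=\s_0'\oplus Z(\g_0)$, which is exactly the content of the paper's two cited results. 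One minor point: your justification of $\s_0'\cap Z(\g_0)=\z$ argues summand-by-summand ($\s_0'\cap Z(\s_0)=\z$ and $\s_0'\cap Z_s=\z$), which by itself does not rule out a nonzero element of $\s_0'$ lying in the \emph{sum} $Z(\s_0)\oplus Z_s$; the clean fix is to quote the three-fold direct sum $\g_0=\s_0'\oplus Z(\s_0)\oplus Z_s$ appearing in the proof of Corollary~\ref{Cor:Z_s} (equivalently, decompose $v\in\s_0'\cap Z(\g_0)$ as $v=a+b$ with $a\in Z(\s_0)$, $b\in Z_s$, note $b=v-a\in\s_0\cap Z_s\subseteq S\cap R=\z$, and then $v=a\in\s_0'\cap Z(\s_0)=\z$). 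With that repair the argument is complete and matches the paper's conclusion.
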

\begin{proof}
Since $\g_0$ is reductive, it admits $H(\g_0')\oplus Z(\g_0)$ as a CSA, where $H(\g_0')$ is a CSA of $\g_0'$. 
By Proposition~\ref{Prop:gskr} (iii) and  Corollary~\ref{Cor:Z_s}, $\g_0'=\s_0'$, and the result follows.
\end{proof}
\begin{Proposition}
The Killing form restricted to $\h_0$ is non-degenerate.
\end{Proposition}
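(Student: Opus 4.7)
The plan is to reduce this to non-degeneracy of the Killing form on $\g_0$, which is already established by Proposition~\ref{Prop:gonondeg}, by showing that $\h_0$ is orthogonal to everything in $\g_0$ outside $\h_0$. The standard route is through the weight space decomposition of $\g_0$ under $\ad\h_0$.

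First, I would observe that $\h_0$ acts semisimply on $\g_0$: the piece $H_s$ is a CSA of the semisimple Lie algebra $\s_0'$ so it acts semisimply on $\s_0'$ and, by Corollary~\ref{Cor:Z_s}, $Z(\g_0) = Z(\s_0) \oplus Z_s$ acts trivially on $\g_0$ (since it sits inside the centre of $\g_0$). Together with the reductivity of $\g_0$ (Proposition~\ref{Prop:gskr}(ii) combined with the remark after Proposition~\ref{Prop:gonondeg}), this gives a weight space decomposition
\begin{align}
\g_0 = \h_0 \oplus \bigoplus_{\alpha \in \Phi}\g_0^\alpha,
\end{align}
where $\Phi$ denotes the set of nonzero $\h_0$-weights on $\g_0$ and $[h,x] = \alpha(h)x$ for $h \in \h_0$, $x \in \g_0^\alpha$.

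Next, I would establish that $\h_0 \perp \g_0^\alpha$ for every $\alpha \in \Phi$. Fix $\alpha \in \Phi$ and choose $h \in \h_0$ with $\alpha(h) \neq 0$. For $x \in \h_0$ and $y \in \g_0^\alpha$, invariance of the Killing form of $L$ (which descends to invariance under $\ad$ by elements of $\g_0$) yields
\begin{align}
0 = ([h,x],y) + (x,[h,y]) = \alpha(h)(x,y),
\end{align}
using $[h,x]=0$ since $\h_0$ is abelian. Hence $(x,y)=0$.

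Finally, suppose $x \in \h_0$ satisfies $(x,\h_0) = \z$. The orthogonality just proved gives $(x,\g_0^\alpha) = \z$ for every $\alpha \in \Phi$ as well, so $(x,\g_0) = \z$. By Proposition~\ref{Prop:gonondeg}, this forces $x = 0$, establishing non-degeneracy. I do not anticipate a serious obstacle here; the only point requiring mild care is that ``Killing form'' means the Killing form of $L$ restricted to $\g_0$, but all that is used is its $\g_0$-invariance, which is automatic.
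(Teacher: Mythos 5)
Your proposal is correct and follows essentially the same route as the paper: reduce to the non-degeneracy of the Killing form on $\g_0$ (Proposition~\ref{Prop:gonondeg}) and use that $\g_0$ is reductive with CSA $\h_0$. The paper simply cites this standard fact, whereas you spell out the underlying weight-space orthogonality argument, which is fine (only note that your $\Phi$ for the nonzero $\h_0$-weights on $\g_0$ clashes with the paper's use of $\Phi$ for the roots of $L$).
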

\begin{proof}
By Proposition~\ref{Prop:gonondeg}, the restriction of the Killing form to $\g_0$ is non-degenerate, 
and since $\g_0$ is reductive with CSA $\h_0$, the result follows.
\end{proof}
\begin{Definition}
We refer to
\begin{align}
 H:= \h_0 \oplus Z(L)
\end{align}
as the CSA of $L$.
\end{Definition}
\begin{Theorem}
$d \in H$.
\end{Theorem}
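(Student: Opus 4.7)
The plan is to invoke Theorem~\ref{Theo:d} directly and then unpack the definitions of $\h_0$ and $H$. Specifically, Theorem~\ref{Theo:d} gives that, since $L$ is normal, $d \in Z(\g_0) \oplus Z(L)$. So it suffices to show $Z(\g_0) \oplus Z(L) \subseteq H$.

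For this, I would observe that by Proposition~\ref{Prop:h0}, the CSA $\h_0 = H_s \oplus Z(\g_0)$ of $\g_0$ contains $Z(\g_0)$ as a summand. Hence
\begin{align}
 Z(\g_0) \subseteq \h_0 \subseteq \h_0 \oplus Z(L) = H,
\end{align}
and of course $Z(L) \subseteq H$ by the very definition of $H$. Combining these with Theorem~\ref{Theo:d} gives $d \in H$.

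There is essentially no obstacle here: the work has already been done in establishing Theorem~\ref{Theo:d} (which pins down where the $\g_0$-component of $d$ and the $\kf_0$-component of $d$ individually live) and in showing that $\g_0$ is reductive with CSA $\h_0$ (Proposition~\ref{Prop:h0}). The only thing to check is that the definition of $H$ has been set up so that both $Z(\g_0)$ and $Z(L)$ are swept up into it, which is immediate. The statement should therefore be proved as a short corollary-style argument rather than by any new computation.
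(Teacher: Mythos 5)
Your proposal is correct and matches the paper's own proof, which simply cites Theorem~\ref{Theo:d} to get $d\in Z(\g_0)\oplus Z(L)\subseteq H$; your unpacking of $Z(\g_0)\subseteq\h_0$ via Proposition~\ref{Prop:h0} just makes the inclusion explicit.
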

\begin{proof}
By Theorem \ref{Theo:d}, $d\in Z(\g_0)\oplus Z(L)\subseteq H$.
\end{proof}
\begin{Proposition}
Every element of $H$ is semisimple on $L$. 
\end{Proposition}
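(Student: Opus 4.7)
The plan is to decompose an arbitrary $h \in H$ as $h = h_0 + z$ with $h_0 \in \h_0$ and $z \in Z(L)$, and to show each summand acts ad-semisimply on $L$ before combining them.

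The central piece is immediate: for $z \in Z(L)$, the adjoint action $\ad z$ vanishes on $L$, so $z$ is trivially semisimple. The content therefore lies in establishing that every element of $\h_0 = H_s \oplus Z(\g_0)$ is semisimple on $L$.

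For this I would invoke condition (ii) of Definition~\ref{Def:normal}, which gives a decomposition $L = \bigoplus_\alpha M_\alpha$ into irreducible $\g_0$-modules. Since $\g_0$ is reductive by condition (i), with semisimple part $\s_0'$, Schur's lemma forces $Z(\g_0)$ to act by scalars on each finite-dimensional irreducible $\g_0$-module $M_\alpha$. Simultaneously, viewing $M_\alpha$ as an $\s_0'$-module, its CSA $H_s$ acts diagonalisably by the standard weight decomposition for finite-dimensional representations of semisimple Lie algebras. Hence $\h_0 = H_s \oplus Z(\g_0)$ acts diagonalisably on every $M_\alpha$, and so on $L$. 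To finish, note that $z$ commutes with $h_0$, so $\ad h_0$ and $\ad z$ commute; two commuting semisimple endomorphisms of a finite-dimensional space are simultaneously diagonalisable, hence $\ad h = \ad h_0 + \ad z$ is semisimple.

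I do not anticipate any serious obstacle: the argument reduces to Schur's lemma together with the standard diagonalisability of CSAs of semisimple Lie algebras on finite-dimensional modules. The only subtle point is to exploit the $\g_0$-irreducibility of each $M_\alpha$ (not merely its $\s_0'$-irreducibility), so that Schur's lemma applies to the full centre $Z(\g_0)$ rather than just to a scalar action arising after further $\s_0'$-decomposition.
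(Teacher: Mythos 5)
Your argument is correct and follows essentially the same route as the paper: the paper's proof simply notes that condition (ii) of Definition~\ref{Def:normal} makes $\h_0$ semisimple on $L$ and that $Z(L)$ acts trivially, which is exactly your decomposition $h=h_0+z$. Your Schur's-lemma and weight-space discussion just fills in the details the paper leaves implicit in invoking complete reducibility.
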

\begin{proof}
By condition (ii) in Definition~\ref{Def:normal}, $\h_0$ is semisimple on $L$. Since $[Z(L),L]=\z$, the result follows.
\end{proof}
\noindent
\textbf{Remark.}
Unlike for semisimple Lie algebras, the CSA $H$ need not equal its own centraliser. 
Indeed, there may exist zero-weight vectors in $K$ which are not in $Z(L)$. 
Nevertheless, as seen in Theorem~\ref{Theo:NH}, $N_0(H)=C_0(H)$.
\medskip

\noindent
\textbf{Remark.}
Let $V$ be an $L$-module. As usual, a nonzero vector $v \in V$ a called a {\em weight vector} of weight 
$\Lambda \in H^*$ if $hv = \Lambda(h) v$ for all $h \in H$.
\medskip

\noindent
\textbf{Remark.}
The Weyl group $\Wc_0$ of $\g_0$, herein referred to as the Weyl group of $L_0$, has a natural 
action on the weights $\Lambda \in H^*$. Indeed, relative to the decomposition $H^*=\h_0^*\oplus Z^*$,
we write $\Lambda=\lambda+z^*$ with $\lambda\in\h_0^*$ and $z^*\in Z^*$, and on this, $\sigma\in\Wc_0$ 
acts as $\sigma(\Lambda)=\sigma(\lambda) + z^*$. 
\begin{Definition}
The roots of $L$ are defined as the nonzero weights of the adjoint representation, 
and the set of roots is denoted by $\Phi$.
For each $i$, $\Phi_i$ denotes the set of roots associated with $L_i$. 
For each $\beta \in \Phi$, the corresponding root space is defined by
\begin{align}
L_{\beta}:= \{x \in L\,\,|\,[h,x] = \beta(h)x,\,\forall h \in H\}.
\end{align}
\end{Definition}

\noindent
\textbf{Remark.}
We note that
\begin{align}
 \Phi=\bigcup_{i=-k}^{l}\Phi_i
\end{align}
and that, for every $\beta\in\Phi$,
\begin{align}
 \beta (z) =0, \qquad\forall z \in Z(L).
\end{align}

The root system $\Phi_0$ may be partitioned as
\begin{align}
 \Phi_0 = \Phi_0^\s \cup \Phi_0^\kf,
\end{align}
where $\Phi_0^\s$ is the set of roots of $\g_0$ and  $\Phi_0^\kf$ the set of roots in $\kf_0$. 
As the set of positive roots, we take 
\begin{align}
 \Phi^+:= \Phi_0^+ \cup \Phi_1^+,
\end{align}
where
\begin{align}
 \Phi_0^+:= \Phi_0^{\s,+} \cup \Phi_0^{\kf,+}
\end{align}
is the set of positive roots in $\Phi_0$ (with respect to the partial ordering induced by the positive roots of $\g_0$) 
and
\begin{align}
 \Phi_1^+:= \Phi_1\cup\cdots\cup \Phi_l.
\end{align}
Likewise, the set of negative roots is given by 
\begin{align}
\Phi^-:= \Phi_0^{-} \cup \Phi_1^-,
\end{align}
where $\Phi_0^{-}:= \Phi_0^{\s,-} \cup \Phi_0^{\kf,-}$ is the set of negative roots of $L_0$ and
\begin{align}\label{Phi1m}
\Phi_1^-:= \Phi_{-k} \cup\cdots\cup \Phi_{-1}.
\end{align}

\noindent
\textbf{Remark.}
The roots in $\kf_0$ can be partitioned 
into positive and negative roots in a way consistent with the partial ordering on weights induced by the positive 
roots of $\g_0$. Indeed, we take a $\g_0$-dominant weight
\begin{align}
 \gamma\notin\bigcup_{\beta \in \Phi_0}\Pc_\beta,
\label{hyperplane}
\end{align}
where
\begin{align}
 \Pc_\beta:=\{\mu \in H^* \,|\,(\mu,\beta)=0\}
\end{align}
is the hyperplane orthogonal to $\beta$, and then declare
\begin{align}
 \beta>0\quad &\text{if}\ \ (\beta,\gamma)>0,\\[.2cm]
 \beta<0\quad &\text{if}\ \ (\beta,\gamma)<0.
\end{align}
\begin{Definition}
\label{Def:simple}
A simple root of $L$ is a positive root that cannot be written as the sum of two positive roots.
\end{Definition}

\noindent
\textbf{Remark.}
It follows from condition (iii) in Definition~\ref{Def:normal} that the elements of $\Cf$ are semisimple 
on each $L_i$, $i \neq 0$, and thus on the $\g_0$-module
\begin{align}\label{G}
 G:=\g_0\oplus\bigoplus_{i \neq 0}L_i.
\end{align}

\section{Regular algebras}
\label{Sec:Regular}

\begin{Definition}
\label{Def:regular}
A $\Zkl$-graded Lie algebra $L$ is called regular if it has the following properties:
$$
\begin{array}{rll}
{\rm (i)}\!\!\!& \text{reductivity:} & 
\mbox{$\g_0$ is a reductive Lie algebra.}
\\[.2cm]
{\rm (ii)}\!\!\!& \text{multiplicity free:} & 
\mbox{For $i \neq 0$, $L_i$ admits a multiplicity free decomposition into}
\\[.05cm] && \mbox{irreducible $\g_0$-modules.} 
\\[.2cm]
{\rm (iii)}\!\!\!& \text{reflexivity:} & 
\mbox{If $x \in \kf_i$, $0<|i| \leq m$, satisfies $[x,\kf_{-i}] = \z$, then $x=0$.}
\\[.2cm]
{\rm (iv)}\!\!\!& \text{non-singularity:} &
\mbox{For each index $i$ such that $0 < |i| \leq m$, $[\kf_i,\kf_{-i}] \subseteq N(L_s)$.}
\\[.2cm]
{\rm (v)}\!\!\!& \text{completeness:} &
\mbox{$L$ is generated as a Lie algebra by $G$ defined in \eqref{G}.}
\end{array}
$$
\end{Definition}

\noindent
Throughout this section, we assume all $\Zkl$-graded Lie algebras are regular. 
\medskip

\noindent
\textbf{Remark.}
The completeness and multiplicity free conditions imply that $L$ is completely reducible as a $\g_0$-module.
The reflexivity and non-singularity conditions are important for the root-space 
structure of $L$ discussed below. In particular, they imply that $L_{\pm i}$, $0 < |i| \leq m$, 
are pairwise related via duality, as seen in Section~\ref{Sec:Structure}. 
\medskip

\noindent
The completeness condition (v) implies the following result.
\begin{Proposition}
\label{Prop:k0}
$\kf_0 = \n \cap L_0$.
\end{Proposition}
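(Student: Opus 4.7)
The easy direction $\n \cap L_0 \subseteq \kf_0$ is immediate from \eqref{LRnK}: $\n \subseteq K$ gives $\n \cap L_0 \subseteq K \cap L_0 = \kf_0$. For the reverse direction, since $\kf_0 \subseteq K \subseteq R$, it suffices by \eqref{LRnK} to establish $\kf_0 \subseteq L'$; one then obtains $\kf_0 \subseteq L' \cap R = [L,R] \subseteq \n$, and $\kf_0 \subseteq L_0$ completes the other inclusion.

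The key step is to use completeness (v) to extract the identity $L_0 = \g_0 + J_0$, where $J_0 := \sum_{j \neq 0}[L_j, L_{-j}] \subseteq L'$. First I would verify that $I := \bigoplus_{j \neq 0} L_j + J_0$ is an ideal of $L$; the nontrivial part, $[L, J_0] \subseteq I$, follows from the Jacobi identity together with $[L_i, L_j] \subseteq L_{i+j}$. Consequently, $\g_0 + I$ is a Lie subalgebra of $L$ containing the generating set $G$, while any Lie subalgebra containing $G$ must contain $\g_0 + I$. Thus $\langle G \rangle = \g_0 + I$, and completeness yields $L = \g_0 + I$; projecting onto the degree-zero component gives $L_0 = \g_0 + J_0$.

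With this identity in hand, take any $x \in \kf_0$ and write $x = g + j$ with $g \in \g_0$ and $j \in J_0 \subseteq L'$. In the decomposition $L_0 = \g_0 \oplus \kf_0$, let $j_\g \in \g_0$ and $j_\kf \in \kf_0$ denote the two components of $j$; matching $\kf_0$-components in $x = g + j_\g + j_\kf$ and using $x \in \kf_0$ gives $x = j_\kf = j - j_\g$. It remains to show $j_\g \in L'$. Writing $j$ as a finite sum of brackets $[a, b]$ with $a \in L_i$, $b \in L_{-i}$ for some $i \neq 0$, and decomposing each factor via $L_i = \s_i \oplus \kf_i$ from Proposition~\ref{Prop:gskr}(i), Proposition~\ref{Prop:gkk}(i) ensures that every bracket with a $\kf$-factor lies in $\kf_0$. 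Thus only $[\s_i, \s_{-i}]$ contributes to $j_\g$, and $[\s_i, \s_{-i}] \subseteq [S,S] = S \subseteq L'$ by semisimplicity of $S$. Hence $j_\g \in L'$, so $x \in L'$, as required.

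I expect the main conceptual step to be translating the completeness hypothesis into the structural identity $L_0 = \g_0 + J_0$, via recognising $\bigoplus_{j \neq 0} L_j + J_0$ as the ideal of $L$ generated by the nonzero-degree part. Once that is in place, the remainder is routine bookkeeping within the Killing decomposition.
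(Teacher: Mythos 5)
Your argument is correct, but it reaches the key identity by a different route than the paper. The paper also starts from completeness, but exploits it more directly: writing $L = G + [G,L] = G + L'$ and combining this with the Levi-decomposition fact $L' \subseteq S \oplus [L,R]$ and $S \subseteq G$ gives $L = G + [L,R]$, whence $L_0 = \g_0 + ([L,R]\cap L_0)$ with the second summand already inside $\kf_0$ (by \eqref{LRnK}); the decomposition $y = x + y'$ with $x \in \g_0 \cap K = \z$ then puts $\kf_0$ straight into $[L,R] \subseteq \n$, with no further bookkeeping. You instead form the ideal $I$ generated by the nonzero-degree part and deduce the finer degree-zero identity $L_0 = \g_0 + \sum_{j\neq 0}[L_j,L_{-j}]$, which forces you to handle the $\g_0$-component of $\sum_{j\neq 0}[L_j,L_{-j}]$ separately: you dispose of it via the $\s_i \oplus \kf_i$ splitting, Proposition~\ref{Prop:gkk}(i), and $[\s_i,\s_{-i}] \subseteq S \subseteq L'$, before concluding through $\kf_0 \subseteq L' \cap R = [L,R] \subseteq \n$ from \eqref{LRnK}. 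Both proofs ultimately rest on completeness, $\g_0 \cap K = \z$, and \eqref{LRnK}; the paper's detour through $[L,R]$ is shorter, while yours yields the extra structural statement $L_0 = \g_0 + \sum_{j\neq 0}[L_j,L_{-j}]$. One small point to make explicit: for $\g_0 + I$ to be a subalgebra you need $[\g_0,\g_0] \subseteq \g_0$, i.e.\ the reductivity condition (i) of Definition~\ref{Def:regular}, which is part of the standing assumptions of that section (in general $[Z_s,Z_s]$ only lies in $\kf_0$), so you are implicitly using a hypothesis the paper's version of the argument does not need at this step.
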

\begin{proof}
Since $L$ is generated by $G$, we may write
\begin{align}
L = G + [G,G] + [G,[G,G]] +\cdots = G + [G,L] = G+L'.
\end{align}
Moreover, the Levi decomposition of $L$ implies $L'\subseteq S \oplus [L,R]$, and we recall \eqref{LRnK}
and note that $S\subseteq G$.
It follows that $L = G + [L,R]$, so
\begin{align}
L_0 = G_0 + [L,R] \cap L_0 = \g_0 + [L,R] \cap L_0.
\end{align}
Hence, $y\in \kf_0$ may be written 
$y=x+y'$, where $x \in \g_0$ and $y' \in [L,R] \cap L_0 \subseteq \kf_0$, so $y-y' = x \in \g_0 \cap K=\z$. 
Thus, $x=0$ and $y=y' \in [L,R] \cap L_0 \subseteq [L,R] \subseteq \n$. 
This shows that $\kf_0 \subseteq \n \cap L_0$. For the reverse inclusion, $\n \subseteq K$ in \eqref{LRnK}
implies $\n \cap L_0 \subseteq K \cap L_0 = \kf_0$.
\end{proof}
\begin{Corollary}
\label{Cor:k0}
Let $x \in \kf_0$. Then, $\ad_x$ is nilpotent.
\end{Corollary}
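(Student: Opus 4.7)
The plan is to leverage Proposition~\ref{Prop:k0} to reduce the claim to a standard fact about nilpotent ideals. Since the previous proposition identifies $\kf_0$ with $\n \cap L_0$, any $x \in \kf_0$ sits inside the nilradical $\n$, which is a nilpotent ideal of $L$. So the task is really to verify the general lemma: if $I$ is a nilpotent ideal of a Lie algebra $L$ and $x \in I$, then $\ad_x$ is nilpotent on $L$ (not merely on $I$, which would be Engel's theorem applied to $\n$ as a nilpotent Lie algebra).

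The key observation is that, because $\n$ is an ideal, $\ad_x(L) \subseteq \n$ already after one step. Iterating, one shows by a quick induction that
\begin{align}
\ad_x^{\,j}(L) \subseteq \n^{(j)},
\end{align}
where $\n^{(1)}:=\n$ and $\n^{(j+1)}:=[\n,\n^{(j)}]$ is the lower central series of $\n$. The base case is $\ad_x(L) \subseteq \n$; for the inductive step, $\ad_x^{\,j+1}(L)=[x,\ad_x^{\,j}(L)] \subseteq [\n,\n^{(j)}]=\n^{(j+1)}$. Since $\n$ is nilpotent, $\n^{(j)}=\z$ for some $j$, and hence $\ad_x^{\,j}=0$ on $L$.

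I would write this out in two short sentences: first cite Proposition~\ref{Prop:k0} to place $x$ in $\n$, then give the one-line induction showing $\ad_x^{\,j}(L)\subseteq\n^{(j)}$ and conclude from the nilpotence of $\n$. There is no genuine obstacle; the only minor subtlety is to remember that ad-nilpotence on $L$ (rather than on $\n$ alone) uses the ideal property of $\n$, not merely Engel's theorem applied inside $\n$.
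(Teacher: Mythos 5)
Your proof is correct and follows the paper's own route: Proposition~\ref{Prop:k0} places $x$ in the nilradical $\n$, and the nilpotence of $\n$ gives the conclusion. The only difference is that you spell out the standard induction $\ad_x^{\,j}(L)\subseteq\n^{(j)}$ showing that elements of a nilpotent ideal are ad-nilpotent on all of $L$, a detail the paper leaves implicit.
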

\begin{proof}
From Proposition~\ref{Prop:k0}, $\kf_0 = \n \cap L_0$, and since $\n$ is nilpotent, the result follows.
\end{proof}
\begin{Theorem}
\label{CKZ}
$\Cf = Z(L)$.
\end{Theorem}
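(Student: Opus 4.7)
The plan is to prove the equality by showing both inclusions, with the reverse inclusion $\Cf\subseteq Z(L)$ being the substantive content and the forward inclusion $Z(L)\subseteq\Cf$ being essentially a bookkeeping check.

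For the easy direction, I would note that $Z(L)\subseteq K$ (recorded in the lemma preceding Proposition~\ref{Prop:ZsR}) and that every $z\in Z(L)$ commutes with the level operator $d$, which is semisimple on $L$ with $L_0$ as its zero-eigenspace; hence $z\in L_0$, and therefore $z\in K\cap L_0=\kf_0$. Since $z$ manifestly commutes with $\g_0$, this gives $z\in C_0(\g_0)\cap\kf_0=\Cf$.

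The interesting direction is $\Cf\subseteq Z(L)$. Given $c\in\Cf$, the strategy is to show $c$ commutes with each piece of the generating set $G=\g_0\oplus\bigoplus_{i\neq 0}L_i$ from the completeness condition (v), then invoke the Jacobi identity to propagate this to all of $L$. By definition $[c,\g_0]=\z$, so only the nonzero-grade pieces require work. For each $i\neq 0$, the bracket $\ad_c$ restricts to a map $L_i\to L_i$, and since $[c,\g_0]=\z$, a short Jacobi computation shows $\ad_c$ is $\g_0$-equivariant on $L_i$. By the multiplicity-free condition (ii), $L_i$ decomposes as a direct sum of pairwise non-isomorphic irreducible $\g_0$-modules, so Schur's lemma forces $\ad_c$ to act as a scalar on each irreducible summand. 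The crucial input is now Corollary~\ref{Cor:k0}: since $c\in\kf_0$, the operator $\ad_c$ is nilpotent on $L$, hence nilpotent on each $L_i$, which forces these Schur scalars to vanish. Thus $[c,L_i]=\z$ for all $i\neq 0$, and combined with $[c,\g_0]=\z$ we conclude $[c,G]=\z$. A routine induction using the Jacobi identity then extends this to $[c,L]=\z$, because any Lie polynomial in generators that $c$ centralises is again centralised by $c$.

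The only genuinely delicate step is the synthesis of multiplicity-freeness with nilpotency via Schur; everything else is a direct assembly of already-established facts (Corollary~\ref{Cor:k0}, condition (ii), condition (v), and the lemma giving $Z(L)\subseteq K$). No reliance on the normal-algebra results from Section~\ref{Sec:Normal} is needed, which is important since regularity does not presuppose that $L_s$ is a Lie algebra.
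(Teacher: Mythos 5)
Your proof is correct and follows essentially the same route as the paper: both arguments combine the multiplicity-free condition (via Schur's lemma, giving scalar, hence diagonalisable, action of $\ad_c$ on each $L_i$), the ad-nilpotency of elements of $\kf_0$ from Corollary~\ref{Cor:k0}, and the completeness condition that $G$ generates $L$. The only cosmetic difference is that you annihilate the Schur scalars level-by-level and then propagate $[c,G]=\z$ to all of $L$ via the centraliser-is-a-subalgebra argument, whereas the paper first concludes that $\ad_c$ is semisimple on all of $L$ and then invokes nilpotency to force $\ad_c=0$.
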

\begin{proof}
Let $c\in\Cf$.
By property (iii) in Definition~\ref{Def:normal}, $c$ is semisimple on $L_{i}$, $i \neq 0$. 
As $[c,\g_0] = \z$, $c$ is also semisimple on $\g_0$, and since $L$ is generated by 
$\g_0\oplus\bigoplus_{i \neq 0}L_i$, it follows that every
element of $\Cf$ is semisimple on all of $L$. On the other hand,
$\Cf\subseteq \kf_0$, so by Corollary~\ref{Cor:k0}, $c$ is ad-nilpotent. 
This is only possible if $\ad_c=0$, so $\Cf\subseteq Z(L)$. The reverse inclusion is immediate.
\end{proof}
\begin{Corollary}
\begin{align}
 C_0(\g_0) = Z(\g_0) \oplus Z(L), \qquad
 N_0(\g_0) = \g_0 \oplus Z(L), \qquad
 C(L_s) = Z_s \oplus Z(L).
\end{align}
\end{Corollary}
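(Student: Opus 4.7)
The plan is to perform a one-line substitution. Theorem~\ref{CKZ} identifies $\Cf$ with $Z(L)$, while the three decompositions
\[
 C_0(\g_0) = Z(\g_0) \oplus \Cf,\qquad N_0(\g_0) = \g_0 \oplus \Cf,\qquad C(L_s) = Z_s \oplus \Cf
\]
are the content of Proposition~\ref{Prop:C0}(i)--(ii) and Corollary~\ref{Cor:NLLC}(i). Replacing $\Cf$ by $Z(L)$ in each of these yields the three equalities claimed in the corollary.

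The only point needing justification is that these cited results, proved in Section~\ref{Sec:Normal} under the normality hypothesis, remain valid under regularity. I would handle this by observing that regularity implies normality: conditions (i), (iii), (v) of Definition~\ref{Def:normal} coincide respectively with conditions (i), (ii), (iv) of Definition~\ref{Def:regular}; the complete-reducibility condition (ii) of normality follows from completeness and multiplicity-freeness, as already noted in the remark after Definition~\ref{Def:regular}; and Theorem~\ref{CKZ} gives $Z(\Cf)=Z(L)$, so the $\Cf$-central condition (iv) of normality is trivially satisfied. Hence Proposition~\ref{Prop:C0} and Corollary~\ref{Cor:NLLC} are in force.

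There is no genuine obstacle here: all the substantive work has been carried out in Theorem~\ref{CKZ}, and the corollary is simply its packaging for the three natural centraliser and normaliser questions. A reader who prefers a self-contained argument can alternatively check that the proofs of Proposition~\ref{Prop:C0}(i)--(ii) only use the decomposition $L_0=\g_0\oplus\kf_0$ and reductivity of $\g_0$, while Corollary~\ref{Cor:NLLC}(i) combines the Killing-decomposition step of Proposition~\ref{Prop:CN} with the Schur-type multiplicity-free argument of Proposition~\ref{Prop:Cf}, neither invoking any feature of normality beyond what regularity already supplies.
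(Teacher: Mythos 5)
Your proposal is correct and follows essentially the same route as the paper: the paper's own proof is just the substitution of Theorem~\ref{CKZ} ($\Cf=Z(L)$) into the centraliser/normaliser decompositions of Section~\ref{Sec:Normal} (it cites Proposition~\ref{Prop:CN} together with Theorem~\ref{CKZ}, with Proposition~\ref{Prop:C0} and Corollary~\ref{Cor:NLLC} implicitly in force). Your extra step checking that regularity implies normality is exactly the content of the theorem the paper proves immediately afterwards, so spelling it out is a harmless (indeed slightly more careful) version of the same argument.
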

\begin{proof}
These decompositions follow from Proposition~\ref{Prop:CN} and Theorem~\ref{CKZ}. 
\end{proof}

\noindent
We are now in a position to show that every regular $\Z$-graded Lie algebra is normal.
\begin{Theorem}
$L$ is a normal $\Z$-graded Lie algebra.
\end{Theorem}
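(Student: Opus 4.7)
The plan is to verify each of the five defining conditions of normality in Definition~\ref{Def:normal}, drawing on the regularity hypotheses and the results already established in the section. Three of the five conditions are immediate: (i) reductivity, (iii) multiplicity free, and (v) non-singularity coincide verbatim with (i), (ii), and (iv) of Definition~\ref{Def:regular}. The $\Cf$-central condition (iv) drops out of Theorem~\ref{CKZ}: since $\Cf = Z(L)$ is abelian, $Z(\Cf) = \Cf = Z(L)$.

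The real content is condition (ii), complete reducibility of $L$ as a $\g_0$-module. The plan is to reduce this to ad-semisimplicity of the centre of $\g_0$ on $L$. Indeed, $\g_0$ is reductive (regularity (i)), so writing $\g_0 = \g_0' \oplus Z(\g_0)$ one has Weyl's theorem handling the semisimple part $\g_0'$, and a finite-dimensional module of a reductive Lie algebra is completely reducible precisely when its centre acts semisimply. Thus it suffices to show that every $z \in Z(\g_0)$ is ad-semisimple on $L$.

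The key idea is to combine the multiplicity-free condition (ii) with the completeness condition (v) of regularity. Fix $z \in Z(\g_0)$. On the $\g_0$-submodule $G = \g_0 \oplus \bigoplus_{i \neq 0} L_i$, $\ad_z$ acts trivially on $\g_0$ (as $z$ is central), while on each $L_i$ with $i \neq 0$ the multiplicity-free decomposition into irreducible $\g_0$-modules together with Schur's lemma forces $z$ to act by a scalar on each summand. Hence $\ad_z$ is semisimple on $G$. Now let $L^{ss} \subseteq L$ denote the sum of the $\ad_z$-eigenspaces in $L$. The derivation identity
\begin{align}
 \ad_z[x,y] = [\ad_z x, y] + [x, \ad_z y]
\end{align}
ensures that $L^{ss}$ is stable under the Lie bracket, hence a Lie subalgebra. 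Since $G \subseteq L^{ss}$ and $L$ is generated as a Lie algebra by $G$ by completeness (v), we conclude $L^{ss} = L$, so $\ad_z$ is semisimple on all of $L$, as required.

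The one non-trivial step, and thus the main obstacle, is this lifting of ad-semisimplicity from $G$ to $L$; it is precisely where the completeness condition (v) is used, and it is what allows elements of $Z_s$ (which a priori might only be controlled on the generating submodule $G$) to be semisimple throughout $L$. Everything else is either a direct transcription from Definition~\ref{Def:regular} to Definition~\ref{Def:normal} or an immediate appeal to Theorem~\ref{CKZ}.
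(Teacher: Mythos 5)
Your proposal is correct and follows the same route as the paper: conditions (i), (iii) and (v) of Definition~\ref{Def:normal} are read off directly from Definition~\ref{Def:regular}, and condition (iv) follows from Theorem~\ref{CKZ}. The only difference is that where the paper disposes of condition (ii) by citing the remark after Definition~\ref{Def:regular}, you actually prove that remark---reducing complete reducibility to ad-semisimplicity of $Z(\g_0)$ (Schur's lemma on the irreducible summands of each $L_i$, $i\neq 0$) and lifting it from $G$ to all of $L$ via the eigenvector-span subalgebra together with the completeness condition---and that argument is sound.
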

\begin{proof}
By construction, $L$ satisfies conditions (i), (iii) and (v) in Definition \ref{Def:normal}.
As pointed out in the Remark following Definition \ref{Def:regular}, $L$ satisfies condition (ii).
By Theorem \ref{CKZ}, $Z(\Cf)\subseteq\Cf=Z(L)$, so condition (iv) is satisfied.
The result now follows.
\end{proof}
\begin{Corollary}
For each index $i$ such that $0 < |i| \leq m$, $[\kf_i,\kf_{-i}] \subseteq Z(L)$.
\end{Corollary}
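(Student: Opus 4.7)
The plan is to chain together three ingredients already established in the paper: the non-singularity property from Definition~\ref{Def:regular}, the identification of $\Cf$ as $N(L_s)\cap K$ for normal algebras (Proposition~\ref{Prop:Cf}), and the key Theorem~\ref{CKZ} which, under regularity, upgrades $\Cf$ to $Z(L)$.

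First, I would fix $i$ with $0<|i|\leq m$ and note that the non-singularity condition (iv) in Definition~\ref{Def:regular} gives $[\kf_i,\kf_{-i}]\subseteq N(L_s)$. Separately, since $K$ is an ideal of $L$ and $\kf_{\pm i}\subseteq K$, we automatically have $[\kf_i,\kf_{-i}]\subseteq K$. Combining these,
\begin{align}
[\kf_i,\kf_{-i}] \subseteq N(L_s)\cap K.
\end{align}

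Next I would invoke the Theorem immediately preceding this corollary, which establishes that every regular $\Z$-graded $L$ is in fact normal. This means Proposition~\ref{Prop:Cf} applies, giving $N(L_s)\cap K=\Cf$. Finally, Theorem~\ref{CKZ} (available in the regular setting) yields $\Cf=Z(L)$. Stringing these identifications together,
\begin{align}
[\kf_i,\kf_{-i}] \subseteq N(L_s)\cap K = \Cf = Z(L),
\end{align}
which is the desired conclusion.

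There is no real obstacle here; the work has already been done. The only thing to be careful about is the order of invocation: one must first recognise that regularity implies normality (so that Proposition~\ref{Prop:Cf} is applicable) before collapsing $N(L_s)\cap K$ to $\Cf$, and only then use Theorem~\ref{CKZ} to identify $\Cf$ with $Z(L)$.
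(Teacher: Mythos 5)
Your proof is correct and follows essentially the same route as the paper's one-line argument, which combines the non-singularity condition (iv) of Definition~\ref{Def:regular}, the identification of $N(L_s)\cap K$ with $\Cf$ (Proposition~\ref{Prop:Cf}, equivalently Corollary~\ref{Cor:NLLC}), and Theorem~\ref{CKZ}. If anything, your explicit observation that $[\kf_i,\kf_{-i}]\subseteq K$, so that one may pass through the intersection $N(L_s)\cap K=\Cf$, is slightly more careful than the paper's abbreviated chain $N(L_s)\subseteq\Cf=Z(L)$, and your remark about first invoking regularity-implies-normality before using Proposition~\ref{Prop:Cf} is exactly the right order of logic.
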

\begin{proof}
By condition (iv) in Definition~\ref{Def:regular}, Corollary~\ref{Cor:NLLC}, and Theorem~\ref{CKZ}, we have
$[\kf_i,\kf_{-i}] \subseteq N(L_s) \subseteq\Cf= Z(L)$.
\end{proof}

\subsection{Simple roots}

For simplicity, we now assume $L_1$ is irreducible as a $\g_0$-module, so $L_1$ has
a unique lowest weight, $\alpha_0$, and this weight appears with unit multiplicity in $\Phi$.
The set of simple roots is then given by
\begin{align}
 \Pi = \{\alpha_0,\alpha_1,\ldots,\alpha_r\},
\end{align}
where $\alpha_1,\ldots,\alpha_r$ are the simple roots of $\g_0$.
\medskip

\noindent
\textbf{Remark.}
If $L_1$ is not an {\em irreducible} $\g_0$-module, then it is completely reducible (since $L$ is multiplicity free) and $\Pi$ 
contains as many roots as there are summands in the decomposition of $L_1$.
\medskip

\noindent
Corresponding to $\alpha_0$, let $e_0$ be a lowest-weight vector in $L_1$ viewed as a $\g_0$-module.
As shown in Theorem~\ref{Theo:LiLmi} below, $L_{-1}$ is isomorphic to the dual of $L_1$, so there exists
a highest-weight vector $f_0 \in L_{-1}$ of highest weight $-\alpha_0$. 
Correspondingly, we define
\begin{align}
 h_0:=[e_0,f_0].
\end{align}
\begin{Theorem}
\label{Theor:simpler}
For each $\ell=1,\ldots,r$, there exist nonzero $e_\ell\in L_{\alpha_\ell}$, $f_\ell\in L_{-\alpha_\ell}$, 
and $h_\ell\in H$ such that, for all $i,j\in\{0,1,\ldots,r\}$, 
\begin{align}
[e_i,f_j] = {\delta}_{ij}h_i,\qquad [h,e_i] = \alpha_i(h)e_i,\qquad [h,f_i] = -\alpha_i(h)f_i,\qquad \forall h \in H.
\end{align}
\end{Theorem}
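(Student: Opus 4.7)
The plan is to build the $\mathfrak{sl}_2$-triples indexed by the simple roots of $\g_0$ from standard Chevalley--Serre theory, then handle the index $0$ separately via weight arguments, duality (Theorem \ref{Theo:LiLmi}), and the non-singularity and reflexivity conditions of Definition \ref{Def:regular}.

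For $\ell \in \{1,\ldots,r\}$, since $\g_0$ is reductive with derived subalgebra $\g_0' = \s_0'$ semisimple (Corollary \ref{Cor:Z_s}) and CSA $H_s$, and since $\alpha_1,\ldots,\alpha_r$ are by definition the simple roots of $\s_0'$, the Chevalley--Serre construction furnishes nonzero root vectors $e_\ell \in (\s_0')_{\alpha_\ell}$, $f_\ell \in (\s_0')_{-\alpha_\ell}$, and $h_\ell := [e_\ell,f_\ell] \in H_s \subseteq H$ satisfying $[e_i,f_j] = \delta_{ij} h_i$ and the eigenvalue relations with respect to $H_s$ for all $i,j \in \{1,\ldots,r\}$. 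These eigenvalue relations extend from $H_s$ to all of $H = H_s \oplus Z(\g_0) \oplus Z(L)$ because $[Z(\g_0),\s_0'] = \z$ (reductivity of $\g_0$) and $[Z(L),L] = \z$, while the roots $\alpha_\ell$ with $\ell \geq 1$ automatically vanish on $Z(\g_0) \oplus Z(L)$.

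Next I would verify the cross relations involving index $0$. Since $\alpha_0$ is the lowest weight of the irreducible $\g_0$-module $L_1$ and appears with unit multiplicity, every weight of $L_1$ has the form $\alpha_0 + \sum_{k \geq 1} n_k \alpha_k$ with $n_k \in \Nb_0$. Hence $\alpha_0 - \alpha_\ell$ is not a weight of $L_1$ for $\ell \geq 1$, so $[e_0,f_\ell] \in L_1$, being a weight vector of weight $\alpha_0 - \alpha_\ell$, must vanish. The symmetric argument using $-\alpha_0$ as the highest weight of $L_{-1} \cong L_1^*$ (Theorem \ref{Theo:LiLmi}) gives $[e_\ell,f_0] = 0$. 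The eigenvalue relations $[h,e_0] = \alpha_0(h)e_0$ and $[h,f_0] = -\alpha_0(h)f_0$ for $h \in H$ are immediate from $e_0$ and $f_0$ being $H$-weight vectors.

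The main obstacle is showing that $h_0 := [e_0,f_0]$ actually lies in $H$, since $H$ need not equal its own centraliser in $L_0$ and there may be weight-zero vectors in $\kf_0$ outside $Z(L)$. I would argue by cases on the $\g_0$-submodule structure of $L_1 = \s_1 \oplus \kf_1$: as $L_1$ is irreducible, either $L_1 = \s_1$ with $\kf_1 = \z$, or $L_1 = \kf_1$ with $\s_1 = \z$. In the first case, nondegeneracy of the Killing form restricted to $S$ forces $\dim \s_{-1} = \dim \s_1$, while reflexivity applied to $\kf_{-1}$ with $\kf_1 = \z$ gives $\kf_{-1} = \z$; thus $f_0 \in \s_{-1}$ and $h_0 \in [\s_1,\s_{-1}] \subseteq \s_0$, and being weight zero in the reductive Lie algebra $\s_0$ it lies in the CSA $H_s \oplus Z(\s_0) \subseteq \h_0 \subseteq H$. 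In the second case, the same dimension count yields $\s_{-1} = \z$ and $L_{-1} = \kf_{-1}$, whence $h_0 \in [\kf_1,\kf_{-1}] \subseteq N(L_s) \cap K = \Cf = Z(L) \subseteq H$ by non-singularity and Theorem \ref{CKZ}. Nonvanishing of $h_0$ for a suitable scaling of $f_0$ can be arranged using the duality between $L_1$ and $L_{-1}$ supplied by Theorem \ref{Theo:LiLmi}.
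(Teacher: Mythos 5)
Your proposal is correct and follows essentially the same route as the paper: the $\ell\geq 1$ triples come from the standard theory of the reductive algebra $\g_0$, the cross relations follow from $e_0,f_0$ being lowest/highest weight vectors of $L_1$ and $L_{-1}\cong L_1^*$, and your dichotomy for $h_0\in H$ (either $e_0,f_0\in L_s$ so $h_0\in\h_0$, or $e_0,f_0\in K$ so $h_0\in[\kf_1,\kf_{-1}]\subseteq\Cf=Z(L)$) is exactly the argument the paper gives in the remark following the theorem. Only your closing aside that $h_0\neq0$ follows from duality is too optimistic — in the Heisenberg case $(e_0,f_0)=0$ and the paper needs the separate reflexivity argument of Lemma~\ref{h0} — but $h_0\neq0$ is not asserted in the statement, so this does not affect the proof.
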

\begin{proof}
The result is a consequence of the definition of $e_0,h_0,f_0$ and the fact that $\alpha_1,\ldots,\alpha_r$ 
are the simple roots of $\g_0$.
\end{proof}

\noindent
\textbf{Remark.}
As in the standard theory of semisimple Lie algebras, 
$\langle e_i,h_i,f_i\rangle\cong sl(2,\C)$, $i=1,\ldots,r$.
However, this is not necessarily the case for $i=0$. 
Instead, we note that either $e_0,f_0\in L_s$, in which case 
$h_0\in \h_0$, or $e_0,f_0\in K$, in which case $h_0\in [\kf_1,\kf_{-1}] \subseteq \Cf = Z(L)$.
In either case, we have $h_0 \in H$. 
\begin{Lemma}
\label{h0}
$h_0 \neq 0$.
\end{Lemma}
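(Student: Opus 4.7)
The plan is to exploit the dichotomy noted in the preceding Remark: either $e_0,f_0\in L_s$ (so $e_0\in\s_1$, $f_0\in\s_{-1}$), or $e_0,f_0\in K$ (so $e_0\in\kf_1$, $f_0\in\kf_{-1}$). I would treat the two cases separately.

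In the first case, $e_0$ and $f_0$ are root vectors for the opposite roots $\pm\alpha_0$ inside the semisimple Lie algebra $S$. By standard semisimple theory, the Killing form of $S$ restricts to a nondegenerate pairing between the $\alpha_0$- and $(-\alpha_0)$-root spaces, and after a suitable rescaling of $f_0$ one obtains
\[
 [e_0,f_0]=(e_0,f_0)\,t_{\alpha_0},
\]
where $t_{\alpha_0}$ denotes the nonzero element of the CSA of $S$ dual to $\alpha_0$ via the Killing form, and $(e_0,f_0)\neq 0$. Hence $h_0\neq 0$.

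In the second case, I would invoke the reflexivity condition (iii) in Definition~\ref{Def:regular}. Since $e_0\in\kf_1$ is nonzero, reflexivity produces some $y\in\kf_{-1}$ with $[e_0,y]\neq 0$. Decomposing $y=\sum_\mu y_\mu$ into $\h_0$-weight components, each $[e_0,y_\mu]$ carries $\h_0$-weight $\alpha_0+\mu$. By the Corollary following Theorem~\ref{CKZ}, $[\kf_1,\kf_{-1}]\subseteq Z(L)$, and elements of $Z(L)$ have $H$-weight zero, so only the summand with $\mu=-\alpha_0$ can be nonzero. Since $L_{-1}$ is an irreducible $\g_0$-module (via Theorem~\ref{Theo:LiLmi}) with highest weight $-\alpha_0$ of multiplicity one, its $(-\alpha_0)$-weight space is spanned by $f_0$. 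Thus $y_{-\alpha_0}$ is a nonzero scalar multiple of $f_0$, and $[e_0,f_0]$ is a nonzero scalar multiple of $[e_0,y]\neq 0$, so $h_0\neq 0$.

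The main obstacle will be the weight-space argument in Case 2, specifically the multiplicity-one statement for the weight $-\alpha_0$ in $L_{-1}$. This relies on the forward-referenced duality result in Theorem~\ref{Theo:LiLmi} together with the multiplicity-free hypothesis in Definition~\ref{Def:regular}; once that is in hand, the rest is a clean weight-grading bookkeeping combined with reflexivity and the inclusion $[\kf_1,\kf_{-1}]\subseteq Z(L)$.
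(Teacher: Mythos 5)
Your proof is correct, but in the key case it runs along a different track from the paper's. The first case ($e_0,f_0\in L_s$) is handled the same way in both: the paper simply notes there is nothing to prove since $e_0,f_0$ are then $\pm\alpha_0$ root vectors, and your appeal to the non-degenerate Killing pairing of opposite root spaces is exactly the justification behind that remark (no rescaling of $f_0$ is actually needed for $[e_0,f_0]=(e_0,f_0)t_{\alpha_0}$ and $(e_0,f_0)\neq0$; unit multiplicity of $\pm\alpha_0$ plus orthogonality of the grading gives it directly). In the case $L_1=\kf_1$, the paper argues by contradiction: assuming $[e_0,f_0]=0$, the Jacobi identity together with the maximality of $f_0$ gives $[[e_i,e_0],f_0]=0$ for all simple raising operators $e_i$ of $\g_0$, and since $e_0$ generates the irreducible module $\kf_1$ under these operators, recursion yields $[\kf_1,f_0]=\z$, contradicting reflexivity applied to $f_0$. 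You instead apply reflexivity to $e_0$ to produce $y\in\kf_{-1}$ with $[e_0,y]\neq0$, then use $[\kf_1,\kf_{-1}]\subseteq Z(L)$ (the corollary stated just after the theorem that regular implies normal, resting on Theorem~\ref{CKZ}) together with the fact that central elements have weight zero to force the only contributing weight component of $y$ to be the $(-\alpha_0)$-component, which by multiplicity one is a nonzero multiple of $f_0$; this gives $[e_0,f_0]\neq0$ directly. Both arguments are sound and both ultimately rest on reflexivity and the unit multiplicity of $\pm\alpha_0$; the trade-off is that your route leans on the centrality result $[\kf_1,\kf_{-1}]\subseteq Z(L)$ (hence on $\Cf=Z(L)$ and the completeness condition behind it), whereas the paper's recursion needs only the $\g_0$-module structure of $L_{\pm1}$ and the Jacobi identity; in exchange, yours is direct rather than by contradiction and makes explicit why the bracket must be detected precisely by $f_0$. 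The forward reference to Theorem~\ref{Theo:LiLmi} that you flag as the main obstacle is unproblematic: the paper itself invokes it at this point to define $f_0$, and its proof does not depend on Lemma~\ref{h0}.
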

\begin{proof}
If $L_1 = \g_1 = \s_1 \subseteq L_s$, there is nothing to prove since then $e_0,f_0$ are root vectors of 
weight $\pm\alpha_0$. Thus, suppose $L_1 = \kf_1$ and $[e_0,f_0] = 0$, and 
observe that $e_0 \in \kf_1$ is the lowest-weight vector, while $f_0 \in \kf_{-1}$ is the maximal vector. 
It follows that
\begin{align}
[[e_i,e_0],f_0] = - [[f_0,e_i],e_0] - [[e_0,f_0],e_i] = 0,\qquad \forall i = 1,\ldots,r.
\end{align}
By recursion, $[\kf_1,f_0]=\z$, in contradiction to the reflexivity condition.
\end{proof}
\begin{Proposition}
\label{Prop:slh}
If $\alpha_0(h_0) \neq 0$, then $\langle e_0,h_0,f_0\rangle\cong sl(2,\C)$.
If $\alpha_0(h_0)=0$, then $\langle e_0,h_0,f_0\rangle\cong\h_3$ and $h_0 \in Z(L)$.
\end{Proposition}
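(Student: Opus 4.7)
My plan is to exploit the three commutation relations $[e_0,f_0]=h_0$, $[h_0,e_0]=\alpha_0(h_0)e_0$, and $[h_0,f_0]=-\alpha_0(h_0)f_0$, the first by definition of $h_0$ and the latter two by specialising Theorem~\ref{Theor:simpler} to $h=h_0\in H$. Since all brackets of the generators land in $\spa(e_0,h_0,f_0)$ and the three vectors are linearly independent (they sit in the distinct grades $L_1$, $L_0$, $L_{-1}$, with $h_0\neq 0$ by Lemma~\ref{h0}), the Lie subalgebra $\langle e_0,h_0,f_0\rangle$ coincides with this $3$-dimensional span, and its isomorphism type is controlled by $\alpha_0(h_0)$.

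If $\alpha_0(h_0)\neq 0$, I rescale by setting $E=e_0$, $H=\tfrac{2}{\alpha_0(h_0)}h_0$ and $F=\tfrac{2}{\alpha_0(h_0)}f_0$, and verify the Chevalley relations $[H,E]=2E$, $[H,F]=-2F$, $[E,F]=H$, giving $\langle e_0,h_0,f_0\rangle\cong sl(2,\C)$. If $\alpha_0(h_0)=0$, the same relations collapse to $[h_0,e_0]=[h_0,f_0]=0$ together with $[e_0,f_0]=h_0\neq 0$, which is the standard presentation of the $3$-dimensional Heisenberg algebra $\h_3$ with centre spanned by $h_0$.

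It remains to establish $h_0\in Z(L)$ in the second case. Here I invoke the dichotomy recorded in the remark preceding Lemma~\ref{h0}: either $e_0,f_0\in L_s$ (and then $h_0\in\h_0$) or $e_0,f_0\in K$ (and then $h_0\in[\kf_1,\kf_{-1}]\subseteq\Cf=Z(L)$). The idea is to rule out the former scenario when $\alpha_0(h_0)=0$. In that scenario $L_1=\s_1\subseteq S$, so $e_0$ and $f_0$ are nonzero weight vectors in the semisimple subalgebra $S$ of weights $\pm\alpha_0$ with respect to a CSA of $S$, whence the classical semisimple theory forces $h_0=[e_0,f_0]$ to be a nonzero multiple of the coroot of $\alpha_0$, on which $\alpha_0$ does not vanish --- contradicting $\alpha_0(h_0)=0$. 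Only the $K$-scenario survives, delivering $h_0\in Z(L)$.

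The hard part is the $L_s$-scenario step: one must recognise that a CSA of the reductive subalgebra $\s_0$ is also a CSA of the semisimple $S$ (so that $\pm\alpha_0$ are genuine roots of $S$ on this CSA) before invoking the classical non-vanishing of a root on its own coroot. The remaining manipulations are routine structure-constant bookkeeping.
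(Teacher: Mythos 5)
Your proof is correct and follows essentially the same route as the paper: the commutation relations together with Lemma~\ref{h0} fix the two possible isomorphism types, and the dichotomy in the remark preceding Lemma~\ref{h0} yields $h_0 \in Z(L)$. You actually supply more detail than the paper, which simply cites that remark; your elimination of the $L_s$-scenario (in which $h_0$ would be a nonzero multiple of the coroot of $\alpha_0$ in the semisimple part, forcing $\alpha_0(h_0)\neq 0$) is exactly the semisimple-theory step the paper leaves implicit.
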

\begin{proof}
From Lemma~\ref{h0}, $\dim\langle e_0,h_0,f_0\rangle=3$, where
\begin{align}
[e_0,f_0] = h_0,\qquad [h_0,e_0]=\alpha_0(h_0)e_0,\qquad [h_0,f_0]=-\alpha_0(h_0)f_0.
\end{align}
This establishes the two possible isomorphisms. The result $h_0 \in Z(L)$ follows from the Remark 
preceding Lemma~\ref{h0}.
\end{proof}
\noindent
Both cases in Proposition~\ref{Prop:slh} can occur, as demonstrated in Section~\ref{Sec:Ex}.

\subsection{Structure theory}
\label{Sec:Structure}

\noindent
\textbf{Remark.}
For $e_{\beta} \in L_{\beta}$ and $e_{\beta'} \in L_{\beta'}$, where $\beta,\beta' \in \Phi$ such that 
$\beta+\beta'\neq0$, standard arguments imply that $(e_{\beta},e_{\beta'})=0$.
\begin{Theorem}
\label{Theo:LiLmi}
If $0<|i| \leq m$, then $L_i$ and the dual of $L_{-i}$ are isomorphic as $\g_0$-modules.
\end{Theorem}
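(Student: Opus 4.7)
The plan is to build a $\g_0$-module isomorphism $L_i\cong L_{-i}^*$ by treating the two summands in the decomposition $L_i=\g_i\oplus\kf_i$ (from \eqref{Li}, which is indeed a $\g_0$-module decomposition in the regular/normal setting, since $[Z_s,\g_i]\subseteq\g_i\cap\kf_i=\z$) separately, matching them with $\g_{-i}^*$ and $\kf_{-i}^*$ via two different $\g_0$-invariant bilinear pairings: the Killing form for the semisimple part, and the Lie bracket with values in $Z(L)$ for the radical part.

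For the semisimple part, Proposition~\ref{Prop:Ls} states that the Killing form restricted to $L_s=\bigoplus_j\g_j$ is non-degenerate, while Proposition~\ref{prop:LL0} ensures that the $\Z$-grading is orthogonal under the Killing form. Together, these imply that the restriction of the Killing form to $\g_i\times\g_{-i}$ is non-degenerate, and its $L$-invariance promotes the induced map $\g_i\to\g_{-i}^*$ to a $\g_0$-module isomorphism.

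For the radical part the Killing form is useless, as it vanishes on $K$. Instead, I would use the preceding corollary, $[\kf_i,\kf_{-i}]\subseteq Z(L)$, to produce a bilinear map $\kf_i\times\kf_{-i}\to Z(L)$. Since $\g_0$ acts trivially on $Z(L)$, a Jacobi identity computation shows that the induced map $\phi:\kf_i\to\mathrm{Hom}(\kf_{-i},Z(L))$, $x\mapsto[x,\,\cdot\,]$, is $\g_0$-equivariant, and the reflexivity axiom (Definition~\ref{Def:regular}(iii)) is precisely the statement that $\phi$ is injective. Viewing $\mathrm{Hom}(\kf_{-i},Z(L))\cong\kf_{-i}^*\otimes Z(L)$, with trivial $\g_0$-action on the second factor, yields an embedding of $\g_0$-modules $\kf_i\hookrightarrow\kf_{-i}^*\otimes Z(L)$, so every irreducible $\g_0$-constituent of $\kf_i$ occurs in $\kf_{-i}^*$. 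Running the same argument with $i$ and $-i$ swapped gives the reverse inclusion of irreducible constituents, and the multiplicity-free hypothesis (Definition~\ref{Def:regular}(ii)) on both sides then forces a bijection, yielding $\kf_i\cong\kf_{-i}^*$.

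The main obstacle is precisely this last step: extracting the isomorphism $\kf_i\cong\kf_{-i}^*$ from the a priori weaker injection into $\kf_{-i}^*\otimes Z(L)$. Since $Z(L)$ can be of arbitrary dimension, the image of $\phi$ might in principle contain irreducibles with inflated multiplicities, and merely having an embedding is not enough. It is the combined use of reflexivity in both directions together with the multiplicity-free assumption that rules this out and is essential to close the argument. Assembling the two matchings then produces $L_i=\g_i\oplus\kf_i\cong\g_{-i}^*\oplus\kf_{-i}^*\cong L_{-i}^*$ as $\g_0$-modules.
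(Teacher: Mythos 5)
Your proof is correct, and every ingredient you use is legitimately available at this point of the paper: the $\g_0$-module decomposition $L_i=\g_i\oplus\kf_i$ (valid since $[Z_s,\g_i]\subseteq\g_i\cap\kf_i=\z$ in the normal setting), non-degeneracy of the Killing form on $L_s$ together with the orthogonality of the grading, the corollary $[\kf_i,\kf_{-i}]\subseteq Z(L)$ (which rests on Theorem~\ref{CKZ} and Corollary~\ref{Cor:NLLC}, not on the present theorem, so there is no circularity), reflexivity, and the multiplicity-free condition. You also correctly identified and closed the one delicate point, namely upgrading the embedding $\kf_i\hookrightarrow\kf_{-i}^*\otimes Z(L)$ to an isomorphism $\kf_i\cong\kf_{-i}^*$ by running reflexivity in both directions and using multiplicity-freeness to force a bijection between the (pairwise non-isomorphic) constituents. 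The paper's proof shares the same engine for the radical part --- the $Z(L)$-valued bracket pairing constrained by reflexivity --- but is organised differently: it works with root vectors, first proving $\Phi_{-i}=-\Phi_i$ (if $-\beta\notin\Phi_{-i}$ then $e_\beta\in K$ and $[e_\beta,\kf_{-i}]\subseteq Z(L)$ must vanish by weight reasons, contradicting reflexivity), and then, for each irreducible $W\subseteq\kf_i$, produces $W'\subseteq\kf_{-i}$ with $\z\neq[W',W]\subseteq Z(L)$ and concludes $W'\cong W^*$ by Schur's Lemma; the summand $\g_i$ is treated only implicitly there. Your route buys an explicit and clean treatment of the semisimple part via the Killing form (never invoked in the paper's proof), avoids root-space bookkeeping, and handles honestly the possibility $\dim Z(L)>1$, which the paper's bare appeal to Schur glosses over; the paper's route buys the intermediate statement $\Phi_{-i}=-\Phi_i$ directly, which is recorded as the corollary immediately following the theorem, whereas in your argument it emerges only a posteriori from the module isomorphism.
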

\begin{proof}
Let $0 < |i| \leq m$ and $\beta \in \Phi_i$. First suppose $-\beta \notin \Phi_{-i}$. 
Then, $(e_{\beta},L_{-i})=\z$, hence $e_{\beta} \in K$, so
$[e_{\beta},\kf_{-i}]  \subseteq [\kf_{i},\kf_{-i}] \subseteq Z(L)$.
Since the zero weight does not appear in $[e_{\beta},\kf_{-i}]$, it follows that $[e_{\beta},\kf_{-i}] = \z$, 
in contradiction to reflexivity. Hence, the weights in $\Phi_i$ are the negative of the weights in $\Phi_{-i}$.
Second, given an irreducible $\g_0$-module $W \subseteq \kf_i$,
there exists an irreducible $\g_0$-module $W' \subseteq \kf_{-i}$ such that 
$\z \neq [W',W] \subseteq Z(L)$. By Schur's Lemma, this can only be true if $W'$ is isomorphic to the dual of $W$,
so the dual of every irreducible $\g_0$-submodule of $L_i$ must occur in $L_{-i}$. The result now follows.
\end{proof}
\begin{Corollary}
For each index $i$ such that $0<|i| \leq m$, $\Phi_{-i} = -\Phi_i$.
\end{Corollary}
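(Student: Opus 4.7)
The plan is to deduce this identity as a direct consequence of the $\g_0$-module isomorphism provided by Theorem~\ref{Theo:LiLmi}. First, I would note that for $i \neq 0$, every $H$-weight occurring in $L_i$ is in fact a root: if $x \in L_i$ were a zero-weight vector, then in particular $[d,x] = 0$, yet the level operator $d$ acts on $L_i$ by multiplication by $i \neq 0$, forcing $x = 0$. Hence $\Phi_i$ coincides with the set of all $H$-weights appearing in $L_i$, and similarly $\Phi_{-i}$ with those appearing in $L_{-i}$.

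Next, since every root vanishes on $Z(L)$, an $H$-weight of a vector in $L_{\pm i}$ is completely determined by its restriction to $\h_0$, and thus by the $\g_0$-module structure. Theorem~\ref{Theo:LiLmi} supplies a $\g_0$-module isomorphism between $L_i$ and the dual of $L_{-i}$, and since the weights of a dual module are the negatives of the original weights, the $\h_0$-weights occurring in $L_i$ are precisely the negatives of those occurring in $L_{-i}$. Combining these two observations immediately yields $\Phi_i = -\Phi_{-i}$, which is the claim.

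The principal difficulty has already been dispatched in the proof of Theorem~\ref{Theo:LiLmi}, where the reflexivity and non-singularity conditions together with Schur's lemma were used to establish the duality. Given that theorem, the present corollary is essentially a translation between module-theoretic and root-theoretic language; the only point to watch is the distinction between $\h_0$-weights and $H$-weights, but the triviality of $Z(L)$ in the adjoint representation renders this distinction inessential.
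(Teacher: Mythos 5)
Your proof is correct and in substance the same as the paper's: the paper simply remarks that the corollary follows from the proof of Theorem~\ref{Theo:LiLmi}, whose first part shows directly (via reflexivity) that the weights in $\Phi_i$ are the negatives of those in $\Phi_{-i}$. You instead re-derive this from the theorem's duality statement using the standard fact that dual-module weights are negated, plus the routine observations that roots vanish on $Z(L)$ and that no zero weight occurs in $L_i$ for $i\neq 0$ — an equivalent packaging of the same argument.
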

\begin{proof}
The result follows from the proof of Theorem~\ref{Theo:LiLmi}.
\end{proof}

For $\beta \in \Phi$, we define $t_{\beta} \in \h_0$ in the usual way by
\begin{align}
\beta(h) = (t_{\beta},h),\qquad \forall h \in \h_0.
\end{align}
Since the Killing form restricted to $\h_0$ is non-degenerate, this is well-defined. 
\begin{Proposition}
\label{Prop:zinZ}
Let $\beta \in \Phi_i$, $0 < |i| \leq m$, and suppose $x \in L_{\beta}$ and $y \in L_{-\beta}$. 
Then, there exists $z \in Z(L)$ such that
\begin{align}
[x,y] = (x,y)t_{\beta} + z.
\end{align}
\end{Proposition}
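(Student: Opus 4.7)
I would decompose $[x,y]\in L_0=\g_0\oplus\kf_0$ as $[x,y]=a+b$ with $a\in\g_0$ and $b\in\kf_0$, and identify $a=(x,y)t_\beta$ and $b\in Z(L)$ separately.

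For $a$: since $x\in L_\beta$ and $y\in L_{-\beta}$, Jacobi gives $[h,[x,y]]=\z$ for all $h\in\h_0$, so $[x,y]$ is weight zero, and since $\g_0$ and $\kf_0$ are $\h_0$-stable, so are $a$ and $b$ individually. The reductivity of $\g_0$ with CSA $\h_0$ forces $a\in C_{\g_0}(\h_0)=\h_0$. Invariance of the Killing form then gives $(h,[x,y])=([h,x],y)=\beta(h)(x,y)=(h,(x,y)t_\beta)$ for every $h\in\h_0$, while $(h,b)=0$ since $b\in K$. Non-degeneracy of the Killing form on $\h_0$ therefore forces $a=(x,y)t_\beta$.

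For $b$: Theorem~\ref{CKZ} identifies $Z(L)=\Cf=C_0(\g_0)\cap K$, so since $b\in\kf_0\subseteq K$ it suffices to verify $[b,\g_0]=\z$. Decomposing $x=x_s+x_k$ and $y=y_s+y_k$ via the split $L_\beta=(L_\beta\cap L_s)\oplus(L_\beta\cap K)$ yields $b=[x_s,y_k]+[x_k,y_s]+[x_k,y_k]$, and the last term lies in $[\kf_i,\kf_{-i}]\subseteq Z(L)$ by the preceding corollary. For each mixed term, the bracket is the image of a $\g_0$-equivariant map $\s_i\otimes\kf_{-i}\to\kf_0$. A Schur-type argument modelled on the proof of Theorem~\ref{Theo:LiLmi}, using the $\g_0$-module duality $\kf_{-i}\cong\kf_i^*$ together with the multiplicity-free decomposition $L_i=\s_i\oplus\kf_i$ (which forces $\s_i$ and $\kf_i$ to share no $\g_0$-irreducible constituents), then constrains the image to the $\g_0$-invariant subspace of $\kf_0$, which is exactly $Z(L)$.

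The main obstacle is this final Schur-type argument. The multiplicity-free condition rules out trivial summands in $\s_i\otimes\kf_{-i}$ (since $\mathrm{Hom}_{\g_0}(\s_i,\kf_i)=\z$), so any nonzero image in $\kf_0$ would have only non-trivial $\g_0$-constituents and thus sit outside $Z(L)$. Closing the argument hence requires exploiting the reflexivity and non-singularity conditions, together with the generating property $\kf_0=\n\cap L_0$ from Proposition~\ref{Prop:k0}, to force the mixed brackets to vanish, so that the only surviving $\kf_0$-contribution to $b$ is the $[\kf_i,\kf_{-i}]$ term already known to lie in $Z(L)$.
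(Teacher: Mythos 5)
Your treatment of the $\g_0$-component is correct and is essentially the paper's own computation: invariance of the Killing form together with its non-degeneracy on $\h_0$ pins the $\h_0$-part of $[x,y]$ down to $(x,y)t_\beta$. The genuine gap is the claim that the $\kf_0$-component $b$ lies in $Z(L)$. Writing $x=x_s+x_k$, $y=y_s+y_k$, the term $[x_k,y_k]$ is indeed central, but the mixed terms $[x_s,y_k]+[x_k,y_s]$ are only known to be weight-zero elements of $\kf_0$, and weight zero does not place them in $Z(L)$: the zero-weight space of $\kf_0$ is $Z(L)\oplus\zf_0$ in the notation of Section~\ref{Sec:L0}, and $\zf_0$ need not vanish. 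Moreover, granting your duality $\kf_{-i}\cong\kf_i^*$, your Schur-type observation cuts the other way: since $\s_i$ and $\kf_i$ share no irreducible $\g_0$-constituents, the $\g_0$-submodule $[\s_i,\kf_{-i}]+[\kf_i,\s_{-i}]\subseteq\kf_0$ has no trivial constituents, so a nonzero mixed contribution can never land in $Z(L)$; the proposition therefore forces the mixed brackets to vanish identically, and that vanishing is exactly the step you concede you cannot supply. Invoking reflexivity, non-singularity and Proposition~\ref{Prop:k0} as ingredients that ``should'' close the argument is a statement of intent, not a proof, so the proposal does not establish the statement.

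That the missing step has real content is shown by the normal but non-regular conformal Galilei algebra $g_1(n)$ with $n\geq3$ odd: take $x=C$ and $y$ the $so(n)$-weight-zero vector of $\mathsf{P}_0$, so that $y\in L_{-\beta}$ for $\beta$ the root of $C$ and $(x,y)=0$ (as $y\in K$); yet $[x,y]$ is a nonzero vector of $\mathsf{P}_1=\m_0$ while $Z(L)=\z$, so the asserted identity fails precisely because a mixed bracket in $[\g_1,\kf_{-1}]$ survives. Hence any correct argument must genuinely use regularity and cannot follow from weight and multiplicity bookkeeping alone. For comparison, the paper's proof is short and of a different shape: it observes at the outset that $[x,y]\in H$, i.e.\ that no $\zf_0$-component occurs at all, and then concludes from $(h,[x,y]-(x,y)t_\beta)=0$ for all $h\in\h_0$ that $[x,y]-(x,y)t_\beta\in K\cap H=Z(L)$. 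The difficulty you isolate is thus concentrated in that opening observation; your split into $a$ and $b$ makes the issue explicit but does not resolve it.
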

\begin{proof}
Observe that $[x,y] \in H$, and let $h \in \h_0$ be arbitrary. Then,
\begin{align}
(h,[x,y]) = ([h,x],y) = \beta(h)(x,y) = (t_{\beta},h)(x,y)=((x,y)t_{\beta},h) = (h,(x,y)t_{\beta}),
\end{align}
so $[x,y] - (x,y)t_{\beta} \in H$ is orthogonal to $\h_0$, hence $[x,y] - (x,y)t_{\beta} \in K \cap H = Z(L)$. 
\end{proof}
\begin{Corollary}
Let $\beta \in \Phi_i$, $0 < |i| \leq m$, and suppose
$x \in L_{\beta}$ and $y \in L_{-\beta}$ such that $(x,y) \neq 0$. Then,
$\langle x,y,[x,y]\rangle\cong sl(2,\C)$.
\end{Corollary}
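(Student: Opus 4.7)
The plan is to leverage Proposition~\ref{Prop:zinZ} together with the fact that elements of $H$ are semisimple on $L$, in order to promote $\{x, y, [x,y]\}$ to a standard $sl(2,\C)$ triple. Setting $h := [x,y]$, the proposition gives $h = (x,y)\, t_\beta + z$ for some $z \in Z(L)$. Since $z$ is central, a direct computation yields
\begin{align*}
 [h, x] = (x,y)\, \beta(t_\beta)\, x = (x,y)(t_\beta, t_\beta)\, x, \qquad [h, y] = -(x,y)(t_\beta, t_\beta)\, y,
\end{align*}
so the subspace $V := \spa\{x,y,h\}$ is already closed under the Lie bracket, provided $h \neq 0$ and $x, y$ are linearly independent; the latter is automatic, as $x$ and $y$ are weight vectors for the distinct weights $\pm\beta$ of $H$.

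It then remains to verify that $\mu := (x,y)(t_\beta, t_\beta) \neq 0$, and this will simultaneously force $h \neq 0$. I plan to argue by contradiction in the style of Humphreys. If $\mu = 0$, then $[h,x] = [h,y] = 0$, so $V$ is a (nilpotent, Heisenberg-type) solvable Lie subalgebra of $L$. Applying Lie's theorem to the adjoint action of $V$ on $L$, the derived subalgebra $[V,V] = \C h$ must act on $L$ by nilpotent endomorphisms, so $\ad_h$ is nilpotent. But $h \in H$, and every element of $H$ is semisimple on $L$, forcing $\ad_h$ to be simultaneously semisimple and nilpotent, hence zero. Thus $h \in Z(L)$. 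Combining this with $h = (x,y)\, t_\beta + z$ and $z \in Z(L)$ then gives $(x,y)\, t_\beta \in Z(L) \cap \h_0 = \z$, using the direct sum $H = \h_0 \oplus Z(L)$. Since $\beta \neq 0$, the non-degeneracy of the Killing form on $\h_0$ yields $t_\beta \neq 0$, contradicting $(x,y) \neq 0$.

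The main obstacle is precisely this last step: extracting the contradiction requires combining several earlier results, notably Proposition~\ref{Prop:zinZ}, the decomposition $H = \h_0 \oplus Z(L)$, the non-degeneracy of the Killing form on $\h_0$, and the semisimplicity of every element of $H$ on $L$. Once $\mu \neq 0$ has been established (and hence $h \neq 0$ and $\dim V = 3$), a standard rescaling $e := x$, $f := (2/\mu)\, y$, $h' := (2/\mu)\, h$ delivers the canonical relations $[h',e] = 2e$, $[h',f] = -2f$, $[e,f] = h'$, completing the identification $\langle x, y, [x,y] \rangle \cong sl(2,\C)$.
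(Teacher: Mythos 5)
Your proof is correct, but it reaches the crucial non-vanishing $(x,y)\,\beta(t_\beta)\neq 0$ by a genuinely different route than the paper. The paper's proof is a one-liner: it observes that $(x,y)\neq 0$ forces $L_\beta$ and $L_{-\beta}$ to be root spaces for the semisimple part $S$ (a vector pairing non-trivially under the Killing form cannot lie in $K$), and then simply imports the classical fact that roots of a semisimple Lie algebra are non-isotropic, so $\beta(t_\beta)\neq 0$ and the displayed brackets $[[x,y],x]=(x,y)\beta(t_\beta)x$, $[[x,y],y]=-(x,y)\beta(t_\beta)y$ already exhibit $sl(2,\C)$. You instead rerun Humphreys' argument inside the graded setting: assuming $\mu=(x,y)(t_\beta,t_\beta)=0$, the span $\langle x,y,[x,y]\rangle$ is solvable, Lie's theorem makes $\ad_{[x,y]}$ nilpotent, semisimplicity of elements of $H$ (valid here since regular implies normal) forces $[x,y]\in Z(L)$, and then Proposition~\ref{Prop:zinZ} together with $\h_0\cap Z(L)=\z$ yields $(x,y)t_\beta=0$, a contradiction. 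What your approach buys is self-containedness — you never need to identify $L_{\pm\beta}$ as root spaces of $S$, and the argument would survive even if one only knew $x,y$ pair non-trivially — at the cost of being longer; the paper's proof buys brevity by delegating to standard semisimple theory. One small point to make explicit: your step ``$\beta\neq 0$ implies $t_\beta\neq 0$'' uses that $\beta$ restricted to $\h_0$ is nonzero, which follows from the paper's remark that every root vanishes on $Z(L)$ combined with $H=\h_0\oplus Z(L)$; as written, non-degeneracy on $\h_0$ alone does not rule out a root supported only on $Z(L)$. This is easily patched and does not affect the validity of the argument.
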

\begin{proof}
For $(x,y)\neq0$, Proposition~\ref{Prop:zinZ} implies that $L_\beta$ and $L_{-\beta}$ are root spaces for $S$.
It follows that
\begin{align}
 [[x,y],x]=(x,y)\beta(t_\beta)x\neq0,\qquad [[x,y],y]=-(x,y)\beta(t_\beta)y\neq0,
\end{align}
hence the result.
\end{proof}
\begin{Lemma}
\label{Lem:Si}
For every $i$,
\begin{align}
 S_i:=\{x \in L_i\,|\,[x,L_{-i}] \subseteq Z(L)\}
\end{align}
is an $L_0$-submodule of $L_i$.
\end{Lemma}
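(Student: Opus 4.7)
The plan is to verify the two defining features of an $L_0$-submodule: linearity and stability under the adjoint action of $L_0$. Linearity of $S_i$ is immediate from the definition, since for $x,x'\in S_i$ and scalars $a,a'\in\C$, we have
\begin{align}
[ax+a'x',L_{-i}]\subseteq a[x,L_{-i}]+a'[x',L_{-i}]\subseteq Z(L),
\end{align}
and $Z(L)$ is a subspace. So the only real content is showing that $[L_0,S_i]\subseteq S_i$.

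For stability, I would fix $h\in L_0$ and $x\in S_i$ and aim to show that $[h,x]\in S_i$. Since $[L_0,L_i]\subseteq L_i$ by the grading, we have $[h,x]\in L_i$, so it remains to check the bracket condition with $L_{-i}$. The natural tool is the Jacobi identity: for any $y\in L_{-i}$,
\begin{align}
\bigl[[h,x],y\bigr]=\bigl[h,[x,y]\bigr]-\bigl[x,[h,y]\bigr].
\end{align}
The first term vanishes because $[x,y]\in Z(L)$ (as $x\in S_i$), hence $[h,[x,y]]=0$. For the second term, $[h,y]\in L_{-i}$ again by the grading, so $[x,[h,y]]\in[x,L_{-i}]\subseteq Z(L)$.

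Combining these, $[[h,x],y]\in Z(L)$ for every $y\in L_{-i}$, so $[h,x]\in S_i$ as required. There is no real obstacle here; the argument is a routine application of the Jacobi identity together with the fact that $L_0$ preserves each graded component, and the conclusion $S_i$ is an $L_0$-submodule of $L_i$ follows immediately.
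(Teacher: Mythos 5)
Your proof is correct and follows essentially the same route as the paper's: both apply the Jacobi identity to $[[h,x],y]$ with $y\in L_{-i}$, kill one term because $[x,y]\in Z(L)$, and place the other in $[x,L_{-i}]\subseteq Z(L)$ using $[L_0,L_{-i}]\subseteq L_{-i}$. The explicit check that $S_i$ is a subspace is a harmless addition the paper leaves implicit.
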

\begin{proof}
For $y\in L_0$ and $x\in S_i$, we have
\begin{align}
[[y,x],L_{-i}] =[[L_{-i},y],x]+[[x,L_{-i}],y] \subseteq [L_{-i},x]\subseteq Z(L),
\end{align}
so $[y,x]\in S_i$.
\end{proof}
\begin{Proposition}
\label{Prop:xK}
Let $0 \neq x \in L_{\beta}\subseteq L_i$, where $\beta\in\Phi$ and $0 < |i| \leq m$, 
and suppose $(x,L_{-\beta})=\z$. Then, the following holds:
\begin{itemize}
\item[{\rm (i)}]
$x \in K$.
\item[{\rm (ii)}]
$[x,L_{-\beta} \cap \kf_{-i}] \subseteq Z(L)$.
\item[{\rm (iii)}]
If $L_{\beta'} \subseteq \kf_{-i}$ with $\beta + \beta' \neq 0$, then $[x,L_{\beta'}] = \z$.
\item[{\rm (iv)}]
$[x,\kf_{-i}] \subseteq Z(L)$.
\end{itemize} 
\end{Proposition}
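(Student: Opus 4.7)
I would prove the four parts in the order stated, each feeding into the next. The key ingredients are (a) orthogonality of root spaces under the Killing form (from the Remark preceding Theorem~\ref{Theo:LiLmi} together with Proposition~\ref{prop:LL0}), (b) the Corollary stating $[\kf_i,\kf_{-i}] \subseteq Z(L)$ for regular algebras, and (c) the fact that $Z(L)$ consists entirely of zero-weight vectors.

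For part (i), the idea is to show that $x$ is Killing-orthogonal to all of $L$. By Proposition~\ref{prop:LL0}, $(x,L_j)=\z$ for every $j\neq -i$, so only $(x,L_{-i})$ is at issue. Decomposing $L_{-i}$ into root spaces $L_{\beta'}$, $\beta'\in\Phi_{-i}$, the Remark at the start of Section~\ref{Sec:Structure} gives $(x,L_{\beta'})=0$ whenever $\beta+\beta'\neq 0$, i.e., whenever $\beta'\neq -\beta$. The only remaining component is $(x,L_{-\beta})$, which vanishes by hypothesis. Hence $(x,L)=\z$ and $x\in K$, so in fact $x\in K\cap L_i=\kf_i$.

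For part (ii), now that $x\in\kf_i$, we simply apply the Corollary from the regular-algebras section:
\begin{align}
[x,L_{-\beta}\cap\kf_{-i}]\subseteq [\kf_i,\kf_{-i}]\subseteq Z(L).
\end{align}
For part (iii), the bracket $[x,L_{\beta'}]$ lies in $L_{\beta+\beta'}\subseteq L_0$, so it is a sum of vectors of weight $\beta+\beta'\neq 0$. At the same time, since both $x$ and $L_{\beta'}$ lie in $K$ (the latter because $L_{\beta'}\subseteq\kf_{-i}$), we have $[x,L_{\beta'}]\subseteq [\kf_i,\kf_{-i}]\subseteq Z(L)$. But every element of $Z(L)$ has weight zero, so the only way a vector of nonzero weight can sit in $Z(L)$ is for it to be zero, giving $[x,L_{\beta'}]=\z$.

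For part (iv), the strategy is to decompose $\kf_{-i}$ into its root-space components under $H$. By Lemma~\ref{Lem:Si} and the complete reducibility built into regularity, $\kf_{-i}=\bigoplus_{\beta'\in\Phi_{-i}^{\kf}}L_{\beta'}$ (after restricting to the part lying in $\kf_{-i}$). For each such $\beta'$, either $\beta'=-\beta$, in which case $L_{\beta'}\subseteq L_{-\beta}\cap\kf_{-i}$ and part (ii) applies, or $\beta+\beta'\neq 0$ and part (iii) applies giving a vanishing bracket; in either case $[x,L_{\beta'}]\subseteq Z(L)$. Summing yields $[x,\kf_{-i}]\subseteq Z(L)$. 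I do not expect any serious obstacle here; the only mildly delicate point is the weight-zero argument in (iii), which requires remembering that $Z(L)\subseteq H$ and that $Z(L)$ kills every root.
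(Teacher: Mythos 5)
Your proposal is correct and follows essentially the same route as the paper: part (i) via Killing-form orthogonality of graded components and root spaces, parts (ii)–(iii) via the Corollary $[\kf_i,\kf_{-i}]\subseteq Z(L)$ together with the observation that $Z(L)$ consists of zero-weight vectors, and (iv) by decomposing $\kf_{-i}$ into weight components and combining (ii) and (iii). The only difference is that you spell out the orthogonality argument for (i) that the paper leaves implicit; the inessential citation of Lemma~\ref{Lem:Si} in (iv) could simply be replaced by the semisimple action of $H$ on $L$.
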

\begin{proof}
Under the given assumptions, we have $(x,L_{-i})=\z$, so $x \in K$, which proves part (i). Since $K$ is an 
ideal of $L$, part (ii) follows from the observation that $[x,L_{-\beta}] \subseteq [\kf_{i},\kf_{-i}] \subseteq Z(L)$. As 
to (iii), we have $[x,L_{\beta'}] \subseteq [\kf_{i},\kf_{-i}] \subseteq Z(L)$. Since $\beta + \beta' \neq 0$, 
this can only occur if $[x,L_{\beta'}] = \z$. Finally, part (iv) follows from (ii) and (iii).
\end{proof}
\begin{Corollary}
\label{Cor:LiLmi}
For $0<|i| \leq m$, if $L_i$ is irreducible as an $L_0$-module, then $[L_i,L_{-i}] \subseteq Z(L)$.
\end{Corollary}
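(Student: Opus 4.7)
The natural route is to invoke Lemma~\ref{Lem:Si}: the subset
\[
S_i := \{x \in L_i \,|\, [x, L_{-i}] \subseteq Z(L)\}
\]
is an $L_0$-submodule of $L_i$, and the assertion $[L_i,L_{-i}] \subseteq Z(L)$ is precisely the statement $S_i = L_i$. Under the hypothesised $L_0$-irreducibility, Schur's alternative leaves only $S_i = \z$ or $S_i = L_i$, so the task reduces to exhibiting a single nonzero element of $S_i$.

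A convenient first reduction is to split on whether $L_i$ lies inside the Killing radical. Since $K$ is an ideal of $L$, $\kf_i = K \cap L_i$ is itself an $L_0$-submodule of $L_i$, so irreducibility dichotomises into $\kf_i = L_i$ and $\kf_i = \z$. The first subcase is essentially immediate: Theorem~\ref{Theo:LiLmi} identifies $L_{-i}$ as the $\g_0$-dual of $L_i$, and the non-degenerate Killing pairing of $\s_i$ with $\s_{-i}$ forces $\dim\s_{-i}=\dim\s_i=0$, so $L_{-i}=\kf_{-i}$; the corollary $[\kf_i,\kf_{-i}]\subseteq Z(L)$ already established under regularity then gives $S_i=L_i$ directly.

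The main obstacle is the second subcase $\kf_i = \z$, in which $L_i = \s_i \subseteq L_s$ and, by the same dimension argument, $L_{-i} = \s_{-i}$. Here Proposition~\ref{Prop:zinZ} expresses brackets $[x,y]$ with $x \in L_\beta \subseteq \s_i$ and $y \in L_{-\beta} \subseteq \s_{-i}$ as $(x,y)t_\beta + z$ with $z \in Z(L)$ but $t_\beta \in \h_0$ a priori non-central, while the Killing pairing of $\s_i$ with $\s_{-i}$ is non-degenerate so the scalar $(x,y)$ need not vanish. Extracting centrality therefore requires more than Lemma~\ref{Lem:Si} alone; I would attempt to close this gap by exploiting that the $L_0$-action on $L_i = \s_i$ factors through $\s_0$ (because $\kf_0$ annihilates $\kf_i=\z$) together with the full force of the regularity axioms — completeness and reflexivity — to pin down $t_\beta$ in a subspace compatible with $Z(L)$. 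This reconciliation of semisimple-type bracket behaviour inside $L_s$ with the centrality conclusion is where I expect the real substance of the argument to reside.
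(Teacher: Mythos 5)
Your reduction via Lemma~\ref{Lem:Si} and the submodule dichotomy for $\kf_i$, and your treatment of the subcase $\kf_i=L_i$, are correct and essentially reproduce the paper's argument: the paper's entire proof is the one-liner that the result follows from Lemma~\ref{Lem:Si} and Proposition~\ref{Prop:xK}\,(iv), and when $L_i\subseteq K$ every nonzero root vector $x\in L_\beta\subseteq\kf_i$ automatically satisfies $(x,L_{-\beta})=\z$ because $(K,L)=\z$, so Proposition~\ref{Prop:xK}\,(iv) applies; combining this with your observation that $\g_i=\z$ forces $\g_{-i}=\z$ (non-degeneracy of the Killing form on $L_s$ together with Proposition~\ref{prop:LL0}), hence $L_{-i}=\kf_{-i}$, one gets $[L_i,L_{-i}]=[\kf_i,\kf_{-i}]\subseteq Z(L)$, with or without the irreducibility dichotomy. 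Note that the step $L_{-i}=\kf_{-i}$, which you supply, is needed even on the paper's route, since Proposition~\ref{Prop:xK}\,(iv) only controls $[x,\kf_{-i}]$ and says nothing about $[x,\g_{-i}]$; the paper leaves this implicit.

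The subcase $\kf_i=\z$, which you rightly identify as the obstacle, cannot be closed, because there the statement as literally written fails: take $L=sl(2,\C)$ with the height gradation of Section~\ref{Sec:semi}, which the paper itself certifies as regular and irreducible; then $m=1$, $L_1=\C e$ is irreducible as an $L_0$-module, yet $[L_1,L_{-1}]=\C h\not\subseteq Z(L)=\z$, and any single-node gradation of a simple Lie algebra exhibits the same phenomenon. The paper's cited proof tacitly presupposes the existence of a nonzero $x\in L_\beta\subseteq L_i$ with $(x,L_{-\beta})=\z$, which (for $L_i$ irreducible) is exactly the situation $\kf_i\neq\z$, and every later application of Corollary~\ref{Cor:LiLmi} (and of Corollary~\ref{Cor:Liirred}) is to indices of Heisenberg type in the sense of Definition~\ref{Def:stype}; the text immediately following Corollary~\ref{Cor:LiLmi} even concedes that in the semisimple-type case $e_\beta,f_\beta$ generate an $sl(2,\C)$ with non-central $[e_\beta,f_\beta]$. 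So do not attempt to extract centrality of $t_\beta$ from completeness and reflexivity -- that is hopeless; instead read the corollary with the effective hypothesis $\kf_i\neq\z$ (equivalently, by your dichotomy, $L_i\subseteq K$), in which case your argument is complete and coincides with the paper's.
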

\begin{proof}
This follows from Lemma~\ref{Lem:Si} and Proposition~\ref{Prop:xK} (iv).
\end{proof}

\noindent
It follows that for $L_\beta\subseteq L_i$, where $0<|i| \leq m$, the root-space generators 
$e_{\beta} \in L_{\beta}$ and $f_{\beta} \in L_{-\beta}$ either generate an $sl(2,\C)$ 
subalgebra or $[e_{\beta},f_{\beta}] \in Z(L)$. In the former case, the generators may be normalised
such that 
\begin{align}
h_{\beta}:=[e_{\beta},f_{\beta}]= \frac{2}{(\beta,\beta)} t_{\beta} + z,\qquad z \in Z(L),
\end{align}
while in the latter case, we have the following result.
\begin{Proposition}
\label{Prop:yHeis}
Let $0 \neq x \in L_{\beta} \cap L_i$, where $\beta\in\Phi$ and $0 < |i| \leq m$, and suppose $(x,L_{-\beta})=\z$. 
Then, there exist $y \in L_{-\beta} \cap \kf_{-i}$ and $z\in Z(L)$ such that $\langle x,y,z\rangle\cong\h_3$.
\end{Proposition}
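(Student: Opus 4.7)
The plan is to build the Heisenberg triple by extracting from a single reflexivity-produced element a weight vector of the exactly opposite weight to $x$, and then reading off the centre using Proposition~\ref{Prop:xK}.

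First I would record what is already in hand. By Proposition~\ref{Prop:xK}(i), the hypothesis $(x,L_{-\beta})=\z$ forces $x\in K$, so in fact $x\in\kf_i$; and by Proposition~\ref{Prop:xK}(iv), $[x,\kf_{-i}]\subseteq Z(L)$. Since $x\neq0$, the reflexivity condition from Definition~\ref{Def:regular}(iii) produces some $y'\in\kf_{-i}$ with $[x,y']\neq0$. Then $[x,y']$ is a nonzero element of $Z(L)$.

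The key step is to refine $y'$ into a weight vector of weight precisely $-\beta$. Decompose $y'=\sum_\gamma y'_\gamma$ into $H$-weight components; each $y'_\gamma$ lies in $\kf_{-i}$ because $\kf_{-i}$ is $H$-stable (this uses that $H\subseteq L_0$ normalises every $L_j$, together with $\kf_{-i}=\rf_{-i}$ from Proposition~\ref{Prop:gskr}(i), so $H$-stable). Then $[x,y'_\gamma]$ has weight $\beta+\gamma$, but must lie in $Z(L)\subseteq\kf_0$, which consists of weight-$0$ vectors by the Remark following the definition of roots. Hence $[x,y'_\gamma]=0$ unless $\gamma=-\beta$, so $[x,y']=[x,y'_{-\beta}]\neq0$. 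I then set $y:=y'_{-\beta}\in L_{-\beta}\cap\kf_{-i}$ and $z:=[x,y]\in Z(L)\setminus\{0\}$.

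It remains to verify the Heisenberg relations and linear independence. The brackets $[x,z]=[y,z]=0$ are automatic from $z\in Z(L)$, and $[x,y]=z$ by construction, so $\langle x,y,z\rangle$ is at most three-dimensional and closed. Linear independence follows because $x$, $y$, $z$ are weight vectors of three distinct weights: $\beta$, $-\beta$, and $0$, noting $\beta\neq0$ since $\beta(d)=i\neq0$. Thus $\langle x,y,z\rangle\cong\h_3$.

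The only nontrivial point is the weight-projection argument in the middle step; once one observes that $Z(L)$ has only weight zero, the rest is bookkeeping. I would expect this to be the place to write carefully, but no genuine obstacle appears.
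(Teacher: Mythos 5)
Your proof is correct and follows essentially the same route as the paper: both rest on the reflexivity condition together with Proposition~\ref{Prop:xK} (the paper phrases the existence of $y$ contrapositively via part (iii) and then invokes Proposition~\ref{Prop:zinZ} with $(x,y)=0$, while you apply reflexivity directly and isolate the $-\beta$ weight component, which is the same weight argument). One tiny nitpick: the fact that nonzero elements of $Z(L)$ have weight zero follows simply from $Z(L)\subseteq C_L(H)$, not really from the remark that roots vanish on $Z(L)$; otherwise the bookkeeping is fine.
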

\begin{proof}
If $0 \neq x \in L_{\beta}$, then there exists $y \in L_{-\beta}$ such that $[x,y] \neq 0$. 
Otherwise, by Proposition~\ref{Prop:xK}, we would have $x \in \kf_{i}$ with $[x,\kf_{-i}] = \z$, in contradiction with
reflexivity. The result now follows from Propositions~\ref{Prop:zinZ} and~\ref{Prop:xK}.
\end{proof}
\begin{Corollary} 
\label{Cor:Liirred}
Let $L_i\neq\z$, $0 < |i| \leq m$, be irreducible as an $L_0$-module. Then, 
\begin{itemize}
\item[{\rm (i)}]
$L_i \oplus L_{-i} \subseteq K$;
\item[{\rm (ii)}]
$[L_i,L_{-i}] = \C z$ for some nonzero $z\in Z(L)$.
\end{itemize}
\end{Corollary}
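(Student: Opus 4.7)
The plan is to prove (i) and (ii) in succession, with (i) supplying the structural input $L_{\pm i}\subseteq K$ that (ii) will then exploit.

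For (i), the key observation is that $\kf_i=K\cap L_i$ is an $L_0$-submodule of $L_i$ (as $K$ is an ideal of $L$), so irreducibility forces $\kf_i\in\{\z,L_i\}$. I will rule out $\kf_i=\z$ by contradiction: the decomposition $L_i=\s_i\oplus\kf_i$ of \eqref{Li} would then give $L_i=\s_i\subseteq S$, nonzero, so the graded subspace $\s_i$ of the semisimple algebra $S$ contains a root space $S_\beta$; standard semisimple theory supplies $f_\beta\in S_{-\beta}\subseteq\s_{-i}$ with $[e_\beta,f_\beta]\neq 0$ in $S$, contradicting $[L_i,L_{-i}]\subseteq Z(L)\subseteq K$ (Corollary~\ref{Cor:LiLmi}) together with $S\cap K=\z$. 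Hence $L_i=\kf_i\subseteq K$. The corresponding statement $L_{-i}\subseteq K$ then follows from the perfect pairing of $\s_i$ with $\s_{-i}$ induced by the Killing form on $S$ (non-degenerate on $S$ and orthogonal on distinct grades by Proposition~\ref{prop:LL0}), which forces $\dim\s_{-i}=\dim\s_i=0$.

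For (ii), Corollary~\ref{Cor:LiLmi} supplies $[L_i,L_{-i}]\subseteq Z(L)$, and reflexivity (condition (iii) of Definition~\ref{Def:regular}) supplies non-vanishing: any $0\neq x\in L_i=\kf_i$ satisfies $[x,\kf_{-i}]\neq\z$. The delicate step is bounding $\dim[L_i,L_{-i}]$ by one, for which I will first show $L_{-i}$ is itself $L_0$-irreducible. Suppose otherwise and pick an irreducible $L_0$-submodule $W\subsetneq L_{-i}$ (via a minimal nonzero submodule inside a proper nonzero one); set $W^{\perp}:=\{x\in L_i\,|\,[x,W]=\z\}$. The Jacobi identity, combined with $L_0$-invariance of $W$ and $[L_0,Z(L)]=\z$, shows $W^\perp$ is an $L_0$-submodule of $L_i$, so either $W^\perp=\z$ or $W^\perp=L_i$. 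The latter is excluded by reflexivity applied to any $0\neq w\in W\subseteq\kf_{-i}$. In the former, the bracket defines an injective $L_0$-equivariant map $L_i\hookrightarrow\mathrm{Hom}(W,Z(L))\cong W^*\otimes Z(L)$, a direct sum of copies of the irreducible module $W^*$; projecting to any nonzero summand yields an $L_0$-isomorphism $L_i\cong W^*$, whence $\dim L_i=\dim W<\dim L_{-i}$, contradicting $\dim L_i=\dim L_{-i}$ from Theorem~\ref{Theo:LiLmi}.

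With both $L_i$ and $L_{-i}$ known to be $L_0$-irreducible, any $z^*\in Z(L)^*$ converts the bracket into an $L_0$-invariant bilinear form on $L_i\times L_{-i}$, equivalently an element of $\mathrm{Hom}_{L_0}(L_i,L_{-i}^*)$; Schur's Lemma makes this space one-dimensional, so all such forms are scalar multiples of a single form and $\dim[L_i,L_{-i}]\leq 1$. Combined with the non-vanishing above, this yields $[L_i,L_{-i}]=\C z$ for some nonzero $z\in Z(L)$. The main obstacle I anticipate is the $L_0$-irreducibility of $L_{-i}$: since $Z(L)$ can be multi-dimensional, a naive dimension bound on $\mathrm{Hom}(W,Z(L))$ is too weak, and one must package the bracket into the embedding $L_i\hookrightarrow W^*\otimes Z(L)$ and extract a genuine $L_0$-isomorphism $L_i\cong W^*$ via a Schur-type projection onto the $W^*$-summands.
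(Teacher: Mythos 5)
Your proposal is correct, but it reaches the result by a genuinely different route from the paper. For part (i), the paper argues via the Killing form: root vectors of $L_i$ are orthogonal to $L_{-i}$ (Proposition~\ref{Prop:xK}), hence lie in $K$, so $\kf_i\neq\z$ and irreducibility gives $L_i=\kf_i$; you instead rule out $\kf_i=\z$ by exhibiting a root space of $S$ inside $\s_i$ and contradicting $[L_i,L_{-i}]\subseteq Z(L)$ (Corollary~\ref{Cor:LiLmi}) with $S\cap K=\z$, then get $L_{-i}\subseteq K$ from the perfect pairing $\s_i\times\s_{-i}$ --- both arguments are sound. For part (ii), the paper stays within its root/weight machinery: it takes the lowest-weight vector $e_0\in L_i$ (unit multiplicity), gets $[e_0,f_0]=z\neq0$ from Proposition~\ref{Prop:yHeis}, gets $[e_0,L_{-i}]\subseteq\C z$ from Proposition~\ref{Prop:xK}\,(iii), and finishes with the $L_0$-submodule $\{x\in L_i\,|\,[x,L_{-i}]\subseteq\C z\}$, so it never needs $L_{-i}$ to be irreducible. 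You instead first prove the auxiliary fact that $L_{-i}$ is $L_0$-irreducible --- via the equivariant embedding $L_i\hookrightarrow\mathrm{Hom}(W,Z(L))\cong W^*\otimes Z(L)$, a Schur projection onto a $W^*$-summand, and the dimension equality from Theorem~\ref{Theo:LiLmi} --- and then bound $\dim[L_i,L_{-i}]\leq1$ by Schur's Lemma applied to the space of $L_0$-invariant pairings $L_i\times L_{-i}\to\C$, with non-vanishing supplied directly by reflexivity (Definition~\ref{Def:regular}\,(iii)). Your approach is weight-free, avoids the unit-multiplicity/duality bookkeeping for the extremal weight vectors, and yields the extra structural fact that $L_{-i}$ is irreducible; the paper's approach is shorter given its earlier propositions and works without that extra fact. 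All the ingredients you invoke (Corollary~\ref{Cor:LiLmi}, Theorem~\ref{Theo:LiLmi}, reflexivity) precede the statement, so there is no circularity; the only slight compressions --- that a nonzero map between irreducibles is an isomorphism since $W^*$ is irreducible, and the passage from proportionality of the forms $z^*\circ[\cdot,\cdot]$ to $\dim[L_i,L_{-i}]\leq1$ --- are routine and easily filled in.
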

\begin{proof}
By Proposition~\ref{Prop:xK}, we have $(x,L_{-i}) = \z$, so $x \in K$. Irreducibility then implies that 
$\z \neq K \cap L_i = L_i$, so $L_i \subseteq K$, which is enough to prove part (i). As to part (ii), let $\beta_0 \in \Phi_i$ be 
the lowest weight of $L_i$ (occurring with unit multiplicity in $\Phi_i$), $e_0 \in L_i$ a corresponding
lowest-weight vector, and $f_0 \in L_{-i}$ a corresponding highest-weight vector. Then, by Proposition~\ref{Prop:yHeis}, 
$[e_0,f_0] = z$ for some $0 \neq z \in Z(L)$. Hence, by Proposition~\ref{Prop:xK} (iii), 
$[e_0,L_{-i}] \subseteq \C z$. But $\{x \in L_i\,|\,[x,L_{-i}] \subseteq \C z\}$ is 
an $L_0$-submodule of $L_i$ since, for $s\in\{x \in L_i\,|\,[x,L_{-i}] \subseteq \C z\}$ and $x \in L_0$,
\begin{align}
 [[x,s],L_{-i}]=-[[L_{-i},x],s]-[[s,L_{-i}],x]\subseteq\C z.
\end{align}
\end{proof}

\noindent
The $\g_0$-module decomposition \eqref{Li} induces the following partitioning:
\begin{align}
\Phi_i = \Phi_i^\s \cup \Phi_i^\kf,\qquad\forall i.
\end{align}
Thus, in the case $|i| > m$, we have $L_i = \kf_i$ and $\Phi_i = \Phi_i^\kf$, while in the case $i = 0$, 
we have $L_0 = \g_0 \oplus \kf_0$ and $\Phi_0 = \Phi_0^\s \cup \Phi_0^\kf$. We set
\begin{align}
\Phi_\s:=\bigcup_i\Phi_i^\s,\qquad \Phi_\kf:=\bigcup_i\Phi_i^\kf,
\end{align}
so that $\Phi = \Phi_\s \cup \Phi_\kf$. 
\begin{Lemma}
Let $x \in \g_i$, $0 < |i| \leq m$. Then, $(x,\g_{-i})=\z$ implies $x=0$.
\end{Lemma}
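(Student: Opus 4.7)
The plan is to show that the hypothesis forces $(x,L)=\z$, so that $x\in K$, and then to exploit $x\in S$ to conclude $x=0$.

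First I would invoke Proposition~\ref{Prop:gskr}(i) to identify $\g_i=\s_i$ for $i\neq 0$, which places $x$ inside $S\subseteq L_s$. Next, by the orthogonality of the $\Z$-graded decomposition under the Killing form (Proposition~\ref{prop:LL0}), $x\in L_i$ is orthogonal to every $L_j$ with $j\neq -i$, so
\begin{align}
 (x,L)=(x,L_{-i}).
\end{align}
Using $L_{-i}=\g_{-i}\oplus\kf_{-i}$ together with the hypothesis $(x,\g_{-i})=\z$, this reduces to $(x,L)=(x,\kf_{-i})$.

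The key observation is then that $\kf_{-i}\subseteq K$, and $K$ is by definition the Killing radical, i.e.\ $(K,L)=\z$. Hence $(x,\kf_{-i})=\z$, and therefore $(x,L)=\z$, which by definition means $x\in K$. Combining this with $x\in\s_i\subseteq S$, and using $S\cap K\subseteq S\cap R=\z$ from the Levi decomposition, I conclude $x=0$.

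There is no genuine obstacle here; the argument is a short unpacking of the orthogonality properties of the Killing form relative to the $\Z$-grading and the Killing decomposition. The only point worth being careful about is not to confuse the Killing form of $L$ with that of $S$: everything is computed with the Killing form of $L$ throughout, so that the inclusion $\kf_{-i}\subseteq K$ immediately kills the remaining pairing.
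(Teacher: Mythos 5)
Your proof is correct and follows essentially the same route as the paper: graded orthogonality of the Killing form reduces everything to the pairing of $x$ with $L_{-i}$, and the hypothesis then forces $x$ into the Killing radical, which is incompatible with $x\in\s_i\subseteq L_s$. The only cosmetic difference is that you re-derive the needed non-degeneracy statement by hand (via $x\in K$ and $S\cap K\subseteq S\cap R=\z$) instead of simply citing Proposition~\ref{Prop:Ls}, which is what the paper's one-line proof does.
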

\begin{proof}
Let $x \in \g_i$. Since the Killing form restricted to $L_s$ is non-degenerate, we have
$(x,\g_{-i})=\z$, so $(x,L_s)=\z$, hence $x = 0$.
\end{proof}

\noindent
We thus arrive at the following analogue of Proposition~\ref{Prop:yHeis}.
\begin{Proposition}
Let $0 \neq x \in L_{\beta}\cap\g_i$, where $\beta\in\Phi$ and $0 < |i| \leq m$.
Then, there exists $y \in L_{-\beta} \cap \g_{-i}$ such that $(x,y) \neq 0$
and $\langle x,y, [x,y]\rangle\cong sl(2,\C)$. 
\end{Proposition}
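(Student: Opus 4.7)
The plan is to reduce the statement to the Corollary following Proposition~\ref{Prop:zinZ}, which produces an $sl(2,\C)$-triple whenever $x\in L_\beta$ and $y\in L_{-\beta}$ satisfy $(x,y)\neq 0$. Thus it is enough to exhibit a $y\in L_{-\beta}\cap\g_{-i}$ with $(x,y)\neq 0$; the $sl(2,\C)$-conclusion then comes for free.

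First I would invoke the Lemma immediately preceding the statement: since $0\neq x\in\g_i$ with $0<|i|\leq m$, its contrapositive furnishes some $\tilde{y}\in\g_{-i}$ with $(x,\tilde{y})\neq 0$. However, $\tilde{y}$ need not lie in the root space $L_{-\beta}$, so a refinement is required. Second, I would extract the correct weight component. Because every element of $H$ is semisimple on $L$ and $\g_{-i}$ is a $\g_0$-submodule (hence $H$-stable), $\g_{-i}$ decomposes as a direct sum of $H$-weight spaces $L_\mu\cap\g_{-i}$. By the invariance of the Killing form, $(L_\mu,L_\nu)=\z$ whenever $\mu+\nu\neq 0$ (the standard identity $(\mu(h)+\nu(h))(u,v)=0$ for all $h\in H$ forces this). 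Since $x$ has weight $\beta$, decomposing
\begin{align}
\tilde{y}=y+y',\qquad y\in L_{-\beta}\cap\g_{-i},
\end{align}
with $y'$ in the sum of the remaining weight components, yields $(x,y')=0$, so $(x,y)=(x,\tilde{y})\neq 0$.

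Finally, applying the cited Corollary to the pair $(x,y)\in L_\beta\times L_{-\beta}$ with $(x,y)\neq 0$ gives $\langle x,y,[x,y]\rangle\cong sl(2,\C)$, completing the argument. I do not anticipate any real obstacle: the only ingredient beyond the preceding Lemma and Corollary is the orthogonality of distinct-weight spaces under the Killing form, which is a one-line consequence of invariance and only needs to be invoked in passing.
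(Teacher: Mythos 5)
Your proposal is correct and is essentially the paper's argument in contrapositive form: the paper assumes no such $y$ exists, uses the same orthogonality of weight spaces with $\mu+\nu\neq0$ to conclude $(x,L_{-\beta}\cap\g_{-i})=(x,\g_{-i})=\z$, and then invokes the preceding Lemma to force $x=0$, with the $sl(2,\C)$ conclusion supplied by the same Corollary you cite. No substantive difference.
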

\begin{proof}
If no such $y$ exists, then $\z=(x,L_{-\beta} \cap \g_{-i}) = (x,\g_{-i})$, hence $x = 0$, a contradiction.
\end{proof}
\begin{Proposition}
\label{Prop:Ligi}
\mbox{}
\begin{itemize}
\item[{\rm (i)}]
If $L_i = \g_i$, then  $[L_j,[\kf_{-j},L_{\pm i}]] = [\kf_j,L_{-i-j}] =\z$.
\item[{\rm (ii)}]
If $i+j \neq 0$ and $L_{i+j} = \g_{i+j}$, then $[\kf_i,L_j] = [\kf_j,L_i] = \z$.
\item[{\rm (iii)}]
Let nonzero indices $i,j$ satisfy $|i|,|j|\leq m$ with $L_i = \g_i$, $L_j = \g_j$ and $L_{i+j} = \kf_{i+j}$. 
Then,
\begin{align}
[L_i,L_{-i-j}] = [L_j,L_{-i-j}] = [L_i,L_j] = \z.
\end{align}
\end{itemize}
\end{Proposition}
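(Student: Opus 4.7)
The plan is to treat the three parts in turn, each reducing to applications of the ideal property of $K$, the Jacobi identity, and the non-degeneracy of the Killing form on $L_s$. A duality observation underpinning Parts (i) and (iii) is
\begin{align*}
 L_i = \g_i \;\Longleftrightarrow\; \kf_i = \z \;\Longleftrightarrow\; \kf_{-i} = \z \;\Longleftrightarrow\; L_{-i} = \g_{-i}
\end{align*}
for $0 < |i| \leq m$: the non-degenerate Killing pairing between $\g_i$ and $\g_{-i}$ (combining Propositions \ref{prop:LL0} and \ref{Prop:Ls} applied to $L_s$) gives $\dim \g_i = \dim \g_{-i}$, while Theorem \ref{Theo:LiLmi} gives $\dim L_i = \dim L_{-i}$, forcing $\dim \kf_i = \dim \kf_{-i}$. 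The cases $i = 0$ or $|i| > m$ are either vacuous or direct (in the latter range, at least one of $L_{\pm i}$ is $\z$ already).

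For (i), with $\kf_{\pm i} = \z$ in hand, one has $[\kf_j, L_{-i-j}] \subseteq K \cap L_{-i} = \kf_{-i} = \z$ since $K$ is an ideal. The double bracket is handled via Jacobi,
\begin{align*}
 [L_j, [\kf_{-j}, L_{\pm i}]] \subseteq [\kf_{-j}, [L_j, L_{\pm i}]] + [[L_j, \kf_{-j}], L_{\pm i}],
\end{align*}
and each summand lies in $K \cap L_{\pm i} = \kf_{\pm i} = \z$, using also $[L_j, \kf_{-j}] \subseteq \kf_0$. Part (ii) is an immediate consequence of $K$ being an ideal: $[\kf_i, L_j] \subseteq K \cap L_{i+j} = \kf_{i+j} = \z$ whenever $L_{i+j} = \g_{i+j}$, and likewise for $[\kf_j, L_i]$.

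For (iii), first note $[L_i, L_j] = [\g_i, \g_j] \subseteq \g_{i+j}$ by the strengthened bracket relation for normal algebras (the Proposition following Theorem \ref{Theo:Ls}); since $L_{i+j} = \kf_{i+j}$ means $\g_{i+j} = \z$, this gives $[L_i, L_j] = \z$. To address $[L_i, L_{-i-j}]$, I invoke Killing-form invariance: for $w \in L_j$, $x \in L_i$, $y \in L_{-i-j}$,
\begin{align*}
 (w, [x,y]) = ([w,x], y) = 0,
\end{align*}
because $[w,x] \in [L_j, L_i] = \z$. Thus $[x,y] \in L_{-j}$ is Killing-orthogonal to $L_j$. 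Since $L_j = \g_j$, the opening duality yields $L_{-j} = \g_{-j}$, and the Killing form pairs $L_j$ with $L_{-j}$ non-degenerately. Hence $[x,y] = 0$. The case $[L_j, L_{-i-j}] = \z$ then follows by exchanging the roles of $i$ and $j$.

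The main technical point is the opening duality $L_i = \g_i \Rightarrow L_{-i} = \g_{-i}$, which underpins both (i) and (iii); however, it collapses to the dimension count above. The rest of the proof is a careful bookkeeping exercise in which each bracket is pinned down as living in a $K$-component that the hypothesis has forced to vanish.
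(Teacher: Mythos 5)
Your proof is correct on the intended range of indices, and for part (iii) it takes a genuinely different route from the paper. Parts (i) and (ii) agree in substance with the paper's argument (degree bookkeeping plus the fact that $K$ is an ideal, giving containments in $K\cap L_{\pm i}$, respectively $K\cap L_{i+j}$); your Jacobi step in (i) is harmless but unnecessary, since $[\kf_{-j},L_{\pm i}]\subseteq K$ already forces $[L_j,[\kf_{-j},L_{\pm i}]]\subseteq K\cap L_{\pm i}$. What you add, and what the paper leaves tacit, is the duality $L_i=\g_i\Rightarrow L_{-i}=\g_{-i}$ for $0<|i|\le m$, which is exactly what justifies $K\cap L_{-i}=\z$; your dimension count via Theorem~\ref{Theo:LiLmi} and the non-degenerate pairing of $\g_i$ with $\g_{-i}$ is valid (one can also get it in one line from reflexivity: if $\kf_i=\z$, every $x\in\kf_{-i}$ satisfies $[x,\kf_i]=\z$, hence $x=0$). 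For (iii) the paper first shows $[L_i,L_{-i-j}]=[L_j,L_{-i-j}]=\z$ from $K$-membership and then extracts $[L_i,L_j]=\z$ from the Jacobi identity together with reflexivity at the index $i+j$; you instead obtain $[L_i,L_j]=[\g_i,\g_j]\subseteq\g_{i+j}=\z$ immediately from the strengthened bracket relations of a normal algebra, and then kill the mixed brackets by invariance of the Killing form and the non-degenerate pairing of $\g_j$ with $\g_{-j}$ (equivalently, one could note $[\g_i,\kf_{-i-j}]\subseteq\kf_{-j}=\z$). Your route buys robustness: it never invokes reflexivity at index $i+j$, which condition (iii) of Definition~\ref{Def:regular} only supplies for $0<|i+j|\le m$ (e.g.\ $i=j=m$ falls outside that range), so your derivation of $[L_i,L_j]=\z$ covers cases where the paper's final step is not literally licensed. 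The only soft spot is your closing remark that the cases $|i|>m$ in part (i) are ``direct'': there the hypothesis $L_i=\g_i$ no longer controls $\kf_{-i}$, so the conclusion should be read, as in the neighbouring results and as the paper's own proof tacitly assumes, with $0<|i|\le m$ understood; this is a defect of the statement's phrasing rather than a gap in your argument.
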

\begin{proof}
For (i), it suffices to observe that $[\kf_{-j},L_{\pm i}]\subseteq K \cap L_{\pm i} = \z$ and similarly 
$[\kf_j,L_{-i-j}] \subseteq K \cap L_{\pm i} = \z$. Part (ii) follows from the fact that $[\kf_i,L_j]$ and $[\kf_j,L_i]$ are 
subsets of $K \cap L_{i+j} = \z$. As to (iii), we have $[L_i,L_{-i-j}] \subseteq K \cap L_j = \z$, 
and similarly for $[L_j,L_{-i-j}]$.  It follows that
\begin{align}
 [[L_i,L_j],L_{-i-j}] \subseteq [[L_{-i-j},L_i],L_j] + [[L_j,L_{-i-j}],L_i] = \z.
\end{align}
Reflexivity then implies that $[L_i,L_j] = \z$.
\end{proof}
\begin{Corollary}
\mbox{} 
\begin{itemize}
\item[{\rm (i)}]
If $L_i = \g_i$ and $L_j = \kf_j$, then $[L_j,[L_{-j},L_{\pm i}]] = [L_j,L_{-i-j}] =\z$.
\item[{\rm (ii)}]
If $i+j \neq 0$ and $L_{i+j} = \g_{i+j}$, then $[L_i,L_j] = [\g_i,\g_j]$.
\end{itemize}
\end{Corollary}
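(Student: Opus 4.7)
The plan is to deduce both parts directly from Proposition~\ref{Prop:Ligi} by using the decomposition $L_i=\g_i\oplus\kf_i$ from \eqref{Li} together with a short duality argument for part (i). No serious obstacle is expected; the work is entirely in unpacking the hypotheses so that the already-proved Proposition applies.

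For part (ii), I would write $L_i=\g_i\oplus\kf_i$ and $L_j=\g_j\oplus\kf_j$, so that bilinearity of the bracket gives
\begin{align}
[L_i,L_j]=[\g_i,\g_j]+[\g_i,\kf_j]+[\kf_i,\g_j]+[\kf_i,\kf_j].
\end{align}
Under the standing hypotheses $i+j\neq 0$ and $L_{i+j}=\g_{i+j}$, Proposition~\ref{Prop:Ligi}(ii) gives $[\kf_i,L_j]=\z$ and $[\kf_j,L_i]=\z$, which kills the three mixed summands. What remains is $[L_i,L_j]=[\g_i,\g_j]$, as required.

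For part (i), the only thing to check beyond a direct appeal to Proposition~\ref{Prop:Ligi}(i) is that the hypothesis $L_j=\kf_j$ forces $L_{-j}=\kf_{-j}$, so that the statements $[L_j,[L_{-j},L_{\pm i}]]=\z$ and $[L_j,L_{-i-j}]=\z$ become precisely $[\kf_j,[\kf_{-j},L_{\pm i}]]=\z$ and $[\kf_j,L_{-i-j}]=\z$. For $j\neq 0$, Proposition~\ref{Prop:gskr}(i) identifies $\g_j=\s_j$ and $\g_{-j}=\s_{-j}$, so $L_j=\kf_j$ amounts to $\s_j=\z$. Since the Killing form of the semisimple algebra $S$ is non-degenerate and orthogonally grades $S=\bigoplus_i\s_i$ by Proposition~\ref{prop:LL0}, the pairing of $\s_j$ with $\s_{-j}$ is non-degenerate, forcing $\s_{-j}=\z$ and hence $L_{-j}=\kf_{-j}$. (The case $j=0$ is automatic, since then $L_{-j}=L_j$.) With this identification in hand, the two identities are exactly those of Proposition~\ref{Prop:Ligi}(i), whose hypothesis $L_i=\g_i$ is also assumed here.

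If anything counts as the ``hard part,'' it is simply spotting the duality step $L_j=\kf_j\Rightarrow L_{-j}=\kf_{-j}$, which is what lets one pass from the asymmetric formulation of Proposition~\ref{Prop:Ligi}(i) (where $\kf_{-j}$ and $\kf_j$ appear on opposite sides) to the symmetric statement of the corollary in terms of $L_j$ and $L_{-j}$. Once this observation is made, both parts collapse to one-line applications of the preceding proposition.
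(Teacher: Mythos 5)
Your proof is correct and takes essentially the same route as the paper, which deduces both parts immediately from Proposition~\ref{Prop:Ligi}\,(i),(ii) together with the decomposition $L_i=\g_i\oplus\kf_i$; your part (ii) is identical to the intended argument. For part (i), your duality step $L_j=\kf_j\Rightarrow L_{-j}=\kf_{-j}$ (via non-degeneracy of the Killing form on $L_s$ and the graded orthogonality of Proposition~\ref{prop:LL0}) is a valid way of making explicit a detail the paper leaves implicit.
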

\begin{proof}
The results follow immediately from Proposition~\ref{Prop:Ligi} (i) and (ii), 
noting that $L_i=\g_i\oplus\kf_i$ for all $i$, cf.~\eqref{Li}.
\end{proof}
\begin{Proposition}
\mbox{}
\begin{itemize}
\item[{\rm (i)}] 
If $(\beta,\beta') < 0$ for some $\beta \in \Phi_i^s$ and $\beta' \in \Phi_j$, then $[\g_i,L_j] \neq \z$.
\item[{\rm (ii)}] 
If $(\beta,\beta') > 0$ for some $\beta \in \Phi_i^s$ and $\beta' \in \Phi_j$, then $[\g_{-i},L_j] \neq \z$.
\end{itemize}
\end{Proposition}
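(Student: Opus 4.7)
The plan is to extract from $\beta \in \Phi_i^\s$ a genuine $sl(2,\C)$-triple inside $S$, and then apply finite-dimensional $sl(2,\C)$-representation theory to the adjoint action on a $\Z$-graded subspace containing $L_{\beta'}$.

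First, I would pick a weight vector $e_\beta \in L_\beta\cap\s_i$, possible since $\beta\in\Phi_i^\s$ and, for $i\neq0$, $\g_i=\s_i$ by Proposition~\ref{Prop:gskr}. Non-degeneracy of the Killing form on $\g_i\times\g_{-i}$ (via Proposition~\ref{Prop:Ls} together with Proposition~\ref{prop:LL0}) then produces $f_\beta\in L_{-\beta}\cap\s_{-i}$ with $(e_\beta,f_\beta)\neq0$; for $i=0$, the analogue is the standard pairing inside the reductive Lie algebra $\s_0$. After rescaling, Proposition~\ref{Prop:zinZ} (or the standard semisimple theory when $i=0$) gives
\[
h_\beta := [e_\beta,f_\beta] = \frac{2}{(\beta,\beta)}t_\beta + z,\qquad z\in Z(L),
\]
with $\langle e_\beta,h_\beta,f_\beta\rangle\cong sl(2,\C)$ and $h_\beta\in H$.

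Next, I would consider the finite-dimensional subspace
\[
V := \bigoplus_{n\in\Z} L_{j+ni}\subseteq L,
\]
which is stable under the adjoint action of this $sl(2,\C)$ and hence a finite-dimensional $sl(2,\C)$-module. Picking a weight vector $e_{\beta'}\in L_{\beta'}\cap L_j\subseteq V$, and using that every root vanishes on $Z(L)$, its $h_\beta$-weight is $\beta'(h_\beta) = 2(\beta,\beta')/(\beta,\beta)$.

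For part (i), assume $(\beta,\beta')<0$, so that $e_{\beta'}$ is a nonzero vector of strictly negative $h_\beta$-weight in $V$. Classical $sl(2,\C)$-representation theory then forces $[e_\beta,e_{\beta'}]\neq0$, since a highest-weight vector of any finite-dimensional irreducible summand must have non-negative $h$-weight. As $e_\beta\in\g_i$ and $e_{\beta'}\in L_j$, this yields $[\g_i,L_j]\neq\z$. Part (ii) is the symmetric argument: when $(\beta,\beta')>0$, $e_{\beta'}$ has strictly positive $h_\beta$-weight, so $[f_\beta,e_{\beta'}]\neq0$, and $f_\beta\in\s_{-i}\subseteq\g_{-i}$ delivers $[\g_{-i},L_j]\neq\z$. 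I expect the only nontrivial step to be the precise identification of the $sl(2,\C)$-triple---that $h_\beta\in H$, that its eigenvalue on $e_{\beta'}$ is the stated Cartan integer, and that the triple is genuinely three-dimensional. These follow from Proposition~\ref{Prop:zinZ} and the non-degeneracy of the Killing form on $L_s$, so once assembled, the conclusion reduces to a standard appeal to the classification of finite-dimensional $sl(2,\C)$-representations.
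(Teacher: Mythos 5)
Your proof is correct and follows essentially the same route as the paper: build the $sl(2,\C)$-triple $\langle e_\beta,h_\beta,f_\beta\rangle$ from $\beta\in\Phi_i^\s$ using the non-degenerate pairing on $L_s$ (the paper's preceding proposition), then conclude from finite-dimensional $sl(2,\C)$ representation theory that a weight vector of negative (resp.\ positive) $h_\beta$-weight cannot be annihilated by $e_\beta$ (resp.\ $f_\beta$). The only difference is presentational: you argue directly on $e_{\beta'}$ with the explicit weight $2(\beta,\beta')/(\beta,\beta)$, whereas the paper phrases the same point contrapositively.
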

\begin{proof}
Part (i) follows from the representation theory of the $sl(2,\C)$ subalgebra generated by nonzero elements 
$e_\beta\in L_\beta$ and $e_{\beta'}\in L_{\beta'}$.
Indeed, if $[\g_i,L_j] = \z$, then all root vectors in $L_j$ are maximal for $sl(2,\C)$, so
$(\beta,\beta') > 0$ for all $\beta' \in \Phi_j$, a contradiction. Part (ii) follows similarly.
\end{proof}
\noindent
\textbf{Remark.}
Let $L$ be a simple Lie algebra and $L_0 \subset L$ the reductive Lie subalgebra obtained by 
omitting the $s$th node of the Dynkin diagram of $L$. Then, $L$ admits a $\Z$-graded decomposition
\begin{align}
L=\bigoplus_{i=-k}^{k}L_i,
\end{align}
where $k$ is the coefficient of the simple root $\alpha_s$ in the decomposition of the highest root of $L$ into a 
sum of simple roots. With this structure, $L$ is a regular $\Z$-graded Lie algebra. 
For a general simple Lie algebra, $k \leq 6$, while for a classical simple Lie algebra, 
$k \leq 2$. The simple roots $\alpha_j$, $j\neq s$, are the simple roots of $L_0$, 
while $\alpha_s$ is the lowest weight of the $L_0$-module $L_1$. 
The corresponding generators $e_s,h_s,f_s$ generate an $sl(2,\C)$ subalgebra.

\section{Irreducible algebras}
\label{Sec:Connected}

In this section, all $\Z$-graded Lie algebras are assumed regular.
\begin{Definition}
\label{Def:connected}
A regular $\Z$-graded Lie algebra $L$ is said to be
$$
\begin{array}{ll}
\mbox{\em connected} & 
\mbox{if \
$\left\{\begin{array}{ll} \!\!\![L_1,L_i]=L_{i+1}\mbox{ or }[L_{-1},L_{i+1}]=L_i, &\forall i>0,\\[.2cm] 
 \!\!\![L_{-1},L_i] = L_{i-1}\mbox{ or }[L_1,L_{i-1}] = L_i, &\forall i<0; \end{array}\right.$}
\\[.6cm]
\mbox{\em transitive} & 
\mbox{if \
$\left\{\begin{array}{ll} \!\!\![L_{-1},L_{i+1}]=L_i \mbox{ for all $i>0$ such that $L_{i+1}\neq\z$},\\[.2cm] 
 \!\!\![L_1,L_{i-1}]=L_i \mbox{ for all $i<0$ such that $L_{i-1}\neq\z$}; \end{array}\right.$}
\\[.6cm]
\mbox{\em strongly graded} & 
\mbox{if it is generated by the subspace $L_{-1} \oplus L_0 \oplus L_1$;}
\\[.3cm]
\mbox{\em irreducible} & 
\mbox{if, for each $i\neq0$, $L_i$ is an irreducible $\g_0$-module.}
\end{array}
$$
\end{Definition}
\begin{Definition}
\label{Def:stype}
Let the index $i$ satisfy $|i| \leq m$. 
If $L_i = \g_i$, then $i$ is said to be of semisimple type $(\sr$-type$)$. 
If $L_i = \kf_i$, then $i$ is said to be of Heisenberg type $(\hr$-type$)$.
\end{Definition}
\noindent
\textbf{Remark.} 
Every index $i$, $0<|i| \leq m$, for which $L_i$ is irreducible as a $\g_0$-module, 
is either of $\sr$- or $\hr$-type.
\medskip

\noindent
\textbf{Remark.} 
If $L$ is strongly graded, then there are two possibilities:
(i)  All nonzero indices are of $\hr$-type, and (ii) all indices $i$ with $|i| \leq m$ are of $\sr$-type.
In \cite{Kac68}, $L$ is assumed strongly graded.
\medskip

As illustrated in Section~\ref{Sec:Ex}, many $\Z$-graded Lie algebras of known physical interest are irreducible in the sense
of Definition~\ref{Def:connected}. Accordingly, in the remainder of this section, we shall assume that $L$ is irreducible.
\begin{Proposition}
Let index $i$, $0 < i < m$, be of $\hr$-type. Then,
\begin{align}
[L_i,L_i] = [L_{-i},L_{-i}] = \z.
\end{align}
\end{Proposition}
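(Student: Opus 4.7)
The strategy is to combine the Jacobi identity with the Heisenberg-type property $[L_i, L_{-i}] \subseteq Z(L)$ (Corollary~\ref{Cor:LiLmi}, applicable since $L_i$ is irreducible as a $\g_0$- and hence $L_0$-module), then invoke irreducibility and reflexivity. The two identities are symmetric, so I focus on the first.

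Since $L_i = \kf_i \subseteq K$ and $K$ is an ideal, $W := [L_i, L_i] \subseteq \kf_{2i}$, and $W$ is a $\g_0$-submodule of $L_{2i}$ by $\g_0$-equivariance of the bracket. If $L_{2i} = \z$ (e.g., when $2i > l$), the claim is immediate; otherwise $L_{2i}$ is an irreducible $\g_0$-module by the hypothesis on $L$, so $W = \z$ (what I want) or $W = L_{2i}$, the latter forcing $L_{2i} = \kf_{2i}$. Assuming $W = L_{2i}$ for contradiction, the key Jacobi computation is, for $x, y \in L_i$ and $z \in L_{-i}$,
\begin{align*}
 [z, [x, y]] = [[z, x], y] + [x, [z, y]] \in [Z(L), L_i] + [L_i, Z(L)] = \z,
\end{align*}
since $[z, x], [z, y] \in [L_{-i}, L_i] \subseteq Z(L)$. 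Hence $[L_{-i}, L_{2i}] = \z$.

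I would then extract the contradiction by upgrading $[L_{-i}, L_{2i}] = \z$ to $[L_{-2i}, L_{2i}] = \z$ and invoking reflexivity at index $2i$, which (when $2i \leq m$) forces $L_{2i} = \z$. The upgrade proceeds via a parallel analysis of $W' := [L_{-i}, L_{-i}]$: the analogous Jacobi gives $[L_i, W'] = \z$, and by irreducibility either $W' = \z$ or $W' = L_{-2i}$. In the case $W' = L_{-2i}$, writing any $u \in L_{-2i}$ as $[a, b]$ with $a, b \in L_{-i}$ and expanding $[w, u]$ for $w = [x,y] \in L_{2i}$ via iterated Jacobi collapses every term to a bracket of $Z(L)$ with $L_{\pm i}$, yielding $[w, u] = 0$. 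The sub-case $W' = \z$ is handled through the duality $L_{-2i} \cong L_{2i}^*$ (Theorem~\ref{Theo:LiLmi}) together with a Schur-type argument on the $\g_0$-equivariant map $L_{-i} \otimes L_{2i} \to L_i$, whose established vanishing prevents $L_{2i}$ from being reached. The companion identity $[L_{-i}, L_{-i}] = \z$ follows verbatim by applying the same argument with $\pm i$ swapped (equivalently, to the reverse $\Z$-grading, which preserves regularity, irreducibility, and the $\hr$-type of $\pm i$).

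The main obstacle is the final contradiction step: bridging from $[L_{-i}, L_{2i}] = \z$ (lying in $L_i$) to $[L_{-2i}, L_{2i}] = \z$ (lying in $Z(L) \subseteq L_0$) requires careful iteration of Jacobi and attention to the case split on $W'$, as well as a separate treatment when $2i > m$, where reflexivity at index $2i$ is unavailable and one must instead exploit $\g_0$-weight considerations (noting in particular that the lowest weight $2\alpha$ of a putative $L_{2i} = [L_i, L_i]$ cannot appear in $\Lambda^2 L_i$).
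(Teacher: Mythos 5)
Your main case is, in essence, the paper's own proof: assume $[L_i,L_i]\neq\z$, use irreducibility to get $[L_i,L_i]=L_{2i}\subseteq K$, use the Jacobi identity together with $[L_{\pm i},L_{\mp i}]\subseteq Z(L)$ (Corollary~\ref{Cor:LiLmi}) to obtain $[L_{-i},L_{2i}]=\z$, and then, when $[L_{-i},L_{-i}]=L_{-2i}$, a second Jacobi expansion gives $[L_{-2i},L_{2i}]=\z$. The paper turns this into a contradiction with $[L_{2i},L_{-2i}]\neq\z$ from Corollary~\ref{Cor:Liirred}, while you contradict reflexivity at index $2i$; these are interchangeable, and that part of your argument is fine.

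The genuine gap is in the two escape hatches your route forces you to add. First, the sub-case $[L_{-i},L_{-i}]=\z$ with $[L_i,L_i]=L_{2i}\neq\z$: your ``duality $L_{-2i}\cong L_{2i}^*$ plus a Schur-type argument on the map $L_{-i}\otimes L_{2i}\to L_i$'' is not an argument. The established vanishing $[L_{-i},L_{2i}]=\z$ and the duality statement of Theorem~\ref{Theo:LiLmi} are both perfectly consistent with this configuration; to exclude it you would need either $[L_{2i},\kf_{-2i}]=\z$ (to feed reflexivity) or a clash with $[L_{2i},L_{-2i}]=\C z\neq\z$ from Corollary~\ref{Cor:Liirred}(ii), and neither follows from what you have, precisely because in this sub-case elements of $L_{-2i}$ cannot be written as brackets from $L_{-i}$. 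The paper sidesteps the sub-case entirely: it invokes Corollary~\ref{Cor:Liirred} at index $2i$ \emph{first}, getting $[L_{2i},L_{-2i}]\neq\z$, and concludes $[L_{-i},L_{-i}]=L_{-2i}$ by irreducibility before doing the Jacobi computations, so that only the case you can handle ever arises. Second, your patch for $2i>m$ rests on a false premise: nothing forces a putative $L_{2i}=[L_i,L_i]$ to have lowest weight $2\alpha$; all one knows is that its lowest weight is some weight of the exterior square $\wedge^2L_i$, so the (correct) observation that $2\alpha$ does not occur in $\wedge^2L_i$ produces no contradiction. (The paper's proof also tacitly assumes $2i\leq m$ when citing Corollary~\ref{Cor:Liirred}, so this last point is a shared limitation rather than a defect peculiar to your write-up, but your claimed fix does not work as stated.)
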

\begin{proof}
If $[L_i,L_i]\neq\z$, then $[L_i,L_i] = L_{2i}$, by irreducibility. 
By Corollary~\ref{Cor:Liirred}, $[L_{2i},L_{-2i}] \neq\z$, 
so $[L_{-i},L_{-i}]=L_{-2i}$, again by irreducibility.
Similarly, if $[L_{-i},L_{-i}]\neq\z$, then $[L_{-i},L_{-i}]=L_{-2i}$ and $[L_i,L_i] = L_{2i}$.
With either assumption, Corollary~\ref{Cor:LiLmi} then implies
\begin{align}
[L_{-i},L_{2i}] = [[L_{-i},[L_i,L_i]] = [[L_i,[L_{-i},L_i]] +[[L_i,[L_i,L_{-i}]] = \z,
\end{align}
hence
\begin{align}
[L_{-2i},L_{2i}] = [[L_{-i},L_{-i}],L_{2i}] = [[L_{2i},L_{-i}],L_{-i}] + [[L_{-i},L_{2i}],L_{-i}] = \z,
\end{align}
a contradiction.
\end{proof}
\begin{Definition}
$L$ is said to be symmetric if it is balanced and satisfies
\begin{align}
[L_i,L_j] = \z\quad\Longleftrightarrow\quad [L_{-i},L_{-j}] = \z. 
\end{align}
\end{Definition}
\noindent
\textbf{Remark.} 
For $L$ symmetric, we may assume a $\Z$-graded decomposition of the form
\begin{align}
 L=\bigoplus_{i=-m}^mL_i.
\end{align}
\begin{Corollary}
Let $L$ be symmetric. For each $i>0$, there exists $z_i\in L_0$ such that
\begin{align}
 L_{-i} \oplus {\C}z_i \oplus L_i\cong\h_{n_i},\qquad n_i=2\dim L_i+1.
\end{align}
\end{Corollary}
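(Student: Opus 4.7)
The strategy is to identify, for each $i>0$, the subspace $V_i := L_{-i} \oplus \C z_i \oplus L_i$ as a Heisenberg algebra by verifying the three defining bracket relations plus non-degeneracy of the induced symplectic form. Throughout, I would use that $L_j$ is irreducible as a $\g_0$-module (hence as an $L_0$-module) for every $j \neq 0$, by Definition~\ref{Def:connected}, and work in the $\hr$-type setting relevant to the preceding Proposition (so $L_i = \kf_i$). By Theorem~\ref{Theo:LiLmi}, $L_{-i}$ and $L_i$ are dual as $\g_0$-modules, so $\dim L_{-i} = \dim L_i$, and the dimension count $\dim V_i = 2\dim L_i + 1 = n_i$ is automatic.

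Next I would locate the central element and the nonzero commutator: Corollary~\ref{Cor:Liirred}(ii) yields $[L_i, L_{-i}] = \C z_i$ for some nonzero $z_i \in Z(L) \subseteq L_0$, and centrality of $z_i$ in $L$ immediately gives $[z_i, L_{\pm i}] = \z$, so $z_i$ lies in the centre of $V_i$. For the vanishing brackets $[L_i, L_i] = \z$ and $[L_{-i}, L_{-i}] = \z$: when $0 < i < m$ these are exactly the preceding Proposition applied to the $\hr$-type index $i$; when $i = m$ they hold trivially, because the symmetric gradation forces $L_{\pm 2m} = \z$, leaving no target space for the bracket.

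Finally, writing $[x,y] = \omega(x,y)\, z_i$ for $x,y \in L_i \oplus L_{-i}$, the form $\omega$ is skew and vanishes on the $L_i \times L_i$ and $L_{-i} \times L_{-i}$ blocks by the previous step. Non-degeneracy of $\omega$, which is what distinguishes $\h_{n_i}$ from a trivial central extension of an abelian Lie algebra, is immediate from the reflexivity condition (Definition~\ref{Def:regular}(iii)): any $x \in L_i = \kf_i$ with $[x, L_{-i}] = \z$ satisfies $[x, \kf_{-i}] = \z$, forcing $x = 0$, and symmetrically for $L_{-i}$. This pins down $V_i \cong \h_{n_i}$. No step here is a genuine obstacle, as the structural heavy lifting is already in place via the previous Proposition, Corollary~\ref{Cor:Liirred}, and Theorem~\ref{Theo:LiLmi}; the mildly delicate point is the boundary case $i = m$, which must be dispatched by invoking the symmetric gradation rather than the preceding Proposition.
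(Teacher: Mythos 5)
Your proposal is correct and takes essentially the same route as the paper, whose entire proof is the one-line citation of Corollary~\ref{Cor:Liirred}\,(ii) — exactly the ingredient you use to obtain the central element $z_i$ and the one-dimensional bracket $[L_i,L_{-i}]=\C z_i$. The remaining points you spell out (abelianness of $L_{\pm i}$ from the preceding Proposition, and from $L_{\pm 2m}=\z$ in the boundary case $i=m$; $\dim L_{-i}=\dim L_i$ via Theorem~\ref{Theo:LiLmi}; non-degeneracy of the pairing via reflexivity, in the $\hr$-type setting where Corollary~\ref{Cor:Liirred} actually applies) are left implicit by the paper, so your write-up is just a more detailed version of the same argument.
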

\begin{proof}
The result is a consequence of Corollary~\ref{Cor:Liirred} (ii).
\end{proof}

\subsection{Transitive algebras}

We now assume that the irreducible $\Z$-graded Lie algebra $L$ is transitive. 
It is convenient to consider the unbalanced ($k \neq l$) and balanced ($k = l$) cases separately.

In the unbalanced case, all indices $i$ with $|i| >m$ are of $\hr$-type. 
As $L$ is assumed transitive and $K$ is an ideal, it follows that all nonzero indices are of $\hr$-type. 
Hence,
\begin{align}
K = \kf_0\oplus\bigoplus_{i \neq 0} L_i,\qquad L_s = \g_0.
\end{align}

In the balanced case, there are several possible subcases.
First, if all indices are of $\sr$-type, then $L$ is a reductive Lie algebra with $K = Z(L)$. 
In fact, $\g_0$ is obtained by removal of a node from the Dynkin diagram of $L_s$. This imposes the 
constraint $k\leq 6$.

Second, if not all indices are of $\sr$-type, then there exists an $\hr$-type index $j$, $0 < j \leq k$. 
By the same argument as above, all indices $i$ such that $0 < i \leq j$ are also of $\hr$-type. 
By the transitivity condition $L_i = [L_{-1},L_{i+1}]$, this would then imply index $i = 1$ is of $\hr$-type 
and hence any index $i$ such that $1\leq i<k$ is of $\hr$-type. 
This means that every nonzero index is of $\hr$-type, or
there exists exactly one $\sr$-type index $k$ while all remaining indices are of $\hr$-type. 
In the latter situation,
\begin{align}
 K=L_{-k+1} \oplus\cdots\oplus L_{-1} \oplus \kf_0 \oplus L_1 \oplus\cdots\oplus L_{k-1},
\end{align}
and we have the $\g_0$-module direct-sum decomposition
\begin{align}
L_s = L_{-k} \oplus \g_0 \oplus L_k.
\end{align}
Moreover, $(L_s,\g_0)$ corresponds to a Hermitian symmetric pair.

\subsection{Connected algebras}

We now assume that the irreducible $\Z$-graded Lie algebra $L$ is connected. The irreducibility then implies that
\begin{align}
 [L_{1},L_i] = L_{i+1}, \qquad [L_{-1},L_{-i}] = L_{-i-1},\qquad i>0,
\end{align}
which is a strengthened version of the connectivity condition in Definition~\ref{Def:connected}.
\begin{Proposition}
\label{Prop:stype}
If index $i=1$ is of $\sr$-type, then every nonzero index $i$ with $|i| \leq m$ is of $\sr$-type. 
\end{Proposition}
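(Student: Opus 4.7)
The plan is to induct on $|i|$ using the strengthened connectedness relations $[L_1, L_i] = L_{i+1}$ and $[L_{-1}, L_{-i}] = L_{-i-1}$ (valid for irreducible connected $L$, as displayed just above the statement), bootstrapping from the base case $|i| = 1$. The key preliminary observation is that since each $L_i$ with $i\neq 0$ is an irreducible $\g_0$-module and decomposes as $L_i = \g_i \oplus \kf_i$ into $\g_0$-submodules (both $L_s$ and $K$ being $\g_0$-stable), it is either of $\sr$-type or $\hr$-type, so it suffices to exclude the $\hr$-type possibility for every nonzero $|i| \leq m$.

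For the base case, I first establish $L_{-1} = \g_{-1}$. Since $L_1 = \g_1 \neq \z$ by hypothesis, the non-degeneracy of the Killing form on $L_s$ (Proposition \ref{Prop:Ls}) combined with the graded orthogonality $(L_i,L_j)=\z$ for $i+j\neq 0$ (Proposition \ref{prop:LL0}) forces the pairing of $\g_1$ with $\g_{-1}$ under the Killing form to be non-degenerate, hence $\g_{-1} \neq \z$. Irreducibility of $L_{-1}$ then yields $L_{-1} = \g_{-1}$.

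The inductive step is direct: assuming $L_j = \g_j$ for some $1 \leq j < m$, the strengthened connectedness and closure of $L_s$ under the bracket (Theorem \ref{Theo:Ls}, applicable since every regular $\Z$-graded Lie algebra is normal) together with the grading relation $[\g_i, \g_j] \subseteq \g_{i+j}$ give
\begin{align}
 L_{j+1} = [L_1, L_j] \subseteq [\g_1, \g_j] \subseteq \g_{j+1}.
\end{align}
The decomposition $L_{j+1} = \g_{j+1} \oplus \kf_{j+1}$ then forces $\kf_{j+1} = \z$, so $L_{j+1} = \g_{j+1}$. An analogous downward induction using $[L_{-1}, L_{-j}] = L_{-j-1}$ handles the negative indices.

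The main potential obstacle is the base case for $L_{-1}$, since without the Killing-form argument one cannot a priori exclude a mixed scenario in which $L_1 = \g_1 \neq \z$ but $L_{-1} = \kf_{-1}$. Once this is settled, the induction is mechanical, relying only on grading-compatible bracket closure within $L_s$.
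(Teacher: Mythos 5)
Your overall strategy (upward and downward induction, with a Killing-form duality argument to seed the negative side) is sound, but as written it leans on the displayed ``strengthened'' relations $[L_1,L_i]=L_{i+1}$ and $[L_{-1},L_{-i}]=L_{-i-1}$, and this is a genuine weak point. Those relations are asserted in the text without proof, and they do not in fact follow from connectedness plus irreducibility: the model filiform algebra of Section~\ref{Sec:Filiform} (with $n\geq 4$) is regular, irreducible and connected, yet $[L_1,L_1]=\z$ while $L_2=\C x_2\neq\z$. (This does not contradict the proposition itself, since there index $1$ is of $\hr$-type, but it shows the blanket display cannot safely be used as a lemma.) Tellingly, the paper's own proof never invokes that display: it works with the two-branch connectivity condition of Definition~\ref{Def:connected}, assumes a minimal $\hr$-type index, rules out the branch $[L_1,L_{i-1}]=L_i$ via Proposition~\ref{Prop:Ligi}\,(iii), and rules out the branch $[L_{-1},L_i]=L_{i-1}$ by observing it would force $L_{i-1}\subseteq K$, contrary to minimality.

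The repair is immediate and brings your argument essentially in line with the paper's. In the upward step, suppose $L_j=\g_j\neq\z$ and, for contradiction, $L_{j+1}=\kf_{j+1}\neq\z$. If the branch $[L_1,L_j]=L_{j+1}$ of Definition~\ref{Def:connected} holds, then your inclusion $[L_1,L_j]\subseteq[\g_1,\g_j]\subseteq\g_{j+1}=\z$ already gives a contradiction (this replaces the paper's appeal to Proposition~\ref{Prop:Ligi}\,(iii)); if instead $[L_{-1},L_{j+1}]=L_j$, then, $K$ being an ideal, $L_j\subseteq K\cap L_j=\kf_j$, so $L_j\subseteq\g_j\cap\kf_j=\z$, again a contradiction. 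Hence $j+1$ is of $\sr$-type, and the downward induction is fixed the same way. With this change your proof is correct, and it has two features worth keeping: the first-branch argument via $[\g_i,\g_j]\subseteq\g_{i+j}$ is more direct than citing Proposition~\ref{Prop:Ligi}, and your Killing-form pairing argument giving $\g_{-1}\neq\z$, hence $L_{-1}=\g_{-1}$, makes the treatment of negative indices explicit, something the paper's proof leaves to the reader.
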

\begin{proof}
Assume that index $i=2$ is of $\hr$-type. 
By the connectivity condition, $[L_1,L_1] = L_2$ or $[L_{-1},L_2] = L_1$. 
The first possibility contradicts Proposition~\ref{Prop:Ligi}, 
while the second possibility would imply $L_1 \subseteq K$, again a contradiction.
Hence, index $i=2$ is of $\sr$-type. Proceeding, we now assume that $i$, $2 < i \leq m$, is the smallest index 
of $\hr$-type. By the connectivity condition, $[L_1,L_{i-1}] = L_i$ or $[L_{-1},L_i] = L_{i-1}$.
The first possibility contradicts Proposition~\ref{Prop:Ligi}, while the second possibility would imply 
$L_{i-1} \subseteq K$, in contradiction to the minimality of $i$. In conclusion, index $i$ is not of $\hr$-type.
\end{proof}
\noindent
\textbf{Remark.} 
In the case of Proposition~\ref{Prop:stype}, we have
\begin{align}
 K = \kf_0\oplus\bigoplus_{|i| >m} L_i
\end{align}
and the $\g_0$-module decomposition
\begin{align}
 L_s = L_{-m} \oplus\cdots\oplus L_{-1} \oplus \g_0 \oplus L_1 \oplus\cdots\oplus L_m.
\label{Lsm}
\end{align}
In the case the $\g_0$-module $L_1$ is irreducible, this imposes the constraint $m\leq 6$,
while $m\leq 2$ if $L_s$ is a classical Lie algebra.

In the general case, we have
\begin{align}
 K =\kf_{-1} \oplus \kf_0 \oplus \kf_1\oplus \bigoplus_{|i| > 1}^{\hr\text{-type}} L_i
\end{align}
and the $\g_0$-module decomposition
\begin{align}
 L_s = \g_{-1} \oplus \g_0  \oplus \g_1\oplus \bigoplus_{|i|>0}^{\sr\text{-type}} L_i,
\end{align}
where the $i$-sums are over all $\hr$-type, respectively $\sr$-type, indices. 
Since this decomposition arises by removal of a node in the Dynkin diagram of a reductive Lie algebra,
this imposes strong constraints on the distribution of s-nodes:
Either every nonzero index $i$ is of $\hr$-type, as discussed above, or there 
exists a minimal s-index $p<m$ and a positive integer $n$ such that $0<pn\leq m$ and such that the indices 
$i = 0,\pm p,\ldots,\pm np$ are of $\sr$-type while the remaining indices are of $\hr$-type. In the latter case,
\begin{align}
K = \kf_0\oplus\bigoplus_{i \neq 0,\pm p,\pm 2p,\ldots,\pm np}\kf_i,
\end{align}
and we have the $\g_0$-module decomposition
\begin{align}
L_s = L_{-np}\oplus\cdots\oplus L_{-p} \oplus \g_0 \oplus L_p\oplus\cdots\oplus L_{np}.
\label{L1Li}
\end{align}
This imposes the constraint $n\leq 6$.

The case $p=2$ is of particular interest.
\begin{Proposition}
Suppose all even indices $i$ with $0<|i|\leq m$ are of $\sr$-type with all remaining indices of $\hr$-type. 
Then, there exists nonzero $z \in Z(L)$ such that for any $\hr$-type index $i$ with $|i| \leq m$, 
we have $[L_i,L_{-i}]=\C z$.
\end{Proposition}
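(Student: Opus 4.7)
The plan is to fix $z := z_1$, where $\C z_1 = [L_1, L_{-1}]$, and show by induction on odd $i$ that $[L_i, L_{-i}] \subseteq \C z$ for every odd $i$ with $0 < i \leq m$. Under the stated hypothesis, the $\hr$-type indices with $0 < |i| \leq m$ are precisely the odd ones. For each such $i$, $L_i = \kf_i$ is an irreducible $\g_0$-module (and hence an irreducible $L_0$-module, since any $L_0$-submodule is a $\g_0$-submodule), so Corollary~\ref{Cor:Liirred} furnishes a nonzero $z_i \in Z(L)$ with $[L_i, L_{-i}] = \C z_i$. The base case $i = 1$ holds by definition of $z$; the task reduces to showing $z_i \in \C z$ for every odd $i$ with $3 \leq i \leq m$.

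For the inductive step, I would invoke the strengthened connectedness $L_i = [L_1, L_{i-1}]$ available in the irreducible connected setting, together with the Jacobi identity, to obtain
\begin{align}
 [L_i, L_{-i}] \subseteq [L_1, [L_{i-1}, L_{-i}]] + [L_{i-1}, [L_1, L_{-i}]] \subseteq [L_1, L_{-1}] + [L_{i-1}, L_{-(i-1)}],
\end{align}
where the second inclusion follows from the grading. The first summand equals $\C z$. For the second, observe that $i-1$ is even, hence of $\sr$-type by hypothesis, so $L_{\pm(i-1)} \subseteq L_s$ and $[L_{i-1}, L_{-(i-1)}] \subseteq L_s \cap L_0 = \g_0$. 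On the other hand, $[L_i, L_{-i}] = \C z_i \subseteq Z(L) \subseteq L_0 \cap K = \kf_0$. Projecting the displayed inclusion onto the direct-sum decomposition $L_0 = \g_0 \oplus \kf_0$ kills the $\g_0$-component on the left-hand side, while $[L_{i-1}, L_{-(i-1)}]$ lies entirely in $\g_0$ and so contributes nothing to the $\kf_0$-part; consequently, $[L_i, L_{-i}] \subseteq \C z$. Combined with $[L_i, L_{-i}] = \C z_i$ and $z_i \neq 0$, this gives $[L_i, L_{-i}] = \C z$, closing the induction. The negative-index cases follow by antisymmetry.

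The main obstacle is mild: the key observation is that the Jacobi expansion quarantines the contribution from the $\sr$-type index $i-1$ inside $\g_0$, which is transverse to the $\kf_0$-line where $[L_i, L_{-i}]$ resides, and this transversality is exactly what the hypothesis on the $\sr$- versus $\hr$-type distribution is designed to ensure. No deeper obstruction arises, and the single element $z := z_1$ witnesses the claim uniformly across all $\hr$-type indices with $|i| \leq m$.
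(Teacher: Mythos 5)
Your proof is correct and follows essentially the same route as the paper: fix $z$ with $[L_1,L_{-1}]=\C z$ via Corollary~\ref{Cor:Liirred} and propagate through the grading using the branch of connectivity forced at the $\sr$-type (even) indices together with the Jacobi identity. The only real difference is in bookkeeping: the paper expands the negative side $L_{-i}=[L_{-1},L_{-(i-1)}]$ so that the cross term vanishes outright by Proposition~\ref{Prop:Ligi}, whereas you expand $L_i=[L_1,L_{i-1}]$ and eliminate the resulting $\g_0$-valued term $[L_{i-1},L_{-(i-1)}]$ by applying Corollary~\ref{Cor:Liirred} at level $i$ and using the transversality $\g_0\cap\kf_0=\z$ (your $L_s\cap L_0=\g_0$ step tacitly uses that $L_s$ is closed under the bracket, which Theorem~\ref{Theo:Ls} guarantees here).
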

\begin{proof}
By assumption, index $2$ is of $\sr$-type and index 1 is of $\hr$-type. 
By Corollary~\ref{Cor:Liirred}, we may then write 
\begin{align}
[L_1,L_{-1}] = \C z,
\end{align}
for some nonzero $z \in Z(L)$. 
If index $i$ is of $\sr$-type, then indices $i \pm 1$ are of $\hr$-type and by the connectivity 
condition in Definition~\ref{Def:connected}, we have $L_{i+1} = [L_1,L_i]$ or $L_i = [L_{-1},L_{i+1}]$. The latter 
case cannot occur, since $i$ is of $\sr$-type, so $L_{i+1} = [L_1,L_i]$. Similarly, $L_{i-1} = [L_{-1},L_i]$, so
\begin{align}
[L_{i+1},L_{-i-1}] = [L_{i+1},[L_{-1},L_{-i}]] = [L_{-i},[L_{i+1},L_{-1}]] + [L_{-1},[L_{-i},L_{i+1}]] = [L_{-1},L_1],
\end{align}
where the last equality follows from Proposition~\ref{Prop:Ligi}. 
We thus obtain $[L_{i+1},L_{-i-1}] = \C z$ and similarly $[L_{i-1},L_{-i+1}] = \C z$, thereby concluding the proof.
\end{proof}

\section{Examples}
\label{Sec:Ex}

We present four classes of examples: semisimple Lie algebras and their Borel subalgebras, conformal Galilei 
algebras, including Schr\"odinger algebras, model filiform Lie algebras, and (extended) Heisenberg algebras.
The extended Heisenberg and semisimple Lie algebras illustrate that a Lie algebra may admit several inequivalent $\Z$-gradings, 
and the conformal Galilei algebras illustrate that roots may occur with nontrivial multiplicity.
In certain cases, the conformal Galilei and extended Heisenberg algebras also provide examples 
of non-regular yet normal $\Z$-graded Lie algebras.

\subsection{Semisimple Lie algebras}
\label{Sec:semi}

Let $L$ be a semisimple Lie algebra with fundamental system $\Pi=\{\alpha_1,\ldots,\alpha_r\}$,
Cartan matrix elements $A_{ij}=(\alpha_i^\vee,\alpha_j)$, $i,j=1,\ldots,r$,
and triangular decomposition $L=L_-\oplus\h\oplus L_+$, 
where $\h$ denotes a CSA with Chevalley basis $\{h_1,\ldots,h_r\}$, while
$L_-=\spa\{f_\alpha\,|\,\alpha\in\Phi^+\}$ and $L_+=\spa\{e_\alpha\,|\,\alpha\in\Phi^+\}$.
For each $\alpha\in\Phi^+$, there exist unique $a_1,\ldots,a_r\in\Nb_0$ such that
\begin{align}
 \alpha=\sum_{i=1}^ra_i\alpha_i.
\end{align}

Taking the Cartan generator
\begin{align}\label{dAinv}
 d=\sum_{i,j=1}^r(A^{-1})_{ij}h_j
\end{align}
as the level operator, we get the $\Z$-grading ($i\in\Nb$)
\begin{align}
 L_{-i}=\spa\{f_\alpha\,|\,\het(\alpha)=i,\,\alpha\in\Phi^+\},\quad\
 L_0=\h,\quad\
 L_i=\spa\{e_\alpha\,|\,\het(\alpha)=i,\,\alpha\in\Phi^+\},
\end{align}
where the \textit{height} of $\alpha\in\Phi^+$ is defined as $\het(\alpha):=\sum_{i=1}^ra_i$ and arises as
\begin{align}
 [d,e_\alpha]=\het(\alpha)e_\alpha,\qquad
 [d,f_\alpha]=-\het(\alpha)f_\alpha.
\end{align}
This $\Z$-grading of $L$ is balanced (see Definition~\ref{Def:Zgraded}) 
and regular (see Definition~\ref{Def:regular}), as well as
connected, transitive, strongly graded and irreducible (see Definition~\ref{Def:connected}).

To illustrate that a given Lie algebra may admit several inequivalent $\Z$-gradings 
(see Remark following Definition~\ref{Def:Zgraded}), let us consider the Borel subalgebra of $L$ given by $B=\h\oplus L_+$.
As $d$ in \eqref{dAinv} is an element of $B$, it generates a $\Z$-grading of $B$ that is inherited from the one of $L$:
$B=L_0\oplus\bigoplus_{i\in\Nb}L_i$. Although this $\Z$-grading is `highly unbalanced', taking instead $h_1-h_r$ as
the level operator yields a nontrivial balanced $\Z$-grading of the Borel subalgebra $B$ of $sl(r+1)$ for $r>1$.
To see this, recall that the set of positive roots of $sl(r+1)$ is given by
\begin{align}
 \Phi^+=\Pi\cup\{\alpha_{ij}\,|\,1\leq i<j\leq r\},\qquad 
 \alpha_{ij}:=\alpha_i+\cdots+\alpha_j,
\end{align}
and introduce the shorthand notation $e_i=e_{\alpha_i}$ and $e_{ij}=e_{\alpha_{ij}}$.
For $r\geq4$, we thus have
\begin{align}
 B=B_{-2}\oplus B_{-1}\oplus B_0\oplus B_1\oplus B_2,
\end{align}
where
\begin{gather}
 B_{-2}=\spa\{e_r,e_{2r}\},\qquad
 B_{-1}=\spa\{e_2,e_{ir},e_{2j}\,|\,i=3,\ldots,r-1;\,j=3,\ldots,r-2\},
 \\[.1cm]
 B_0=\h\oplus\spa\{e_2,\ldots,e_{r-2},e_{1r},e_{2,r-1},e_{ij}\,|\,3\leq i<j\leq r-2\},
 \\[.1cm]
 B_1=\spa\{e_{r-1},e_{i,r-1},e_{1j}\,|\,i=3,\ldots,r-2;\,j=2,\ldots,r-2\},\quad
 B_2=\spa\{e_1,e_{1,r-1}\}.
\end{gather}
For $r=3$, we take $\frac{1}{2}(h_1-h_3)$ as the level operator and get the $\Z$-grading
\begin{align}\label{BBB}
 B=B_{-1}\oplus B_0\oplus B_1,
\end{align}
where
\begin{align}
 B_{-1}=\spa\{e_3,e_{23}\},\qquad
 B_0=\h\oplus\spa\{e_2,e_{13}\},\qquad
 B_1=\spa\{e_1,e_{12}\}.
\end{align}
Finally for $r=2$, we take $\frac{1}{3}(h_1-h_2)$ as the level operator and again get a $\Z$-grading
of the form \eqref{BBB} but this time with
\begin{align}
 B_{-1}=\spa\{e_2\},\qquad
 B_0=\h\oplus\spa\{e_{12}\},\qquad
 B_1=\spa\{e_1\}.
\end{align}

Still for $r=2$, we may alternatively take $h_1$ as the level operator, in which case we get the unbalanced $\Z$-grading
\begin{align}
 B=B_{-1}\oplus B_0\oplus B_1\oplus B_2,
\end{align}
where
\begin{align}
 B_{-1}=\spa\{e_2\},\qquad
 B_0=\h,\qquad
 B_1=\spa\{e_{12}\},\qquad
 B_2=\spa\{e_1\}.
\end{align}
As this example fails to be reflexive, it is not regular (see Definition~\ref{Def:regular}).

\subsection{Conformal Galilei algebras}

For each $n\in\Nb$ and $\ell\in\frac12\Nb=\{\frac12,1,\frac32,2,\ldots\}$, 
let $g_\ell(n)$ denote the \textit{conformal Galilei (Lie) algebra} with basis
\begin{align}
 \Bf_\ell(n)=\{D,H,C,J_{ij},P_{m,i}\,| \, J_{ij}=-J_{ji};
\, i,j =1, \ldots, n;
\, m = 0,1, \ldots, 2\ell 
\}
\end{align}
and corresponding nonzero Lie products
\begin{gather}
[D,H] = 2H,   \qquad 
[D,C] = -2C, \qquad 
[C, H] = D, 
\\[.1cm]
[H,P_{m,i}] = -mP_ {m-1,i}, \qquad  
[D,P_{m,i}] =2(\ell - m)P_ {m,i}, \qquad 
[C,P_{m,i}] =(2\ell - m)P_{m+1,i},
\\[.1cm]
[J_{ij}, J_{k\ell}] =\delta_{ik}J_{j\ell} + \delta_{j \ell}J_{ik} - \delta_{i \ell}J_{jk}
- \delta_{jk}J_{i \ell},\qquad
[J_{ij},P_{m,k} ] = \delta_{ik}P_{m,j} - \delta_{jk}P_{m,i}.
\end{gather}
It follows that
\begin{align}
 g_\ell(n)=so(2,1)\oplus so(n)\oplus\bigoplus_{m=0}^{2\ell}\mathsf{P}_m,
\end{align}
where
\begin{gather}
 so(2,1)=\spa\{D,H,C\},\qquad
 so(n)=\spa\{J_{ij}\,|\,1\leq i<j\leq n\},
 \\[.1cm]
 \mathsf{P}_m:=\spa\{ P_{m,i}\,|\,i=1,\ldots, n\},\qquad m=0,1,\ldots,2\ell.
\end{gather}
Writing $L=g_\ell(n)$, we see that $L_s=so(2,1)\oplus so(n)$ is the semisimple Levi factor $S$, 
and that the radical and the Killing radical coincide and are given by
\begin{align}
 R=K=\bigoplus_{m=0}^{2\ell}\mathsf{P}_m.
\end{align}
\noindent
\textbf{Remark.}
The notation $so(2,1)$ and $so(n)$ reflects the physical origin of the algebras.
Here, we are considering their complexifications.
\medskip

For $\ell$ half-odd integer ($\ell\in\{\frac12,\frac32,\frac52,\ldots\}$), 
$g_\ell(n)$ admits a central extension $g_\ell(n)\to \hat{g}_\ell(n)=g_\ell(n)\oplus\C M$ (see, e.g.,~\cite{GM11}), 
enlarging the basis to $\hat{\Bf}_\ell=\Bf_\ell\cup\{M\}$, and with $M$ arising in
\begin{align}
 [P_{m,i},P_{r,j}]= \delta _{ij}  \delta _{m+r,2\ell}  (-1)^{m+\ell+\frac12}  (2\ell -m)! m! M.
\end{align}
The radical and the Killing radical again coincide, and are now given by
\begin{align}
 R=K=\bigoplus_{m=0}^{2\ell}\mathsf{P}_m\oplus\C M.
\end{align}
\noindent
\textbf{Remark.}
For each $n\in\Nb$, the \textit{Schr{\"o}dinger algebra $S(n)$} \cite{Nie72} is isomorphic to $\hat{g}_\frac12(n)$
and is related to the symmetries of the free Schr{\"o}dinger equation in $(n+1)$-dimensional space-time. 
\medskip

For $\ell$ half-odd integer ($\ell\in\{\frac12,\frac32,\frac52,\ldots\}$), we consider $L=\hat{g}_\ell(n)$ and
take $d =-D$ as the level operator, thereby getting the $\Z$-grading
\begin{align}
 L = \bigoplus_{i=-2\ell}^{2\ell}L_i, 
\end{align}
where $L_i= \mathsf{P}_{\ell+\frac{i}{2}}$ for $i$ odd, while
\begin{align}
L_{-2}=\C H,\qquad
L_{0}= \C D \oplus so(n)\oplus\C M,\qquad 
L_2=\C C,\qquad
L_{2j}=\z\ \ \text{for}\ \ |j|>1.
\end{align}
It follows that the $\Z$-grading is balanced with $k=l=2\ell$,
and that $\g_{\pm2}=L_{\pm2}$, $\g_0 =\C D \oplus so(n)$, $\kf_0 = \C M$, $\kf_i = L_i$ for $i$ odd,
and that the $\Z$-grading is regular. 
Without the central extention, the reflexivity condition (iii) in Definition \ref{Def:regular} is not satisfied, 
so the corresponding $\Z$-graded algebra is non-regular, albeit normal.

For $\ell$ integer ($\ell\in\{1,2,3,\ldots\}$), we consider $L=g_\ell(n)$ and take
$d =-\frac{1}{2}D$ as the level operator, thereby getting the $\Z$-grading
\begin{align}
L = \bigoplus_{i=-\ell}^{\ell} L_i,
\end{align}
where $L_i = \mathsf{P}_{\ell+i}$, for $|i| > 1$, while 
\begin{align}
 L_{-1} = \C H\oplus \mathsf{P}_{\ell-1},\qquad
 L_0 = \C D\oplus so(n)\oplus \mathsf{P}_{\ell},\qquad
 L_1 = \C C\oplus \mathsf{P}_{\ell+1}.
\end{align}
It follows that the $\Z$-grading is balanced with $k=l=\ell$,
and that $\g_0 =\C D\oplus so(n)$, $\kf_i = \mathsf{P}_{\ell+i}$ for all $i$,
and $L_s = \g_{-1} \oplus \g_0 \oplus \g_1$ with $\g_{-1}=\C H$ and $\g_1 = \C C$. 
With reference to \eqref{m0} in Section~\ref{Sec:L0} below, we see that
$\m_0 =\mathsf{P}_{\ell}$, so $\kf_0 = \m_0\oplus Z(L)$. 
Indeed, there are nonzero root spaces in $\m_0$ (for $n > 2$). Moreover, for $n$ odd, the roots in $\m_0$ 
all occur in $\g_0$, thus providing an example with roots of multiplicity 2.
Moreover, the reflexivity condition (iii) in Definition \ref{Def:regular} is not satisfied, 
so the $\Z$-graded algebra $L$ is non-regular, albeit normal.

\subsection{Model filiform Lie algebras}
\label{Sec:Filiform}

A finite-dimensional nilpotent Lie algebra with maximal nilindex is called a filiform Lie algebra \cite{Vergne70}; 
see also \cite{Bla58}. 
The smallest set of such algebras from which all other filiform Lie algebras can be described as linear
deformations \cite{GK96}, comprises the so-called model filiform Lie algebras.
For each $n\geq3$, we denote by $F_n$ the \textit{model filiform Lie algebra} with basis 
$\beta_n=\{x_{-1}, x_0,x_1, \ldots, x_{n-2}\}$ and corresponding nonzero Lie products
\begin{align}
 [x_\ell,x_{-1}] = x_{\ell-1},\qquad 
 \ell=1,\ldots, n-2.
\end{align}
We enlarge $F_n$ to $L:=F_n\oplus\C d$, setting
\begin{align}
  [d,x_j]=jx_j,\qquad 
  j=-1,0,1,\ldots, n-2,
\end{align}
whereby $L$ becomes a Lie algebra with basis $\beta_n\cup \{d\}$.
The Killing radical is given by $K=L'=F_n$, the radical by $R=L$, 
the centre by $Z(L)=\C x_0$, while $L_s=Z_s=\C d$.
From
\begin{align}
 [d,[x_\ell,x_{-1}]]=[[d,x_\ell],x_{-1}]+[x_\ell,[d,x_{-1}]],\qquad \ell=1,\ldots,n-2,
\end{align}
it follows that $d$ is an inner derivation of $L$.

Taking the level operator to be $d$, we get the $\Z$-grading
\begin{align}
 L=L_{-1}\oplus L_0\oplus L_1\oplus \cdots\oplus L_{n-2},
\end{align}
where
\begin{align}
 L_0 = \C x_0\oplus\C d,\qquad
 L_i = \C x_i,\qquad i=-1,1,2,\ldots,n-2.
\end{align}
This $\Z$-grading of $L$ is regular, irreducible, connected, and transitive.
Only for $n=3$ is it balanced and strongly graded.

\subsection{Extended Heisenberg algebras}

Consider the finite-dimensional \textit{Heisenberg (Lie) algebra $H_n$} with basis 
$\beta_n=\{ b_j,b_j^\dagger,c\,|\, j=1,\ldots,n\}$ and Lie products
\begin{align}
 [c,b_j]=0=[c,b^\dagger_j],\qquad
 [b_i,b_j]=0=[b_i^\dagger,b_j^\dagger],\qquad 
 [b_i,b_j^\dagger]=\delta_{ij}c,\qquad i,j\in\{1,\ldots,n\}.
\end{align}
Note that $H_1$ is isomorphic to the model filiform $F_3$ from Section~\ref{Sec:Filiform}.

We enlarge $H_n$ to the \textit{extended Heisenberg algebra} $L=L^{(\lambda,\mu)}:=H_n\oplus\C d$, 
setting ($\lambda\in\Z^n$, $\mu\in\Z$)
\begin{align}
 [d,c]=\mu c,\qquad
 [d,b_j]=(\mu-\lambda_j)b_j,\qquad 
 [d,b^\dagger_j]=\lambda_jb^\dagger_j,\qquad 
 j=1,\ldots, n,
\end{align}
thereby turning $L$ into a Lie algebra with basis $\beta_n\cup \{d\}$ and $d$ an inner derivation.
It follows that $K=H_n$, $R=L$, $S=\{0\}$, $L_s=Z_s=\C d$, and
\begin{align}
 Z(L)=\begin{cases} \C c\oplus\C d,\ &\mu=0,\ \lambda=0,\\
  \C c,\ &\mu=0,\ \lambda\neq0,\\
  \{0\},\ &\mu\neq0.
  \end{cases}
 \end{align}
For simplicity, we now set $\mu=0$ and restrict $\lambda$ to $\Nb_0^n$.
In this case, $L^{(\lambda)}\cong L^{(\lambda')}$ if $\lambda$ or $\lambda'$ is a nonnegative integer multiple of the other, 
or if $\{\lambda_1,\ldots,\lambda_n\}=\{\lambda_1',\ldots,\lambda_n'\}$ as multisets.

Taking the level operator to be $d$, we get the balanced $\Z$-grading
\begin{align}
 L=\bigoplus_{i=-m}^mL_i,\qquad
 m=\max\{\lambda_1,\ldots,\lambda_n\},
\end{align}
where
\begin{align}
 L_0 =\C c\oplus\C d\oplus\spa\{b_j,b_j^\dagger\,|\,\lambda_j=0;\,j=1,\ldots,n\}
\end{align}
and (for $i=1,\ldots,m$)
\begin{align}
 L_{-i}=\spa\{b_j\,|\,\lambda_j=i;\,j=1,\ldots,n\},\qquad
 L_i =\spa\{b^\dagger_j\,|\,\lambda_j=i;\,j=1,\ldots,n\}.
\end{align}
It follows that
\begin{align}
 \g_0=\C d,\qquad
 \kf_0=\C c\oplus\spa\{b_j,b_j^\dagger\,|\,\lambda_j=0;\,j=1,\ldots,n\},
\end{align}
so
\begin{align}
 C_0(\g_0)=L_0,\qquad \Cf=\kf_0.
\end{align}
Due to the multiplicity free condition (iii) in Definition \ref{Def:normal},
$L$ is normal if and only if $\lambda$ is multiplicity free (that is, $|\{\lambda_1,\ldots,\lambda_n\}|=n$).
As $G$ defined in \eqref{G} is seen to be given by
\begin{align}
 G=\C c\oplus\C d\oplus\spa\{b_j,b_j^\dagger\,|\,\lambda_j\neq0;\,j=1,\ldots,n\},
\end{align}
it follows that $L$ does not satisfy the completeness condition (v) in Definition \ref{Def:regular} 
if $\lambda_j=0$ for at least one $j$. Thus, $L$ is non-regular in that case.

\section{Representation theory}
\label{Sec:Rep}

Throughout the remainder of the paper, we assume that $L$ is a normal $\Z$-graded Lie algebra, 
and that $d \in Z(\g_0)$, c.f.~Theorem~\ref{Theo:d}. 

Triangular decompositions play important roles in the description and representation theory of Lie 
algebras \cite{MP95}. In our case, $L$ admits the triangular decomposition
\begin{align}
 L = L_-\oplus L_0 \oplus L_+,
 \qquad
 L_-:=\bigoplus_{i<0}L_i,\qquad L_+:=\bigoplus_{i>0}L_i.
\end{align}
We note that 
\begin{align}
{\overline L}_{\pm}:= L_0 \oplus L_{\pm}
\end{align}
are Lie subalgebras of $L$, and we denote the universal enveloping algebras of 
$L,L_0,L_{\pm},{\overline L}_{\pm}$ by $U,U_0,U_{\pm},{\overline U}_{\pm}$, respectively. 
In view of the PBW theorem, we have the decompositions
\begin{align}
U = U_{-}U_0U_{+} = U_{+}U_0U_{-},\qquad
U = {\overline U}_{-}L_{-} \oplus U_0 \oplus UL_+,
\label{UUU}
\end{align}
where the last expression also serves as a (two-sided) $U_0$-module decomposition.

For any subset $W$ of the $L$-module $V$, the $L$-module generated by $W$ is denoted by
\begin{align}
 UW:=\spa\{uw\,|\,u \in U,\,w\in W\}.
\end{align}
In the case $W=\{v\}$, we may write $UW=Uv$.
\begin{Definition}
An $L$-module $V\!$ is 
(i) said to be locally $L_+$-finite if, for each $v\in V$, $U_+v$ is finite-dimensional, and
(ii) called a weight module if
\begin{align}
 V\cong\bigoplus_{\lambda\in H^*}V_\lambda,\qquad V_\lambda:=\{v\in V\,|\,hv=\lambda(h)v,\ \forall h\in H\}.
\end{align}
\end{Definition}
\noindent
Moreover, we let $D_{0}^{+} \subset H^*$ denote the set of $\g_0$-dominant weights and $V(\Lambda)$ 
an irreducible highest-weight $L$-module of highest weight $\Lambda\in H^*$. 
In fact, by Theorem~\ref{Th:V1V2} below, this module is unique.

\subsection{Category $\Zc$}
\label{Sec:CatZ}

We recall that $L$ is assumed normal (see Definition~\ref{Def:normal}) and that the level operator is an element
of $Z(\g_0)$. In fact, $d\in Z(L_0)\cap\g_0$.
\begin{Definition}
\label{Def:CatZ}
Category $\Zc$ is defined to be the full category of $L$-modules whose objects are the modules 
$V\!$ satisfying the following three conditions:
\begin{itemize}
\item[$(\Zc1)$]
$V\!$ is finitely generated.
\item[$(\Zc2)$]
$d$ acts diagonalisably on $V\!$.
\item[$(\Zc3)$]
The spectrum of $d$ is bounded from above.
\end{itemize}
\end{Definition}
\begin{Proposition}
Every $V\in\Zc$ is Noetherian and $L_+$-finite.
\end{Proposition}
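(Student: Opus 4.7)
The plan is to first establish local $L_+$-finiteness by exploiting the interplay between the diagonalisability condition $(\Zc2)$ and the boundedness condition $(\Zc3)$, and then to deduce the Noetherian property from the standard fact that $U=U(L)$ is a left-Noetherian ring together with $V$ being finitely generated over $U$ by $(\Zc1)$.

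For $L_+$-finiteness, I would first observe that since $[d,x]=ix$ for $x\in L_i$, one has $d(xv)=(\lambda+i)xv$ whenever $v\in V_\lambda$, so $L_iV_\lambda\subseteq V_{\lambda+i}$. Consequently, $U_+$ inherits a $d$-grading $U_+=\bigoplus_{N\geq 0}(U_+)_N$ where $(U_+)_N$ is spanned by PBW monomials in $L_+$ of total $d$-weight $N$, and $(U_+)_NV_\lambda\subseteq V_{\lambda+N}$. Because $L_+=\bigoplus_{i=1}^{l}L_i$ is finite-dimensional with every homogeneous component carrying strictly positive $d$-weight, any PBW monomial of $d$-weight $N$ has total length at most $N$, so each $(U_+)_N$ is finite-dimensional. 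Given $v\in V$, condition $(\Zc2)$ lets us write $v$ as a finite sum of $d$-eigenvectors, and by linearity it suffices to treat a single $v\in V_\lambda$. Now $(\Zc3)$ supplies $\mu\in\C$ with $V_\nu=\z$ for all $\nu>\mu$, so $(U_+)_Nv=\z$ whenever $N>\mu-\lambda$. Hence $U_+v$ is a sum of only finitely many finite-dimensional subspaces $(U_+)_Nv$, yielding $\dim U_+v<\infty$.

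For the Noetherian property, the key input is that $U$ is a left-Noetherian ring, a classical PBW consequence: the standard filtration on $U$ has associated graded algebra the symmetric algebra $S(L)$, a polynomial ring in $\dim L<\infty$ variables, which is Noetherian by Hilbert's basis theorem; and a positively filtered algebra whose associated graded is Noetherian is itself Noetherian. By $(\Zc1)$, $V$ is a finitely generated $U$-module, and every finitely generated module over a left-Noetherian ring satisfies the ascending chain condition on submodules, so $V$ is Noetherian.

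Neither step is particularly delicate; the main bookkeeping is coordinating $(\Zc2)$ and $(\Zc3)$ so that the $U_+$-action on any $v$ is supported on only finitely many $d$-weights, after which the finite-dimensionality of each $d$-homogeneous slice of $U_+$ reduces to strict positivity of the grading together with $\dim L<\infty$. The Noetherian half is essentially imported from general ring theory; should one wish to avoid citing left-Noetherianity of $U$ directly, the same $d$-weight filtration used for $L_+$-finiteness provides an alternative route via reduction to $\g_0$-modules on each finite-dimensional weight slice.
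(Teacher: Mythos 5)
Your proposal is correct and follows essentially the same route as the paper: Noetherianity is obtained from left-Noetherianity of $U$ (which the paper cites from Dixmier and you re-derive via the PBW filtration and Hilbert's basis theorem) together with $(\Zc1)$, and local $L_+$-finiteness from $(\Zc3)$, which the paper simply asserts and you justify by the $d$-grading of $U_+$ combined with $(\Zc2)$. The extra detail you supply is a faithful expansion of the paper's terse argument rather than a different method.
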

\begin{proof}
The universal enveloping algebra of a finite-dimensional Lie algebra is Noetherian \cite{Dix96}, 
and since every finitely generated module over a Noetherian ring is Noetherian, $V$ is Noetherian.
Property $(\Zc3)$ in Definition~\ref{Def:CatZ} implies that $V$ is $L_+$-finite.
\end{proof}
\begin{Proposition}
Every module in $\Zc$ is isomorphic to a direct sum of modules admitting a $\Z$-gradation of the form
\begin{align}
 V=\bigoplus_{i \leq N}V_i,\qquad L_iV_j \subseteq V_{i+j},\qquad N\in\Z,
\label{VinZ}
\end{align}
where $V_n=\z$ for all $n>N$.
\end{Proposition}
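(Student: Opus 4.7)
The plan is to use the diagonalisability of $d$ to obtain a weight-space decomposition of $V$, and then to split $V$ according to the cosets of $\Z$ in $\C$ occupied by the $d$-spectrum.

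First, by condition $(\Zc2)$, we have $V = \bigoplus_{\lambda \in \C} V_\lambda$ with $V_\lambda := \{v \in V \mid dv = \lambda v\}$. The $\Z$-grading of $L$ interacts with this decomposition via the identity $[d,x] = ix$ for $x \in L_i$: a one-line computation gives $L_i V_\lambda \subseteq V_{\lambda + i}$, so the action of $L$ shifts $d$-eigenvalues by integers only.

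Next, I would introduce the equivalence relation on the $d$-spectrum $\Sigma := \{\lambda \in \C \mid V_\lambda \neq \z\}$ given by $\lambda \sim \mu$ iff $\lambda - \mu \in \Z$, and write $\Sigma = \bigsqcup_\alpha \Sigma_\alpha$ as a disjoint union of equivalence classes. The subspace $W_\alpha := \bigoplus_{\lambda \in \Sigma_\alpha} V_\lambda$ is then an $L$-submodule by the shift property, and $V = \bigoplus_\alpha W_\alpha$ as $L$-modules. Within each $W_\alpha$, choose a representative $\lambda_\alpha \in \Sigma_\alpha$, so $\Sigma_\alpha \subseteq \lambda_\alpha + \Z$; by $(\Zc3)$, the set of integers $i$ for which $V_{\lambda_\alpha + i} \neq \z$ is bounded above and so admits a maximum $N_\alpha$. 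Setting $W_\alpha^i := V_{\lambda_\alpha + i}$ for $i \in \Z$ yields the required gradation $W_\alpha = \bigoplus_{i \leq N_\alpha} W_\alpha^i$ with $L_j W_\alpha^i \subseteq W_\alpha^{i+j}$ and $W_\alpha^n = \z$ for $n > N_\alpha$.

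There is no serious obstacle in this argument. Its entire content is the recognition that a module $V \in \Zc$ may fail to admit a single $\Z$-gradation of the form \eqref{VinZ} only because its $d$-spectrum can occupy several distinct cosets of $\Z$ in $\C$, and splitting by those cosets repairs exactly this defect. Condition $(\Zc1)$ plays no role in the decomposition itself; combined with the above, it merely ensures that only finitely many of the $W_\alpha$ are nonzero.
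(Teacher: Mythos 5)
Your argument is correct and follows essentially the same route as the paper: decompose $V$ into $d$-eigenspaces via $(\Zc2)$, note that the action of $L_i$ shifts eigenvalues by $i$, split by $\Z$-cosets of the spectrum, and invoke $(\Zc3)$ for the upper bound. Your observation that $(\Zc1)$ is only needed to guarantee finitely many summands (the paper uses it to ensure the eigenvalues lie in finitely many cosets $\Z+\epsilon_\ell$) is an accurate refinement, not a different method.
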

\begin{proof}
By $(\Zc2)$ in Definition~\ref{Def:CatZ}, $V$ is a direct sum of $d$-eigenspaces. By $(\Zc1)$, 
every $d$-eigenvalue $\gamma$ will satisfy $\gamma\in\Z+\epsilon_\ell$ for one of finitely many 
possible scalars $\epsilon_\ell$. The result now follows from the boundedness property $(\Zc3)$.
\end{proof}
\begin{Proposition}
Let $V\in\Zc$. On any irreducible $L_0$-submodule of $V\!$, $d$ acts as a scalar multiple of the identity. 
\end{Proposition}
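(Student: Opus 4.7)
The plan is to exploit two facts simultaneously: the diagonalisability of $d$ guaranteed by condition $(\Zc2)$ in Definition~\ref{Def:CatZ}, and the fact that $d$ lies in the centre of $L_0$. Once both are in play, an elementary ``$d$-eigenspaces are submodules'' argument together with irreducibility forces $d$ to act as a scalar, bypassing any need to invoke Schur's lemma in an infinite-dimensional setting.

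First, I would verify that $d \in Z(L_0)$. This is immediate from the level-operator property: for any $x \in L_0$, we have $[d,x]=0\cdot x=0$, so $d$ commutes with every element of $L_0$. (Alternatively, one could cite that under the standing assumption of Section~\ref{Sec:Rep}, $d\in Z(\g_0)$, and note that $L_0=\g_0\oplus\kf_0$ with $[d,\kf_0]\subseteq 0\cdot\kf_0=\z$.) Let $W$ be an irreducible $L_0$-submodule of $V$. By $(\Zc2)$, $d$ acts diagonalisably on $V$, hence on $W$, so
\begin{align}
W = \bigoplus_{\gamma}W^{(\gamma)},\qquad W^{(\gamma)}:=\{w\in W\,|\,dw=\gamma w\}.
\end{align}

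Next, I would observe that each eigenspace $W^{(\gamma)}$ is an $L_0$-submodule of $W$. Indeed, for $x\in L_0$ and $w\in W^{(\gamma)}$,
\begin{align}
d(xw) = x(dw) + [d,x]w = \gamma\,xw,
\end{align}
since $[d,x]=0$, so $xw\in W^{(\gamma)}$.

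Finally, irreducibility of $W$ as an $L_0$-module forces exactly one of the $W^{(\gamma)}$ to be nonzero, so $W=W^{(\gamma_0)}$ for a single $\gamma_0$, and thus $d$ acts as the scalar $\gamma_0$ on $W$. There is no genuine obstacle here: the only subtle point is ensuring that $d$ really does commute with all of $L_0$ (not merely with $\g_0$), but this is automatic from the level-operator property. The argument does not require $W$ to be finite-dimensional, which is a useful feature given that objects of $\Zc$ need not be.
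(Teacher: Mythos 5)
Your proof is correct and follows essentially the same route as the paper, which simply cites $(\Zc2)$ together with $d\in Z(L_0)$; you have merely spelled out the standard details (eigenspaces of a central diagonalisable element are $L_0$-submodules, so irreducibility forces a single eigenvalue, with the restriction of a diagonalisable action to an invariant subspace remaining diagonalisable because each vector is a finite sum of eigenvectors).
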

\begin{proof}
This follows from $(\Zc2)$ in Definition~\ref{Def:CatZ} and the fact that $d\in Z(L_0)$.
\end{proof}
\noindent
\textbf{Remark.}
For each nonzero $V\in\Zc$, we have $L_-V\subseteq V$, and if $V$ is of the form \eqref{VinZ}, then
\begin{align}
 V_N \cap L_{-}V = \z.
\label{LmV}
\end{align}
\noindent
\textbf{Remark.}
As a module over itself, we have $L\in\Zc$, and $L$ is of the form \eqref{VinZ} with $N=l$. Moreover, $U_+$ and $U_-$ 
admit $\Z$-gradations according to level:
\begin{align}
 U_+=\bigoplus_{i \geq 0}U_+^i,\qquad U_-=\bigoplus_{i \leq 0}U_-^i,
\label{Um}
\end{align}
where
\begin{align}
 U_\pm^i:= \{u \in U_\pm\,|\,[d,u] = iu\},\qquad i\in\Z,
 \end{align}
noting that $U_\pm^0=\C$.

\subsection{Primary component}

\begin{Definition}
For $V\in\Zc$, the primary component is defined as
\begin{align}
 \Vp:= \{v \in V\,|\,L_+v=0\}.
\end{align}
\end{Definition}
\noindent
\textbf{Remark.}
$\Vp$ is a nontrivial $L_0 $-submodule and, with reference to \eqref{VinZ}, $V_N\subseteq\Vp$.
\medskip

\noindent
Following previous work \cite{Gould91} on Lie superalgebras, we now introduce the following notion.
\begin{Definition}
An $L$-module $V\in\Zc$ is called standard if 
\begin{align}
 V=U\Vp.
\label{VUV0}
\end{align}
\end{Definition}
\noindent
\textbf{Remark.}
In view of \eqref{UUU}, a standard $L$-module $V$ may be written
\begin{align}
 V=U_{-}U_0U_{+}\Vp= U_{-}U_0\Vp= U_{-}\Vp= U_{-}L_{-}\Vp+\Vp.
\label{VUUUU}
\end{align}
We also note that any irreducible $V\in\Zc$ is standard.
\begin{Proposition}
For $V\in\Zc$, $U\Vp$ is the unique maximal standard submodule of $V$.
\end{Proposition}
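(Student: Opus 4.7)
The plan is to establish three claims in sequence: that $U\Vp$ is itself an object of $\Zc$, that $U\Vp$ is standard, and that every standard submodule of $V$ is contained in $U\Vp$, from which uniqueness of the maximal standard submodule is immediate.

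First, I would verify that $U\Vp$ is a submodule lying in $\Zc$. It is $U$-stable by construction, hence an $L$-submodule of $V$. Since $U$ is Noetherian, $V\in\Zc$ is Noetherian as already noted, so every submodule of $V$ is finitely generated; this gives condition $(\Zc1)$. Conditions $(\Zc2)$ and $(\Zc3)$ pass to $U\Vp$ directly from $V$, because the $d$-eigenspaces of $U\Vp$ are intersections with those of $V$ and the $d$-spectrum can only shrink. Hence $U\Vp\in\Zc$, and its primary component $(U\Vp)_\bullet$ is well defined.

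Second, I would show that $U\Vp$ is standard, i.e.\ that $U\Vp = U(U\Vp)_\bullet$. The key point is the identification $(U\Vp)_\bullet = \Vp$. The inclusion $\Vp\subseteq (U\Vp)_\bullet$ is immediate, since $\Vp\subseteq U\Vp$ and $L_+\Vp=\z$. Conversely, if $w\in (U\Vp)_\bullet$, then $w\in V$ and $L_+w=\z$, so $w\in\Vp$. Applying $U$ to both sides gives $U(U\Vp)_\bullet = U\Vp$, as required.

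Third, for maximality I would take an arbitrary standard submodule $W\subseteq V$, necessarily in $\Zc$, satisfying $W = UW_\bullet$. Any $w\in W_\bullet$ lies in $V$ and is annihilated by $L_+$, so $w\in\Vp$; hence $W_\bullet\subseteq\Vp$ and $W = UW_\bullet\subseteq U\Vp$. Since every standard submodule is contained in $U\Vp$, which is itself standard, $U\Vp$ is the unique maximal such submodule. There is no real obstacle here; the only subtlety worth flagging explicitly is that the primary component $(U\Vp)_\bullet$ is computed using the same ambient action of $L_+$ on $V$, which is what makes the identification $(U\Vp)_\bullet=\Vp$ and the inclusion $W_\bullet\subseteq\Vp$ both transparent.
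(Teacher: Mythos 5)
Your proof is correct and follows essentially the same route as the paper: the decisive step in both is that any standard submodule $W$ satisfies $\Wp\subseteq\Vp$, hence $W=U\Wp\subseteq U\Vp$. The extra verifications you include (that $U\Vp$ lies in $\Zc$ and is itself standard via $(U\Vp)_\bullet=\Vp$) are sound and merely make explicit what the paper's shorter proof leaves implicit.
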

\begin{proof}
Let $W \subseteq V$ be a standard submodule of $V$. Then, $W = U\Wp$, and since $\Wp\subseteq\Vp$,
it follows that $W\subseteq U\Vp$.
\end{proof}

For $V$ standard, with
\begin{align}
 V_0:=\Vp,
\end{align} 
we now define a sequence of $L_0 $-submodules $V_i$ recursively by 
\begin{align}
V_{-i}:=\begin{cases} L_{-i}V_0 + L_{-i+1}V_{-1} +\cdots+ L_{-1}V_{-i+1},\ &i=1,\ldots,k,
\\[.2cm]
 L_{-k}V_{k-i} + L_{-k+1}V_{k-i-1} +\cdots+ L_{-1}V_{-i+1},\ &i > k,
 \end{cases}
\end{align}
where $-k$ is the lower bound on $i$ in the $\Z$-grading of $L$ in Definition~\ref{Def:Zgraded}.
In the notation of \eqref{Um}, we have $V_i = U_{-}^{i}V_0$ for $i < 0$. Moreover,
\begin{align}
L_{i}V_{j} \subseteq V_{i+j},
\end{align}
where $V_{i+j}\equiv\z$ if $i+j>0$. 
\begin{Theorem}
Let $V$ be an irreducible standard $L$-module. Then, as an $L_0 $-module,
$V$ decomposes as
\begin{align}
 V=\bigoplus_{i \leq 0}V_i.
\label{VsumV}
\end{align}
\end{Theorem}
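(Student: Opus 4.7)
The approach proceeds in three stages: first show that $V$ is the sum of the $V_i$, then verify that each $V_i$ is $L_0$-stable, and finally establish directness by exploiting the $d$-eigenspace structure.

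For the spanning, since $V$ is standard and $L_+\Vp=\z$, one has $V=U\Vp=U_-U_0U_+\Vp=U_-V_0$ using the PBW decomposition \eqref{UUU} together with the $L_0$-stability of $V_0=\Vp$. The gradation $U_-=\bigoplus_{i\leq0}U_-^i$ from \eqref{Um} and the recursive definition of $V_{-i}$ then identify $V_{-i}=U_-^{-i}V_0$, giving $V=\sum_{i\leq0}V_i$.

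For $L_0$-stability, I would induct downwards on $i$. The base case $i=0$ is the defining property of $\Vp$. For $i<0$, using $[L_0,L_{-j}]\subseteq L_{-j}$ and the inductive hypothesis $L_0V_{-i+j}\subseteq V_{-i+j}$, the computation $L_0(L_{-j}V_{-i+j})\subseteq L_{-j}(L_0V_{-i+j})+[L_0,L_{-j}]V_{-i+j}\subseteq L_{-j}V_{-i+j}\subseteq V_{-i}$ completes the step.

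The main obstacle is directness, for which the irreducibility of $V$ must be leveraged. The key is to show that $\Vp$ lies in a single $d$-eigenspace. Since $d\in Z(L_0)$ and $d$ is diagonalisable on $V$ by $(\Zc2)$, every $d$-eigenspace of $\Vp$ is an $L_0$-submodule; pick one, say $W\subseteq\Vp$ with $d$ acting as $\lambda\cdot\mathrm{id}$. Then $L_+W\subseteq L_+\Vp=\z$ and $L_0W\subseteq W$, so $UW=U_-W$ is a nonzero $L$-submodule of $V$, equal to $V$ by irreducibility. Because $U_-^{-i}$ lowers $d$-eigenvalues by $i$, the maximum $d$-eigenvalue on $V$ is $\lambda$, forcing $\Vp$ into the $\lambda$-eigenspace of $d$ and hence $\Vp=W$. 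A short computation using $[d,x]=jx$ for $x\in L_j$ then gives $V_{-i}$ inside the $(\lambda-i)$-eigenspace of $d$, and diagonalisability of $d$ yields directness of the sum, as required.
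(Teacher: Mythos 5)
Your proof is correct, but the directness step takes a genuinely different route from the paper's. The paper argues by downward induction on the level, proving $V_j\cap\bigl(\bigoplus_{i=j+1}^{0}V_i\bigr)=\z$ for all $j<0$; the crux is the identity $\Vp\cap U_-L_-\Vp=\z$, obtained by applying irreducibility to $W:=\Vp\cap U_-L_-\Vp$ and deriving the contradiction $V\subseteq L_-V$ against \eqref{LmV}, which in turn rests on the gradation \eqref{VinZ} and hence on $(\Zc3)$. You instead apply irreducibility to a $d$-eigenspace $W$ of $\Vp$ to get $V=U_-W$, conclude that $d$ acts by a single scalar $\lambda$ on $\Vp$, and then directness is immediate because the $V_{-i}=U_-^{-i}V_0$ lie in distinct eigenspaces of the diagonalisable operator $d$; this uses only $(\Zc2)$ plus irreducibility, avoids \eqref{LmV} altogether, and makes explicit inside the proof the eigenvalue description $V_i=\{v\in V\,|\,(d-\Delta)v=iv\}$ that the paper only records in the remark following the theorem. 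One small compression to repair: from ``the maximum $d$-eigenvalue on $V$ is $\lambda$'' it does not by itself follow that $\Vp$ lies in the $\lambda$-eigenspace, since a vector killed by $L_+$ could a priori have a smaller eigenvalue; you need the extra observation that the chosen eigenvalue was arbitrary (equivalently, if $\Vp$ contained an eigenvector of eigenvalue $\mu<\lambda$, running your construction with the $\mu$-eigenspace would bound the $d$-spectrum of $V$ by $\mu$, contradicting that $\lambda$ occurs). With that one line added, the argument is complete; your spanning and $L_0$-stability steps agree with the paper's setup via \eqref{UUU} and \eqref{Um}.
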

\begin{proof}
It suffices to prove that
\begin{align}
 V_j\cap \Big(\bigoplus_{i=j+1}^{0}V_{i}\Big) = \z,\qquad \forall j<0,
\label{VjV}
\end{align}
and we do that by induction on $-j$. For the induction start at $j=-1$, 
$V_{-1} \cap V_0 \subseteq U_{-}L_{-}V_0 \cap V_0$.
If $W:=V_0\cap U_-L_-V_0$ is nonzero, then, in view of the irreducibility of $V$,
\begin{align}
 V=UW=U_-L_-W+W\subseteq U_{-}L_-V_0 = L_{-}U_-V_0 \subseteq L_{-}V,
\end{align}
in contradiction to \eqref{LmV}. It follows that
\begin{align}
 V_0\cap U_-L_-V_0=\z,
\label{V0ULV}
\end{align}
so $V_{-1} \cap V_0=\z$.
For the induction step, suppose
\begin{align}
V_j \cap \Big(\bigoplus_{i=j+1}^{0}V_{i}\Big) = \z,\qquad \ell\leq j<0,
\end{align}
for some integer $\ell$, and let $v \in V_{\ell-1} \cap (\bigoplus_{i=\ell}^{0}V_{i})$. 
Then, by the induction hypothesis,
\begin{align}
L_nv \subseteq V_{\ell+n-1} \cap \Big(\bigoplus_{i=\ell+n}^{0}V_{i}\Big) = \z
\end{align}
for every $n>1$, so $L_+v=\z$. Hence, 
\begin{align}
v \in V_0 \cap V_{\ell-1} \subseteq V_0 \cap U_{-}L_{-}V_0 = \z.
\end{align}
\end{proof}
\begin{Theorem}
\label{Theo:Vstandard}
Let $V\!$ be a standard $L$-module. 
Then, $V\!$ is irreducible if and only if $\Vp$ is an irreducible $L_0 $-module.
\end{Theorem}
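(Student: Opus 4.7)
The plan is to exploit the interplay between $\Vp$ and the triangular decomposition $U=U_-U_0U_+$, treating the two implications separately.

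For the forward direction, assume $V$ is irreducible. The preceding theorem then supplies the $L_0$-module decomposition $V=\bigoplus_{i\leq 0}V_i$ with $V_0=\Vp$. Given a nonzero $L_0$-submodule $W\subseteq\Vp$, the $L$-submodule $UW$ is nonzero and therefore equals $V$ by irreducibility. Since $L_+W\subseteq L_+\Vp=\z$, we have $U_+W=W$, and as $W$ is $U_0$-stable, it follows that $V=U_-W$. Writing $U_-=\C\oplus\bigoplus_{i<0}U_-^i$, we get $V=W+\bigl(\bigoplus_{i<0}U_-^i\bigr)W$, where the first summand lies in $V_0$ and the second in $\bigoplus_{i<0}V_i$. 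Projecting onto $V_0$ along the direct sum $V=\bigoplus_{i\leq 0}V_i$ therefore gives $\Vp=V_0\subseteq W$, so $W=\Vp$.

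For the backward direction, assume $\Vp$ is irreducible as an $L_0$-module and let $W\subseteq V$ be a nonzero $L$-submodule. It is enough to show $W\cap\Vp\neq\z$: then $W\cap\Vp$ is a nonzero $L_0$-submodule of $\Vp$ and hence equals $\Vp$ by irreducibility, so $\Vp\subseteq W$ and the standardness relation $V=U\Vp\subseteq W$ forces $W=V$. To produce a nonzero primary vector inside $W$, pick a nonzero $w\in W$, use the $d$-diagonalisability of $V$ (inherited by $W$, since $W$ is $d$-stable) to replace $w$ by its $d$-eigencomponent $w_{\gamma_0}\in W$ of largest $d$-eigenvalue $\gamma_0$, and consider $U_+w_{\gamma_0}\subseteq W$. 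This space is finite-dimensional by local $L_+$-finiteness, and its $d$-eigenvalues are bounded above by $(\Zc3)$, so it contains a nonzero $d$-eigenvector $w''$ of maximal eigenvalue $\gamma_{\max}$. For $x\in L_i$ with $i>0$, the vector $xw''\in U_+w_{\gamma_0}$ has $d$-eigenvalue $\gamma_{\max}+i>\gamma_{\max}$, so maximality forces $xw''=0$. Hence $w''\in W\cap\Vp$ and $w''\neq 0$, as required.

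The main subtlety lies in the backward direction: one must extract the primary vector from $W$ rather than merely from $V$, which is why both the $d$-eigencomponent trick (using that $W$ is $d$-stable and $d$ acts diagonalisably on it) and the $L_+$-invariance of $U_+w_{\gamma_0}$ inside $W$ are essential. In the forward direction, the delicate input is the $L_0$-graded structure $V=\bigoplus_{i\leq 0}V_i$ provided by the preceding theorem, without which the projection argument would not isolate $\Vp$.
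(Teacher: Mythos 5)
Your proof is correct and follows essentially the same route as the paper: both directions reduce to comparing primary components, writing $V=U_-W$ via the PBW decomposition, and isolating the level-zero part using the graded structure of an irreducible standard module, then invoking $V=U\Vp$. The only differences are cosmetic: you prove explicitly (via the maximal $d$-eigenvalue argument) that a nonzero submodule has nonzero primary component, a fact the paper takes from its earlier remark that primary components of modules in Category $\Zc$ are nontrivial, and you use the full decomposition $V=\bigoplus_{i\leq 0}V_i$ from the preceding theorem where the paper instead cites the intersection identity $\Vp\cap U_-L_-\Vp=\z$ established in that theorem's proof.
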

\begin{proof}
Suppose $\Vp$ is an irreducible $L_0 $-module and that $W \neq \z$ is an $L$-submodule of $V$ 
with primary component $\Wp$. Then, $\Wp\subseteq\Vp$ 
and hence by irreducibility of $\Vp$, $\Wp=\Vp$. Thus, $V = U\Wp\subseteq W$, hence $V = W$, 
so $V$ is irreducible. 

As to the converse, suppose $V$ is irreducible and let $\Wp\subseteq\Vp$ be a nonzero 
$L_0 $-submodule of $\Vp$. Since $V$ is irreducible, we then have the $L_0 $-module decomposition
\begin{align}
 V=U\Wp=\Wp\oplus U_{-}L_{-}\Wp,
\end{align}
where we have used the result
\begin{align}
 \Wp\cap U_{-}L_{-}\Wp \subseteq\Vp \cap U_{-}L_{-}\Vp = \z,
\end{align}
which follows from \eqref{V0ULV}. We now let $v_0 \in\Vp$ be arbitrary and write
\begin{align}
 v_0 = w_0 + v_1,\qquad w_0 \in\Wp,\qquad v_1 \in U_{-}L_{-}\Wp.
\end{align}
Then, $v_1 = v_0 - w_0 \in\Vp \cap U_{-}L_{-}\Vp = \z$, from which it follows that $v_0 = w_0 \in\Wp$. 
This shows that $\Vp \subseteq\Wp$ and hence $\Vp=\Wp$, so $\Vp$ is an irreducible $L_0 $-module.
\end{proof}
\begin{Corollary}
Let $V\in\Zc$. Then, $V$ is irreducible if and only if \,$V$\! is standard 
and $\Vp$ is an irreducible $L_0 $-module.
\end{Corollary}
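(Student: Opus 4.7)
The plan is to deduce this Corollary from Theorem~\ref{Theo:Vstandard} together with the earlier Remark that $\Vp$ is a nontrivial $L_0$-submodule of any nonzero $V\in\Zc$. Once standardness is in hand, irreducibility of $V$ is equivalent to irreducibility of $\Vp$ by that theorem, so the only real content is checking that an irreducible $V\in\Zc$ is automatically standard.

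For the forward direction, I would argue as follows. Suppose $V\in\Zc$ is irreducible. Using property $(\Zc3)$ and the $\Z$-gradation of the form \eqref{VinZ}, the top graded piece $V_N$ is nonzero and is annihilated by $L_+$ (since $L_+V_N\subseteq\bigoplus_{i>N}V_i=\z$), so $V_N\subseteq\Vp$ and $\Vp$ is nonzero. Therefore $U\Vp$ is a nonzero $L$-submodule of $V$, and irreducibility forces $V=U\Vp$, which is precisely the standardness condition \eqref{VUV0}. Now Theorem~\ref{Theo:Vstandard} applied to the standard module $V$ yields that $\Vp$ is irreducible as an $L_0$-module.

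For the reverse direction, assume $V\in\Zc$ is standard and $\Vp$ is an irreducible $L_0$-module. Then the hypotheses of Theorem~\ref{Theo:Vstandard} are in place, so that theorem immediately gives that $V$ is irreducible as an $L$-module.

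The main potential obstacle is making sure the nontriviality of $\Vp$ is rigorously available for any nonzero $V\in\Zc$, since the whole argument rests on being able to invoke standardness and then Theorem~\ref{Theo:Vstandard}. This is secured by properties $(\Zc2)$ and $(\Zc3)$ via the graded decomposition \eqref{VinZ}, so no further technical work is needed beyond the observation that $V_N\subseteq\Vp$. Everything else is a direct appeal to Theorem~\ref{Theo:Vstandard} in each direction.
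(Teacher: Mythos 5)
Your proposal is correct and follows essentially the same route as the paper: both directions reduce to Theorem~\ref{Theo:Vstandard}, with the forward direction relying on the fact (which you spell out via the graded decomposition \eqref{VinZ} and the nontriviality of $\Vp$) that any irreducible $V\in\Zc$ is standard — a fact the paper cites from its earlier remark. No gaps; your explicit verification that $V_N\subseteq\Vp$ forces $V=U\Vp$ is exactly the content behind that remark.
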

\begin{proof}
Let $V\in\Zc$ be irreducible. Then, $V$ is standard and, by Theorem~\ref{Theo:Vstandard}, 
$\Vp$ is an irreducible $L_0 $-module.
The converse result follows immediately from Theorem~\ref{Theo:Vstandard}.
\end{proof}
\noindent
\textbf{Remark.}
For an irreducible standard $L$-module,
the $L_0$-modules $V_i$ occurring in the decomposition \eqref{VsumV} are given by
\begin{align}
V_i = \{v \in V\,|\, (d-\Delta)v = iv\},
\end{align}
where $\Delta$ is the eigenvalue of $d$ on $V_0=\Vp$. This makes the $\Z$-grading of $V$ explicit.  
In Proposition~\ref{Prop:VKM} below, we show that every irreducible $L$-module is uniquely characterised 
(up to isomorphism) by its primary component $\Vp$.

\subsection{Induced modules}
\label{Sec:Induced}

Let $W$ be an $L_0$-module; it becomes an ${\overline L}_{+}$-module by setting $L_+W= \z$. 
The corresponding induced $L$-module
\begin{align}
 K(W):= U_{-} {\otimes}_{{\overline U}_{+}}W
\label{KV0}
\end{align}
admits a $\Z$-gradation of the form \eqref{VinZ}:
\begin{align}
K(W) = \bigoplus_{i \leq 0}K_i,\qquad K_i:= U_{-}^{i} \otimes W,
\end{align}
where we note that
\begin{align}
U_{-}L_{-}K_0 = U_{-}L_{-} \otimes W= \bigoplus_{i \leq -1}K_i.
\end{align}

The following results summarise some important properties of the induced modules \eqref{KV0} 
in the case $W$ is irreducible as an $L_0$-module. 
With appropriate modifications, the proofs follow similar proofs in \cite{Gould91} 
(see Theorem 4.2 and Lemmas 4.1-4.3 therein).
\begin{Theorem}
\label{Theo:Wirr}
Let $W$ be an irreducible $L_0$-module. Then, the following holds:
\begin{itemize}
\item[{\rm (i)}]
$K(W)$ is indecomposable.
\item[{\rm (ii)}]
$K(W)$ contains a unique maximal $\Z$-graded submodule $M(W)$.
\item[{\rm (iii)}]
$M(W)$ is maximal in the set of all proper submodules of $K(W)$.
\item[{\rm (iv)}]
$K(W)/M(W)$ is an irreducible $L$-module with primary component isomorphic to $W$.
\end{itemize}
\end{Theorem}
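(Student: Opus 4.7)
The plan is to exploit one structural observation that handles parts~(i)--(iii) in parallel and reduces~(iv) to Theorem~\ref{Theo:Vstandard}: every $L$-submodule $N\subseteq K(W)$ is automatically $\Z$-graded with respect to the decomposition $K(W)=\bigoplus_{i\leq 0}K_i$. To see this, I would first apply Schur's lemma: since $W$ is an irreducible $L_0$-module and $d\in Z(L_0)$, there is a scalar $\Delta$ with $dw=\Delta w$ for all $w\in W$; because each element of $U_-^i$ shifts the $d$-weight by $i$, the subspace $K_i$ coincides with the $(\Delta+i)$-eigenspace of $d$ on $K(W)$. As $d$ is assumed inner, any $L$-submodule $N$ is $d$-stable, hence $N=\bigoplus_i(N\cap K_i)$; in particular, $N\cap K_0$ is an $L_0$-submodule of $K_0\cong W$.

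With this in hand, I would establish~(ii) and~(iii) jointly. Irreducibility of $W$ forces $N\cap K_0\in\{\z,K_0\}$, and since $K(W)=U\cdot K_0$ by construction of the induced module, the second alternative gives $N=K(W)$. Thus $N$ is proper iff $N\cap K_0=\z$. Setting
\[
 M(W):=\sum_{N\cap K_0\,=\,\z}N,
\]
with the sum taken over all proper submodules, the $\Z$-grading guarantees $M(W)\cap K_0=\z$, so $M(W)$ is itself proper. It therefore serves simultaneously as the unique maximal $\Z$-graded submodule (part~(ii)) and the unique maximal proper submodule (part~(iii)).

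Claim~(i) then follows immediately: a splitting $K(W)=A\oplus B$ with both summands nonzero and proper would give $A,B\subseteq M(W)$, contradicting $M(W)\neq K(W)$. For~(iv), maximality makes the quotient $V':=K(W)/M(W)$ irreducible. The image $\overline{K_0}$ of $K_0$ in $V'$ is $L_0$-isomorphic to $W$ (since $M(W)\cap K_0=\z$) and is annihilated by $L_+$, so $\overline{K_0}\subseteq V'_\bullet$. Since $V'=U\cdot\overline{K_0}$, the quotient is standard, and Theorem~\ref{Theo:Vstandard} forces $V'_\bullet$ to be irreducible as an $L_0$-module; as $\overline{K_0}\cong W$ is already a nonzero irreducible $L_0$-submodule of $V'_\bullet$, they must coincide, giving $V'_\bullet\cong W$.

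The main obstacle is the very first observation --- that every submodule is $\Z$-graded --- because it is this fact that collapses the distinction between~(ii) and~(iii) into a single argument. It relies crucially on $d$ being inner together with the eigenvalue separation supplied by Schur; absent either input, part~(iii) would demand a separate argument, for instance by tracking leading $d$-weights of elements of a submodule, as in the analogous Lie-superalgebra treatment of~\cite{Gould91}.
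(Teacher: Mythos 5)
Your argument is correct, and since the paper does not actually write out a proof of Theorem~\ref{Theo:Wirr} --- it only points to Theorem 4.2 and Lemmas 4.1--4.3 of \cite{Gould91} ``with appropriate modifications'' --- your proposal supplies a self-contained argument where the paper offers a citation. The distinctive feature of your route is the opening observation: because $d$ is inner and lies in $Z(L_0)$, it acts on the irreducible $L_0$-module $W$ by a scalar $\Delta$, so the level decomposition $K(W)=\bigoplus_{i\le 0}K_i$ is precisely the eigenspace decomposition of $d$ with distinct eigenvalues $\Delta+i$; hence every $L$-submodule, being $d$-stable, is automatically $\Z$-graded. This collapses (ii) and (iii) into a single statement and renders (i) and (iv) formal consequences, the latter via Theorem~\ref{Theo:Vstandard}. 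Note that this does not conflict with the paper's remark that the submodules in (iii) are not assumed $\Z$-graded: in your treatment they are not assumed graded, they are proved to be graded, which is exactly what makes (iii) follow from (ii) without the separate leading-term argument one would otherwise need (and which the approach of \cite{Gould91} is set up to provide in the absence of such a grading element). Two small points should be made explicit. First, since $W$ may be infinite-dimensional, the scalar action of $d$ is not the finite-dimensional Schur lemma but its Dixmier/Quillen form; it applies here because $U(L_0)$ has countable dimension over the uncountable field $\C$, so every irreducible $L_0$-module is cyclic of countable dimension and has endomorphism ring $\C$. Second, before invoking Theorem~\ref{Theo:Vstandard} you should record that $K(W)/M(W)$ belongs to Category $\Zc$ (it is cyclic, $d$ acts diagonalisably on it, and its $d$-spectrum $\{\Delta+i\,|\,i\le 0\}$ is bounded above), since the notion of a standard module is defined only for modules in $\Zc$; with that noted, your identification of the primary component with $W$ goes through exactly as written.
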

\noindent
\textbf{Remark.} 
The submodules in Theorem~\ref{Theo:Wirr} (iii) are not assumed $\Z$-graded.
\begin{Proposition}
\label{Prop:VKM}
Let $V\in\Zc$ be irreducible. Then,
\begin{align}
V \cong K(\Vp)/M(\Vp).
\end{align}
\end{Proposition}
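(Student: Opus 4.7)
The plan is to construct a natural surjective $L$-module homomorphism $\phi\colon K(\Vp)\to V$ and identify its kernel with $M(\Vp)$.

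First, since $L_+\Vp=\z$ by definition of the primary component, $\Vp$ becomes an ${\overline L}_+$-module on which $L_+$ acts trivially, and the inclusion $\Vp\hookrightarrow V$ is an ${\overline L}_+$-module homomorphism. By the universal property of induction from ${\overline L}_+$ to $L$, this extends uniquely to an $L$-module homomorphism $\phi\colon K(\Vp)\to V$ satisfying $\phi(u\otimes v)=uv$ for $u\in U_-$ and $v\in\Vp$. Because $V$ is irreducible, the Corollary to Theorem~\ref{Theo:Vstandard} implies that $V$ is standard, so $V=U\Vp=U_-\Vp$ by \eqref{VUUUU}; hence $\phi$ is surjective, and $\ker\phi$ is a proper submodule of $K(\Vp)$.

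Next, I would invoke Theorem~\ref{Theo:Wirr} (iii), which asserts that $M(\Vp)$ contains every proper submodule of $K(\Vp)$. This gives $\ker\phi\subseteq M(\Vp)$, and the induced map $\overline\phi\colon V\cong K(\Vp)/\ker\phi\twoheadrightarrow K(\Vp)/M(\Vp)$ is surjective. Since $M(\Vp)$ is proper (Theorem~\ref{Theo:Wirr} (iv)), $\overline\phi$ is nonzero; irreducibility of $V$ then forces $\ker\overline\phi=\z$, so $\overline\phi$ is an isomorphism.

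The only delicate point is the application of Theorem~\ref{Theo:Wirr} (iii) to $\ker\phi$, which a priori need not be $\Z$-graded. The strengthening from arbitrary graded proper submodules (part (ii)) to arbitrary proper submodules (part (iii)) is precisely what enables the conclusion; beyond this, the argument is a formal consequence of the universal property of induced modules together with the irreducibility of $V$.
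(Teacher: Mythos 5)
Your argument is correct and is essentially the intended one: the paper itself gives no explicit proof of Proposition~\ref{Prop:VKM}, deferring (together with Theorem~\ref{Theo:Wirr} and Proposition~\ref{Prop:V0V0}) to the analogous results in \cite{Gould91}, and the natural proof is exactly your construction of the surjection $\phi\colon K(\Vp)\to V$ from the universal property of induction, using that an irreducible $V\in\Zc$ is standard with $\Vp$ an irreducible $L_0$-module, followed by the identification $\ker\phi=M(\Vp)$. The only point to be careful about is the one you flag: read literally, Theorem~\ref{Theo:Wirr}\,(iii) says that $M(\Vp)$ is a \emph{maximal element} of the set of all proper submodules, which is formally weaker than the statement you use, namely that $M(\Vp)$ \emph{contains} every proper submodule. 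This gap is easy to close without appealing to (iii) at all: since $\Vp$ is an irreducible $L_0$-module and $d\in Z(L_0)$, $d$ acts on $\Vp$ by a scalar $\Delta$, hence on the level-$i$ component $U_-^i\otimes\Vp$ of $K(\Vp)$ by $\Delta+i$; as $\phi$ commutes with $d$ and $V\in\Zc$ is $d$-diagonalisable, $\ker\phi$ is $d$-stable and therefore $\Z$-graded, so part (ii) alone yields $\ker\phi\subseteq M(\Vp)$ (and then, since $V\cong K(\Vp)/\ker\phi$ is irreducible, $\ker\phi$ is maximal proper and in fact equals $M(\Vp)$). With that refinement, or with the reading of (iii) intended by the paper (every submodule of $K(\Vp)$ is automatically graded, so $M(\Vp)$ is the unique maximal proper submodule), your proof goes through as written.
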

\begin{Proposition}
\label{Prop:V0V0}
Let $W$ and $W'$ be irreducible $L_0$-modules and $\phi_0:W\rightarrow W'$ 
an $L_0$-module isomorphism. 
Then, $\phi_0$ extends to an $L$-module isomorphism $\phi: K(W) \rightarrow K(W')$ which in turn 
induces an $L$-module isomorphism $K(W)/M(W)\to K(W')/M(W')$.
\end{Proposition}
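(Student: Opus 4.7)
The plan is to exploit the universal property of the induced module $K(W) = U_- \otimes_{{\overline U}_+} W$. Since $L_+ W = \z$ and $L_+ W' = \z$ by the construction preceding \eqref{KV0}, the $L_0$-module map $\phi_0$ is automatically ${\overline L}_+$-equivariant, and hence ${\overline U}_+$-linear. Setting $\phi(u \otimes w) := u \otimes \phi_0(w)$ on generators and extending by $U_-$-linearity therefore yields a well-defined $L$-module homomorphism $\phi: K(W) \to K(W')$. Applying the same construction to $\phi_0^{-1}: W' \to W$ produces a map $\psi: K(W') \to K(W)$, and the compositions $\psi \circ \phi$ and $\phi \circ \psi$ act as the identity on the generating subspaces $1 \otimes W$ and $1 \otimes W'$, respectively, hence everywhere by $L$-linearity. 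Thus $\phi$ is an $L$-module isomorphism extending $\phi_0$.

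To obtain the induced isomorphism on quotients, I would argue that $\phi(M(W)) = M(W')$. First, $\phi$ respects the $\Z$-grading, since it maps $U_-^i \otimes W$ bijectively onto $U_-^i \otimes W'$; consequently $\phi(M(W))$ is a $\Z$-graded $L$-submodule of $K(W')$. As $\phi$ is bijective and $M(W)$ is proper in $K(W)$, $\phi(M(W))$ is proper in $K(W')$. The uniqueness of the maximal proper $\Z$-graded submodule granted by Theorem~\ref{Theo:Wirr}~(ii) then forces $\phi(M(W)) \subseteq M(W')$, and by symmetry applied to $\psi = \phi^{-1}$, we obtain $\psi(M(W')) \subseteq M(W)$, so $M(W') \subseteq \phi(M(W))$. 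Combining these inclusions gives $\phi(M(W)) = M(W')$. It follows that $\phi$ descends to a well-defined $L$-module isomorphism $K(W)/M(W) \to K(W')/M(W')$.

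There is no serious obstacle: once the universal property of induction is invoked and Theorem~\ref{Theo:Wirr}~(ii) is brought to bear, the argument reduces to routine $L$-linearity checks. The only mild subtlety is ensuring that $\phi$ preserves the $\Z$-grading on $K(W)$, which is needed so that the uniqueness clause of Theorem~\ref{Theo:Wirr}~(ii) — which concerns $\Z$-graded submodules — may be applied to $\phi(M(W))$; this preservation is, however, immediate from the graded structure of $U_-$ and the definition of $\phi$ on tensor products.
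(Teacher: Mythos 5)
Your argument is correct. The paper gives no proof of this proposition, deferring instead to Lemmas 4.1--4.3 and Theorem 4.2 of \cite{Gould91}, and what you have written is precisely the standard argument those results encode: functoriality of induction $U\otimes_{\overline{U}_{+}}(-)$ applied to $\phi_0$ and $\phi_0^{-1}$ (using that $L_+$ kills $W$ and $W'$, so $\phi_0$ is automatically $\overline{U}_{+}$-linear), followed by matching up the maximal $\Z$-graded submodules via the grading-preservation of $\phi$. The only available streamlining is at the last step: since $\phi$ is a degree-preserving $L$-module isomorphism, $\phi(M(W))$ is itself a maximal proper $\Z$-graded submodule of $K(W')$, so the uniqueness in Theorem~\ref{Theo:Wirr}~(ii) yields $\phi(M(W))=M(W')$ directly, without the double inclusion through $\psi$.
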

\begin{Theorem}
Let $V,V'\in\Zc$ be irreducible. 
Then, $V\cong V'$ as $L$-modules if and only if $\Vp\cong \Vp'$ as $L_0$-modules.
\end{Theorem}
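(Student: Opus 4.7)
The plan is to prove the two directions separately, using the classification machinery developed in Propositions~\ref{Prop:VKM} and~\ref{Prop:V0V0} for the harder direction and a direct restriction argument for the easier direction.

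For the forward direction, suppose $\phi\!:V\to V'$ is an isomorphism of $L$-modules. I would show that $\phi$ restricts to an isomorphism $\Vp\to \Vp'$ of $L_0$-modules. Since $\phi$ intertwines the action of $L_+$, any $v\in\Vp$ satisfies $L_+\phi(v)=\phi(L_+v)=\z$, so $\phi(\Vp)\subseteq \Vp'$. Applying the same argument to $\phi^{-1}$ gives the reverse inclusion, hence $\phi(\Vp)=\Vp'$. The restriction is automatically $L_0$-linear because $L_0\subseteq L$.

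For the backward direction, assume $\phi_0\!:\Vp\to\Vp'$ is an $L_0$-module isomorphism. By Proposition~\ref{Prop:V0V0}, $\phi_0$ extends to an $L$-module isomorphism
\begin{align}
K(\Vp)/M(\Vp)\;\cong\;K(\Vp')/M(\Vp').
\end{align}
Since $V,V'\in\Zc$ are irreducible, Proposition~\ref{Prop:VKM} gives $V\cong K(\Vp)/M(\Vp)$ and $V'\cong K(\Vp')/M(\Vp')$, so chaining these isomorphisms yields $V\cong V'$ as $L$-modules.

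I do not expect a significant obstacle here, since both directions are essentially formal consequences of earlier results: the forward direction uses only that module homomorphisms preserve the defining condition $L_+v=\z$ of the primary component, while the backward direction is a direct appeal to the two preceding propositions. The only mild subtlety is ensuring, in the forward direction, that one uses the isomorphism in both directions to establish surjectivity onto $\Vp'$ (rather than merely $\phi(\Vp)\subseteq\Vp'$); this is handled by invoking $\phi^{-1}$.
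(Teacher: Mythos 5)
Your proposal is correct and follows essentially the same route as the paper: the backward direction chains Propositions~\ref{Prop:VKM} and~\ref{Prop:V0V0} exactly as the paper does, and the forward direction restricts the isomorphism to the primary components via $L_+\phi(v)=\phi(L_+v)=\z$. The only (harmless) difference is that you establish $\phi(\Vp)=\Vp'$ by applying the same inclusion argument to $\phi^{-1}$, whereas the paper deduces surjectivity from the irreducibility of $\Vp'$ as an $L_0$-module; both are valid.
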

\begin{proof}
If $\Vp\cong\Vp'$ as $L_0$-modules, then
\begin{align}
 V \cong K(\Vp)/M(\Vp) \cong K(\Vp')/M(\Vp') \cong V'
\end{align}
as $L$-modules.
Conversely, if $\phi : V \rightarrow V'$ is an $L$-module isomorphism, 
then $\phi(\Vp)$ is a nonzero $L_0$-submodule of $V'$ satisfying
\begin{align}
L_{+}\phi(\Vp) = \phi(L_{+}\Vp) = \z.
\end{align}
It follows that $\phi(\Vp) \subseteq\Vp'$ and, since $\Vp'$ is irreducible, that $\phi(\Vp)=\Vp'$. 
Hence, $\phi$ restricts to an $L_0$-module isomorphism $\Vp\to\Vp'$.\end{proof}
\noindent
\textbf{Remark.}
Using the Killing decomposition
\begin{align}
 L_0 = \g_0 \oplus \kf_0,
\label{L0g0}
\end{align}
a $\g_0$-module $W$ readily becomes an $L_0$-module by setting
\begin{align}
 \kf_0W=\z.
\label{k0V0} 
\end{align}
This construction allows us to consider $L_0$-modules (and hence $L$-modules) induced from any
 $\g_0$-module. In the case $W$ is irreducible as a $\g_0$-module, we say that the ensuing $L_0$-module is 
{\it $\g_0$-irreducible}. Of particular interest is the case where $W$ is a finite-dimensional irreducible 
$\g_0$-module. However, with this construction, $Z(L)\subseteq\kf_0$ always acts trivially. A method which 
overcomes this drawback is discussed below.

\subsection{$L_0$-modules}
\label{Sec:L0}

Motivated by the important role played by $L_0$-modules above, we now consider the representation theory 
for $L_0$. We shall make extensive use of the Killing decomposition \eqref{L0g0} and the decompositions
\begin{align}\label{m0}
 L_0=\G_0\oplus\m_0,\qquad
 \G_0:=\g_0\oplus Z(L),\qquad
 \kf_0:= Z(L) \oplus \m_0.
\end{align}
Note that $\G_0$ is a reductive Lie algebra, while $\m_0$ is a $\g_0$-submodule of $\kf_0$.
For each $\alpha \in \Phi_0$, the corresponding root 
space satisfies $L_{\alpha} \subseteq \g_0 \oplus \m_0$. We denote the zero-weight space in $\m_0$ 
by $\zf_0$ and note that $C_0(H)=H\oplus\zf_0$,
so $C_0(H)\cap\kf_0=\pi_0$ where
\begin{align}
 \pi_0:=Z(L)\oplus\zf_0.
\end{align} 
As for semisimple Lie algebras, we have the following result.
\begin{Proposition}
\label{Theo:NH}
$N_0(H) = C_0(H)$.
\end{Proposition}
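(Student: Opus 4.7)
The containment $C_0(H) \subseteq N_0(H)$ is automatic, so the task is to prove $N_0(H) \subseteq C_0(H)$. My plan is to exploit the Killing decomposition of $L_0$ and then treat the two components separately, leveraging self-normalisation for the $\g_0$-part and a weight-space argument for the $\kf_0$-part.

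First I would note that since $Z(L) \subseteq H$ is central in $L$, for any $x \in N_0(H)$ we have $[x, Z(L)] = 0$, so the condition $[x,H] \subseteq H$ reduces to $[x,\h_0] \subseteq H$. Using $L_0 = \g_0 \oplus \kf_0$ (Proposition~\ref{Prop:gskr}), write $x = x_1 + x_2$ with $x_1 \in \g_0$ and $x_2 \in \kf_0$. Because $\h_0 \subseteq \g_0$ and $\kf_0$ is a $\g_0$-submodule of $L_0$, we get $[x_1,\h_0] \subseteq \g_0$ and $[x_2,\h_0] \subseteq \kf_0$. Since $Z(L) \subseteq K$ and $\g_0 \cap K = \z$, the intersections $H \cap \g_0 = \h_0$ and $H \cap \kf_0 = Z(L)$ hold, so uniqueness of the Killing decomposition gives $[x_1,\h_0] \subseteq \h_0$ and $[x_2,\h_0] \subseteq Z(L)$.

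Next, I handle $x_1$: it lies in $N_{\g_0}(\h_0)$. Since $\g_0$ is reductive (normality, condition (i)) and $\h_0$ is a CSA of $\g_0$ (Proposition~\ref{Prop:h0}), the standard self-normalisation result $N_{\g_0}(\h_0) = \h_0$ applies, placing $x_1 \in \h_0 \subseteq H$ and so $[x_1,H] = \z$.

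For $x_2$, I would use that every element of $H$ (and in particular of $\h_0$) is semisimple on $L$, which gives a weight-space decomposition $x_2 = \sum_{\alpha} x_{2,\alpha}$ with respect to $\ad_{\h_0}$. Then
\begin{align}
 [h,x_2] = \sum_{\alpha} \alpha(h)\,x_{2,\alpha} \in Z(L), \qquad \forall h \in \h_0.
\end{align}
The right-hand side is a zero-weight element for $\h_0$ (since $Z(L)$ is central), so each nonzero weight contribution $\alpha(h)\,x_{2,\alpha}$ with $\alpha \neq 0$ must vanish for every $h \in \h_0$. Picking $h$ with $\alpha(h) \neq 0$ forces $x_{2,\alpha} = 0$ for all $\alpha \neq 0$, so $x_2$ is $\h_0$-zero-weight, i.e.\ $[x_2,\h_0] = \z$. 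Combined with $[x_2,Z(L)] = \z$, this gives $x_2 \in C_0(H)$, and hence $x = x_1 + x_2 \in C_0(H)$.

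The only non-routine ingredient is the last weight-space step, but it is essentially immediate once the semisimplicity of $\ad_h$ on $L$ (provided by the complete reducibility condition (ii) of normality) is invoked. The main conceptual content is really the reduction via the Killing decomposition to the two clean problems: self-normalisation of a CSA inside a reductive Lie algebra, and trivial action on $\h_0$ modulo the centre.
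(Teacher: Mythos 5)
Your proposal is correct and follows essentially the same route as the paper: decompose the normalising element via the Killing decomposition $L_0=\g_0\oplus\kf_0$, show that the $\kf_0$-component brackets $H$ into $Z(L)$, and then use semisimplicity of $H$ on $L$ (a weight-space argument) to force that component into the zero-weight space. The only cosmetic difference is in finishing the $\g_0$-component: you invoke self-normalisation of the CSA, $N_{\g_0}(\h_0)=\h_0$, whereas the paper observes that $n$ itself must be a zero-weight vector and uses $C_0(H)=H\oplus\zf_0$; the two finishing moves are equivalent.
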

\begin{proof}
Observe that $C_0(H) \subseteq N_0(H)$. 
Hence, if $n \in N_0(H)$, the Killing decomposition gives $n = x + y$, where $x \in \g_0$ and $y\in \kf_0$. 
Then, $[y,H] = [n,H] - [x,H] \subseteq (H + \g_0)\cap K = K \cap(\g_0 \oplus Z(L))= Z(L)$. 
It follows that $y\in \kf_0$ has zero weight, so 
$y\in\pi_0$, hence $n \in \g_0 \oplus\pi_0$. But $n$ must also have zero weight,
so $n \in H \oplus\zf_0$ and hence $N_0(H) \subseteq  H \oplus\zf_0 = C_0(H)$. 
\end{proof}

\noindent
Since $C_0(H) \cap \kf_0$ is a (solvable) subalgebra of $L_0$, we also have
\begin{align}
[\zf_0,\zf_0] \subseteq\pi_0.
\end{align}
It follows that
\begin{align}
 \zf:= \zf_0+ [\zf_0,\zf_0]
\end{align}
is a solvable subalgebra containing $\zf_0$ (in fact, the smallest subalgebra containing $\zf_0$). 
With $Z_0:= Z(L) \cap \zf$, we have
\begin{align}
Z(L) = Z' \oplus Z_0, \qquad
\pi_0 = Z' \oplus \zf, \qquad
\zf = Z_0 \oplus \zf_0.
\end{align}

\begin{Proposition}
If $L$ is a regular $\Z$-graded Lie algebra, then $\zf$ is nilpotent.
\end{Proposition}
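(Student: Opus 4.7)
The plan is to show that $\zf$ is actually contained in the nilradical $\n$ and then invoke the standard fact that any Lie subalgebra of a nilpotent Lie algebra is nilpotent. The key input from regularity is Proposition~\ref{Prop:k0}, which gives $\kf_0 = \n \cap L_0$.

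First, I would verify that $\zf$ is a genuine Lie subalgebra of $L_0$, since the definition only writes it as a sum of subspaces. Using $[\zf_0,\zf_0] \subseteq \pi_0 = Z(L) \oplus \zf_0$ together with the centrality of $Z(L)$, one checks that $[\zf_0,[\zf_0,\zf_0]] \subseteq [\zf_0,\zf_0] \subseteq \zf$ and $[[\zf_0,\zf_0],[\zf_0,\zf_0]] \subseteq [\zf_0,\zf_0] \subseteq \zf$, so closure under the bracket is immediate.

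Next, I would establish the containment $\zf \subseteq \kf_0$. By the Lemma following Proposition~\ref{Prop:Killing} we have $Z(L) \subseteq K$, and since every element of $Z(L)$ commutes with $d$ it must lie in $L_0$; hence $Z(L) \subseteq K \cap L_0 = \kf_0$. Therefore $\pi_0 = Z(L) \oplus \zf_0 \subseteq \kf_0$, and since $[\zf_0,\zf_0] \subseteq \pi_0$ we obtain
\begin{align}
 \zf = \zf_0 + [\zf_0,\zf_0] \subseteq \zf_0 + \pi_0 \subseteq \kf_0.
\end{align}

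Finally, under the regularity hypothesis Proposition~\ref{Prop:k0} gives $\kf_0 = \n \cap L_0$, so $\zf \subseteq \n$. Since $\n$ is nilpotent, any Lie subalgebra of $\n$ is nilpotent, and the result follows. There is no real obstacle here; the content of the proof is essentially the chain of inclusions $\zf \subseteq \kf_0 \subseteq \n$, with regularity entering only through Proposition~\ref{Prop:k0}.
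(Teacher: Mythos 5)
Your proof is correct and follows essentially the same route as the paper, which simply notes that regularity makes $\kf_0$ nilpotent (via $\kf_0=\n\cap L_0$) and that $\zf\subseteq\kf_0$. Your extra checks that $\zf$ is closed under the bracket and that $Z(L)\subseteq\kf_0$ are fine but only make explicit what the paper already takes for granted.
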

\begin{proof}
As $L$ is regular, $\kf_0$ is nilpotent, so the result follows from $\zf \subseteq \kf_0$.
\end{proof}
Since $\zf \subseteq \kf_0$, we have for any $i$ and $\beta \in \Phi_i$,
\begin{align}
[\zf,L_{\beta} \cap \g_i] \subseteq L_{\beta} \cap \kf_i.
\end{align}
The algebra $\zf$ is thus intimately connected with the existence of root multiplicities, as illustrated by
the following result.
\begin{Proposition}
If $L$ is regular and all roots in $\Phi_0$ appear with multiplicity one, then $\zf = \zf_0= \z$.
\end{Proposition}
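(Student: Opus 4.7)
The strategy is to show that an arbitrary $x \in \zf_0$ must vanish, which immediately gives $\zf_0 = \z$ and hence $\zf = \zf_0 + [\zf_0,\zf_0] = \z$. I will force $x$ into the intersection $C_0(\g_0) \cap \kf_0 = \Cf$, invoke Theorem~\ref{CKZ} to conclude $x \in Z(L)$, and then use that $\m_0 \cap Z(L) = \z$ by construction of $\m_0$.

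Let $x \in \zf_0$. Since $\zf_0 \subseteq C_0(H)$, we have $[H,x] = \z$, so $\ad_x$ preserves the $H$-weight decomposition of $L_0$. Now for any $\beta \in \Phi_0^\s$, i.e.\ any root of $\g_0$, the weight space $L_\beta \cap \g_0$ is already one-dimensional, because $\g_0$ is reductive (Proposition~\ref{Prop:h0} and standard Cartan theory). The hypothesis that every $\beta \in \Phi_0$ appears with multiplicity one in $L_0$ then forces $L_\beta = L_\beta \cap \g_0 \subseteq \g_0$, with no component in $\m_0$. Since $[L,K] \subseteq K$ implies $\kf_0$ is an ideal of $L_0$, and $x \in \zf_0 \subseteq \m_0 \subseteq \kf_0$, we conclude
\begin{align}
[x,L_\beta] \subseteq \kf_0 \cap \g_0 = \z,\qquad \forall\beta \in \Phi_0^\s.
\end{align}
Combining this with $[H,x]=\z$ and the decomposition $\g_0 = \h_0 \oplus \bigoplus_{\beta \in \Phi_0^\s} L_\beta$ shows $x$ centralizes $\g_0$, so $x \in C_0(\g_0) \cap \kf_0 = \Cf$.

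Since $L$ is regular, Theorem~\ref{CKZ} gives $\Cf = Z(L)$, hence $x \in Z(L)$. But $\kf_0 = Z(L) \oplus \m_0$ by the defining decomposition, so $\m_0 \cap Z(L) = \z$, forcing $x = 0$. There is no serious obstacle here; the argument is essentially bookkeeping assembling the Killing decomposition, Theorem~\ref{CKZ}, and reductivity of $\g_0$. The only nontrivial use of the hypothesis is the identification $L_\beta \subseteq \g_0$ for $\beta \in \Phi_0^\s$, which is what prevents $x$ from having any nontrivial action on the root vectors of $\g_0$.
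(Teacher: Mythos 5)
Your proof is correct and follows essentially the same route as the paper: use the multiplicity-one hypothesis to force every root space of $\Phi_0^\s$ to lie entirely in $\g_0$, deduce $\zf_0\subseteq C_0(\g_0)\cap K=\Cf$, invoke Theorem~\ref{CKZ} (regularity) to get $\Cf=Z(L)$, and conclude from $\m_0\cap Z(L)=\z$. The only cosmetic difference is that the paper treats the case $\g_0=\h_0$ (no root spaces in $\g_0$) separately, whereas your argument covers it uniformly via the empty sum.
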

\begin{proof}
By definition, $[H,\zf_0] = \z$. Suppose $L_{\alpha} \subseteq \g_0$ for some $\alpha\in\Phi_0$.
Then, $[\zf_0,L_{\alpha}] \subseteq L_{\alpha} \cap K \subseteq \g_0 \cap K = \z$, so
$\zf_0\subseteq C_0(\g_0) \cap K = Z(L)$, hence $\zf_0= \z$. If no $L_{\alpha} \subseteq \g_0$ exists, 
then $\g_0=\h_0$, $\G_0=H$, hence $[\zf_0, \g_0] = [\zf_0,\h_0] = \z$, so
$\zf_0\subseteq C_0(\g_0) \cap K = Z(L)$ and hence $\zf_0= \z$.
\end{proof}

\noindent
Now set $H':= \h_0 \oplus Z'$, which is a subalgebra of $H = H' \oplus Z_0$. 
Then, we have the triangular decomposition
\begin{align}
L_0 = \g_{-} \oplus H' \oplus \g_+,
\end{align}
where 
\begin{align}
\g_{-}:= \bigoplus_{\alpha < 0}^{\Phi_0}L_{\alpha},\qquad  
\g_{+}:= \bigoplus_{\alpha > 0}^{\Phi_0}L_{\alpha} \oplus \zf.
\end{align}
We note that $\g_-$ and $\g_+$ give rise to (nilpotent respectively solvable) Lie subalgebras of $L_0$.

We use the convention that a weight vector is nonzero. 
\begin{Definition}
A maximal-weight vector in an $L$-module $W$ is a weight vector $v_{+}$ such that $\g_+ v_{+} \subseteq \C v_+$.
\end{Definition}

\noindent
\textbf{Remark.}
For $0 < \alpha \in \Phi_0$, we have $L_{\alpha} v_+ = \z$. 
Also, $v_+$ determines a one dimensional representation $\chi$ of $\zf$, which in turn extends to an algebra homomorphism 
$\chi : U(\zf) \longrightarrow \C$. We call $\chi$ the \textit{characteristic} of $v_+$. 
If $\Lambda$ is the weight of the maximal-weight vector, then $\Lambda(h) = \chi(h)$ for all $h \in Z_0$.
\medskip

\noindent
\textbf{Remark.}  
In the case of the Schr{\"o}dinger and conformal Galilei algebras discussed in Section~\ref{Sec:Ex}, 
$\zf$, if nonzero, is one-dimensional and hence trivially gives rise to an abelian Lie algebra $\zf =\zf_0$. 
In that case, $Z_0 = \z$ and $H = H'$. 

\begin{Definition}
An $L_0$-module $W$ is called standard cyclic if $W= U(L_0)v_{+}$ for some
maximal-weight vector $v_+\in W$.
\end{Definition}

\noindent
Obviously, any irreducible module with a maximal-weight vector is standard cyclic. 
One of our aims is to show that such an irreducible module is characterised, up to isomorphism, by its highest weight and characteristic. 
The following result follows from standard arguments. It uses the usual weight ordering induced by the 
positive roots, where, for $\mu,\lambda\in H^*$, $\mu\leq\lambda$ if $\lambda-\mu$ is in the positive root lattice.
\begin{Proposition}
\label{Prop:cyclic}
Let $W= U(L_0)v_{+}$ be standard cyclic with highest-weight vector $v_{+}$ of weight $\Lambda \in H^{*}$.
Then, the following holds:
\begin{itemize}
\item[{\rm (i)}]
The weight $\Lambda$ occurs with unit multiplicity in $W$.
\item[{\rm (ii)}]
Every weight $\mu$ in $W$ satisfies $\mu\leq\Lambda$.
\item[{\rm (iii)}]
$W$ is indecomposable.
\end{itemize}
\end{Proposition}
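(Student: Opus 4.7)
The plan is to establish the PBW-style normal form $W=U(\g_-)v_+$ and then read off all three statements from the weight combinatorics of that description.

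First I would argue $W = U(\g_-)v_+$. Using the triangular decomposition $L_0 = \g_- \oplus H' \oplus \g_+$, the PBW theorem gives $U(L_0) = U(\g_-)U(H')U(\g_+)$, so $W = U(\g_-)U(H')U(\g_+)v_+$. The maximal-weight condition $\g_+ v_+ \subseteq \C v_+$ means every element of $\g_+$ acts on $v_+$ by a scalar (for positive root vectors this scalar is forced to be $0$ by weight considerations, while on $\zf$ one reads off the characteristic $\chi$); by an induction on the degree of a PBW monomial in $U(\g_+)$, one gets $U(\g_+)v_+\subseteq\C v_+$. Since $v_+$ is an $H'$-weight vector, $U(H')v_+\subseteq\C v_+$. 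Combining, $W=U(\g_-)v_+$.

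Next I would invoke PBW for $U(\g_-)$: order the negative roots $\beta_1,\ldots,\beta_s$ (with multiplicity, to account for possible multiple root vectors) and take the basis of ordered monomials $f_{\beta_1}^{n_1}\cdots f_{\beta_s}^{n_s}$. Applying such a monomial to $v_+$ produces a weight vector of weight $\Lambda + n_1\beta_1+\cdots+n_s\beta_s$, in which every $\beta_j$ is a negative root. This shows that every weight $\mu$ of $W$ has the form $\Lambda - \sum m_\alpha\alpha$ with $m_\alpha\in\Nb_0$ and $\alpha\in\Phi_0^+$, which is exactly (ii). The only monomial that yields weight $\Lambda$ itself is the empty one, giving the single spanning vector $v_+$, so $\dim W_\Lambda=1$, which is (i).

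For (iii), suppose $W=W_1\oplus W_2$ as $L_0$-modules. Because $H\subseteq L_0$ acts semisimply on $W$ (the weight decomposition built above), each $W_i$ is $H$-stable and inherits a weight decomposition $W_i=\bigoplus_\mu (W_i\cap W_\mu)$. In particular $\C v_+=W_\Lambda = (W_1\cap W_\Lambda)\oplus(W_2\cap W_\Lambda)$, so $v_+$ lies in one of the summands, say $W_1$. Then $W=U(L_0)v_+\subseteq W_1$, forcing $W_2=\z$. This contradicts a nontrivial direct-sum decomposition, so $W$ is indecomposable.

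No single step is really hard; the only point to watch is the first one, since the formal definition of maximal-weight vector only asserts $\g_+ v_+\subseteq\C v_+$ and one must separately observe that positive root vectors must act as zero (by weight) while $\zf$ acts as the characteristic $\chi$, before the PBW reduction of $U(\g_+)v_+$ to $\C v_+$ goes through.
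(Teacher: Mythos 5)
Your proof is correct and is precisely the ``standard arguments'' the paper invokes without spelling out: reduce to $W=U(\g_-)v_+$ via PBW and the maximal-weight condition (with the correct observation that positive root vectors annihilate $v_+$ while $\zf$ acts through the characteristic), read off (i) and (ii) from the weight combinatorics of the monomial spanning set, and get (iii) from the one-dimensionality of the $\Lambda$-weight space in a weight module. Nothing differs in substance from the paper's intended route.
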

\begin{Theorem}
\label{Th:V1V2}
Let $W_1$ and $W_2$ be irreducible highest-weight $L_0$-modules with the same highest weight and characteristic.
Then, $W_1 \cong W_2$.
\end{Theorem}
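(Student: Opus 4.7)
The plan is to reduce the isomorphism $W_1 \cong W_2$ to the indecomposability of standard cyclic $L_0$-modules established in Proposition~\ref{Prop:cyclic}(iii), by means of the classical ``diagonal submodule'' construction. Let $v_1 \in W_1$ and $v_2 \in W_2$ be maximal-weight vectors sharing the given weight $\Lambda \in H^*$ and characteristic $\chi : U(\zf) \to \C$, so that $W_i = U(L_0) v_i$. Inside the $L_0$-module direct sum $W_1 \oplus W_2$, I would form $v := v_1 + v_2$ and the cyclic submodule $W := U(L_0) v$.

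First I would verify that $v$ is itself a maximal-weight vector of $W$ of weight $\Lambda$ and characteristic $\chi$. For $h \in H$, $h v_i = \Lambda(h) v_i$; for $z \in \zf$, $z v_i = \chi(z) v_i$; and for any positive $\alpha \in \Phi_0$ and $x \in L_\alpha$, $x v_i = 0$. All three properties transfer componentwise to $v$, so $\g_+ v \subseteq \C v$ with characteristic $\chi$ on $\zf$, and $v$ has weight $\Lambda$ on $H$. Consequently $W$ is standard cyclic, and Proposition~\ref{Prop:cyclic}(iii) guarantees that $W$ is indecomposable as an $L_0$-module.

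Next I consider the two coordinate projections $\pi_i : W \to W_i$. These are $L_0$-module homomorphisms with $\pi_i(v) = v_i \neq 0$, so $\pi_i(W) \supseteq U(L_0) v_i = W_i$, and $\pi_i$ is surjective. The aim is then to show $\ker \pi_1 = 0$; the same argument gives $\ker \pi_2 = 0$, and together they yield $W_1 \cong W \cong W_2$. Suppose, for contradiction, that $\ker \pi_1 \neq 0$. Since $\ker \pi_1 \subseteq 0 \oplus W_2$, the image $\pi_2(\ker \pi_1)$ is a nonzero $L_0$-submodule of $W_2$, hence equals $W_2$ by irreducibility; because $\pi_2$ is injective on $0 \oplus W_2$, this forces $0 \oplus W_2 \subseteq W$. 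Then $(v_1,0) = v - (0,v_2) \in W$, so $W_1 \oplus 0 = U(L_0)(v_1,0) \subseteq W$, and hence $W = W_1 \oplus W_2$ as $L_0$-modules. This contradicts the indecomposability of $W$, so $\ker \pi_1 = 0$ as required.

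The main subtlety, and the reason the hypotheses are stated as they are, is that a maximal-weight vector here is characterised not merely by its weight in $H^*$ but by its full characteristic on the (possibly non-abelian) solvable subalgebra $\zf$. If the characteristics of $v_1$ and $v_2$ disagreed on even a single element of $\zf_0 \subseteq \g_+$, then $v$ would fail to be an eigenvector for that element, and the construction would collapse before Proposition~\ref{Prop:cyclic} could be invoked. Once the characteristics are assumed to coincide, the diagonal element $v$ is automatically a maximal-weight vector of the prescribed type, and the remainder of the argument is formal.
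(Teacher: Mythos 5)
Your proof is correct and takes essentially the same approach as the paper: both form the diagonal cyclic module generated by $v_1+v_2$ (a maximal-weight vector because the weights and characteristics agree) and derive a contradiction with the indecomposability of standard cyclic modules from Proposition~\ref{Prop:cyclic}, by showing that otherwise this cyclic module would equal all of $W_1\oplus W_2$. The only difference is bookkeeping — you conclude via the coordinate projections $\pi_i$ being isomorphisms, whereas the paper compares the annihilator ideals $I(w_1)=I(w_2)$ and defines the isomorphism $uw_1\mapsto uw_2$ directly.
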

\begin{proof}
Let $\Lambda \in H^{*}$ be the highest weight, and let
$w_1 \in W_1$ and $w_2 \in W_2$ be maximal-weight vectors of weight $\Lambda$ and characteristic $\chi$.
For $i = 1,2$, let $I(w_i) = \{u \in U(L_0)\,|\,uw_i = 0\}$ be the annihilator ideal of $w_i$ in $U(L_0)$.
To show $I(w_2) \subseteq I(w_1)$, assume it does not hold. Then, there exists $u \in U(L_0)$ such that 
$uw_2 =0$ and $uw_1 \neq 0$. 
Let $C = U(L_0)w_0$ where $w_0=w_1+w_2$.
Then,
\begin{align}
0 \neq uw_0 = uw_1 + uw_2 = uw_1 \in W_1 \cap C,
\end{align}
so irreducibility implies $W_1 \subseteq C$. Thus, $w_1 \in C$, so $w_2 = w_0 - w_1 \in C$. 
This implies $W_1 \oplus W_2 \subseteq C$, hence $C = W_1 \oplus W_2$, in contradiction to 
Proposition~\ref{Prop:cyclic}. 
Thus, $I(w_2) \subseteq I(w_1)$. Similarly, $I(w_1) \subseteq I(w_2)$, hence $I(w_2) = I(w_1)$. 
We may thus define an $L_0$-module homomorphism 
$\psi : W_1 \rightarrow W_2$, $\psi(uw_1) = uw_2$ for all $u \in U(L_0)$. 
As $\psi$ is seen to be bijective, this completes the proof.
\end{proof}
\noindent

\begin{Definition}
The irreducible highest-weight $L_0$-module
with highest weight $\Lambda$ and characteristic $\chi$ is denoted by $V_0(\Lambda,\chi)$.
\end{Definition}
\noindent
\textbf{Remark.}
Any highest-weight vector $v_{+}$ of weight $\Lambda \in H^{*}$ determines a one-dimensional 
representation of the $L_0$-subalgebra $\g_{+}$. We thus have the following induced modules.
\begin{Definition}
Let $W$ be an $L_0$-module, and suppose $v_+\in W$ 
is a highest-weight vector of weight $\Lambda\in H^*$ and charcterisic $\chi$.
Then, the corresponding Verma $L_0$-module is defined as
\begin{align}
 M_0(\Lambda, \chi):= U(\g_{-}) \otimes_{\g_+} \C v_+.
\label{M0L}
\end{align}
\end{Definition}
\noindent
\textbf{Remark.}
The weight spectrum of $M_0(\Lambda, \chi)$ depends only on the highest weight $\Lambda$.
\medskip

\noindent
The $L_0$-module \eqref{M0L} belongs to the following category of importance in Section~\ref{Sec:CatO} below.
\begin{Definition}
Category $\Lc$ is defined to be the full category of $L_0$-modules whose objects are the modules 
$V\!$ satisfying the following three conditions:

\begin{itemize}
\item[$(\Lc1)$]
$V\!$ is finitely generated.
\item[$(\Lc2)$]
$V\!$ is a direct sum of finite-dimensional weight spaces.
\item[$(\Lc3)$]
$V\!$ is locally $\g_+$-finite.
\end{itemize}
\end{Definition}

\section{Characters}
\label{Sec:Chacters}

We recall that $L$ is assumed a normal $\Z$-graded Lie algebra.

\subsection{Category $\Obar$}
\label{Sec:Obar}

As discussed in Section~\ref{Sec:Characters2} below, modules in the following category admit descriptions in terms 
of characters.
\begin{Definition}
\label{Def:Obar}
Category $\Obar$ is defined to be the full category of $L$-modules whose objects are the modules 
$V\!$ satisfying the following three conditions:
\begin{itemize}
\item[$(\Obar1)$]
$V\!$ is finitely generated.
\item[$(\Obar2)$]
$V\!$ is a direct sum of finite-dimensional generalised weight spaces.
\item[$(\Obar3)$]
$V\!$ is locally $L_+$-finite.
\end{itemize}
\end{Definition}
\begin{Theorem}
Category $\Obar$ is a subcategory of Category $\Zc$.
\end{Theorem}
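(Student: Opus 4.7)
The plan is to verify each of the three defining conditions of Category $\Zc$ directly from those of Category $\Obar$. The correspondence is almost parallel in spirit, and one of the three checks is immediate: $(\Zc1)$ is literally the statement of $(\Obar1)$, so no work is needed there.

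To derive $(\Zc3)$, I would use $(\Obar1)$ to pick a finite generating set $v_1,\ldots,v_n$ for $V$, noting that by $(\Obar2)$ each $v_j$ is a finite sum of generalised weight vectors and therefore has only finitely many $d$-values (where the $d$-value of a generalised weight vector of weight $\lambda$ means $\lambda(d)$). By $(\Obar3)$, each $U_+v_j$ is finite-dimensional, and it is $d$-stable since $[d,L_i]=iL_i$ with $i\geq 0$ on $L_+$; hence the $d$-values appearing on $U_+\{v_1,\ldots,v_n\}$ are bounded above by $\max_j\max\{\lambda(d)\}+l$, taken over the finite collection of weights supporting the generators. Invoking the PBW decomposition $U=U_-U_0U_+$ together with the facts that $U_0$ preserves $d$-grading (because $d\in Z(L_0)$) and $U_-$ strictly lowers it, the full $d$-spectrum of $V=U\{v_1,\ldots,v_n\}$ is bounded above by the same quantity.

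The heart of the theorem is $(\Zc2)$: upgrading the mere generalised weight decomposition supplied by $(\Obar2)$ to actual diagonalisability of $d$. My plan is to group the generalised $H$-weight spaces by $d$-value, forming $V^{(c)}:=\bigoplus_{\lambda(d)=c}V_\lambda^{\mathrm{gen}}$, which is the generalised $d$-eigenspace for eigenvalue $c$ and is $L_0$-stable since $d\in Z(L_0)$. On each finite-dimensional $V_\lambda^{\mathrm{gen}}$ the operator $d-\lambda(d)$ is nilpotent, so globally the Jordan decomposition writes $d=d_s+N$ with $d_s$ acting as the scalar $c$ on $V^{(c)}$ and $N$ locally nilpotent and commuting with all of $H$; the goal is to show $N=0$. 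To close this, I would exploit the Proposition that every element of $H$ is semisimple on $L$, which makes $\mathrm{ad}_d$ semisimple at the level of the adjoint representation.

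The main obstacle will be transferring this adjoint-level semisimplicity to an arbitrary $V\in\Obar$. My planned route combines the bounded $d$-spectrum from the previous step with local $L_+$-finiteness: any purported nonzero $N$-orbit would be confined to the finitely many generalised weight spaces supporting a given generator, and a degree-raising argument against $L_+$, together with the explicit upper bound on the $d$-spectrum, should squeeze out $N$ entirely by contradicting either finiteness or boundedness. If this direct squeeze does not suffice, the fallback is to invoke the general principle that a locally finite action of an ad-semisimple abelian element on a module decomposing into finite-dimensional generalised weight spaces is itself semisimple, applied to $d$.
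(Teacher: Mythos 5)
Your handling of $(\Zc1)$ is immediate, and your argument for the boundedness of the $d$-values is essentially sound (with the small repair that $U_+v_j$ need not be $d$-stable; one should instead note that an element of the graded piece $U_+^p$ maps the generalised $d$-eigenspace of eigenvalue $c$ into that of eigenvalue $c+p$, because $[d,u]=pu$, and then apply PBW exactly as you do). The genuine gap is $(\Zc2)$, and neither of your proposed routes can close it. The ``general principle'' you fall back on is false: for an abelian Lie algebra every element is ad-semisimple, yet it can act by a single Jordan block on a two-dimensional module, which is finitely generated, locally finite, and a single finite-dimensional generalised weight space. In the present setting, take $L=sl(2,\C)$ with its principal grading (a regular, hence normal, $\Z$-graded algebra with $d=\tfrac12 h$) and the thickened Verma module $U(L)\otimes_{U(\mathfrak{b})}\C[h]/\bigl((h-\lambda)^2\bigr)$, where $\mathfrak{b}=\C h\oplus\C e$ and $e$ acts trivially on the two-dimensional top space: this module satisfies $(\Obar1)$--$(\Obar3)$, but $d$ acts with nontrivial Jordan blocks. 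Your nilpotent part $N$ lives entirely inside individual finite-dimensional generalised weight spaces and commutes with the grading, so no squeeze against local $L_+$-finiteness or against the upper bound on the $d$-values exerts any force on it; semisimplicity of $H$ on $L$ (an adjoint-representation statement) simply does not transfer to arbitrary modules.

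For comparison, the paper's proof is a one-line observation: since $d\in H$, the generalised weight spaces of $V$ group into (generalised) $d$-eigenspaces, exhibiting $V$ as a direct sum of $\Z$-graded modules, and this is taken as membership of Category $\Zc$. In other words, the paper reads $(\Zc2)$--$(\Zc3)$ through the grading by generalised $d$-eigenvalue and never engages with literal diagonalisability of $d$. So your instinct that $(\Zc2)$ is ``the heart'' is accurate if that axiom is read literally, but then no argument of the kind you sketch (nor the paper's) can supply it from $(\Obar1)$--$(\Obar3)$ alone; one either adopts the paper's weaker reading or adds a hypothesis guaranteeing semisimple action of $d$ (as holds automatically for weight modules in Category $\Oc$ and for Harish-Chandra modules). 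What you can salvage verbatim from your proposal is the finite-generation and boundedness part, which is in fact more detailed than what the paper records.
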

\begin{proof}
Since $d \in H$, 
a module in Category $\Obar$ is a direct sum of $\Z$-graded modules and thus belongs to Category $\Zc$. 
\end{proof}

\noindent
\textbf{Remark.}
All finite-dimensional modules from Category $\Zc$ belong to Category $\Obar$.

\subsection{Harish-Chandra modules}
\label{Sec:HC}

We say that a weight vector $v_{+}$ in an $L$-module $V$ is \textit{maximal} if
\begin{align}
(L_+ \oplus \g_{+})v_{+} \subseteq \C v_+.
\end{align}
Such a vector is an $L_0$-maximal weight vector such that $L_+ v_+ = \z$. 
Noting that a maximal weight vector $v_+$ of $V(\Lambda)$ is unique up to scaling, 
we let $\vf(\Lambda):= U(\g_0) v_+$ denote the (standard cyclic) $\g_0$-module generated by $v_+$.
Here, $U(\g_0)$ is the universal enveloping algebra of $\g_0$.
This $\g_0$-module is indecomposable \cite{Hum94},
and if $\vf(\Lambda)$ is finite-dimensional, then it is irreducible. 
We refer to $\vf(\Lambda)$ as the \textit{maximal $\g_0$-component} of $V(\Lambda)$, and
$V(\Lambda)$ is said to be \textit{strongly irreducible} if $\vf(\Lambda)$ is an irreducible $\g_0$-module.

Recall that $\G_0:=\g_0\oplus Z(L)$ is a reductive Lie subalgebra of $L_0$ 
which contains the Cartan subalgebra $H$, and note that if $V(\Lambda)$ is an irreducible 
$L$-module with highest weight $\Lambda \in H^\ast$, then its maximal $\g_0$-component $\vf(\Lambda)$
is a $\G_0$-module.
\begin{Definition}
A module $V\in\Zc$ is called a Harish-Chandra module if it is a direct sum of 
finite-dimensional irreducible $\G_0$-modules, each occurring with finite multiplicity. 
\end{Definition}
\noindent
\textbf{Remark.}
It follows from properties (i), (ii) and (v) in Definition~\ref{Def:regular} 
that $L$ as a module over itself is a Harish-Chandra module.
\begin{Theorem}
Let $V\in\Zc$ be  irreducible and contain a finite-dimensional irreducible $\G_0$-module.
Then, $V\!$ is a Harish-Chandra module.
\end{Theorem}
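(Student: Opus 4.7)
The plan is to prove the theorem in two main parts: first, decompose $V$ as a direct sum of finite-dimensional irreducible $\G_0$-modules; second, establish finite multiplicity for each isomorphism class.

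For the first part, I would let $M\subseteq V$ denote the sum of all finite-dimensional irreducible $\G_0$-submodules; by hypothesis, $M\neq 0$. The crux is showing $M$ is $L$-stable. For any such submodule $W\subseteq M$ and any $x\in L_i$, consider the $\G_0$-equivariant evaluation map $L_i\otimes W\to V$, $y\otimes w\mapsto yw$, whose image $L_iW$ contains $xW$. Here $L_i$ is a finite-dimensional completely reducible $\G_0$-module (by the complete reducibility condition in Definition~\ref{Def:normal}, extended trivially by $\ad Z(L)=\z$), and $W$ is finite-dimensional irreducible by assumption. The tensor product $L_i\otimes W$ is completely reducible as a $\G_0$-module: the semisimple part $\g_0'$ acts completely reducibly by Weyl's theorem, while $Z(\G_0)=Z(\g_0)\oplus Z(L)$ acts semisimply on both factors --- on $L_i$ via $\ad$, using the ad-semisimplicity of $H\supseteq Z(\g_0)$ established in Section~\ref{Sec:Normal} and the triviality of $\ad Z(L)$, and on $W$ by scalars via Schur's lemma. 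Its quotient $L_iW$ is therefore a finite direct sum of finite-dimensional irreducible $\G_0$-modules, so $xW\subseteq M$. Irreducibility of $V$ then forces $M=V$, and Zorn's lemma combined with reductivity of $\G_0$ yields the required direct sum decomposition.

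For the second part, observe that $V=\bigoplus_{i\leq N}V_i$ is itself a $\G_0$-module decomposition since $d\in Z(\G_0)$, and Schur forces $d$ to act as the scalar $i$ on every finite-dimensional irreducible $\G_0$-submodule inside $V_i$. Hence non-isomorphic $\G_0$-classes lie in distinct $V_i$'s, and the multiplicity of a class $[W_\mu]$ in $V$ equals its multiplicity in $V_{\mu(d)}$. Since $V$ is standard with $V=U_-\Vp$ and each graded component $U_-^i$ is finite-dimensional (by PBW applied to the finite-dimensional $L_-$), one has $V_i=U_-^i\Vp$, so each $V_i$ is finite-dimensional as soon as $\Vp$ is.

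The main obstacle is thus showing $\Vp$ is finite-dimensional. I would first produce a finite-dimensional irreducible $\G_0$-submodule $W''\subseteq\Vp$ by applying $L_+$ to the given $W$ the maximal possible number of times (finitely many by $(\Zc3)$), yielding a nonzero finite-dimensional $\G_0$-invariant subspace of $\Vp$, from which $W''$ is extracted via the complete reducibility of the first part. Theorem~\ref{Theo:Vstandard} then gives $\Vp$ irreducible as an $L_0$-module, so $\Vp=U(L_0)W''$. Exploiting the triangular decomposition of $L_0$ from Section~\ref{Sec:L0} together with a $\g_+$-maximal-weight vector $v_+\in W''$, one reduces to $\Vp=U(\g_-)v_+$. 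Finite-dimensionality then follows from the nilpotence of $\m_0\subseteq\kf_0$ (immediate in the regular setting via Corollary~\ref{Cor:k0}) combined with an Engel-type argument on the $\G_0$-equivariant filtration by $\m_0$-degree, reducing $\Vp$ to a quotient of a finite-dimensional $\g_0$-module generated by a maximal vector inside the locally $\G_0$-finite module $V$.
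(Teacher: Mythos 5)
Your first part is sound and is essentially the argument the paper outsources to \cite{Dix96}: the sum $M$ of all finite-dimensional irreducible $\G_0$-submodules is nonzero, each $L_i$ is a finite-dimensional completely reducible $\G_0$-module (condition (ii) of Definition~\ref{Def:normal} together with the triviality of $\ad Z(L)$), so $L_iW$ is again a direct sum of finite-dimensional irreducibles, $M$ is $L$-stable, and irreducibility forces $M=V$. The genuine gap is in your treatment of finite multiplicity. You reduce it to the claim that $\Vp$ is finite-dimensional, and the argument offered for that claim does not work. First, Corollary~\ref{Cor:k0} (ad-nilpotency of $\kf_0$) is proved only for \emph{regular} algebras, whereas the standing hypothesis in Section~\ref{Sec:Rep} is merely normality, so it is not available. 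Second, and more seriously, even granting that $\m_0$ acts ad-nilpotently on $L$, no Engel-type argument can bound $\dim\Vp$: nilpotent subalgebras of $\kf_0$ (e.g.\ Heisenberg subalgebras, which do occur, cf.\ the extended Heisenberg examples of Section~\ref{Sec:Ex}) admit infinite-dimensional irreducible modules, and ad-nilpotency on $L$ says nothing about the action on an abstract irreducible $L_0$-module $\Vp$. Third, the intermediate reduction $\Vp=U(\g_-)v_+$ presupposes a $\g_+$-maximal-weight vector in $\Vp$; since $\g_+$ contains $\zf\subseteq\kf_0$, such a vector need not exist in an irreducible $L_0$-module (a Fock-type module for a Heisenberg subalgebra of $\kf_0$ has none). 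Note also that finite-dimensionality of every $V_i$ is stronger than the assertion being proved: what is needed is only that each $\G_0$-isotype has finite multiplicity inside the single graded piece containing it.

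By contrast, the paper never claims $\Vp$ is finite-dimensional; it obtains both the $\G_0$-semisimplicity of $V$ and the finiteness of the multiplicities in one stroke from a PBW-based argument applied to $U$ acting on a finite-dimensional irreducible $\G_0$-submodule, citing \cite{Dix96}. So your first half recovers the semisimplicity statement by a correct route, but the multiplicity statement --- the substantive half of the Harish-Chandra property --- is not established by your proposal, which is therefore incomplete as it stands. (A minor slip: your claim that non-isomorphic $\G_0$-classes lie in distinct $V_i$'s is false as stated, since distinct classes can share a $d$-eigenvalue; the consequence you actually use, namely that all copies of a fixed class lie in one $V_i$, is the correct statement.)
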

\begin{proof}
Using the PBW theorem, it can be shown 
\cite{Dix96} that $V$ is a direct sum of finite-dimensional irreducible $\G_0$-modules 
which necessarily occur with finite multiplicity in $V$.
\end{proof}
\noindent
\textbf{Remark.}
An irreducible $V\in\Zc$ either contains no irreducible finite-dimensional $\G_0$-submodules or
is a direct sum of irreducible finite-dimensional $\G_0$-modules.
\begin{Theorem}
\label{Theo:irredHC}
Let $V\in\Zc$ be locally $\g_+$-finite. Then, $V\!$ is an irreducible Harish-Chandra module if and only if
$V$ is isomorphic to a strongly irreducible $L$-module $V(\Lambda)$ for some $\Lambda \in D_{0}^{+}$.
\end{Theorem}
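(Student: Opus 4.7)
My plan is to prove the biconditional in two stages; the forward direction requires the bulk of the work.

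For the direction $(\Leftarrow)$, I will suppose $V \cong V(\Lambda)$ is strongly irreducible with $\Lambda \in D_0^+$, and aim to apply the preceding theorem. Strong irreducibility means that $\vf(\Lambda) = U(\g_0)v_+$ is an irreducible $\g_0$-module with highest weight $\Lambda|_{\h_0}$; since $\Lambda$ lies in $D_0^+$, this weight is $\g_0$-dominant integral, so $\vf(\Lambda)$ is finite-dimensional. Extending by the scalar action of $Z(L)$ determined by $\Lambda|_{Z(L)}$ yields a finite-dimensional irreducible $\G_0$-submodule of $V$, and the preceding theorem then immediately gives that $V$ is a Harish-Chandra module.

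For the direction $(\Rightarrow)$, I begin with $V$ an irreducible Harish-Chandra module. Since $V \in \Zc$ is irreducible, $V = U\Vp$ with $\Vp$ irreducible as an $L_0$-module. As $\Vp$ is $\G_0$-stable (being even $L_0$-stable) and nonzero, the decomposition of $V$ into finite-dimensional irreducible $\G_0$-modules forces $\Vp$ to contain at least one such submodule $W$. I will then pick a $\g_0$-highest weight vector $w \in W$ of some weight $\lambda$, with $\lambda|_{\h_0}$ dominant integral. This $w$ need not be a $\zf$-eigenvector, so it is not yet a maximal weight vector of $V$ in the sense required.

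The core step is to upgrade $w$ to a maximal weight vector. By local $\g_+$-finiteness, $M := U(\g_+)w$ is finite-dimensional, and using $[L_+,\g_+] \subseteq L_+$ together with $L_+ w = 0$ shows by induction on monomial length in $U(\g_+)$ that $M \subseteq \Vp$. Since $\g_+$ only raises weights (by positive $\g_0$-roots, or by zero from $\zf$), every weight of $M$ lies in $\lambda + \Nb_0\Phi_0^+$, and I will pick a maximal such weight $\Lambda$. Any weight vector of weight $\Lambda$ in $M$ is automatically annihilated by the positive $\g_0$-root vectors (else a strictly larger weight would appear in $M$), so the weight subspace $M^\Lambda \subseteq M$ consists of $\g_0$-highest weight vectors and is $\zf$-stable (as $\zf$ preserves weights). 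The solvability of $\zf$ and Lie's theorem then produce a common $\zf$-eigenvector $v_+ \in M^\Lambda$, which is the sought maximal weight vector of $V$.

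Finally, I will argue that $V \cong V(\Lambda)$ is strongly irreducible with $\Lambda \in D_0^+$. The $\g_0$-module $\vf(\Lambda) = U(\g_0)v_+$ is standard cyclic, and decomposing $v_+$ according to the finite-dimensional irreducible $\G_0$-summands of $V$ shows $\vf(\Lambda)$ is finite-dimensional; indecomposability (Proposition~\ref{Prop:cyclic}) combined with complete reducibility over the reductive algebra $\g_0$ then forces irreducibility of $\vf(\Lambda)$, so $V$ is strongly irreducible. The highest weight $\Lambda|_{\h_0}$ is $\g_0$-dominant integral, placing $\Lambda \in D_0^+$, and the uniqueness of irreducible highest-weight $L$-modules (the $L$-analogue of Theorem~\ref{Th:V1V2}, cf.\ Proposition~\ref{Prop:VKM}) gives $V \cong V(\Lambda)$. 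The main obstacle in the argument is the third paragraph: the passage from a $\g_0$-highest weight vector $w$ to a full maximal weight vector $v_+$ relies on combining local $\g_+$-finiteness with solvability of $\zf$ via Lie's theorem, while also requiring the auxiliary verifications that $M \subseteq \Vp$ and that $M^\Lambda$ is $\zf$-stable.
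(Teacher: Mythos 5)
Your proof is correct and follows essentially the same route as the paper's: a maximal weight vector generates the standard cyclic $\vf(\Lambda)$, complete reducibility of $V$ as a $\G_0$-module forces $\vf(\Lambda)$ to be finite-dimensional irreducible so that $\Lambda\in D_0^+$ and $V\cong V(\Lambda)$ is strongly irreducible, with the converse obtained from the theorem on irreducible modules containing a finite-dimensional irreducible $\G_0$-module. The only difference is that where the paper simply asserts the existence of a maximal weight vector from boundedness of the weights of $\Vp$, you construct it explicitly via local $\g_+$-finiteness and Lie's theorem applied to the solvable $\zf$ --- a welcome elaboration; just note that $\g_+$ contains root vectors for \emph{all} positive roots of $\Phi_0$ (including those lying in $\m_0$), not only the $\g_0$-roots, although your maximality argument for the weight $\Lambda$ of $M$ handles these verbatim.
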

\begin{proof}
 $V$ admits a $\Z$-graded decomposition of the form $V = \bigoplus_{i \leq 0}V_i$ with 
$L_iV_j \subseteq V_{i+j}$. Each $V_i$ constitutes a direct sum of finite-dimensional 
irreducible $\G_0$-modules. If $V$ is irreducible, we may assume its primary component 
$\Vp$ is an irreducible $L_0$-module whose weights are bounded from above. 
It follows that $V$ admits a maximal weight vector, say of highest-weight $\Lambda$, 
which generates the standard cyclic $\G_0$-module $\vf(\Lambda)$. 
Since $V$ is completely reducible as a $\G_0$-module, $\vf(\Lambda)$ 
must be an irreducible $\G_0$-module which is finite-dimensional, so $\Lambda \in D_{0}^{+}$. 
It follows that $V$ is a strongly irreducible $L$-module with highest weight $\Lambda$. 
Conversely, if $V\cong V(\Lambda)$ is strongly irreducible and $\Lambda \in D_{0}^{+}$, 
then the maximal $\g_0$-component of $V$ must be a finite-dimensional irreducible $\g_0$-module
that is irreducible as a $\G_0$-module. 
Hence, $V$ is an irreducible Harish-Chandra module.
\end{proof}
\begin{Theorem}
If $V\in\Zc$ is a Harish-Chandra module, then $V\in\Obar$.
\end{Theorem}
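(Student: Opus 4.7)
The plan is to verify the three defining properties of Category $\Obar$ in Definition~\ref{Def:Obar}. Two of them come essentially for free. Since $V\in\Zc$, property $(\Zc1)$ immediately delivers $(\Obar1)$, and the earlier proposition that every module in Category $\Zc$ is locally $L_+$-finite delivers $(\Obar3)$.

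The real content is therefore $(\Obar2)$, namely that $V$ is a direct sum of finite-dimensional generalised weight spaces. First, the Harish-Chandra hypothesis gives $V=\bigoplus_\alpha W_\alpha$ with each $W_\alpha$ a finite-dimensional irreducible $\G_0$-module. Since $\G_0=\g_0\oplus Z(L)$ is reductive with Cartan subalgebra $H=\h_0\oplus Z(L)$, each $W_\alpha$ decomposes into honest $H$-weight spaces, so $V=\bigoplus_\mu V_\mu$ with $V_\mu=\bigoplus_\alpha (W_\alpha)_\mu$. These are ordinary weight spaces and are therefore also generalised weight spaces, which takes care of the direct-sum part of $(\Obar2)$.

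It remains to show each $V_\mu$ is finite-dimensional. The key grading observation is that $d\in Z(\G_0)$ acts as a scalar on every irreducible $\G_0$-submodule and that every root of $\g_0$ lies in $L_0$ and hence takes value zero on $d$. Consequently any weight $\mu$ of $W_\alpha$ satisfies $\mu(d)=\Lambda_\alpha(d)$ for the highest weight $\Lambda_\alpha$, so $V_\mu$ sits inside the single $d$-eigenspace $V^d_{\mu(d)}$, and it suffices to bound the weight-$\mu$ contribution from those $W_\alpha$ with $\Lambda_\alpha(d)=\mu(d)$. Writing $V=Uv_1+\cdots+Uv_n$ with weight vectors $v_j$ and applying the decomposition $U=U_{-}U_0U_{+}$ together with the $L_+$-finiteness of $V$, I reduce to $V=\sum_k U_{-}U_0 w_k$ for finitely many weight vectors $w_k$. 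Collecting by $d$-grading, $V^d_{\mu(d)}=\sum_k U_-^{\mu(d)-\lambda_k(d)}\,U_0 w_k$, where each subspace $U_-^{j}$ is finite-dimensional for fixed $j\leq 0$ by a grading count analogous to those in Section~\ref{Sec:CatZ}.

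The main obstacle I anticipate is controlling the weight-$\mu$ content of $U_0 w_k$. The direct-sum structure of $V$ forces $w_k$ to lie in a finite sub-sum of the $W_\alpha$, so its $\G_0$-saturation $U(\G_0)w_k$ is finite-dimensional; however, the full $L_0$-action also involves the $\m_0$-component in the decomposition $L_0=\G_0\oplus\m_0$, and the resulting $L_0$-submodule need not itself be finite-dimensional. To close this gap, I would argue using the PBW decomposition of $U(L_0)$ along $L_0=\g_0\ltimes\kf_0$ that at the fixed $d$-eigenvalue $\lambda_k(d)$ only finitely many isomorphism types of irreducible $\G_0$-modules can appear in $U_0 w_k$, and then invoke the finite multiplicity hypothesis of the Harish-Chandra condition to bound $\dim V_\mu$, completing the verification of $(\Obar2)$.
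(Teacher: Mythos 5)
Your handling of $(\Obar1)$ and $(\Obar3)$, and the observation that the Harish-Chandra decomposition into finite-dimensional irreducible $\G_0$-modules already makes $V$ a direct sum of honest $H$-weight spaces, coincide with the paper's argument. The problem is the remaining half of $(\Obar2)$: you do not actually prove that the weight spaces are finite-dimensional. Your argument stops at a declared plan (``I would argue \dots that at the fixed $d$-eigenvalue $\lambda_k(d)$ only finitely many isomorphism types of irreducible $\G_0$-modules can appear in $U_0w_k$''), and that pivotal claim is unsubstantiated precisely where it is needed: all of $L_0$, including the $\m_0$-part of $\kf_0$, acts at $d$-level zero, so fixing the $d$-eigenvalue places no restriction at all on which $\G_0$-types the $\kf_0$-action can generate from $w_k$. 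Root vectors in $\m_0$ of positive $\Phi_0$-weight can a priori raise $\g_0$-highest weights without bound inside a single $d$-eigenspace, and the finite-multiplicity clause of the Harish-Chandra definition only bounds each isotypic component individually; it does not exclude infinitely many distinct types $\Lambda\geq\mu$ all containing the weight $\mu$, which is exactly what must be ruled out for $\dim V_\mu<\infty$. So the reduction to $V^d_{\mu(d)}=\sum_k U_-^{\mu(d)-\lambda_k(d)}U_0w_k$, while correct as far as it goes, leaves the essential finiteness question open.

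By contrast, the paper does not work level-by-level in the $d$-grading at all: it deduces the finite-dimensionality of the weight spaces directly from finite generation $(\Zc1)$ in combination with the $\G_0$-decomposition, exactly the point your more elaborate PBW route ends up having to re-establish and cannot. To repair your argument you would need a genuine finiteness statement about the $\G_0$-content of $U(\kf_0)$-saturations inside $V$ (using, e.g., finite generation/Noetherianity of $V$ together with the multiplicity bound), not the $d$-grading, which carries no information about the $\Phi_0$-directions.
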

\begin{proof}
As to $(\Obar1)$, $V$ is finitely generated by $(\Zc1)$. 
As to $(\Obar2)$, since $V$ is a direct sum of irreducible finite-dimensional $\G_0$-modules, 
$V$ is a direct sum of weight spaces, and by $(\Zc1)$, these weight spaces are finite-dimensional.
As to $(\Obar3)$, it is a consequence of $(\Zc3)$ and $(\Zc3)$ that $U(L_+)v$ is finite-dimensional for any $v \in V$.
\end{proof}

\subsection{Category $\Oc$}
\label{Sec:CatO}

\begin{Definition}
Category $\Oc$ is defined to be the full category of $L$-modules whose objects are the modules 
$V\!$ satisfying the following three conditions:
\begin{itemize}
\item[$(\Oc1)$]
$V\!$ is finitely generated.
\item[$(\Oc2)$]
$V\!$ is a weight module.
\item[$(\Oc3)$]
$V\!$ is locally $L_+ \oplus \g_+$-finite.
\end{itemize}
\end{Definition}
\noindent
We readily have the following result.
\begin{Proposition}
\label{WKW}
If $W \in\Lc$, then $K(W)\in\Oc$.
\end{Proposition}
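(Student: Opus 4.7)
The plan is to verify each of $(\Oc1)$, $(\Oc2)$, $(\Oc3)$ for $K(W)=U_-\otimes_{\overline{U}_+}W$, which I identify with $U_-\otimes W$ as a vector space via PBW. Conditions $(\Oc1)$ and $(\Oc2)$ are the straightforward ones. For $(\Oc1)$, if $w_1,\ldots,w_n$ generate $W$ as an $L_0$-module, then $\{1\otimes w_i\}$ will generate $K(W)$ as an $L$-module, because $U\cdot(1\otimes w_i)\supseteq U_-U_0\cdot(1\otimes w_i)=U_-\otimes U(L_0)w_i$, and summing over $i$ yields $U_-\otimes W$. For $(\Oc2)$, since $H\subseteq\overline{L}_+$, the action of $h\in H$ on $u\otimes w$ will take the Leibniz form $h(u\otimes w)=[h,u]\otimes w+u\otimes hw$; as every $h\in H$ is ad-semisimple on $L$ (hence on $U_-$) and diagonalisable on $W$ by $(\Lc2)$, $K(W)$ decomposes into $H$-weight spaces with $(U_-)_\nu\otimes W_\mu\subseteq K(W)_{\nu+\mu}$.

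The bulk of the work will be $(\Oc3)$. The subspace $\mathfrak{p}:=L_+\oplus\g_+$ is a Lie subalgebra of $L$ because $[\g_+,L_+]\subseteq[L_0,L_+]\subseteq L_+$, and by PBW, $U(\mathfrak{p})=U(\g_+)U(L_+)$. I will therefore establish local $L_+$- and local $\g_+$-finiteness of $K(W)$ separately and then combine them. For local $L_+$-finiteness, I first note that the $d$-spectrum of $K(W)$ is bounded above: since $d\in Z(L_0)$ commutes with $U(L_0)$, the finitely generated weight module $W$ has a finite $d$-spectrum (given by $d$-weights of any finite weight-vector generating set), while $U_-$ has non-positive $d$-weights. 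Each $H$-weight space $K(W)_\Lambda=\bigoplus_\nu(U_-)_\nu\otimes W_{\Lambda-\nu}$ will then be finite-dimensional: the $d$-component of $\nu$ is restricted to the finite set of differences between $\Lambda(d)$ and $d$-weights of $W$, and for each such $d$-component only finitely many $H$-weights $\nu$ of $U_-$ arise, since all $L_-$-root $d$-components are strictly negative. Applying $U(L_+)$ to any $y\in K(W)$ strictly raises $d$-weight at each step, so $U(L_+)y$ is supported on only finitely many $H$-weights and is thus finite-dimensional.

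For local $\g_+$-finiteness, the same Leibniz identity $h(u\otimes w)=[h,u]\otimes w+u\otimes hw$ holds for $h\in\g_+\subseteq L_0$. The ad-action of $\g_+$ on $U_-$ preserves PBW-degree because $[L_0,L_-]\subseteq L_-$, so $U(\g_+)_{\ad}\cdot u$ lies in the finite-dimensional PBW filtration piece containing $u$. Combined with local $\g_+$-finiteness of $W$ from $(\Lc3)$, the tensor-product structure will yield $U(\g_+)\cdot(u\otimes w)\subseteq U(\g_+)_{\ad}u\otimes U(\g_+)w$, a finite-dimensional subspace. Finally, for any $y\in K(W)$, $U(\mathfrak{p})y=U(\g_+)U(L_+)y=U(\g_+)F$ where $F:=U(L_+)y$ is finite-dimensional by the previous paragraph, so $U(\mathfrak{p})y$ is finite-dimensional as well. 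The hard part will be justifying the finite-dimensionality of the $H$-weight spaces of $K(W)$ that underpins the $L_+$-finiteness argument; this depends crucially on $d\in Z(L_0)$ to ensure $W$ has a finite $d$-spectrum even when infinite-dimensional.
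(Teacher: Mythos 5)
Your proposal is correct. The paper in fact offers no argument at all for Proposition~\ref{WKW} (it is introduced with ``We readily have the following result''), so your write-up supplies exactly the verification that is left implicit, and each step is sound: the identification $K(W)\cong U_-\otimes W$ via PBW; the observation that for $x\in L_0\subseteq\overline{L}_+$ one has $x(u\otimes w)=(\mathrm{ad}_x u)\otimes w+u\otimes xw$, so that as an $L_0$-module $K(W)$ is the tensor product of $(U_-,\mathrm{ad})$ with $W$, which gives $(\Oc2)$ and local $\g_+$-finiteness from $(\Lc2)$, $(\Lc3)$ and the ad-stability of the finite-dimensional filtration pieces of $U_-$; the use of the standing assumption $d\in Z(L_0)$ to get a finite $d$-spectrum for $W$ from a finite weight-vector generating set, hence a $d$-spectrum of $K(W)$ bounded above; and the factorisation $U(L_+\oplus\g_+)=U(\g_+)U(L_+)$, legitimate because $L_+$ is an ideal of $L_+\oplus\g_+$. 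Two minor remarks: the finite-dimensionality of the $H$-weight spaces of $K(W)$, which you flag as the hard part, is not actually needed for local $L_+$-finiteness — since the grading subspaces $U_+^j$ of $U_+$ are finite-dimensional and $U_+^jy=0$ for $j$ large (boundedness of the $d$-spectrum), one gets $U_+y=\sum_{j\le J}U_+^jy$ finite-dimensional directly, though your weight-space computation is itself correct and in effect shows $K(W)\in\Obar$ as well; and the inclusion $U(\g_+)(u\otimes w)\subseteq(\mathrm{ad}_{U(\g_+)}u)\otimes(U(\g_+)w)$ should be read as an inclusion of subspaces, established by induction on monomial length via the Leibniz identity, which is evidently what you intend.
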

\noindent
\textbf{Remark.}
A module in Category $\Oc$ is a direct sum of weight spaces with weight spectrum bounded from above.
This upper bound is relative to the usual partial ordering induced by the positive roots. 
Clearly, Category $\Oc$ is a subcategory of Category $\Obar$.
In particular, we may consider the irreducible $L$-module $V(\Lambda)$ with highest weight 
$\Lambda\in H^*$ which admits a $\g_0$ maximal component $\vf(\Lambda)$ which is 
a standard cyclic $\g_0$-module of the same highest weight. 
\begin{Definition}
Let $M(\Lambda)$ be a Verma $\g_0$-module of highest weight $\Lambda \in H^*$, extended to an $L_0$-module
by setting $\kf_0 M(\Lambda)=\z$. Then, the corresponding Verma $L$-module is defined as the induced module 
\begin{align}
 K(\Lambda):= U_{-} {\otimes}_{{\overline U}_{+}} M(\Lambda)
\label{KL}
\end{align}
\end{Definition}
\begin{Proposition}
\label{Prop:VLO}
Every Verma $L$-module is in Category $\Oc$.
\end{Proposition}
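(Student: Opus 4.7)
The plan is to verify the three conditions $(\Oc1)$--$(\Oc3)$ defining Category $\Oc$ for the induced module $K(\Lambda) = U_{-}\otimes_{\overline{U}_{+}} M(\Lambda)$. The starting observation is that, by the PBW decomposition \eqref{UUU} applied to $L = L_{-}\oplus L_0\oplus L_{+}$ and the fact that $L_+$ and $\kf_0$ annihilate $M(\Lambda)$ in its extended $\overline{L}_{+}$-module structure, the induction is free on the $U_{-}$-side and yields a vector-space identification $K(\Lambda)\cong U_{-}\otimes_{\C}M(\Lambda)$, with $L$-action arising from left multiplication on $U$.

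For $(\Oc1)$, I would show that $K(\Lambda)$ is cyclic, generated by $1\otimes v_+$. Since $\g_{-}\subseteq L_0\subseteq\overline{L}_{+}$, moving $U(\g_{-})$ across the tensor yields $U(\g_{-})(1\otimes v_+) = 1\otimes U(\g_{-})v_+ = 1\otimes M(\Lambda)$, after which $U_{-}\cdot(1\otimes M(\Lambda)) = K(\Lambda)$, so $K(\Lambda) = U(1\otimes v_+)$. For $(\Oc2)$, I would check that $H=\h_0\oplus Z(L)$ acts diagonalisably on $K(\Lambda)$: the subalgebra $\h_0\subseteq\g_0$ acts diagonalisably on $M(\Lambda)$ by standard Verma theory for the reductive $\g_0$, the centre $Z(L)\subseteq\kf_0$ acts as zero on $M(\Lambda)$ by construction, and the adjoint action of $H$ on $U_{-}$ is diagonalisable because every element of $H$ is semisimple on $L$ (as proved earlier). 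The induced $H$-action on the tensor product is therefore diagonalisable, so $K(\Lambda)$ is a weight module.

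The substantive step is $(\Oc3)$. I would fix a weight vector $v\in K(\Lambda)$ of weight $\mu$ and show that $U(L_+\oplus\g_+)v$ is finite-dimensional. By PBW, this subspace is spanned by monomials $z\,y_1\cdots y_n v$ where each $y_i$ is a positive-root vector in $L_+\oplus\g_+$ and $z\in U(\zf)$. Such a monomial has $H$-weight $\mu+\sum_i\alpha_{y_i}$ (since $\zf$ has $\h_0$-weight zero and $Z(L)\subseteq\kf_0$), and for it to be nonzero this weight must lie in the weight spectrum of $K(\Lambda)$ and hence satisfy $\mu+\sum_i\alpha_{y_i}\leq\Lambda$ in the partial order induced by the positive roots. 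Since only finitely many positive roots exist and their $\Z_{\geq0}$-multiplicities are bounded by the coefficients of $\Lambda-\mu$, only finitely many weights arise. Each $H$-weight space of $K(\Lambda)$ is finite-dimensional, as seen by decomposing it via the finite-dimensional $d$-graded pieces $U_{-}^{-i}$ and the finite-dimensional $\h_0$-weight spaces of the reductive Verma module $M(\Lambda)$. Finally, $U(\zf)$ preserves each $H$-weight space, so $U(L_+\oplus\g_+)v$ is contained in a finite sum of finite-dimensional weight spaces.

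The main obstacle is the handling of $\g_+$ in $(\Oc3)$: it contains the subalgebra $\zf$, which is zero-weight for $\h_0$ but need not act nilpotently when $L$ is only assumed normal rather than regular. The resolution is that $\zf\subseteq\kf_0$ carries zero $\h_0$-weight, so $U(\zf)$ acts within each $H$-weight space of $K(\Lambda)$; it is this weight-preservation property, rather than any nilpotency of $\zf$, that confines the full $U(L_+\oplus\g_+)$-orbit to a finite union of finite-dimensional subspaces, thereby completing the verification.
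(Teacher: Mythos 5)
Your handling of $(\Oc1)$ and $(\Oc2)$ is correct, and your reduction of $(\Oc3)$ to (a) finite-dimensionality of the $H$-weight spaces of $K(\Lambda)$ (via the level pieces $U_-^{-i}$ of \eqref{Um} and $d\in Z(\g_0)$) and (b) finiteness of the set of weights reachable from $\mu$ is the right skeleton. The gap is in (b). You assert that any weight of $K(\Lambda)$ satisfies $\nu\leq\Lambda$ in the partial order generated by $\Phi^+$. The weights of $K(\Lambda)$ have the form $\Lambda-\sum_j m_j\alpha_j+\sum_\beta n_\beta\beta$ with $\alpha_j\in\Phi_0^{\s,+}$, $\beta\in\Phi_1^-$ and $m_j,n_\beta\in\Z_{\geq0}$, so your claim needs every $-\beta$ with $\beta\in\Phi_1^-$ to be a nonnegative integral combination of positive roots. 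The paper establishes $\Phi_{-i}=-\Phi_i$ only for \emph{regular} algebras and only for $0<|i|\leq m$ (Theorem~\ref{Theo:LiLmi}); here $L$ is merely normal, and for an unbalanced grading the weights of $L_{-i}$ with $i>l$ have no counterparts in $\Phi^+$ at all, so the asserted bound is unjustified and can fail. Moreover, even granting it, the step ``their $\Z_{\geq0}$-multiplicities are bounded by the coefficients of $\Lambda-\mu$'' is not meaningful: the positive roots are not linearly independent, so $\Lambda-\mu$ has no well-defined coefficients, and the finiteness of $\{\nu\,|\,\mu\leq\nu\leq\Lambda\}$ itself requires an argument.

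Both defects are repairable without any global bound on the weight spectrum. A monomial $z\,y_1\cdots y_n v$ with nonzero image has $d$-eigenvalue $\mu(d)+\sum_i\alpha_{y_i}(d)$, and the $d$-spectrum of $K(\Lambda)$ is bounded above by $\Lambda(d)$; since every factor $y_i\in L_+$ raises the $d$-eigenvalue by at least $1$, the number of $L_+$-factors is at most $\Lambda(d)-\mu(d)$, leaving finitely many choices for that part of the weight shift. For the remaining factors, which lie in root spaces of $\Phi_0^+$, note that at a fixed level $\Lambda(d)-i$ the $\h_0$-weights of $K(\Lambda)$ lie in the finite set of $\h_0$-weights of $U_-^{-i}$ shifted by $\Lambda-\Z_{\geq0}\Phi_0^{\s,+}$; pairing with the functional $\gamma$ of \eqref{hyperplane}, which is strictly positive on all of $\Phi_0^+$, then bounds the number of $\Phi_0^+$-factors. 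With this repair your argument goes through. For comparison, the paper's own proof is a two-line reduction: $K(\Lambda)=K(W)$ for $W=M(\Lambda)$, the $L_0$-module $M(\Lambda)$ lies in Category $\Lc$, and Proposition~\ref{WKW} gives $K(W)\in\Oc$; your proposal is in effect a direct proof of that proposition in this special case, so the finiteness step above is precisely the content you must supply.
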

\begin{proof}
Using Proposition~\ref{WKW}, this follows from the fact that the $L_0$-module $M(\Lambda)$ belongs to category $\Lc$.
\end{proof}
\noindent
\textbf{Remark.}
The situation in Proposition~\ref{Prop:VLO} is quite different to the case the maximal $\Z$-graded component 
$V_0 = M_0(\Lambda)$ is a Verma $L_0$-module. 
$K(\Lambda)$ is also distinct from the $L$-module $K(V_0(\Lambda))$ induced from 
the $\g_0$-irreducible $L_0$-module $V_0(\Lambda)$.
\medskip

\noindent
\textbf{Remark.}
All finite-dimensional Harish-Chandra $L$-modules belong to Category $\Oc$.
As for simple Lie algebras, Category $\Oc$ is closed under tensor products with finite-dimensional $L$-modules from Category $\Oc$. 
\begin{Proposition}
Let $V$ be a finitely generated Harish-Chandra module on which $L_+ \oplus \g_+$ is locally nilpotent. Then, $V\in\Oc$. 
\end{Proposition}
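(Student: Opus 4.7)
The plan is to verify in turn the three defining conditions $(\Oc1)$--$(\Oc3)$ of Category $\Oc$ for the given module $V$. Condition $(\Oc1)$ is part of the hypothesis, so there is nothing to do. For $(\Oc2)$ I would invoke the Harish-Chandra assumption: $V$ decomposes as a direct sum of finite-dimensional irreducible $\G_0$-modules. Since $\G_0=\g_0\oplus Z(L)$ is reductive and contains the Cartan subalgebra $H$, standard finite-dimensional representation theory of reductive Lie algebras guarantees that every such summand is a direct sum of $H$-weight spaces. Summing over all the Harish-Chandra components then shows that $V$ itself is a weight module, establishing $(\Oc2)$.

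For $(\Oc3)$, set $\mathfrak{a}:=L_+\oplus\g_+$, which is a finite-dimensional Lie subalgebra of $L$. The hypothesis that $\mathfrak{a}$ is locally nilpotent on $V$ means that for each $v\in V$ there exists $n\in\Nb$ such that the action on $v$ of any product of $n$ or more elements of $\mathfrak{a}$ vanishes. Equipping $U(\mathfrak{a})$ with its standard PBW filtration $U^{\leq j}(\mathfrak{a})$, this translates into $U^{\geq n}(\mathfrak{a})\cdot v=\z$, so $U(\mathfrak{a})v=U^{<n}(\mathfrak{a})v$. Since $\mathfrak{a}$ is finite-dimensional, the PBW theorem ensures that $U^{<n}(\mathfrak{a})$ is itself finite-dimensional, so $U(\mathfrak{a})v$ is finite-dimensional. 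Thus $V$ is locally $\mathfrak{a}$-finite, which is $(\Oc3)$.

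None of the three verifications presents a substantial obstacle: $(\Oc1)$ is immediate, $(\Oc2)$ is a direct consequence of the reductivity of $\G_0$ combined with the definition of a Harish-Chandra module, and $(\Oc3)$ is a short PBW/filtration argument. The one subtle point worth flagging is the reading of ``locally nilpotent'': it must be taken at the Lie-algebra level (a uniform annihilation degree in $U(\mathfrak{a})$ for each fixed $v$), rather than only element-wise; under this natural reading, which is the one implicit in analogous statements earlier in the paper, the PBW argument for $(\Oc3)$ is automatic and no further input is required.
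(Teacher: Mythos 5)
Your proof is correct, and its treatment of $(\Oc3)$ differs from the paper's. The paper argues through the weight structure: $V$ is generated by finitely many weight vectors, and each generates a submodule that is a direct sum of finite-dimensional weight spaces with weights bounded from above (the bound coming from the nilpotent action of the raising operators), whence all three axioms follow at once. You instead verify $(\Oc3)$ by a purely PBW/filtration argument in $U(L_+\oplus\g_+)$ that never mentions weights, which is more elementary and does not rely on finite-dimensionality of weight spaces; your verification of $(\Oc2)$ via reductivity of $\G_0$ (Schur on the centre, Weyl/Cartan theory on $\g_0'$) is the same mechanism the paper implicitly uses when it calls the generators weight vectors. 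One correction to your flagged ``subtle point'': the uniform reading of local nilpotency is not actually needed. Under the element-wise reading (each element of $L_+\oplus\g_+$ acts locally nilpotently on all of $V$), local finiteness still follows for a finite-dimensional subalgebra: fix an ordered basis $x_1,\ldots,x_k$ of $L_+\oplus\g_+$; by PBW, $U(L_+\oplus\g_+)v$ is spanned by $x_1^{a_1}\cdots x_k^{a_k}v$, and one builds finite spanning sets iteratively, $T_k=\{x_k^{a}v\}$, $T_{k-1}=\{x_{k-1}^{a}w\,|\,w\in T_k\}$, etc., each finite by local nilpotency on the finitely many vectors produced at the previous stage. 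So the proposition is safe under either reading, and your proof as written is valid under the reading you adopt.
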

\begin{proof}
$V$ is generated by a finite number of weight vectors. Each of these will generate an $L$-module which is 
a direct sum of finite-dimensional weight spaces with weights bounded from above. 
It follows that $V \in \Oc$.
\end{proof}

\subsection{Characters}
\label{Sec:Characters2}

\begin{Definition}
Let $V\in\Obar$. Its character is defined as
\begin{align}
ch_{q}^{V}:= \sum_{\nu}q^{\nu},
\end{align}
where the sum is over all weights $\nu$ (including multiplicities) in $V$.
\end{Definition}
\noindent
\textbf{Remark.}
The finiteness condition $(\Obar2)$ in Definition~\ref{Def:Obar} ensures that $ch_{q}^{V}$ is well-defined.
\medskip

\noindent
To compute the character of a module in $\Obar$, we first note that each root sum of the form
\begin{align}
\rho_i:= -\tfrac{1}{2}\sum_{\beta \in \Phi_{-i}}\beta,\qquad i > 0,
\end{align}
satisfies
\begin{align}
(\rho_i,\alpha) = 0,\qquad \forall \alpha \in \Phi_0,
\end{align}
and is therefore fixed by the Weyl group $\Wc_0$. Using these root sums, we introduce the {\em Weyl vector}
\begin{align}
 \rho:=\sum_{i=0}^k\rho_i= \rho_0 -\tfrac{1}{2}\sum_{\beta \in \Phi_1^-}\beta,
\label{rho}
\end{align}
where $\rho_0$ denotes the half-sum of the positive roots of $\g_0$.
We also find it convenient to define
\begin{align}
\hat{\Phi}^- :=\Phi_0^{s,-} \cup \Phi_1^{-}
\end{align}
and ($\epsilon\in\{0,1\}$)
\begin{align}
 P_\epsilon^-:=\prod_{\alpha\in \Phi_\epsilon^-}\frac{-1}{q^{\frac{\alpha}{2}}-q^{-\frac{\alpha}{2}}},\qquad
 P_0^{s,\pm}:= \prod_{\alpha \in \Phi_0^{s,\pm}}\frac{\pm1}{q^{\frac{\alpha}{2}}-q^{-\frac{\alpha}{2}}},\qquad
 \hat{P}^-:=\prod_{\alpha\in \hat{\Phi}^-}\frac{-1}{q^{\frac{\alpha}{2}}-q^{-\frac{\alpha}{2}}}.
\label{P}
\end{align}

We now let $W$ be an $L_0$-module. The character of the corresponding induced $L$-module $K(W)$ is 
thus given by 
\begin{align}
 ch_{q}^{K(W)}=\sum_{\beta\in\Phi_1^-}\sum_{n_{\beta}\in\Nb_0}q^{\sum_{\beta}n_{\beta}\beta}ch_{q}^{W},
\end{align}
where $ch_{q}^{W}$ is an $L_0$-character. It follows that
\begin{align}
ch_{q}^{K(W)}=\Big(\prod_{\beta \in \Phi_1^-}\frac{1}{1-q^\beta}\Big)ch_{q}^{W}
 =q^{\rho -\rho_0}P_1^-ch_{q}^{W}.
\label{chqKV0}
\end{align}
Two cases are of particular interest.

First, recall that the character of the Verma $\g_0$-module $W= M(\Lambda)$ is given by
\begin{align}
ch_{q}^{M(\Lambda)}=P_0^{s,+}q^{\Lambda + \rho_0}.
\end{align}
It follows that the character of the corresponding Verma $L$-module $K(\Lambda)$ is given by
\begin{align}
ch_{q}^{K(\Lambda)}=\hat{P}^-q^{\Lambda+\rho}.
\end{align}
This is contrasted to the case of a Verma  $L_0$-module $W = M_0(\Lambda)$,
whose character is given by
\begin{align}
ch_{q}^{W} =P_0^-q^{\Lambda+\rho_0'},
\qquad
\rho_0':= -\tfrac{1}{2}\sum_{\alpha < 0}^{\Phi_0}\alpha.
\end{align}

In the other case, let $W= W(\Lambda)$ be a finite-dimensional $\g_0$-irreducible $L_0$-module with 
highest weight $\Lambda \in D_{0}^+$, so that
\begin{align}
ch_{q}^{W(\Lambda)} =P_0^{s,+}\sum_{\sigma \in\Wc_0}sn(\sigma)q^{\sigma(\Lambda + \rho_0)}.
\end{align}
The induced module is now a Harish-Chandra module with character given by
\begin{align}
ch_{q}^{K(W(\Lambda))} =\hat{P}^-\sum_{\sigma \in\Wc_0}sn(\sigma)q^{\sigma(\Lambda + \rho)}.
\end{align}

\subsection{Quantum dimensions}

We define the $q$-dimension of an $L$-module $V$ in the usual way as
\begin{align}
\dim_q[V]:= tr_V(q^{h_\rho}),
\end{align}
and introduce the level generating function of an $L$-module $V$ as
\begin{align}
d_q[V]:= tr_V(q^d).
\label{d}
\end{align}
For $\Lambda\in H^\ast$, we find it convenient to introduce
\begin{align}
 P_\epsilon^-(\Lambda)
 :=\prod_{\alpha\in \Phi_\epsilon^-}\frac{1}{q^{-\frac{(\alpha,\Lambda)}{2}}-q^{\frac{(\alpha,\Lambda)}{2}}},
 \qquad
 \Lambda\notin\bigcup_{\alpha\in\Phi}\Pc_\alpha,
\end{align}
where $\Pc_\alpha$ is defined in \eqref{hyperplane}.
Similar evaluations based on the other products in \eqref{P}
yield expressions naturally denoted by $P_0^{s,\pm}(\Lambda)$ and $\hat{P}^-(\Lambda)$.

For an $L_0$-module $W$, we have
\begin{align}
\dim_q[K(W)]=q^{(\rho - \rho_0,\delta)} P_1^-(\rho)\dim_q[W],
\end{align}
where $\dim_q[W] = tr_W(q^{h_\rho})$ is the usual $q$-dimension of $W$. 
In the case $W= M(\Lambda)$ is a Verma $\g_0$-module, we thereby obtain 
\begin{align}
\dim_q[K(\Lambda)]
 =\hat{P}^-(\rho)\, q^{(\Lambda + \rho,\rho)}.
\end{align}
Similarly, in the case $W=W(\Lambda)$ is a finite-dimensional $\g_0$-irreducible $L_0$-module 
with highest weight $\Lambda \in D_{0}^+$, the $q$-dimension for the 
corresponding induced module is given by
\begin{align}
\dim_q[K(W(\Lambda))]
 =P_1^-(\rho) \sum_{\sigma \in\Wc_0}sn(\sigma)q^{(\sigma(\Lambda + \rho),\rho)}.
\end{align}
Utilising the denominator formula
\begin{align}
\sum_{\sigma \in\Wc_0}sn(\sigma)q^{\sigma(\rho)} = P_0^{s,+},
\end{align}
we thereby obtain 
\begin{align}
\dim_q[K(W(\Lambda))]
 =P_1^-(\rho) \dim_{q}^0[W(\Lambda)],
\end{align}
where
\begin{align}
 \dim_{q}^0[W(\Lambda)]
 =\prod_{\alpha \in \Phi_0^{s,-}}\frac{q^{\frac{1}{2}(\Lambda+\rho,\alpha)}
  -q^{-\frac{1}{2}(\Lambda+\rho,\alpha)}}{q^{\frac{1}{2}(\rho,\alpha)}-q^{-\frac{1}{2}(\rho,\alpha)}}
\end{align}
is the usual $q$-dimension for the irreducible $\g_0$-module $W(\Lambda)$. We thus arrive at the interesting formula
\begin{align}
 \dim_q[K(W(\Lambda))] 
 =\frac{\hat{P}^-(\rho)}{P_0^{s,-}(\Lambda+\rho)}
 =\frac{\prod_{\alpha \in \Phi_0^{s,-}}(q^{-\frac{1}{2}(\Lambda+\rho,\alpha)}
 -q^{\frac{1}{2}(\Lambda+\rho,\alpha)})}{\prod_{\beta \in \hat{\Phi}^-}(q^{-\frac{1}{2}(\rho,\beta)}
    -q^{\frac{1}{2}(\rho,\beta)})}.
\end{align}

For the alternative $q$-dimension \eqref{d}, we see that
\begin{align}
d_q[K(W)] = q^{(\rho - \rho_0,\delta)}P_1^-(\delta)\,tr_W(q^d).
\end{align}
We now suppose that $d$ takes the constant value $\xi$ on $W$, in which case
\begin{align}
tr_W(q^d)= q^{\xi}\dim W.
\end{align}
This is only well-defined if $W$ is finite-dimensional, in which case 
\begin{align}
d_q[K(W)] =q^{(\rho,\delta)+\xi}P_1^-(\delta) \dim W.
\end{align}
Since $\Phi_1^- = \Phi_{-k} \cup\cdots\cup \Phi_{-1}$ and for $\beta \in \Phi_{-i}$, we have $(\beta,\delta) = -i$, 
it follows that
\begin{align}
 \big(P_1^-(\delta)\big)^{-1}=q^{(\rho,\delta)}\prod_{\beta \in \Phi_{1}^-}(1-q^{(\delta,\beta)})
= q^{(\rho,\delta)}\prod_{i=1}^{k}\prod_{\beta \in \Phi_{-i}}(1 - q^{-i})
= q^{(\rho,\delta)}\prod_{i=1}^{k}(1 - q^{-i})^{\dim L_{-i}}.
\end{align}
We thus obtain the interesting expression
\begin{align}
d_q[K(W)] = q^{\xi}\dim W\prod_{i=1}^{k}\Big(\frac{1}{1 - q^{-i}}\Big)^{\!\dim L_{-i}},
\end{align}
which can be readily made fully explicit using Weyl's dimension formula for $W$.

\section{Outlook}
\label{Sec:Concl}

Based on the Killing decomposition,
we have presented new advances in the theory of finite-dimensional $\Z$-graded Lie algebras. 
The algebra structure has been examined, and the associated representation theory has been 
extensively developed, including the introduction of new categories of $\Z$-graded modules. 
Special attention has been paid to a class of highest-weight induced modules whose characters have been 
determined using adaptations of traditional techniques. 

It is of great interest to consider applications of these advances to further investigate the representation theory 
of $\Z$-graded Lie algebras of physical relevance, with especially irreducible $\Z$-graded Lie algebras
appearing in a wide variety of contexts. While there has been significant development of the representation theory 
in certain special cases, notably the Schr\"odinger and conformal Galilei algebras for which there exists a large body of literature, 
their $\Z$-graded structures have not been exploited. It is of particular interest in this regard to examine the associated root systems 
and the various characters as $q$-series, including any connections with special functions. 

In light of Theorem~\ref{Theo:irredHC}, we intend to explore the presence of irreducible Harish-Chandra modules 
in association with unitary representations corresponding to a compact real form of $\g_0$.
We also plan to consider $\Z$-gradations of \textit{Lie superalgebras}, 
including the physically important superconformal Galilei algebras.

\addcontentsline{toc}{section}{Acknowledgements}
\subsection*{Acknowledgements}

This work was supported by the Australian Research Council under the Discovery Project scheme, 
project numbers DP160101376 and DP200102316.

\addcontentsline{toc}{section}{References}

\end{document}